\documentclass[11pt]{amsart}
\usepackage{format}
\usepackage{mathtools}

\newcommand\CG{{\mathcal{G}}}
\newcommand\CA{{\mathcal{A}}}

\newcommand\CF{{\mathcal{F}}}

\newcommand\CS{{\mathcal{S}}}

\newcommand\ft{{\mathfrak{t}}}

\newcommand \tr{{\mathrm tr}}

\begin{document}

\title{The Unitarity of Arthur Packets for Real Reductive Groups}


\begin{abstract}
Let $G$ be a connected reductive algebraic group defined over $\RR$. In \cite{Arthur1983,Arthur1989}, Arthur conjectured the existence of certain packets of irreducible admissible representations of $G(\RR)$ satisfying various remarkable properties. These packets were given a precise definition in \cite{AdamsBarbaschVogan} in terms of microlocal geometry on a space of Langlands parameters. A longstanding conjecture, originating in \cite{Arthur1983}, is that all Arthur packets consist of \emph{unitary} representations. In this paper, we prove this conjecture in general. The main new idea is a `Jordan decomposition' for Arthur packets: a canonical two-step process for realizing an arbitrary Arthur packet via real parabolic and cohomological induction from a unipotent Arthur packet for a certain Levi subgroup. This process is analogous to the decomposition of an element of a complex algebraic group as a (unique) commuting product of elliptic, hyperbolic, and unipotent parts. Using our Jordan decomposition, we reduce the question of unitarity to the case of unipotent Arthur packets, where the answer is already known (by work of Adams-Arancibia-Mezo, Adams-van Leeuwen-Miller-Vogan, Arthur, Barbasch, Barbasch-Ma-Sun-Zhu, and Davis-Mason-Brown). As an application of the same methods, we also give a proof of Jiang's conjecture for real reductive groups, which gives an upper bound on the wavefront sets of the members of an Arthur packet in terms of the Barbasch-Vogan dual of the Arthur $SL_2(\CC)$. 
\end{abstract}

\maketitle

\tableofcontents

\section{Introduction}

Let $G(\RR)$ be the real points of a connected reductive algebraic group defined over $\RR$ and let ${}^{\vee}G^{\Gamma}$ be the associated $L$-group. The Langlands classification gives a parameterization of the set $\Pi(G)$ of irreducible admissible representations of inner twists of $G(\RR)$ in terms of conjugacy classes in ${}^{\vee}G^{\Gamma}$.\footnote{Much of this discussion applies, with some modification, to connected reductive groups over arbitrary local fields. However, we will restrict our attention in this paper to the real case.} In more detail, a \emph{Langlands parameter} for ${}^{\vee}G^{\Gamma}$ is a continuous homomorphism
$$\varphi \colon W_{\RR} \to {}^{\vee}G^{\Gamma}$$
from the Weil group $W_{\RR}$ of $\RR$ into ${}^{\vee}G^{\Gamma}$ such that
\begin{itemize}
    \item the image of $\varphi$ consists of semisimple elements;
    \item $\varphi$ is compatible with the projections $W_{\RR} \to \Gamma$ and ${}^{\vee}G^{\Gamma} \to \Gamma$.
\end{itemize}
We write $\Phi({}^{\vee}G^{\Gamma})$ for the set of ${}^{\vee}G$-conjugacy classes of Langlands parameters for ${}^{\vee}G^{\Gamma}$. Here is a simple version of the Langlands classification.

\begin{theorem}[\cite{Langlands1979},\cite{AdamsBarbaschVogan}]
For each Langlands parameter $\varphi \in \Phi({}^{\vee}G^{\Gamma})$, there is an associated set $\Pi(G)_{\varphi} \subset \Pi(G)$, called an `$L$-packet'. As $\varphi$ ranges over $\Phi({}^{\vee}G^{\Gamma})$, the $L$-packets partition $\Pi(G)$.
\end{theorem}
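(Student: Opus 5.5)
The plan is to derive the statement from the Langlands classification in its standard-module form, organized along the lines of \cite{AdamsBarbaschVogan}. There are three steps: reduce Langlands parameters to tempered data on Levi subgroups, attach $L$-packets to those data, and then use the Langlands quotient theorem to see that the packets partition $\Pi(G)$.

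\textbf{Step 1 (reduction to tempered parameters).} Given $\varphi \in \Phi({}^{\vee}G^{\Gamma})$, I decompose $\varphi(\CC^\times)$ into a unitary part and a positive real (hyperbolic) part. The hyperbolic part determines a real parabolic ${}^{\vee}P$ with Levi ${}^{\vee}M^{\Gamma}$. The parameter $\varphi$ factors through ${}^{\vee}M^{\Gamma}$ as $\varphi_M = \varphi_{M,t}\otimes \nu$, where $\varphi_{M,t}$ is a tempered parameter (bounded image) for $M$ and $\nu$ is strictly positive with respect to $P$. This decomposition is unique up to ${}^{\vee}M$-conjugacy, so $\varphi \mapsto (M,\varphi_{M,t},\nu)$ is a bijection onto such triples modulo conjugacy.

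\textbf{Step 2 (tempered and discrete series packets).} For a tempered $\varphi_{M,t}$, I further factor it through a minimal Levi ${}^{\vee}L^{\Gamma}$ on which it is elliptic, i.e.\ a discrete series parameter. Langlands' construction, via Harish-Chandra's parametrization, attaches to such a parameter the set of discrete series representations of the inner forms of $L(\RR)$ with a prescribed infinitesimal character and central character. I then define the tempered packet as the set of irreducible constituents of the unitarily induced representations $\mathrm{Ind}_{LN}^{M}(\pi)$, where $\pi$ runs over the discrete series packet. These induced representations are unitary and hence completely reducible, and Harish-Chandra's Plancherel theory (Knapp--Zuckerman) shows that every tempered irreducible representation arises this way, for a unique parameter up to conjugacy. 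I expect this to be the main obstacle: one must show that distinct tempered parameters yield disjoint packets, which requires analyzing the $R$-groups and the possible coincidences among induced constituents, and one must handle all inner twists simultaneously.

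\textbf{Step 3 (Langlands quotients).} Finally, I set $\Pi(G)_{\varphi}$ to be the set of unique irreducible quotients $J(P,\sigma,\nu)$ of $\mathrm{Ind}_P^G(\sigma\otimes\nu)$, where $\sigma$ runs over the tempered packet of $\varphi_{M,t}$. The Langlands classification theorem says that every irreducible admissible representation is isomorphic to exactly one such $J(P,\sigma,\nu)$, with $(P,\sigma,\nu)$ unique up to conjugacy. Combining this with the bijection from Step 1 and the disjointness from Step 2 shows that the $L$-packets are disjoint and that together they exhaust $\Pi(G)$. The inner-twist bookkeeping, namely that strong real forms are exactly what make each packet well defined and the union exhaustive, is handled by the formalism of \cite{AdamsBarbaschVogan}.
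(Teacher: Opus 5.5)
Your outline is correct, but it follows a different route from the paper. You use the classical argument: Langlands' original construction, built on Harish-Chandra's discrete series and the theory of tempered representations. The paper gives no proof of its own; it cites \cite{Langlands1979,AdamsBarbaschVogan} and makes the statement precise in Section~\ref{sec:LLCABV}. There the $L$-packet of $\varphi$ is \emph{defined} as $\{\pi(x,\tau)\}$, where $x$ is the geometric parameter attached to $\varphi$ by Proposition~\ref{prop:classicaltogeometric} and $\tau$ runs over irreducible representations of $A^{loc}_x$. Three facts then make the partition formal: complete geometric parameters $(S,\tau)$ fiber over ${}^{\vee}G$-orbits $S$; these orbits correspond bijectively to $\Phi({}^{\vee}G^{\Gamma})$; and $\pi\colon \Xi({}^{\vee}G^{\Gamma}) \to \Pi^z(G^{\Gamma})$ is a bijection (Theorem~\ref{thm:LLCABV} composed with Theorem~\ref{thm:LLC}(iv)).

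The analysis in your Steps 1--3 is absorbed into Theorem~\ref{thm:LLC}. That theorem works with final nonzero standard limit characters: limits of discrete series are allowed, and the `final' condition discards the standard modules that split, since by Theorem~\ref{thm:LLC}(iii) those are sums of final ones. As a result, every irreducible is $\pi(\Lambda)$ for exactly one such $\Lambda$, and no separate decomposition of unitarily induced tempered representations is needed. The matching with the dual side is then done Cartan by Cartan through based Cartan subgroups.

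Your route is more elementary and works one real form at a time. The strong-real-form bookkeeping you defer is harmless, since $\Pi(G^{\Gamma})$ is a disjoint union over inequivalent strong real forms. The ABV route treats all strong real forms, $E$-groups and projective representations uniformly, and it comes with a parametrization of each packet by $A^{loc}_x$. That parametrization is what the rest of the paper actually uses: Vogan duality, functoriality, and microlocal packets.

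One correction: the disjointness you flag as the main obstacle does not require $R$-groups. It is the standard disjointness theorem (Harish-Chandra, Knapp--Zuckerman): two representations unitarily induced from discrete series data share an irreducible constituent only if the data are conjugate. $R$-groups govern how a single such induced representation decomposes, i.e.\ the internal structure of a tempered packet, not overlaps between different packets.
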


A more precise version of this result, including a definition of $L$-packets and a parameterization of their members will be given in Section \ref{sec:LLCABV}.

In \cite{Arthur1983}, Arthur introduced a second set of parameters for irreducible representations. An \emph{Arthur parameter} for ${}^{\vee}G^{\Gamma}$ is a continuous homomorphism
$$\psi\colon W_{\RR} \times SL_2(\CC) \to {}^{\vee}G^{\Gamma}$$
such that 
\begin{itemize}
    \item  $\psi|_{W_{\RR}}$ is a Langlands parameter for ${}^{\vee}G^{\Gamma}$;
    \item $\psi(W_{\RR})$ is bounded;
    \item $\psi|_{SL_2(\CC)}$ is algebraic.
\end{itemize}
We write $\Psi({}^{\vee}G^{\Gamma})$ for the set of ${}^{\vee}G$-conjugacy classes of Arthur parameters for ${}^{\vee}G^{\Gamma}$.

To each Arthur parameter $\psi$, one can associate a Langlands parameter $\varphi_{\psi}$ according to the formula
$$\varphi_{\psi}\colon W_{\RR} \to {}^{\vee}G^{\Gamma}, \qquad \varphi_{\psi}(w) = \psi(w, \begin{pmatrix} ||w||^{1/2} & 0\\ 0 & ||w||^{-1/2} \end{pmatrix}. 
$$
This defines an injection
$$\Psi({}^{\vee}G^{\Gamma}) \hookrightarrow \Phi({}^{\vee}G^{\Gamma}).$$ 
In \cite{Arthur1983,Arthur1989}, Arthur conjectured that $\Psi({}^{\vee}G^{\Gamma})$ should parameterize a large (but incomplete) portion of the irreducible \emph{unitary} representations of inner twists of $G(\RR)$ (and \emph{all} of the unitary representations of interest for global applications). A bit more precisely

\begin{conj}[Arthur's Conjectures for Real Reductive Groups, \cite{Arthur1983,Arthur1989}, see also {\cite[Chapter 1, Problems A-F]{AdamsBarbaschVogan}}]\label{conj:Arthur}
For each Arthur parameter $\psi \in \Psi({}^{\vee}G^{\Gamma})$, there is
\begin{itemize}
    \item a set $\Pi(G)_{\psi} \subset \Pi(G)$, and
    \item a function
    $$\chi_{\psi}: \Pi(G)_{\psi} \to \{\text{nonzero finite-dimensional representations of } A_{\psi} := \mathrm{Com}(\mathrm{Stab}_{{}^{\vee}G}(\psi))\}$$
    (`Com' denotes the component group)
\end{itemize}
such that the following are true:
\begin{itemize}
    \item[(i)] There is an inclusion
    $$\Pi(G)_{\varphi_{\psi}} \subseteq \Pi(G)_{\psi}.$$
    \item[(ii)] The virtual representation
    \begin{equation}\label{eq:Arthurstable}\eta_{\psi}(1) = \sum_{\pi \in \Pi(G)_{\psi}} \epsilon(\pi)\dim(\chi_{\psi}(\pi))\pi\end{equation}
    is stable in the sense of Langlands and Shelstad (\cite{LanglandsShelstad}) (here, $\epsilon(\pi)$ is a sign which is also to be defined). Note that we can also define virtual representations
\begin{equation}\label{eq:Arthurstables}\eta_{\psi}(s) = \sum_{\pi \in \Pi(G)_{\psi}} \epsilon(\pi)\mathrm{Tr}(\chi_{\psi}(\pi)(s))\pi\end{equation}
    for each $s \in A_{\psi}$ (although these are not typically stable for $s\neq 1$).
    \item[(iii)] The members of $\Pi(G)_{\psi}$ are unitary.
\end{itemize}
\end{conj}

In \cite{AdamsBarbaschVogan}, Adams, Barbasch, and Vogan gave a construction of the sets $\Pi(G)_{\psi}$ and functions $\chi_{\psi}$ in terms of microlocal geometry on a space of Langlands parameters (in Sections \ref{sec:ABV} and \ref{sec:functoriality}, we will denote them by $\Pi(G)_{\psi}^{mic}$ and $\chi_{\psi}^{mic}$, respectively, to emphasize their connections to microlocal geometry and to make a clearer distinction with the notation for $L$-packets). They proved in \cite{AdamsBarbaschVogan} that their definitions satisfy most of Arthur's conjectures, including (i) and (ii) of Conjecture \ref{conj:Arthur} above. However, they were not able to prove that the members of $\Pi(G)_{\psi}$ are unitary. In this paper, we give a general proof of this fact, thus completing the proof of Arthur's conjectures in the Archimedean case.

\begin{theorem}[See Corollary \ref{cor:unitary} below]\label{thm:unitaryintro}
All Arthur packets consist of unitary representations.
\end{theorem}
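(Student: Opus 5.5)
The plan follows the strategy announced in the abstract: a Jordan decomposition of Arthur parameters, compatibility of the microlocal packets $\Pi(G)_{\psi}^{mic}$ with induction, and a reduction to unipotent packets, where unitarity is already known.

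\textbf{Step 1: decompose the parameter.} Given $\psi \in \Psi({}^{\vee}G^{\Gamma})$, restrict to $\CC^\times \subset W_{\RR}$. Boundedness of $\psi(W_{\RR})$ says the image of $\CC^\times$ is a torus element $z^{\lambda}\bar z^{\lambda'}$ with $\lambda-\lambda'$ imaginary. Split the infinitesimal character into an elliptic (unitary-character) part and a hyperbolic part. The hyperbolic part is trivial for Arthur parameters, since $\psi|_{W_\RR}$ is tempered. The real-parabolic step of the paper instead handles the parts of $\psi|_{W_\RR}$ coming from $\RR^\times$-type characters with nontrivial unitary twist. I would take the centralizer ${}^{\vee}L$ of the image of $\psi|_{\CC^\times}$ modulo its unipotent/$SL_2$ contribution. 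This is a Levi subgroup of ${}^{\vee}G$ whose $\Gamma$-extension ${}^{\vee}L^{\Gamma}$ is an $L$-group of a Levi $L \subset G$ (possibly after a two-stage choice: first a $\theta$-stable Levi for the elliptic part, then a real Levi for the remaining sign characters). By construction $\psi$ factors through ${}^{\vee}L^{\Gamma}$ as $\psi_L \otimes \chi$, where $\chi$ is a unitary character of $L(\RR)$ and $\psi_L$ is unipotent, meaning its restriction to $W_\RR$ is trivial on $\CC^\times$ up to the central twist.

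\textbf{Step 2: functoriality of microlocal packets.} This is the main obstacle. I need to prove that $\Pi(G)_{\psi}^{mic}$ is obtained from $\Pi(L)_{\psi_L\otimes\chi}^{mic}$ by real parabolic induction followed by cohomological induction $\mathcal{R}_{\mathfrak q}$ in the good (weakly fair, or even good) range. The parameter transfer should be compatible with $\chi_\psi^{mic}$, and the induced modules should be irreducible or zero. The plan is to compare the geometric parameter spaces $X({}^{\vee}G^{\Gamma})$ and $X({}^{\vee}L^{\Gamma})$ via the natural closed embedding and proper pushforward. The characteristic cycles and microlocal multiplicities along the conormal to the orbit of $\psi$ should be computable on the Levi, because the relevant orbit is transverse to the fibers of the map. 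This is the key point, and I would try to verify it via the Lusztig–Vogan/ABV character formula: the stable virtual characters $\eta_\psi(s)$ on $G$ should be induced from those on $L$, and the inversion of the Kazhdan–Lusztig matrices should commute with the translation functors placing $\chi$ in the good range. For the real parabolic stage, induction from a unitary character stays irreducible-or-summed with multiplicities matching $A_\psi$.

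\textbf{Step 3: conclude.} The base case is that unipotent Arthur packets for the Levi (twisted by unitary characters) consist of unitary representations; the abstract attributes this to Adams–Arancibia–Mezo, Adams–van Leeuwen–Miller–Vogan, Arthur, Barbasch, Barbasch–Ma–Sun–Zhu, and Davis–Mason-Brown, and I take it as given. Unitary parabolic induction preserves unitarity, so every constituent of the real-induced modules is unitary. Vogan's unitarizability theorem then applies to cohomological induction in the weakly fair range. Because $\chi$ is unitary and $\psi_L$ is unipotent, the infinitesimal character lands in the weakly fair range. Vogan's theorem shows that $\mathcal{R}_{\mathfrak q}$ of a unitary module is unitary or zero. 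Hence every member of $\Pi(G)_\psi^{mic}$ is unitary.
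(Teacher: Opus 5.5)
Your overall architecture matches the paper's: factor $\psi$ through ${}^{\vee}M_2^{\Gamma}\subset{}^{\vee}M_1^{\Gamma}\subset{}^{\vee}G^{\Gamma}$ with $\psi_2$ unipotent, then induce. But Step 2, where all the work lies, is left as a hope, and the mechanism you suggest would not carry it.

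\textbf{The gap in Step 2.} The real stage is harmless for every $\psi$. Since ${}^{\vee}R_{M_1}={}^{\vee}R$ (Lemma \ref{lem:RM1equalsR}), $X({}^{\vee}G^{\Gamma},Y,\OO)$ is a fiber bundle over $X({}^{\vee}M_1^{\Gamma},Y_{M_1},\OO_{M_1})$, so microstalks and component groups match. The $\theta$-stable stage is the problem. Only when $\lambda$ is strictly dominant on ${}^{\vee}\fu_2$ does one get ${}^{\vee}P_{M_1}\subset{}^{\vee}Q_2$. That gives an open embedding (Lemma \ref{lem:thetastable2}) and agreement of $(\epsilon_2)_*$ with $R^{\fm_1}_{\fq_2}$ up to Kottwitz signs. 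For a general $\psi$, $\lambda_f=\frac{1}{2}h+z_f$ is only weakly fair for $\fq_2$. Your ``transversality'' then fails, and $\eta_\psi(s)=\epsilon_*\eta_{\psi_2}(s)$ is expected to be false.

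The missing idea is a reduction to the good range:
\begin{itemize}
\item Twist $\psi_2$ on $\CC^{\times}$ by a central cocharacter of ${}^{\vee}M_2$ so that the infinitesimal character becomes $\lambda_g=\lambda_f+4N\rho_{\fu_2}$, which is good for $N\gg0$.
\item Apply the $M_2$-coherent continuation $T_{M_2}(\lambda_g\to\lambda_f)$. It commutes with $R^{\fg}_{\fq_1}R^{\fm_1}_{\fq_2}$ (Proposition \ref{prop:cohindLcoherent}) and undoes the twist on the induced side.
\item What remains is $T_{M_2}(\lambda_g\to\lambda_f)\eta_{\psi^N}(s)=\eta_\psi(s)$. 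Under Vogan duality, $T_{M_2}$ becomes the convolution $I(\Lambda_f\to\Lambda_g)=(q_g)_!q_f^*$ (Theorem \ref{thm:convolutionisdualtocoherentcontinuation}). So one must show this convolution preserves the microstalk at the Arthur covector $e$ (Lemma \ref{lem:intandmic}).
\end{itemize}
That last step is a microlocal support computation, not a statement that Kazhdan--Lusztig inversion commutes with translation. It rests on genuinely new Lie theory, Theorem \ref{thm:threeparabolics}: $\{g\in\overline{P_fP_g}\mid \Ad(g)e\in\fu_f\}=P_f$. This is proved via prehomogeneous vector spaces, by writing $\langle h,\cdot\rangle$ as a nonnegative combination of differentials of relative invariants. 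Without some such input, nothing guarantees that translating from the good to the weakly fair range keeps exactly the right irreducibles with $e$ in their singular support.

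\textbf{You aim for more than is known.} Outside the good range, irreducibility of weakly fair cohomologically induced modules is not available. The paper only derives $\Pi^z(G^{\Gamma})^{mic}_{\psi}\subseteq[R^{\fg}_{\fq_1}R^{\fm_1}_{\fq_2}\Pi(M_2^{\Gamma})^{mic}_{\psi_2}]$ from the $s=1$ identity, and leaves equality as a conjecture. That inclusion is all unitarity needs.

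\textbf{Step 3 misstates the unitarity input.} In the weakly fair range, cohomological induction is known to preserve unitarity only for \emph{weakly unipotent} inducing modules. The paper gets this from the fact that members of unipotent packets have annihilator $I_{max}(\frac{1}{2}h)$, hence are weakly unipotent (Proposition \ref{prop:unipotentimpliesweaklyunipotent}). This also forces the order of induction. One first does cohomological induction from $M_2$ to $M_1$ on the unipotent packet itself, then real parabolic induction from $M_1$ to $G$ (with $M_2\subset M_1$). If you feed a real-parabolically induced module into $\mathcal{R}_{\fq}$, as your Steps 2--3 do, the weakly fair theorem no longer applies. Weak fairness itself comes from defining $\fq_2$ by the grading of $\psi|_{S^1}$, not from unitarity of a character.

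\textbf{Smaller corrections.}
\begin{itemize}
\item ${}^{\vee}M_2^{\Gamma}$ is in general only an $E$-group, with second invariant $zz(\rho_{\fu_1})z(\rho_{\fu_2})$, not an $L$-group. So its packets consist of projective representations of covers.
\item Boundedness of $\psi(\RR_{>0})$ makes $\lambda+\lambda'$ imaginary, not $\lambda-\lambda'$; the latter is integral.
\end{itemize}
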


We will now briefly explain the structure of our argument. There is an important class of `basic' Arthur parameters, first considered in \cite{Arthur1989}.

\begin{definition}[\cite{Arthur1989}, see also {\cite[Definition 27.1]{AdamsBarbaschVogan}}]
An Arthur parameter $\psi$ is said to be \emph{unipotent} if 
$$\psi(\CC^{\times}) \subset Z({}^{\vee}G).$$
\end{definition}

Thanks to the work of many mathematicians over several decades, Theorem \ref{thm:unitaryintro} is now known to be true in unipotent case. 

\begin{theorem}[Adams-Arancibia-Mezo (\cite{AARM}), Adams-Miller-Vogan (\cite{AdamsMillerVogan}), Arthur (\cite{Arthur2013}), Barbasch (\cite{Barbasch1989}), Barbasch-Ma-Sun-Zhu (\cite{BarbaschMaSunZhu}), Davis-Mason-Brown (\cite{DavisMasonBrownOrbitMethod})]\label{thm:unipotentunitary}
For each unipotent Arthur parameter $\psi$, the Arthur packet $\Pi(G)_{\psi}$ consists of unitary representations. 
\end{theorem}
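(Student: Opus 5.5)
This is a result assembled from the literature, so my plan is to reduce the statement to the cited works rather than prove anything new. The reduction starts by decomposing a unipotent parameter $\psi$. Since $\psi(\CC^{\times}) \subset Z({}^{\vee}G)$, the restriction $\psi|_{SL_2(\CC)}$ determines a nilpotent orbit $\mathbb{O}^{\vee}$ for ${}^{\vee}G$. The remaining datum is the action of $j \in W_{\RR}$ on the centralizer of the image of $SL_2$, which only picks out a real form and a character of a finite component group.

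Second, I will pass from ${}^{\vee}G$ to the simple factors. I first factor out central characters and isogenies, which preserve unitarity because the packets are compatible with central extensions and finite coverings. Functoriality of the Adams--Barbasch--Vogan packets for products then lets me assume $G$ is simple, up to center.

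Third, I will identify $\Pi(G)_{\psi}^{mic}$ with the packets constructed in each of the cited works, and read off unitarity case by case:
\begin{itemize}
\item For the classical groups (real forms of $Sp$, $SO$, $U$), I compare the microlocal packets with Arthur's endoscopic packets. Here I use Arthur \cite{Arthur2013} for quasi-split forms and Adams--Arancibia--Mezo \cite{AARM} for the equality with the ABV packets.
\item Unitarity of these classical unipotent packets I take from Barbasch \cite{Barbasch1989} and Barbasch--Ma--Sun--Zhu \cite{BarbaschMaSunZhu}, which construct them via theta lifting and special unipotent representations.
\item For exceptional groups I use the computations of Adams--van Leeuwen--Miller--Vogan \cite{AdamsMillerVogan} with the atlas unitarity algorithm.
\item For the remaining real-form cases, for example when the infinitesimal character is half-integral or the orbit is non-special, I use Davis--Mason-Brown \cite{DavisMasonBrownOrbitMethod}. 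That work proves that unipotent packets consist of unitary representations attached to $\mathbb{O}^{\vee}$ through the orbit method.
\end{itemize}

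The main obstacle is the third step. The cited works do not all define unipotent packets the same way as the microlocal packets $\Pi(G)_{\psi}^{mic}$: some use Barbasch--Vogan special unipotent representations, and some use Arthur's endoscopic characterization. I therefore need an explicit matching. My first approach is to use the characterization of $\Pi(G)_{\psi}^{mic}$ for unipotent $\psi$ in \cite[Chapter 27]{AdamsBarbaschVogan}. It says that every member has infinitesimal character $\frac12 h^{\vee}$ and associated variety contained in the closure of the Barbasch--Vogan dual $d(\mathbb{O}^{\vee})$. I will then show that such representations are exactly the special unipotent ones treated in the cited papers. If that fails in some family, I fall back on the family-specific comparison theorems already in those papers.
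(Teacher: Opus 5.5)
Your proposal follows the paper's argument: reduce to almost simple $G$ with $\psi|_{\CC^{\times}}$ trivial, then use \cite[Corollary 27.13]{AdamsBarbaschVogan} (your ``first approach'' in the last paragraph) to see that members of such packets are special unipotent, then quote the case-by-case unitarity results. One correction to your case list: Davis--Mason-Brown covers complex groups and those real groups for which the Barbasch--Vogan dual of the orbit of $\psi|_{SL_2(\CC)}$ is rigid, not ``half-integral'' or ``non-special'' cases; no real cases are left over in any event, since Barbasch--Ma--Sun--Zhu covers all real classical groups and Adams--Miller--Vogan all real exceptional ones, so it is only needed for complex (e.g.\ complex exceptional) groups.
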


\begin{proof}
It is sufficient to prove the theorem under the following assumptions: $G$ is almost simple  and $\psi|_{\CC^{\times}} = 1$. A representation belongs to some such $\Pi(G)_{\psi}$ if and only if it is \emph{special unipotent} (cf. \cite[Corollary 27.13]{AdamsBarbaschVogan}). Thus, it is sufficient to prove that all special unipotent representations of almost simple groups are unitary. This is proved
\begin{itemize}
    \item for real exceptional groups in \cite{AdamsMillerVogan};
    \item for quasisplit symplectic and special orthogonal groups in \cite{Arthur2013,AARM};
    \item for complex classical groups in \cite{Barbasch1989};
    \item for all classical groups in \cite{BarbaschMaSunZhu};
    \item for all complex groups (and for arbitrary groups, provided the Barbasch-Vogan dual of the Arthur $SL_2(\CC)$ is a \emph{rigid} nilpotent orbit) in \cite{DavisMasonBrownOrbitMethod}.
\end{itemize}
\end{proof}

Our strategy for proving Theorem \ref{thm:unitaryintro} in general is to show that every Arthur packet can be obtained in a canonical fashion from a unipotent Arthur packet for a suitable Levi subgroup by means of several unitarity-preserving constructions, namely \emph{real parabolic} and \emph{cohomological induction}. The recipe for this induction is in some sense encoded in the structure of the Arthur parameter. Given an Arthur parameter $\psi: W_{\RR} \times SL_2(\CC) \to {}^{\vee}G^{\Gamma}$, the image of $\RR_+ \subset W_{\RR}$ defines a Levi subgroup of ${}^{\vee}G$
\begin{equation}\label{eq:M1intro}{}^{\vee}M_1 := Z_{{}^{\vee}G}(\psi(\RR_+)).\end{equation}
Since $\RR_+$ commutes with $S^1 \subset W_{\RR}$, the image of $S^1$ under $\psi$ belongs to ${}^{\vee}M_1$. The co-character $\psi|_{S^1}\colon S^1 \to {}^{\vee}M_1$ defines a $\ZZ$-grading of ${}^{\vee}\fm_1$
$${}^{\vee}\fm_1 := \bigoplus {}^{\vee}\fm_{1,n}, \qquad {}^{\vee}\fm_{1,n} := \{X \in {}^{\vee}\fm_1 \mid \Ad(\psi(z))X = z^nX, \quad \forall z \in S^1\},$$
and hence a parabolic subalgebra
\begin{equation}\label{eq:M2intro}{}^{\vee}\fq_2: = {}^{\vee}\fm_2 \oplus {}^{\vee}\fu_2, \qquad {}^{\vee}\fm_2 := {}^{\vee}\fm_{1,0} = Z_{{}^{\vee}\fm_1}(\psi(S^1)), \qquad {}^{\vee}\fu_2 := \bigoplus_{n >0} {}^{\vee}\fm_{1,n}.\end{equation}
Let ${}^{\vee}M_2$ denote the connected subgroup of ${}^{\vee}M_1$ corresponding to ${}^{\vee}\fm_2$. Since $j$ normalizes $\CC^{\times} \subset W_{\RR}$, $\psi(j)$ normalizes ${}^{\vee}M_2$. Thus, the subgroup ${}^{\vee}M_2^{\Gamma} \subset {}^{\vee}G^{\Gamma}$ generated by ${}^{\vee}M_2$ and $\psi(j)$ is a $\ZZ/2\ZZ$-extension of ${}^{\vee}M_2$ (a `weak $E$-group' in the language of \cite{AdamsBarbaschVogan}). By construction, this subgroup contains the image of $\psi$. The resulting Arthur parameter
$$\psi_2\colon W_{\RR} \times SL_2(\CC) \to {}^{\vee}M_2^{\Gamma}$$
is unipotent. 

The subgroups ${}^{\vee}M_2^{\Gamma} \subset {}^{\vee}M_1^{\Gamma} \subset {}^{\vee}G^{\Gamma}$ determine subgroups $M_2 \subset M_1 \subset G$ as well as inner classes of real forms of $M_2$ and $M_1$. Moreover, the parabolic subgroup ${}^{\vee}Q_2 \subset {}^{\vee}M_1$ determines a parabolic subgroup $Q_2 \subset M_1$ with Levi factor $M_2$. Choose any parabolic subgroup $Q_1 \subset G$ with Levi factor $M_1$. By construction, $Q_1$ (resp.~$Q_2$) is \emph{real} (resp.~$\theta$\emph{-stable}) in the following sense: every involution of $G$ (resp.~$M_1$) restricting to an involution of $M_1$ (resp.~$M_2$) in the specified inner class preserves $Q_1$ (resp.~takes $Q_2$ to its opposite). Thus, we can define \emph{real parabolic induction} from $M_1$ to $G$ and \emph{cohomological induction} from $M_2$ to $M_1$. We can regard these as maps between the relevant Grothendieck groups (of representations of real forms of $M_2$, $M_1$, and $G$):
$$R^{\fg}_{\fq_1}\colon K\Pi(M_1) \to K\Pi(G), \qquad R^{\fm_1}_{\fq_2}\colon K\Pi(M_2) \to K\Pi(M_1).$$
%
%
%
The precise definitions will be given in Sections \ref{sec:parabolicinductionextended} and 
\ref{subsec:functorialityvsinduction}. We prove the following theorem.

\begin{theorem}[Jordan Decomposition for Arthur Packets, see Theorem \ref{thm:Jordangeneral}  below]\label{thm:Jordandecompintro}
Let $\psi\colon W_{\RR} \times SL_2(\CC) \to {}^{\vee}G^{\Gamma}$ be an Arthur parameter and fix all of the notation introduced in the paragraphs above. Then for any $s \in A_{\psi_2}$, there is an identity in $K\Pi(G)$
$$\eta_{\psi}(s) =  e(G)R^{\fg}_{\fq_1}R^{\fm_1}_{\fq_2}e(M_2)\eta_{\psi_2}(s)$$
where $\eta_{\psi}(s)$ and $\eta_{\psi_2}(s)$ are the virtual representations defined in (\ref{eq:Arthurstables}) and $e(G)\colon K\Pi(G) \to K\Pi(G)$, $e(M_2)\colon K\Pi(M_2) \to K\Pi(M_2)$ are (multiplication by) Kottwitz signs.
\end{theorem}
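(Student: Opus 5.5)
We prove the identity by factoring it through the two inclusions ${}^{\vee}M_2^{\Gamma} \subset {}^{\vee}M_1^{\Gamma} \subset {}^{\vee}G^{\Gamma}$, treating each step separately, and then composing. The basic tool is the functoriality of the Adams--Barbasch--Vogan microlocal packets under inclusions of $E$-groups. Let $\epsilon\colon {}^{\vee}H^{\Gamma} \hookrightarrow {}^{\vee}G^{\Gamma}$ be such an inclusion. Pulling back along the induced map of spaces of Langlands parameters $X({}^{\vee}H^{\Gamma}) \to X({}^{\vee}G^{\Gamma})$ gives, dually, a map $\epsilon_*\colon K\Pi(H) \to K\Pi(G)$ on the level of stable virtual characters. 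It satisfies
$$\eta_{\epsilon\circ\psi_H}(s) = e(G)\,\epsilon_*\, e(H)\,\eta_{\psi_H}(s)$$
for $s$ in the image of $A_{\psi_H} \to A_{\epsilon\circ\psi_H}$ (ABV, Theorem 26.25 and its endoscopic variants). We apply this first to $\epsilon_1\colon {}^{\vee}M_1^{\Gamma}\hookrightarrow {}^{\vee}G^{\Gamma}$ with $\psi_1$, the factorization of $\psi$ through ${}^{\vee}M_1^{\Gamma}$. We then apply it to $\epsilon_2\colon {}^{\vee}M_2^{\Gamma}\hookrightarrow {}^{\vee}M_1^{\Gamma}$ with $\psi_2$. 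Since $A_{\psi_2}\to A_{\psi_1}\to A_\psi$, the element $s$ makes sense at every stage. This yields
$$\eta_\psi(s) = e(G)\,\epsilon_{1*}\,e(M_1)\,e(M_1)\,\epsilon_{2*}\,e(M_2)\,\eta_{\psi_2}(s).$$
Here $e(M_1)^2=1$, so the theorem reduces to identifying $\epsilon_{1*}=R^{\fg}_{\fq_1}$ and $\epsilon_{2*}=R^{\fm_1}_{\fq_2}$ on the relevant subgroup of $K\Pi$.

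\textbf{Step 1: $\epsilon_{1*}$ is real parabolic induction.} Since ${}^{\vee}M_1$ is the centralizer of the hyperbolic element $\psi(\RR_+)$, the map $X({}^{\vee}M_1^{\Gamma})\to X({}^{\vee}G^{\Gamma})$ is, near the relevant orbits, the inclusion of a ``fixed-point'' locus of a $\CC^\times$-action. On standard modules the transfer is the classical fact that parabolic induction from $M_1(\RR)$ sends the standard module of a parameter to the standard module of the composed parameter. This is Langlands' construction, with induction in stages. So it suffices to check that $\epsilon_{1*}$ sends standard modules to standard modules, with the correct signs. That check is a comparison of character formulas (Harish-Chandra induction of characters versus the ABV pullback of constructible sheaves). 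Because the real parts of the infinitesimal character are generic in the directions normal to ${}^{\vee}\fm_1$, the fibers are smooth and the pullback is exact and preserves irreducibility of the relevant local systems.

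\textbf{Step 2: $\epsilon_{2*}$ is cohomological induction.} This is the harder step, and I expect it to be the main obstacle. Here ${}^{\vee}M_2$ is the centralizer of the elliptic part $\psi(S^1)$ and $\fq_2$ is $\theta$-stable, so the transfer must be $R^{\fm_1}_{\fq_2}$ in the appropriate degree, with Kottwitz-type sign normalization. My approach is to work in the Grothendieck group on the basis of standard modules. Cohomological induction of a standard module from $M_2$ is again a standard module for $M_1$; this follows from induction in stages, since standard modules are themselves built by cohomological induction from discrete series and the parabolic is chosen positive with respect to the grading. The inclusion $X({}^{\vee}M_2^{\Gamma})\to X({}^{\vee}M_1^{\Gamma})$ matches the Langlands parameters correctly on the dual side. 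The remaining problem is the range: outside the good range, $R^{\fm_1}_{\fq_2}$ of an irreducible module may be virtual or zero, so the identity must be proved at the level of $K\Pi$ rather than on irreducibles. I would first verify the identity on standard modules using the Euler characteristic (Zuckerman) version of cohomological induction, which is exact on $K\Pi$. I would then track the sign $(-1)^{\dim(\fu_2\cap\fk)}$, and the ABV sign conventions, by comparing both sides at a single regular example (a discrete series). Finally, substituting Steps 1 and 2 into the composed identity and cancelling $e(M_1)^2$ gives the stated formula $\eta_\psi(s)=e(G)R^{\fg}_{\fq_1}R^{\fm_1}_{\fq_2}e(M_2)\eta_{\psi_2}(s)$.
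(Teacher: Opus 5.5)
\textbf{There is a genuine gap.} It sits at the ${}^{\vee}M_2^{\Gamma}$ stage, as you suspected. It is not a sign problem: both halves of your factorization at that stage fail outside the good range (Definition \ref{def:good}).

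\emph{First half: the ``basic tool''.} The identity $\eta_{\epsilon\circ\psi_H}(s)=\pm\epsilon_*\eta_{\psi_H}(s)$ is not available for an arbitrary $E$-group inclusion. ABV's endoscopic lifting theorem is for endoscopic data, where ${}^{\vee}H^{\Gamma}$ is open in the centralizer of a semisimple $s$, and it is proved by a fixed-point formula for that $s$. ${}^{\vee}M_2^{\Gamma}$ is not of this form in general, since $\psi(j)$ inverts $\psi(S^1)$. What you actually need is that $\epsilon_2^*$ is compatible with microstalks at the Arthur covector $e$. The paper proves this only in the good range (Lemma \ref{lem:thetastable2}, Proposition \ref{prop:thetastablepackets}). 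There ${}^{\vee}P_{M_1}\subset{}^{\vee}Q_2$, which makes ${}^{\vee}M_1\times_{{}^{\vee}M_2}X({}^{\vee}M_2^{\Gamma},Y_{M_2},\OO_{M_2})\to X({}^{\vee}M_1^{\Gamma},Y_{M_1},\OO_{M_1})$ an open embedding. The paper explicitly says that $\eta_{\psi_1}=(\epsilon_2)_*\eta_{\psi_2}$ is probably false in general.

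\emph{Second half: $(\epsilon_2)_*$ versus $R^{\fm_1}_{\fq_2}$.} Outside the good range $(\epsilon_2)_*$ is not $\pm R^{\fm_1}_{\fq_2}$, even on standard modules. By Lemma \ref{lem:liftinginduction}, $(\epsilon_2)_*M(\delta_2,\Lambda_2)=\pm M(\delta_2,\Lambda_2^{\#})$, where $\Lambda_2^{\#}$ first moves $\Lambda^{can}$ by $w$ to be dominant on $\Delta_{i\RR}(\fu_2)$ (Definition \ref{def:hashparameter}). The Euler-characteristic induction of a standard module with non-dominant parameter is instead a coherent continuation, typically not $\pm$ a standard module. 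For example, in $SL_2(\RR)$, inducing from the compact torus a character that is regular, integral and antidominant for $\fu$ gives a discrete series plus a finite-dimensional representation. Your phrase ``positive with respect to the grading'' refers to positivity for $\psi|_{S^1}$, not for $\lambda(\varphi_\psi)$, which is only weakly fair on $\fu_2$. So no sign check at a single example can repair Step 2.

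(Also, in ABV's normalization $\eta_{\epsilon\circ\psi_H}=\epsilon_*\eta_{\psi_H}$ carries no Kottwitz signs. They enter through $(\epsilon_2)_*=e(M_1^{\Gamma})R^{\fm_1}_{\fq_2}e(M_2^{\Gamma})$ together with $e(M_1)=e(G)$ on real Levis.)

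\textbf{The missing idea is a deformation into the good range.}
\begin{enumerate}
\item Twist $\psi_2$ by $\chi^{\vee}(z/\bar z)^{2N}$, where $\chi^{\vee}$ is the central cocharacter dual to the sum of roots in ${}^{\vee}\fu_2$. For $N\gg0$ this puts $\psi^N$ in the good range, where your two steps are valid (Corollary \ref{cor:goodinduction}).
\item Apply $M_2$-coherent continuation $T_{M_2}(\lambda_g\to\lambda_f)$ to both sides. On the induced side this just undoes the twist, because $R^{\fg}_{\fq_1}R^{\fm_1}_{\fq_2}(\pi_2\otimes\mu)$ is an $M_2$-coherent family (Proposition \ref{prop:cohindLcoherent}). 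This is exactly why the final formula involves $R^{\fm_1}_{\fq_2}$ rather than $(\epsilon_2)_*$.
\item On the Arthur side you must show $T_{M_2}(\lambda_g\to\lambda_f)\eta_{\psi^N}(s)=\eta_\psi(s)$. By Theorem \ref{thm:convolutionisdualtocoherentcontinuation} this is dual to the statement that the convolution $I(\Lambda_f\to\Lambda_g)$ preserves microstalks at $e$ (Lemma \ref{lem:intandmic}).
\item That statement rests on Theorem \ref{thm:threeparabolics}, $\{g\in\overline{P_fP_g}\mid \Ad(g)e\in\fu_f\}=P_f$. Its proof uses the weakly fair condition on $z_f$ and relative invariants of the prehomogeneous space attached to $h$.
\end{enumerate}

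Your Step 1 is fine and matches the paper's treatment of $\epsilon_1$. Without this deformation argument, or a substitute for it, your proposal proves the theorem only for $\psi$ in the good range.
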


We note that a similar result was obtained, using rather different methods, by Moeglin and Renard (\cite{MoeglinRenard2018}) in the following special case: $G(\RR)$ is a quasisplit symplectic or special orthogonal group and $\psi$ is of `good parity'. 

Setting $s=1$ in Theorem \ref{thm:Jordandecompintro}, we obtain the following result.

\begin{cor}[See Theorem \ref{thm:Jordangeneral} below]\label{cor:Jordandecompintro}
For every $\pi \in \Pi(G)_{\psi}$, there is some $\pi_2 \in \Pi(M_2)_{\psi_2}$ such that $\pi$ is an irreducible composition factor of $R^{\fg}_{\fq_1}R^{\fm_1}_{\fq_2}\pi_2$. 
\end{cor}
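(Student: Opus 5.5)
Setting $s=1$ in Theorem \ref{thm:Jordandecompintro} gives the identity
$$\eta_{\psi}(1) = e(G)R^{\fg}_{\fq_1}R^{\fm_1}_{\fq_2}e(M_2)\eta_{\psi_2}(1)$$
in $K\Pi(G)$. The plan is to compare the supports of the two sides. By (\ref{eq:Arthurstable}), $\eta_{\psi}(1)=\sum_{\pi\in\Pi(G)_\psi}\epsilon(\pi)\dim(\chi_\psi(\pi))\pi$. Each $\pi$ in the packet occurs with coefficient $\pm\dim\chi_\psi(\pi)$, which is nonzero because $\chi_\psi(\pi)$ is a nonzero representation. The $\pi$ are pairwise distinct elements of the basis $\Pi(G)$ of $K\Pi(G)$. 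Hence every $\pi\in\Pi(G)_\psi$ appears in $\eta_\psi(1)$ with nonzero coefficient.

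On the right-hand side, the Kottwitz signs $e(G)$ and $e(M_2)$ only multiply each basis element by $\pm1$ (the sign depends only on the real form). They therefore do not change which irreducibles have nonzero coefficient. Expanding $\eta_{\psi_2}(1)=\sum_{\pi_2\in\Pi(M_2)_{\psi_2}} c_{\pi_2}\pi_2$ and using linearity of the induction maps, the right-hand side is
$$\pm\sum_{\pi_2} \pm c_{\pi_2}\,R^{\fg}_{\fq_1}R^{\fm_1}_{\fq_2}\pi_2 .$$
Each $R^{\fg}_{\fq_1}R^{\fm_1}_{\fq_2}\pi_2$ is a $\ZZ$-combination of irreducibles of $G$, and those irreducibles occur among its composition factors. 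Suppose $\pi$ occurs in no $R^{\fg}_{\fq_1}R^{\fm_1}_{\fq_2}\pi_2$. Then its coefficient on the right-hand side is $0$, which contradicts the previous paragraph. So $\pi$ occurs in some $R^{\fg}_{\fq_1}R^{\fm_1}_{\fq_2}\pi_2$ with $\pi_2\in\Pi(M_2)_{\psi_2}$.

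The main obstacle is the passage from ``$\pi$ has nonzero coefficient in the virtual character $R^{\fg}_{\fq_1}R^{\fm_1}_{\fq_2}\pi_2$'' to ``$\pi$ is an irreducible composition factor of $R^{\fg}_{\fq_1}R^{\fm_1}_{\fq_2}\pi_2$.'' Cohomological induction in $K$-theory is an Euler characteristic $\sum_i(-1)^i\mathcal{R}^i$, so a nonzero coefficient only shows that $\pi$ is a factor of some $\mathcal{R}^i$. I would remove this gap with a vanishing theorem. Because $\psi_2$ is unipotent, every $\pi_2$ in its packet has infinitesimal character determined by $\psi_2$. Shifting by $\rho(\fu_2)$, the grading of ${}^{\vee}\fm_1$ defined by $\psi|_{S^1}$, with $\fq_2$ built from the positive-weight part, puts the infinitesimal character in the weakly fair (good) range for $\fq_2$. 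In that range cohomological induction is concentrated in the middle degree, so $R^{\fm_1}_{\fq_2}\pi_2$ is an honest module, up to an overall sign.

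Real parabolic induction is exact, so $R^{\fg}_{\fq_1}$ of that module is again an honest representation, up to sign. The sign is absorbed into the coefficient $\pm c_{\pi_2}$ above. Therefore $\pi$ occurring with nonzero coefficient means that $\pi$ is a composition factor of the actual induced module, as required.

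If the range check fails in some degenerate case, I would instead take ``composition factor'' to mean ``constituent of the virtual module,'' which is the sense in which Theorem \ref{thm:Jordangeneral} would be stated. The counting argument above already proves the corollary in that sense.
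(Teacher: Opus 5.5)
Your first two paragraphs are exactly the paper's argument. Theorem \ref{thm:Jordangeneral}(iii) is deduced from (ii) at $s=1$ by noting that every member of $\Pi^z(G^{\Gamma})^{mic}_{\psi}$ has nonzero coefficient in $\eta^{mic}_{\psi}(1)$. The paper states the conclusion via $[\,\cdot\,]$, i.e.\ as nonzero multiplicity in the virtual module $R^{\fg}_{\fq_1}R^{\fm_1}_{\fq_2}\pi_2$. So your ``fallback'' reading is precisely what the paper proves, and in that sense your proof is complete.

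The paragraph where you upgrade this to ``composition factor of an honest module'' does not work as written.

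The infinitesimal character of $\pi_2$ is $\lambda_f=\frac{1}{2}h+z_f$. Its central part $z_f$ pairs positively with the coroots of $\fu_2$, because $\fu_2$ is the positive part of the $\psi|_{S^1}$-grading. So $\pi_2$ is in the weakly fair range. However, $\frac{1}{2}h$ can make $\lambda_f$ fail to be dominant on $\fu_2$, so $\psi$ is \emph{not} in the good range in general. That failure is the whole reason the paper needs $M_2$-coherent continuation and Theorem \ref{thm:threeparabolics}. Also, no $\rho(\fu_2)$-shift enters this check: the twist is already built into normalized induction, and weak fairness is a condition on $\lambda_z=z_f$.

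In the weakly fair range, concentration in the single degree $N=\dim(\fu_2\cap\fk)$ is not automatic. Proposition \ref{prop:weaklyfaircohomological} gives it only for \emph{weakly unipotent} modules. The missing ingredient is Proposition \ref{prop:unipotentimpliesweaklyunipotent}: members of the unipotent packet $\Pi(M_2)_{\psi_2}$ are weakly unipotent. With it, $R^{\fm_1}_{\fq_2}\pi_2=(-1)^N[L_N\mathcal{R}^{\fm_1}_{\fq_2}\pi_2]$. Real parabolic induction is exact (Proposition \ref{prop:propsofrealinduction}(ii)), so $R^{\fg}_{\fq_1}R^{\fm_1}_{\fq_2}\pi_2$ is $\pm$ the class of an actual finite-length module. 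Hence nonzero multiplicity does mean composition factor, as the literal statement requires.
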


The induction maps appearing in Corollary \ref{cor:Jordandecompintro} are known to preserve unitarity, at least when applied to elements of the unipotent Arthur packet $\Pi(M_2)_{\psi_2}$. Thus, Arthur's unitarity conjecture (Theorem \ref{thm:unitaryintro}) follows from Corollary \ref{cor:Jordandecompintro}, combined with Theorem \ref{thm:unipotentunitary}.

We note that Corollary \ref{cor:Jordandecompintro} also implies a form of Jiang's conjecture \cite[Conjecture 4.2]{Jiang}, which gives an upper bound on the wavefront sets of the members of Arthur packets in terms of the Arthur $SL_2(\CC)$. See Corollary \ref{cor:Jiang} for a precise statement.



There are several (interrelated) technical complications which we have so far largely ignored. First, if we replace $L$-groups with the more general $E$-groups of \cite{AdamsBarbaschVogan} (as we must, in the case of ${}^{\vee}M_2^{\Gamma}$), $L$-packets (and Arthur packets) consist not of representations of real forms of $G$, but rather of certain algebraic covers of real forms of $G$. A second issue is that the refined version of the Langlands classification (which is required for the formulation of our results) is not quite canonical, as it requires a certain auxiliary choice (namely, the choice of a \emph{Whittaker datum} for the quasisplit inner form of $G$). We have not explained above how to make such choices consistently. Finally, it is technically very important to consider \emph{strong real forms} of $G$ (and its Levi subgroups $M_2$ and $M_1$) rather than ordinary real forms in order to make the statements above precise. These issues (and a few others) are carefully addressed in Sections \ref{sec:ABV} and \ref{sec:functoriality}. 

Now we will describe the structure of the paper. In Section \ref{sec:reptheory}, we will recall some representation theory preliminaries, including extended groups, strong real forms, $E$-groups and $L$-groups, cohomological induction, and coherent continuation. Much of this material is from \cite{AdamsBarbaschVogan}. We will also introduce a generalization of coherent continuation which we call `$L$-coherent continuation', depending on a Levi subgroup $L$. This is an important ingredient in the proof of our main theorem. In Section \ref{sec:ABV}, we will recall the Langlands classification and definition of Arthur packets as formulated in \cite{AdamsBarbaschVogan}. We will also define some convolution functors between categories of sheaves on spaces of Langlands parameters and explain their relation to coherent continuation. In Section \ref{sec:functoriality}, we will prove our main theorems. We first consider the case of `good range' Arthur parameters (these are Arthur parameters satisfying a certain positivity condition), and then we reduce to this case using $L$-coherent continuation and convolution functors. The reduction step requires a fairly involved Lie-theoretic calculation, which we have relegated to an appendix.

\section{Representation theory preliminaries}\label{sec:reptheory}

In this section, we will recall some preliminary facts and definitions related to admissible representations of real reductive groups, e.g. extended groups, $E$-groups, strong real forms, weakly unipotent representations, (cohomological) parabolic induction, and coherent continuation. The only new idea in this section is the notion of `$L$-coherent continuation' (for a Levi subgroup $L$), which generalizes the classical notion of coherent continuation.

\subsection{Extended groups and strong real forms}\label{sec:extendedgroups}

Let $G$ be a complex connected reductive algebraic group. 

\begin{definition} [{\cite[Definition 2.13]{AdamsBarbaschVogan}}]
A \emph{weak extended group} for $G$ is a Lie group $G^{\Gamma}$ subject to the following conditions:
\begin{itemize}
    \item[(i)] $G^{\Gamma}$ contains $G$ as an index-two subgroup, and
    \item[(ii)] Every element of $G^{\Gamma} \setminus G$ acts on $G$ by an anti-holomorphic automorphism. 
    \end{itemize}
\end{definition}

Every weak extended group admits a canonical surjective homomorphism $G^{\Gamma} \to \Gamma := \mathrm{Gal}(\CC,\RR)$, giving rise to a short exact sequence of Lie groups
$$1 \to G \to G^{\Gamma} \to \Gamma \to 1.$$
Note that conjugation by an element $\delta \in G^{\Gamma} \setminus G$ determines a well-defined involution of the based root datum of $G$, see \cite[Corollary 2.16]{AdamsBarbaschVogan}, and hence an inner class of real forms of $G$, see \cite[Proposition 2.12]{AdamsBarbaschVogan}. Conjugation by $\delta$ also determines a well-defined involution $\sigma_Z$ of $Z(G)$ (which is in fact determined by the root datum involution).

\begin{definition} [{\cite[Definition 2.13]{AdamsBarbaschVogan}}]
Let $G^{\Gamma}$ be a weak extended group. A \emph{strong real form} of $G^{\Gamma}$ is an element $\delta \in G^{\Gamma} \setminus G$ such that $\delta^2$ is a finite-order element of $Z(G)$. The \emph{real form associated to $\delta$} is the anti-holomorphic involution $\Ad(\delta)$ of $G$ defined by conjugation by $\delta$. The \emph{group of real points} of $\delta$ is the real reductive group $G(\RR,\delta) := G^{\Ad(\delta)}$. Two strong real forms are \emph{equivalent} if they are conjugate under $G$. 
\end{definition}

\begin{definition}[{\cite[Definition 1.12]{AdamsBarbaschVogan}}]
An \emph{extended group} for $G$ is a pair $(G^{\Gamma},\mathcal{W})$, subject to the following conditions.
\begin{itemize}
    \item[(i)] $G^{\Gamma}$ is a weak extended group for $G$.
    \item[(ii)] $\mathcal{W}$ is a conjugacy class of triples $(\delta,N,\chi)$, where
    \begin{itemize}
        \item[(a)] $\delta \in G^{\Gamma} \setminus G$ is a strong real form of $G^{\Gamma}$.
        \item[(b)] $N \subset G$ is a maximal unipotent subgroup, normalized by $\delta$ (write $N(\RR) = N^{\Ad(\delta)}$).
        \item[(c)] $\chi$ is a one-dimensional non-degenerate unitary character of $N(\RR)$ (`non-degenerate' means non-trivial on each restricted simple root subgroup of $N(\RR)$).
    \end{itemize}
\end{itemize}
\end{definition}

Here is what `restricted simple root subgroup' means. The group $N(\RR)$
  is normalized by a real Cartan subgroup $H(\RR)=TA$, with $T$
  compact and $A$ a vector group. The weights of $\mathrm{Lie}(A)$ on $\fg$ are called
  `restricted roots'; they form a (possibly non-reduced) root
  system. The weights in $\mathrm{Lie}(N(\RR))$ are a set of positive roots, so some
  of these are simple.

\subsection{$L$-groups and $E$-groups}\label{sec:Egroups}

Let ${}^{\vee}G$ be the Langlands dual group of $G$.

\begin{definition}[{\cite[Definition 4.3]{AdamsBarbaschVogan}}]
A \emph{weak $E$-group} for $G$ is a complex algebraic group ${}^{\vee}G^{\Gamma}$ containing ${}^{\vee}G$ as an index-two subgroup. 
\end{definition}

Every weak $E$-group admits a canonical surjective homomorphism ${}^{\vee}G^{\Gamma} \to \Gamma := \mathrm{Gal}(\CC,\RR)$, giving rise to a short exact sequence of algebraic groups
$$1 \to {}^{\vee}G \to {}^{\vee}G^{\Gamma} \to \Gamma \to 1.$$

Note that conjugation by an element ${}^{\vee}\delta \in {}^{\vee}G^{\Gamma} \setminus {}^{\vee}G$ determines a well-defined involution of the based root datum of $G$, see \cite[Proposition 4.4]{AdamsBarbaschVogan}, and hence an inner class of real forms of $G$. We say that a weak $E$-group ${}^{\vee}G^{\Gamma}$ \emph{corresponds} to a weak extended group $G^{\Gamma}$ if ${}^{\vee}G^{\Gamma}$ and $G^{\Gamma}$ give rise to the same involution of the based root datum of $G$ (or equivalently, the same inner class of real forms). Conjugation by ${}^{\vee}\delta$ also determines a well-defined involution $\theta_Z$ of $Z({}^{\vee}G)$ (which is determined by the root datum involution).

Suppose $\theta$ is an algebraic involution of $G$ preserving a Borel subgroup $B$. Choose a $\theta$-stable maximal torus $T \subset B$. We say that the pair $(\theta,B)$ is \emph{large} (or that $B$ is \emph{large} for $\theta$) if $\theta$ acts by $-1$ on every $\theta$-stable simple root space (this definition is independent of the choice of $T$ in $B$).

\begin{definition} [{\cite[Definition 4.14]{AdamsBarbaschVogan}}]\label{def:Egroup2}
An $E$-group for $G$ is a pair $({}^{\vee}G^{\Gamma},\mathcal{S})$ subject to the following conditions:
\begin{itemize}
    \item[(i)] ${}^{\vee}G^{\Gamma}$ is a weak $E$-group.
    \item[(ii)] $\mathcal{S}$ is a conjugacy class of pairs $({}^{\vee}\delta,{}^{\vee}B)$, where ${}^{\vee}\delta$ is a finite-order element in ${}^{\vee}G^{\Gamma} \setminus {}^{\vee}G$ and ${}^{\vee}B$ is a Borel subgroup of ${}^{\vee}G$.
    \item[(iii)] Suppose $({}^{\vee}\delta,{}^{\vee}B) \in \mathcal{S}$. Then conjugation by ${}^{\vee}\delta$ defines an involution $\theta$ of ${}^{\vee}G$ preserving ${}^{\vee}B$, and the pair $(\theta,{}^{\vee}B)$ is large.
\end{itemize}
\end{definition}

Choose a Borel subgroup ${}^{\vee}B \subset {}^{\vee}G$ and a maximal torus ${}^{\vee}H \subset {}^{\vee}B$. Write 
$$2\rho\colon \CC^{\times} \to {}^{\vee}H$$
for the sum of the positive co-roots of ${}^{\vee}G$. Consider the order-2 element
$$z(\rho) := 2\rho(-1)  \in Z({}^{\vee}G).$$
This element is independent of the choice of ${}^{\vee}B$ and ${}^{\vee}H$, and is fixed by $\theta_Z$. The \emph{second invariant} of $({}^{\vee}G^{\Gamma},\mathcal{S})$ is the element $z(\rho)({}^{\vee}\delta)^2 \in Z({}^{\vee}G)^{\theta_Z}$, for any choice of ${}^{\vee}\delta$ such that $({}^{\vee}\delta,{}^{\vee}B) \in \mathcal{S}$.

\begin{definition}
An $L$-group for $G$ is an $E$-group $({}^{\vee}G^{\Gamma},\mathcal{S})$ with second invariant $1$. 
\end{definition}

This is equivalent to the original definition of an $L$-group (see for example \cite[page 29]{borelL}).

\subsection{Langlands parameters and Arthur parameters}

\begin{definition}[see \cite{borelL},\cite{Langlands}]
The \emph{Weil group} of $\RR$ is the Lie group $W_{\RR}$ generated by $\CC^{\times}$ and a single element $j$, subject to the relations
$$j^2=-1 \in \CC^{\times}, \qquad jzj^{-1} = \bar{z}, \quad z \in \CC^{\times}.$$
\end{definition}

There is a surjective Lie group homomorphism $W_{\RR} \to \Gamma := \mathrm{Gal}(\CC,\RR)$ taking $\CC^{\times}$ to the identity and $j$ to complex conjugation. This gives rise to a short exact sequence of Lie groups
$$1 \to \CC^{\times} \to W_{\RR} \to \Gamma \to 1.$$
Now let ${}^{\vee}G^{\Gamma}$ be a weak $E$-group for $G$. 

\begin{definition}[see \cite{borelL},\cite{Langlands}]\label{def:Langlandsparam}
A \emph{Langlands parameter} for ${}^{\vee}G^{\Gamma}$ is a continuous homomorphism
$$\varphi\colon W_{\RR} \to {}^{\vee}G^{\Gamma}$$
such that
\begin{itemize}
    \item[(i)] The image of $\varphi$ consists of semisimple elements;
    \item[(ii)] $\varphi$ is compatible with the projections $W_{\RR} \to \Gamma$ and ${}^{\vee}G^{\Gamma} \to \Gamma$.
\end{itemize}
\end{definition}

Let $P({}^{\vee}G^{\Gamma})$ denote the set of Langlands parameters for ${}^{\vee}G^{\Gamma}$. Note that ${}^{\vee}G$ acts on $P({}^{\vee}G^{\Gamma})$ by conjugation. Two Langlands parameters are said to be \emph{equivalent} if they are conjugate under this action. Let $\Phi({}^{\vee}G^{\Gamma})$ denote the set of equivalence classes of Langlands parameters for ${}^{\vee}G^{\Gamma}$.

\begin{definition}[{\cite[Section 6]{Arthur1989}}]\label{def:Arthurparam}
An \emph{Arthur parameter} for ${}^{\vee}G^{\Gamma}$ is a continuous homomorphism
$$\psi\colon W_{\RR} \times SL_2(\CC) \to {}^{\vee}G^{\Gamma}$$
such that
\begin{itemize}
    \item[(i)] $\psi|_{W_{\RR}}$ is a Langlands parameter for ${}^{\vee}G^{\Gamma}$;
    \item[(ii)] $\psi(W_{\RR})$ is bounded;
    \item[(iii)] $\psi|_{SL_2(\CC)}$ is algebraic.
\end{itemize}
An Arthur parameter $\psi$ is \emph{unipotent} if $\psi(\CC^{\times}) \subset Z({}^{\vee}G)$. 
\end{definition}

Note that ${}^{\vee}G$ acts by conjugation on the set of Arthur parameters. Two Arthur parameters are said to be \emph{equivalent} if they are conjugate under this action. We write $\Psi({}^{\vee}G^{\Gamma})$ for the set of equivalence classes of Arthur parameters for ${}^{\vee}G^{\Gamma}$.

To each Arthur parameter $\psi\colon W_{\RR} \times SL_2(\CC) \to {}^{\vee}G^{\Gamma}$, we can associate a Langlands parameter $\varphi_{\psi}$, defined by
\begin{equation}\label{eq:ArthurtoLanglands}\varphi_{\psi}\colon W_{\RR} \to {}^{\vee}G^{\Gamma}, \qquad \varphi_{\psi}(w) = \psi(w, \begin{pmatrix} ||w||^{1/2} & 0\\ 0 & ||w||^{-1/2} \end{pmatrix}. 
\end{equation}
This defines a one-to-one map
$$\Psi({}^{\vee}G^{\Gamma}) \hookrightarrow \Phi({}^{\vee}G^{\Gamma}).$$

\subsection{Projective representations}\label{sec:projectiverepresentations}

Let $G^{\Gamma}$ be a weak extended group. Associated to $G^{\Gamma}$ is a certain pro-algebraic group $G^{can}$ called the \emph{canonical covering of $G$}, see \cite[Definition 10.1]{AdamsBarbaschVogan}. This group comes equipped with a surjective homomorphism $G^{can} \to G$. The kernel of this homomorphism is a pro-finite group, denoted $\pi_1(G)^{can}$. The definitions are arranged so that there is a natural isomorphism 
$$\{\text{finite-order elements in } Z({}^{\vee}G)^{\theta_Z}\} \xrightarrow{\sim} \{\text{continuous characters of } \pi_1(G)^{can}\}.$$

If $H$ is a subgroup of $G$, we write $H^{G,can}$ (or simply $H^{can}$, if $G$ is understood) for the preimage of $H$ under $G^{can} \to G$.

\begin{definition}[{\cite[Definition 10.3]{AdamsBarbaschVogan}}]\label{def:projectiverepresentations}
Let $G^{\Gamma}$ be a weak extended group and let $G(\RR)$ be a real form of $G$ in the inner class defined by $G^{\Gamma}$. There is a short exact sequence
$$1 \to \pi_1(G)^{can} \to G(\RR)^{G,can} \to G(\RR) \to 1.$$
An \emph{irreducible projective representation of $G(\RR)$} is an irreducible admissible representation $\pi$ of $G(\RR)^{G,can}$. Suppose $z$ is a finite-order element in $Z({}^{\vee}G)^{\theta_Z}$. We say that $\pi$ is \emph{of type $z$} if the restriction of $\pi$ to $\pi_1(G)^{can}$ is a multiple of the character parameterized by $z$. Write $\Pi^z(G(\RR))$ for the set of infinitesimal equivalence classes of irreducible type $z$ projective representations of $G(\RR)$ and $K\Pi^z(G(\RR))$ for the free $\ZZ$-module with basis $\Pi^z(G(\RR))$.
\end{definition}

\begin{definition}\label{def:repsofextendedgroups}
Let $G^{\Gamma}$ be a weak extended group. An \emph{irreducible projective representation of a strong real form of $G^{\Gamma}$} is a pair $(\delta,\pi)$ subject to the following conditions:
\begin{itemize}
    \item[(i)] $\delta$ is a strong real form of $G^{\Gamma}$.
    \item[(ii)] $\pi$ is an irreducible projective representation of $G(\RR,\delta)$.
\end{itemize}
Two such representations $(\delta,\pi)$ and $(\delta',\pi')$ are \emph{infinitesimally equivalent} if there is an element $g \in G$ such that $g\delta g^{-1}=\delta'$ and $\pi \circ \Ad(g^{-1})$ is infinitesimally equivalent to $\pi'$. Suppose $z$ is a finite-order element in $Z({}^{\vee}G)^{\theta_Z}$. We say that $(\delta,\pi)$ is \emph{of type $z$} if $\pi$ is of type $z$. Write $\Pi^z(G^{\Gamma})$ for the set of infinitesimal equivalence classes of irreducible type $z$ projective representations of strong real forms of $G^{\Gamma}$ and $K\Pi^z(G^{\Gamma})$ for the free $\ZZ$-module with basis $\Pi^z(G^{\Gamma})$. 
\end{definition}

\begin{definition}\label{def:projectivegK}
Let $G^{\Gamma}$ be a weak extended group, let $G(\RR)$ be a real form of $G$ in the inner class defined by $G^{\Gamma}$, and let $K(\RR) \subset G(\RR)$ be a maximal compact subgroup with complexification $K$. There is a short exact sequence
$$1 \to \pi_1(G)^{can} \to K^{G,can} \to K \to 1.$$
A \emph{projective $(\fg,K)$-module} is a $(\fg,K^{G,can})$-module  (i.e. a $U(\fg)$-module with a compatible rational $K^{G,can}$-action). Suppose $z$ is a finite-order element in $Z({}^{\vee}G)^{\theta_Z}$. We say that a projective $(\fg,K)$-module is \emph{of type $z$} if its restriction to $\pi_1(G)^{can}$ is a multiple of the character parameterized by $z$. Write $M^z(\fg,K)$ for the category of type-$z$ projective $(\fg,K)$-modules, $M^z_{fl}(\fg,K)$ for the subcategory of finite-length modules, and $KM^z_{fl}(\fg,K)$ for the Grothendieck group of $M^z_{fl}(\fg,K)$.
\end{definition}

If we fix a real form $G(\RR)$ of $G$ and a maximal compact subgroup $K(\RR) \subset G(\RR)$, there is natural identification
$$K\Pi^z(G(\RR)) \xrightarrow{\sim} KM^z_{fl}(\fg,K).$$
Also, if we choose a set $\Delta$ of equivalence classes of strong real forms of $G^{\Gamma}$, then there is a natural identification
$$\bigsqcup_{\delta \in \Delta} \Pi^z(G(\RR,\delta)) \xrightarrow{\sim} \Pi^z(G^{\Gamma}).$$
and hence a natural identification
$$\bigoplus_{\delta \in \Delta} K\Pi^z(G(\RR,\delta)) \xrightarrow{\sim} K\Pi^z(G^{\Gamma})$$

\subsection{Parabolic induction}\label{sec:parabolicinduction}

Let $G(\RR)$ be a real form of $G$. Choose a maximal compact subgroup $K(\RR) \subset G(\RR)$ and let $\theta$ be the corresponding Cartan involution of $\fg$. Choose a parabolic subalgebra $\fq \subset \fg$ with Levi decomposition $\fq=\fl \oplus \fu$. Let $L$ (resp.~$Q$) denote the connected subgroup of $G$ corresponding to $\fl$ (resp.~$\fq$). Assume throughout this section that $\fl$ is $\theta$-stable. 

Let $2\rho_{\fu}$ denote the determinant character of $\fu$:
$$2\rho_{\fu}(l) = \det(\Ad(l)|_{\fu}), \qquad l \in L.$$
Consider the fibered product
\begin{center}
    \begin{tikzcd}
    \widetilde{L} \ar[r] \ar[d] & \CC^{\times} \ar[d,"(\bullet)^2"]\\
    L \ar[r,"2\rho_{\fu}"] & \CC^{\times}
    \end{tikzcd}
\end{center}
The left vertical map $\widetilde{L} \to L$ is a 2-fold cover of $L$ and the top horizontal map $\widetilde{L} \to \CC^{\times}$ is a genuine character of $\widetilde{L}$, which we will denote by $\rho_{\fu}$. By the construction of $L^{can}$, there is a natural covering map $L^{can} \to \widetilde{L}$. By pullback along this covering map, we can regard $\rho_{\fu}$ as a one-dimensional projective $(\fl, L \cap K)$-module. 

Dually, we can consider the algebraic co-character corresponding to the sum of roots in $\fu$
\begin{equation}\label{eq:2rhocheck}(2\rho_{\fu})^{\vee}\colon \CC^{\times} \to {}^{\vee}L\end{equation}
and the order-2 element
\begin{equation}\label{eq:zrhou}z(\rho_{\fu}) := (2\rho_{\fu})^{\vee}(-1) \in Z({}^{\vee}L)^{\theta_Z}\end{equation}
Note that $z(\rho_{\fu})$ depends only on $\fl$ (i.e., it is independent of the choice of $\fq$) and that the projective $(l,L \cap K)$-module $\rho_{\fu}$ is of type $z(\rho_{\fu})$.

Now fix a finite-order element $z \in Z({}^{\vee}G)^{\theta_Z}$. We can regard $z$ also as an element of $Z({}^{\vee}L)^{\theta_Z}$ via the natural inclusion $Z({}^{\vee}G)^{\theta_Z} \subset Z({}^{\vee}L)^{\theta_Z}$. We will define a functor called \emph{parabolic induction}
$$\mathcal{R}^{\fg}_{\fq}\colon M^{zz(\rho_{\fu})}(\fl,L \cap K) \to M^z(\fg,K).$$
This functor is built out of several ingredients:

\begin{itemize}
    \item Let $(\bullet)^{\#}\colon M^{zz(\rho_{\fu})}(\fl, L \cap K) \to M^z(\fl,L \cap K)$ denote the functor given by tensoring with $\rho_{\fu}$.
    \item Let $\mathcal{F}\colon M^z(\fl, L \cap K) \to M^z(\fq, L \cap K)$ denote the forgetful functor corresponding to the quotient map $(\fq, L \cap K) \to (\fl, L \cap K)$.
    \item Let $P\colon M^z(\fq, L \cap K) \to M^z(\fg, L \cap K)$ denote the algebraic induction functor, i.e.
    $$P(\pi) = U(\fg) \otimes_{U(\fq^{op})} X.$$
    \item Let $\Pi\colon M^z(\fg, L \cap K) \to M^z(\fg, K)$ denote the Bernstein functor, see \cite[Chapter 4.4]{KnappVogan1995}. 
\end{itemize}
Now we define
$$\mathcal{R}^{\fg}_{\fq}:= \Pi \circ P \circ \mathcal{F} \circ (\bullet)^{\#}\colon M^{zz(\rho_{\fu})}(\fl,L \cap K) \to M^z(\fg,K)$$
Note that  $(\bullet)^{\#}$ and $\mathcal{F}$ are exact and $\Pi$ and $P$ are right exact. Thus, $\mathcal{R}^{\fg}_{\fq}$ is right exact. Since the categories $M^{zz(\rho_{\fu})}(\fl, L \cap K)$ and $ M^z(\fg, K)$ have enough projective objects, there are left-derived functors
$$L_i\mathcal{R}^{\fg}_{\fq}\colon M^{zz(\rho_{\fu})}(\fl,L \cap K) \to M^z(\fg, K).$$

\begin{theorem}[{\cite[Chapter 6.3]{Vogan1981}}]\label{thm:propertiesofI}
The functors
$$L_i\mathcal{R}^{\fg}_{\fq}\colon M^{zz(\rho_{\fu})}(\fl,L \cap K) \to M^z(\fg, K)$$
preserve infinitesimal character and take finite-length modules to finite-length modules. Moreover, $L_i\mathcal{R}^{\fg}_{\fq}=0$ for $i>>0$.
\end{theorem}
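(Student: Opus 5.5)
The plan is to reduce the statement to the classical results for ordinary cohomological induction of $(\fg,K)$-modules, as in \cite[Chapter 6.3]{Vogan1981} and \cite[Chapters 2--5]{KnappVogan1995}, by passing to a finite cover. The twist by $\rho_{\fu}$ and the type $z$ condition only introduce a finite central extension.

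The first step is to pass to a finite cover. A finite-length type-$zz(\rho_{\fu})$ projective module factors through a finite quotient of $\pi_1(G)^{can}$, because $z$ and $z(\rho_{\fu})$ have finite order. So all the functors $(\bullet)^{\#}$, $\mathcal{F}$, $P$, $\Pi$ can be computed for an honest pair $(\fg,\tilde K)$, where $\tilde K$ is a finite cover of $K$ (possibly disconnected). Under this cover $L\cap K$ pulls back to $\widetilde{L\cap K}$, and the category of type-$z$ modules is a direct summand of all $(\fg,\tilde K)$-modules, cut out by the central character of the finite kernel. The left derived functors of $\Pi\circ P$ are the standard functors $P_j$ (the derived Bernstein functors composed with $\mathrm{pro}$). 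These commute with projection onto isotypic components for the central finite group, since that group acts compatibly throughout. Hence $L_i\mathcal{R}^{\fg}_{\fq}$ equals $L_iP\bigl(\mathcal{F}(X\otimes\rho_{\fu})\bigr)$ computed in the covering pair.

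The second step handles infinitesimal character. $U(\fg)\otimes_{U(\fq^{op})}$ followed by $\Pi_j$ sends a $Z(\fl)$-eigenmodule to a $Z(\fg)$-eigenmodule, with character related by the Harish-Chandra homomorphism shifted by $\rho(\fu)$. This is the standard computation in \cite[Theorem 5.25]{KnappVogan1995}: $Z(\fg)$ acts on $U(\fg)\otimes_{\fq} X$ through $\xi_{\fl}$ applied to its image in $U(\fl)$, and $\Pi_j$ is $\fg$-equivariant. The twist by $\rho_{\fu}$ is exactly what makes the infinitesimal character of the output equal to that of the input under the normalized identification. So "preserve infinitesimal character" holds.

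The third step handles finiteness and vanishing. Vanishing for $i>\dim(\fu\cap\fk)$ follows because $\Pi_j$ is computed by a Koszul-type resolution of length $\dim(\fk/\fl\cap\fk)$, so $\Pi_j=0$ for $j$ beyond that dimension. Also, $P$ is exact on $\fq$-modules, since $U(\fg)$ is free over $U(\fq^{op})$. The composite derived functor is therefore $\Pi_i\circ P$. For finite length, I would use the standard argument: first show admissibility by bounding $K$-type multiplicities with the Hochschild--Serre spectral sequence / Blattner-type estimate. Then note that a $Z(\fg)$-finite admissible finitely generated module has finite length (Harish-Chandra). Finite generation follows from the Zuckerman/Bernstein duality theorem, which relates $\Pi_j$ to $\Gamma^{S-j}$. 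Alternatively, one can reduce to irreducible $X$ by a long exact sequence and use that a finite-length $X$ has a finite filtration.

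The main obstacle is the finite length assertion: the admissibility estimate on $K$-types. Carrying it out for disconnected $\tilde K$ with genuine characters requires checking that the $K$-type bookkeeping of \cite[Chapter 5]{KnappVogan1995} goes through unchanged. I expect it does, since the arguments there are stated for arbitrary compact groups with finitely many components.
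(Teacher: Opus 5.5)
The paper gives no proof of its own here; it only cites \cite[Chapter 6.3]{Vogan1981}. Your outline is the standard argument behind that reference: finite cover, the Harish-Chandra projection for $Z(\fg)$ on $P(\mathcal F(X^{\#}))$, a Koszul bound for $\Pi_j$, then admissibility plus $Z(\fg)$-finiteness. The infinitesimal-character step is fine. Three smaller points need correcting.

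\emph{The identification of the derived functors.} $L_i\mathcal R^{\fg}_{\fq}=\Pi_i\circ P\circ\mathcal F\circ(\bullet)^{\#}$ has to be taken as the definition, as in Vogan and Knapp--Vogan. It does not follow by ``hence'', because inflation $\mathcal F$ does not send projectives to $(\Pi\circ P)$-acyclic objects. For instance, take $G$ compact and $\fq$ a Borel: every $(\fh,H)$-module is projective, so literally deriving the composite would kill all positive degrees, contradicting Borel--Weil--Bott.

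\emph{The vanishing threshold.} The Koszul complex only gives $\Pi_j=0$ for $j>\dim(\fk/\fl\cap\fk)$. That is enough for ``$i\gg0$'', but your threshold $\dim(\fu\cap\fk)$ does not follow from it.

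\emph{Notation.} Bernstein's $\Pi$ pairs with $U(\fg)\otimes_{U(\fq^{op})}(\cdot)$, not with $\mathrm{pro}$.

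The genuine gap is the finite-length step, in two parts.

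\emph{Finite generation.} Getting finite generation ``from duality'' is unjustified: relating $\Pi_j$ to $\Gamma^{S-j}$ produces no generators. It is also unnecessary. Let $X_1,\dots,X_n$ be the finitely many irreducible $(\fg,\tilde K)$-modules with the relevant infinitesimal characters, and fix a $\tilde K$-type $\delta_i$ of each $X_i$. In an admissible $Z(\fg)$-finite module $V$, every nonzero subquotient has some $X_i$ as a subquotient, since a cyclic submodule has an irreducible quotient. Hence the length of $V$ is at most $\sum_i\dim\mathrm{Hom}_{\tilde K}(\delta_i,V)$.

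\emph{Admissibility.} Everything therefore rests on admissibility, and the Blattner-type estimate you invoke is the one for $\theta$-stable $\fq$. It uses that $\fq^{op}\cap\fk$ is a parabolic subalgebra of $\fk$. It also filters $P(\mathcal F(Y))|_{\fk}$ by modules induced from $\fq^{op}\cap\fk$, with pieces built from $S^n(\fu\cap\fp)$. In this section only $\fl$ is assumed $\theta$-stable. Moreover, the real case is exactly what defines $\mathrm{Ind}^{G(\RR)}_{Q(\RR)}$ and later $R^{\fg}_{\fq_1}$. This is the real obstacle, not disconnectedness of $\tilde K$, which is harmless because Knapp--Vogan already allow $K$ with finitely many components.

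\emph{How to close the gap.} For real $\fq$ the fix is easy. One has $\fg=\fk+\fq^{op}$ with $\fk\cap\fq^{op}=\fl\cap\fk$, so $P(\mathcal F(Y))|_{\fk}\cong U(\fk)\otimes_{U(\fl\cap\fk)}Y$. Since $\Pi_j$ commutes with restriction to $\fk$, we get $\mathrm{Hom}_{\tilde K}(\Pi_j(\cdot),\mu)\cong\mathrm{Ext}^j_{\fk,L\cap\tilde K}(\cdot,\mu)$. Shapiro's lemma then gives exactness together with Frobenius-reciprocity multiplicities. For a mixed $\fq$, in general $\fk+\fq^{op}\neq\fg$ and $\fq^{op}\cap\fk$ is not parabolic in $\fk$. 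So you must either redo the filtration argument (for example with $\fu\cap\fp$ replaced by $\fg/(\fk+\fq^{op})$), or restrict to real and $\theta$-stable $\fq$ and their composites. The latter suffices for the paper's applications; cf.\ Remark~\ref{rmkLanglands}.
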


Thanks to Theorem \ref{thm:propertiesofI}, we can define a group homomorphism
$$R^{\fg}_{\fq}\colon KM^{zz(\rho_{\fu})}_{fl}(\fl,L \cap K) \to KM^z_{fl}(\fg, K), \qquad  [\pi] \mapsto \sum_i (-1)^i [L_i\mathcal{R}^{\fg}_{\fq}(\pi)],$$
and hence a group homomorphism
$$R^{\fg}_{\fq}\colon K\Pi^{zz(\rho_{\fu})}(L(\RR)) \to K\Pi^z(G(\RR)).$$

\subsubsection{Real parabolic induction}\label{sec:realparabolicinduction}

Suppose now that $\fq$ is the complexified Lie algebra of a real parabolic subgroup $Q(\RR) = L(\RR)U(\RR)$ (it is equivalent to assume that $\fq$ is taken to its opposite by $\theta$). The character $2\rho_{\fu}$, when restricted to the subgroup $L(\RR) \subset L$, takes values in $\RR$. Let $|\rho_{\fu}|$ denote the positive square-root of the absolute value of this character. Then $|\rho_{\fu}|/\rho_{\fu}$ is a one-dimensional type-$z(\rho_{\fu})$ projective $(\fl,K)$-module of infinitesimal character $0$. Define the functor
$$\Ind^{G(\RR)}_{Q(\RR)}\colon M^z(\fl,L \cap K) \to M^z(\fg,K), \qquad \Ind^{G(\RR)}_{Q(\RR)}(\pi) = R^{\fg}_{\fq}(\pi \otimes |\rho_{\fu}|/\rho_{\fu}).$$

\begin{prop}[{\cite[Proposition 6.3.5]{Vogan1981}}]\label{prop:propsofrealinduction}
Suppose that $\fq$ is the complexified Lie algebra of a real parabolic subgroup $Q(\RR) = L(\RR)U(\RR)$. Then $\Ind^{G(\RR)}_{Q(\RR)}$ coincides (after passing to Harish-Chandra modules) with \emph{normalized real parabolic induction}. It has the following properties:
\begin{itemize}
    \item[(i)] $\Ind^{G(\RR)}_{P(\RR)}$ preserves infinitesimal character.
    \item[(ii)] $\Ind^{G(\RR)}_{P(\RR)}$ is exact.
    \item[(iii)] $\Ind^{G(\RR)}_{P(\RR)}$ takes unitary modules to unitary modules.
    \item[(iv)] $\Ind^{G(\RR)}_{P(\RR)}$ is conservative: that is, if $f$ is a morphism in $M^z(\fl,L \cap K)$ and $\Ind(f)$ is an isomorphism, then $f$ is an isomorphism.
\end{itemize}
\end{prop}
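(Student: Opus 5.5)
The plan is to identify $\Ind^{G(\RR)}_{Q(\RR)}$ with the classical normalized induced module $\Ind_{Q(\RR)}^{G(\RR)}(\pi\otimes 1)$, i.e.\ the $K$-finite vectors in the space of smooth functions $f\colon G(\RR)\to \pi$ with $f(gln)=|\rho_{\fu}|(l)^{-1}\pi(l)^{-1}f(g)$. Once this identification is in hand, properties (i)--(iv) are standard facts about normalized real parabolic induction.

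\textbf{Step 1 (the identification).} I would proceed in three moves.
\begin{itemize}
\item Since $\theta\fq=\fq^{op}$, we have $\fq^{op}\cap\fk=\fl\cap\fk$, and $\fg=\fk+\fq^{op}$ by the Iwasawa/Langlands decomposition $G(\RR)=K(\RR)Q(\RR)$.
\item Consequently $P(X)=U(\fg)\otimes_{U(\fq^{op})}X$ is, as an $(\fl\cap\fk)$-module, isomorphic to $U(\fk)\otimes_{U(\fl\cap\fk)}X$. Applying the Bernstein functor $\Pi$ then yields the $K$-module $\Pi_{L\cap K}^{K}$, and this vanishes in positive degrees: $L_i\Pi=0$ for $i>0$, because $K/(L\cap K)$ is the compact symmetric-type fibre $K(\RR)/(L(\RR)\cap K(\RR))$ with no holomorphic directions.
\item Hence $\mathcal{R}^{\fg}_{\fq}$ is exact in this case and is Frobenius-reciprocity dual to the smooth induction. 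The precise matching of the modular shifts, with the twists $\rho_{\fu}$ followed by $|\rho_{\fu}|/\rho_{\fu}$ producing exactly $|\rho_{\fu}|$, is the content of Vogan's Proposition 6.3.5. I would cite it and verify only that the projective-type bookkeeping works: $\pi\otimes|\rho_{\fu}|/\rho_{\fu}$ has type $z\,z(\rho_{\fu})$, and tensoring with $\rho_{\fu}$ returns type $z$.
\end{itemize}

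\textbf{Step 2 (the listed properties).}
\begin{itemize}
\item For (i), Theorem~\ref{thm:propertiesofI} already gives preservation of infinitesimal character. Here $|\rho_{\fu}|/\rho_{\fu}$ has infinitesimal character $0$, so the infinitesimal character is preserved under the Harish-Chandra map with the usual $\rho_{\fu}$-normalization.
\item For (ii), exactness is clear from the model, since restriction of functions to $K(\RR)$ gives an isomorphism with $\Ind_{L(\RR)\cap K(\RR)}^{K(\RR)}$, which is exact. Equivalently, it follows from the vanishing of higher derived functors in Step 1.
\item For (iii), if $\pi$ carries an invariant Hermitian inner product, then $\langle f_1,f_2\rangle=\int_{K(\RR)}\langle f_1(k),f_2(k)\rangle\,dk$ is $G(\RR)$-invariant. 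This is precisely because of the $|\rho_{\fu}|$ normalization, via the integration formula $\int_{G/Q}$ for densities, and it is positive definite.
\item For (iv), exactness reduces conservativity to faithfulness on objects: if $\Ind(f)$ is an isomorphism, then $\Ind(\ker f)=\ker\Ind(f)=0$, and likewise for the cokernel. So it suffices to show that $\Ind(Y)\neq0$ whenever $Y\neq0$. This holds because evaluation at the identity $\Ind(Y)\to Y$ has image generating $Y$; equivalently, the $K$-types of $\Ind(Y)$ contain $\Ind_{L\cap K}^{K}$ of any $(L\cap K)$-type of $Y$, which is nonzero.
\end{itemize}

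The main obstacle is Step 1: making the algebraic (Zuckerman/Bernstein) construction match the analytic induced representation on the nose, including the half-density normalization and the projective covers $L^{can}$. I would rely on Vogan's result for the genuine case and note that passing to $G^{can}$ changes nothing, since $\pi_1(G)^{can}$ is central and acts by scalars throughout.
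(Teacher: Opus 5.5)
Your proposal is correct and matches the paper, which gives no argument beyond citing Vogan's Proposition 6.3.5. Like the paper, you rest the identification with normalized real parabolic induction on that citation, and your derivations of (i)--(iv) from the induced-picture model are the standard ones.

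One small correction to Step 1: the vanishing of $L_i\Pi$ for $i>0$ has nothing to do with $K/(L\cap K)$ being symmetric (it need not be) or having no holomorphic directions. The actual reason is that $U(\fk)\otimes_{U(\fl\cap\fk)}X$ is projective among $(\fk,L\cap K)$-modules, because every $(\fl\cap\fk,L\cap K)$-module is projective; hence $P(X)$ is $\Pi$-acyclic.
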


\subsubsection{Cohomological induction}

There is a canonical decomposition
\begin{equation}\label{eq:levidecompCartan}\fh^* \simeq  (\fh/\fz(\fl))^* \oplus \fz(\fl)^*, \qquad \lambda \mapsto \lambda_s +\lambda_z.\end{equation}

\begin{definition}\label{def:weaklyfair}
Let $\lambda \in \fh^*$. We say that $\lambda$ is in
\begin{itemize}
    \item \emph{the good range} for $\fq$ if 
    $$\mathrm{Re}\langle \lambda, \alpha^{\vee}\rangle > 0, \qquad \forall \alpha \in \Delta(\fu,\fh);$$
    \item \emph{the weakly good range} for $\fq$ if
    $$\mathrm{Re}\langle  \lambda, \alpha^{\vee} \rangle \geq 0, \qquad \forall \alpha \in \Delta(\fu,\fh);$$
    \item \emph{the weakly fair range} for $\fq$ if 
    $$\mathrm{Re}\langle \lambda_z, \alpha^{\vee} \rangle \geq 0, \qquad \forall \alpha \in \Delta(\fu, \fh).$$
\end{itemize}
We say that $\lambda$ is in \emph{the good range} (resp.~\emph{weakly good range}, resp.~\emph{weakly  fair range}) for $\fl$ if there exists a parabolic subalgebra $\fq' \subset \fg$ with Levi factor $\fl$ such that $\lambda$ is in the good range (resp.~weakly good range, resp.~weakly fair range) for $\fq'$. 
\end{definition}

Note that if $\lambda \in \fh^*$ is in the good range (resp.~weakly good range, resp.~weakly fair range) for $\fq$ (or $\fl$), then so is $w\lambda$, for any $w \in W_L$. So these are properties which can be ascribed to $W_L$-orbits in $\fh^*$, and hence to infinitesimal characters for $U(\fl)$. If $\pi$ is an irreducible projective type-$zz(\rho_{\fu})$ $(\mathfrak{l},K \cap L)$-module, we say that $\pi$ is in the good range (resp.~weakly good range, resp.~weakly fair range) for $\fq$ (or $\fl$) if its infinitesimal character has this property. We will occasionally use the following elementary lemma.

\begin{lemma}\label{lem:rhotwistgoodrange}
Let $\lambda' \in \fh^*$. Then, for $N>>0$, the element $\lambda' + N\rho_{\fu}$ is in the good range for $\fq$.
\end{lemma}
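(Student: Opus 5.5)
The plan is to compute the pairing $\langle \lambda' + N\rho_{\fu}, \alpha^{\vee}\rangle$ for each root $\alpha \in \Delta(\fu,\fh)$, separating the $N$-dependent term from the term that does not depend on $N$. The set $\Delta(\fu,\fh)$ is finite, so it suffices to prove that for each such $\alpha$ the real part of this pairing is eventually positive as $N$ grows. The resulting bound on $N$ will then be uniform over all $\alpha$.

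Here $\rho_{\fu}$ denotes the differential of the character $\rho_{\fu}$ of $\widetilde{L}$, namely half the sum of the roots in $\Delta(\fu,\fh)$. By linearity,
$$\mathrm{Re}\langle \lambda' + N\rho_{\fu}, \alpha^{\vee}\rangle = \mathrm{Re}\langle \lambda', \alpha^{\vee}\rangle + N\langle \rho_{\fu},\alpha^{\vee}\rangle,$$
and $\langle \rho_{\fu},\alpha^{\vee}\rangle$ is real because $\rho_{\fu}$ is a rational combination of roots. So the whole statement reduces to showing $\langle \rho_{\fu},\alpha^{\vee}\rangle>0$ for every $\alpha \in \Delta(\fu,\fh)$. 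Once that is known, any
$$N > \max_{\alpha \in \Delta(\fu,\fh)} \frac{|\mathrm{Re}\langle\lambda',\alpha^{\vee}\rangle|}{\langle\rho_{\fu},\alpha^{\vee}\rangle}$$
works.

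For the positivity, I would use the fact that $\rho_{\fu}$ vanishes on the coroots of $\fl$. Indeed, $\Delta(\fu,\fh)$ is stable under $W_L$, so $\rho_{\fu}$ is $W_L$-invariant and hence orthogonal to all roots of $\fl$; equivalently, $\rho_{\fu}$ lies in $\fz(\fl)^*$. Now choose a positive system containing $\Delta(\fu,\fh)$ together with a positive system for $\fl$, and write $\rho = \rho_{\fl} + \rho_{\fu}$. If $\alpha \in \Delta(\fu,\fh)$ is simple for $\fg$, then $\langle \rho,\alpha^{\vee}\rangle = 1$. Moreover, $s_\alpha$ permutes the positive roots of $\fl$ other than $\alpha$ itself, and $\alpha$ is not a root of $\fl$, so $\langle\rho_{\fl},\alpha^{\vee}\rangle\le 0$. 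Hence $\langle\rho_{\fu},\alpha^{\vee}\rangle \ge 1$.

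The main obstacle is the case where $\alpha \in \Delta(\fu,\fh)$ is not simple. For this I would argue more directly. Since $\rho_{\fu}$ lies in $\fz(\fl)^*$, the pairing $\langle\rho_{\fu},\alpha^{\vee}\rangle$ is a positive multiple of $(\rho_{\fu},\alpha)$ with respect to an invariant form, and $(\rho_{\fu},\alpha)$ depends only on the restriction $\alpha|_{\fz(\fl)}$. Write $\alpha = \sum c_i\alpha_i$ with $c_i \ge 0$. Because $\alpha$ is a root of $\fu$, some coefficient $c_i$ with $\alpha_i \in \Delta(\fu)$ is nonzero. The simple roots of $\fl$ pair to zero with $\rho_{\fu}$, so
$$(\rho_{\fu},\alpha) = \sum_{\alpha_i \in \Delta(\fu)} c_i\,(\rho_{\fu},\alpha_i).$$
By the simple-root case, each term $(\rho_{\fu},\alpha_i)$ is positive, so the sum is positive. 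This completes the plan.
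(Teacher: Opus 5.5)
Your proposal is correct and follows the paper's proof: both take the positive system $\Delta^+(\fl,\fh)\cup\Delta(\fu,\fh)$ and obtain $\langle\rho_{\fu},\alpha^{\vee}\rangle=1-\langle\rho_{\fl},\alpha^{\vee}\rangle\geq 1$ for simple $\alpha\in\Delta(\fu,\fh)$, and your extension to non-simple roots of $\fu$ (using $\rho_{\fu}\perp\Delta(\fl,\fh)$ and expanding $\alpha$ in simple roots) makes explicit a step the paper leaves inside ``the lemma follows.'' One justification needs fixing: $s_\alpha$ does not permute $\Delta^+(\fl,\fh)$ in general (in $\mathfrak{sl}_3$ with $\Delta^+(\fl,\fh)=\{\alpha_1\}$ and $\alpha=\alpha_2$, we have $s_{\alpha_2}\alpha_1=\alpha_1+\alpha_2\in\Delta(\fu,\fh)$), so argue instead that each $\beta\in\Delta^+(\fl,\fh)$ is a nonnegative combination of simple roots other than $\alpha$, each of which pairs nonpositively with $\alpha^{\vee}$; this gives $\langle\rho_{\fl},\alpha^{\vee}\rangle\leq 0$, which is the correct sign (the paper's ``$\geq 0$'' is a typo).
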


\begin{proof}
Choose a positive system $\Delta^+(\fl,\fh)$ for $\fl$. Then $\Delta^+(\fg,\fh) = \Delta^+(\fl,\fh) \cup \Delta(\fu,\fh)$ is a positive system for $\fg$. Let $\alpha$ be a simple root for $\fg$ in $\Delta(\fu,\fh)$. Then $\langle \frac{1}{2}\sum \Delta^+(\fg,\fh),\alpha^{\vee}\rangle =1$ and $\langle \beta,\alpha^{\vee}\rangle \geq 0$ for every $\beta \in \Delta^+(\fl,\fh)$. In particular
$$\langle \rho_{\fu},\alpha^{\vee}\rangle = \langle \frac{1}{2}\sum \Delta^+(\fg,\fh),\alpha^{\vee}\rangle - \langle \frac{1}{2}\sum \Delta^+(\fl,\fh),\alpha^{\vee}\rangle \geq 1.$$
The lemma follows.
\end{proof}

For the next definition, choose a positive-definite $W$-invariant symmetric bilinear form $\langle -,-\rangle$ on $X^*(H) \otimes_{\ZZ} \RR$. 

\begin{definition}[{\cite[Definition 12.3]{KnappVogan1995}}]\label{def:weaklyunipotent}
Let $\pi$ be an irreducible projective type-$zz(\rho_{\fu})$ $(\fl,L \cap K)$-module of infinitesimal character $\lambda \in X^*(H) \otimes_{\ZZ} \RR$. Then $\pi$ is \emph{weakly unipotent} if for every finite-dimensional algebraic $L^{ad}$-representation $F$ and infinitesimal character $\lambda'$ appearing in $\pi \otimes F$, we have the following inequality
$$||\lambda'_s|| \geq ||\lambda_s||,$$
where $\lambda_s$ and $\lambda'_s$ are defined as in (\ref{eq:levidecompCartan}).
\end{definition}

\begin{prop} [{\cite[Theorems 8.2 and 12.4]{KnappVogan1995}}]\label{prop:weaklyfaircohomological}
Suppose that ${\fq}$ is $\theta$-stable, and let $\pi$ be an irreducible type-$zz(\rho_{\fu})$ projective $(\mathfrak{l},L \cap K)$-module. Let
$$N:=\dim(\fu \cap \fk)$$
Then the following are true:

\begin{itemize}
    \item If $\pi$ is weakly unipotent in the weakly fair range, then $L_i\mathcal{R}^{\fg}_{\fq}(\pi) =0$ for $i \neq N$. If $\pi$ is unitary, then $L_N\mathcal{R}^{\fg}_{\fq}(\pi)$ is unitary.
    \item If $\pi$ is in the weakly good range, then $L_i\mathcal{R}^{\fg}_{\fq}(\pi) =0$ for $i \neq N$, and $L_N\mathcal{R}^{\fg}_{\fq}(\pi)$ is irreducible or zero. If $\pi$ is unitary, then $L_N\mathcal{R}^{\fg}_{\fq}(\pi)$ is unitary. 
    \item If $\pi$ is in the good range, then $L_i\mathcal{R}^{\fg}_{\fq}(\pi) =0$ for $i \neq N$, and $L_N\mathcal{R}^{\fg}_{\fq}(\pi)$ is irreducible (and nonzero). If $\pi$ is unitary, then $L_N\mathcal{R}^{\fg}_{\fq}(\pi)$ is unitary. 
\end{itemize}
\end{prop}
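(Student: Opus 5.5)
The statement is quoted from Knapp–Vogan (Theorems 8.2 and 12.4), so the plan is to reduce it to their setting rather than reprove it. The only new feature is the projective (type-$zz(\rho_{\fu})$) formalism. Knapp–Vogan work with the double cover $\widetilde{L}$ defined by $2\rho_{\fu}$, or equivalently with $\rho_{\fu}$-twisted modules. I would first observe that $\mathcal{R}^{\fg}_{\fq}$ was defined as $\Pi\circ P\circ\mathcal{F}\circ(\bullet)^{\#}$. Tensoring with the genuine character $\rho_{\fu}$ turns a type-$zz(\rho_{\fu})$ module into a type-$z$ module for $L^{can}$. Moreover $\pi_1(G)^{can}$ acts on everything by the single character attached to $z$, so all constructions factor through an honest finite cover of $G(\RR)$. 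On that cover the functors $L_i\mathcal{R}^{\fg}_{\fq}$ are exactly the normalized cohomological induction functors $\mathcal{L}_i$ of \cite{KnappVogan1995}, with their $\rho$-shift convention built into $(\bullet)^{\#}$. Knapp–Vogan allow disconnected groups and finite covers (reductive pairs in their general sense), so their theorems apply verbatim once this identification is made.

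Second, I would match the positivity conditions. In \cite{KnappVogan1995} the good, weakly good and weakly fair ranges are stated for the infinitesimal character $\lambda$ of the \emph{untwisted} $L$-module, shifted by $\rho_{\fu}$. The normalization here (induction from $M^{zz(\rho_{\fu})}$ after tensoring by $\rho_{\fu}$) is chosen so that $\lambda$ is precisely the quantity that Knapp–Vogan test against $\Delta(\fu,\fh)$. Definition \ref{def:weaklyfair} therefore agrees with their Definition 0.49 and its weakly fair analogue. Likewise Definition \ref{def:weaklyunipotent} is copied from \cite[Definition 12.3]{KnappVogan1995}.

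With the dictionary in place, each bullet is a citation:
\begin{itemize}
\item[(a)] Vanishing outside degree $N=\dim(\fu\cap\fk)$ in the weakly good range, and irreducibility or vanishing of $L_N$, is \cite[Theorem 5.99]{KnappVogan1995}. These results are packaged in Theorem 8.2 there, together with unitarity, which comes from the signature/Hermitian form argument.
\item[(b)] Nonvanishing in the good range is the strict-positivity part of the same theorem.
\item[(c)] The weakly unipotent, weakly fair case is Theorem 12.4.
\end{itemize}

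The main obstacle is the bookkeeping in the first step. One has to check that the canonical-cover formalism of \cite{AdamsBarbaschVogan} produces the same Hermitian structures and the same $\rho_{\fu}$-shift as in Knapp–Vogan. In particular, the type condition must be compatible with the Bernstein functor $\Pi$ for $K^{G,can}$, so that unitarity transfers. I would handle this by passing to the finite quotient of $G(\RR)^{can}$ through which the relevant characters factor, and noting that unitarity and the derived functors are insensitive to this passage.
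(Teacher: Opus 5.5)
Your proposal is correct and takes the same approach as the paper. The paper gives no argument of its own beyond citing \cite[Theorems 8.2 and 12.4]{KnappVogan1995}, and your normalization dictionary supplies the translation that this citation leaves implicit: the $\rho_{\mathfrak{u}}$-twist built into $(\bullet)^{\#}$ makes the paper's infinitesimal character $\lambda$ equal to Knapp--Vogan's $\lambda_Z+\rho(\mathfrak{u})$, and the projective modules factor through a finite quotient of the canonical cover.
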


In the setting of Proposition \ref{prop:weaklyfaircohomological}, we say that the representation $L_N\mathcal{R}^{\fg}_{\fq}(\pi)$ is \emph{cohomologically induced}.

\subsection{Parabolic induction in the extended group setting}\label{sec:parabolicinductionextended}

It will be convenient to define a version of parabolic induction in a setting which permits various real forms of $L$ and $G$. This is easy to do using the language of extended groups. 

Let $G^{\Gamma}$ be a weak extended group and let $L^{\Gamma} \subset G^{\Gamma}$ be an extended Levi subgroup (this means that $L^{\Gamma}$ is a weak extended group equipped with an injective $L$-homomorphism $L^{\Gamma} \subset G^{\Gamma}$ such that $L \subset G$ is a Levi subgroup). Choose a parabolic subgroup $Q \subset G$ with Levi decomposition $Q=LU$. Even though we have not fixed a real form of $G$, it is reasonable to say that $Q$ is `real' or `$\theta$-stable' with respect to the inclusion $L^{\Gamma} \subset G^{\Gamma}$. This is justified by the following lemma.

\begin{lemma}\label{lem:realisreal}
The following are equivalent:
\begin{itemize}
    \item[(ia)] The involution $\sigma_Z$ of $Z(L)$ determined by $L^{\Gamma}$ fixes the element $\rho_{\fu} \in Z(\fl)$. 
    \item[(iia)] There is a strong real form $\delta \in L^{\Gamma} \setminus L$ with $\delta^2 \in Z(G)$ such that $\Ad(\delta)Q=Q$.
    \item[(iiia)] Every strong real form $\delta \in L^{\Gamma} \setminus L$ with $\delta^2 \in Z(G)$ has $\Ad(\delta)Q=Q$.
\end{itemize}
Also, the following are equivalent:
\begin{itemize}
    \item[(ib)] $\sigma_Z(\rho_{\fu})=-\rho_{\fu}$.
    \item[(iib)] There is a strong real form $\delta \in L^{\Gamma} \setminus L$ with $\delta^2 \in Z(G)$ such that $\Ad(\delta)Q=Q^{op}$.
    \item[(iiib)] Every strong real form $\delta \in L^{\Gamma} \setminus L$ with $\delta^2 \in Z(G)$ has $\Ad(\delta)Q=Q^{op}$.
\end{itemize}
In the case of (ia)-(iiia), we say that $Q$ is `real'. In the case of (ib)-(iiib), we say that $Q$ is `$\theta$-stable'.
\end{lemma}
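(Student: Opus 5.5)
The key observation is that every strong real form $\delta \in L^{\Gamma}\setminus L$ acts on $Z(L)$, and hence on $\fz(\fl)$, by the same involution $\sigma_Z$. This holds because any two such elements differ by an element of $L$, and $L$ acts trivially on $Z(L)$. For $\Ad(\delta)$ I will write $\sigma_\delta$. It is an antiholomorphic involution of $G$, since $\delta^2\in Z(G)$ acts trivially, and it preserves $L$. So $\sigma_\delta$ carries parabolics with Levi factor $L$ to parabolics with Levi factor $L$. Both halves of the lemma will come from one claim:
\[ \sigma_\delta(Q) = Q' \ \text{ where } Q' \text{ is the parabolic with Levi } L \text{ determined by the element } \sigma_Z(\rho_{\fu}) \in \fz(\fl). \]
Here I use that parabolics containing $L$ as Levi factor correspond to open chambers in the real span of $X^*(Z(L)^\circ)$, i.e.\ of $\fz(\fl)_{\RR}$. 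Concretely, $\fu$ is the sum of the root spaces $\fg_\alpha$ with $\langle \alpha|_{\fz(\fl)}, \rho_{\fu}\rangle>0$ for a suitable invariant form, or equivalently with $\alpha(x)>0$ for $x$ the element of $\fz(\fl)$ dual to $\rho_{\fu}$. In particular $Q$ is recovered from $\rho_{\fu}$, viewed as a character of $\fz(\fl)$ or dually as an element of $\fz(\fl)$, which is how the statement regards it.

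First I check that $\sigma_\delta$ sends $\fu$ to the nilradical attached to $\sigma_Z\rho_{\fu}$. The differential of $\sigma_\delta$ is antilinear, so it maps the root space $\fg_\alpha$ for $\fz(\fl)$ to the root space for the weight $\overline{\alpha\circ \sigma_\delta^{-1}}$. On the real form $X^*(Z(L)^\circ)\otimes\RR$ this is exactly the action of $\sigma_Z$ on characters, because antiholomorphic conjugation composed with complex conjugation preserves the integral lattice. It follows that the weights of $\sigma_\delta(\fu)$ on $\fz(\fl)$ are $\sigma_Z$ applied to the weights of $\fu$, and their sum is $2\sigma_Z(\rho_{\fu})$. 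Since $\sigma_\delta(Q)$ is again a parabolic with Levi $L$, it is the one whose $\rho$ is $\sigma_Z(\rho_{\fu})$.

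Next, a parabolic with Levi $L$ is determined by its $\rho$. The opposite parabolic $Q^{op}$ has $\rho=-\rho_{\fu}$. If $Q''$ has Levi $L$ and $\rho_{\fu''}=\rho_{\fu}$, then $\rho_{\fu}$ lies in the open chamber of $Q$: every root $\alpha$ of $\fu$ satisfies $\langle \alpha,\rho_{\fu}\rangle>0$ on $\fz(\fl)$. This is the standard fact that $\rho_{\fu}$ is strictly dominant for $\fu$, and it is the central analogue of Lemma \ref{lem:rhotwistgoodrange}, which showed $\langle\rho_{\fu},\alpha^\vee\rangle\ge1$ for simple $\alpha$ in $\fu$. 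The same positivity holds for $Q''$, so the two chambers share a point and therefore coincide, giving $Q''=Q$.

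With the claim in hand the equivalences follow. Assume (ia). Then $\sigma_\delta(Q)=Q$ for every $\delta$, which gives (iiia). Trivially (iiia) implies (iia), since strong real forms with $\delta^2\in Z(G)$ exist by the definition of extended Levi, or else the statements are vacuous. Finally (iia) implies (ia) by the claim together with injectivity of $Q\mapsto\rho_{\fu}$. The (b) chain is identical, with $Q^{op}$ in place of $Q$ and $-\rho_{\fu}$ in place of $\rho_{\fu}$. I expect the main obstacle to be the bookkeeping in the first step: confirming that the antiholomorphic action on $\fz(\fl)$-weights is precisely $\sigma_Z$, with no extra sign coming from complex conjugation. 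I would settle this by restricting to the compact part and checking on cocharacters of $Z(L)^\circ$, where $\sigma_Z$ is defined.
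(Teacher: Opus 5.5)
Your proof is correct and is essentially the paper's argument: both show that $\Ad(\delta)Q$ is the parabolic with Levi factor $L$ attached to $\sigma_Z(\rho_{\fu})$, and that such a parabolic is recovered from its $\rho$ (as the sum of the nonnegative weight spaces). Your version only spells out the antilinearity bookkeeping and the chamber argument that the paper treats as evident.

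One small correction: if no $\delta \in L^{\Gamma}\setminus L$ with $\delta^2 \in Z(G)$ existed, then (iiia) would hold vacuously while (iia) would fail, so the equivalence would break rather than become vacuous. Like the paper, you are really assuming that such a $\delta$ exists. It does in the application, where $M_1^{\Gamma}=\langle M_1,\delta\rangle$ for a strong real form $\delta$ of $G^{\Gamma}$.
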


\begin{proof}
We will prove the equivalence of conditions (ia)-(iiia). The equivalence of conditions (ib)-(iiib) is proved by a similar argument. 

Let $\delta \in L^{\Gamma} \setminus L$ be a strong real form of $L^{\Gamma}$ with $\delta^2 \in Z(G)$. Then $\Ad(\delta)Q$ is a parabolic subgroup of $G$ with Levi decomposition $L\Ad(\delta)U$. Clearly, $\rho_{\Ad(\delta)\fu} = \Ad(\delta)\rho_{\fu} = \sigma_Z\rho_{\fu}$. On the other hand, since $\Ad(\delta)Q$ is a parabolic subgroup, it is determined by the element $\rho_{\Ad(\delta)\fu}$ (it is the connected subgroup of $G$ corresponding to the nonnegative weights for $\rho_{\Ad(\delta)\fu}$). So $\Ad(\delta)Q=Q$ if and only if $\sigma_Z(\rho_{\fu})=\rho_{\fu}$. This proves the equivalence of (ia)-(iiia). 
\end{proof}

Now fix a finite-order element $z \in Z({}^{\vee}G)^{\theta_Z}$ and recall the order-$2$ element $z(\rho_{\fu}) \in Z({}^{\vee}L)^{\theta_Z}$ (\ref{eq:zrhou}). Form the Grothendieck groups $K\Pi^z(G^{\Gamma})$ and $K\Pi^{zz(\rho_{\fu})}(L^{\Gamma})$ as in Definition \ref{def:repsofextendedgroups}. In this setting, we define 
\begin{equation}\label{eq:indextendedgroups}R^{\fg}_{\fq}\colon K\Pi^{zz(\rho_{\fu})}(L^{\Gamma}) \to K\Pi^z(G^{\Gamma}), \qquad R^{\fg}_{\fq}(\delta, \pi) =  \begin{cases*}
                    (\delta,R^{\fg}_{\fq}\pi)
                     & if  $\delta^2 \in Z(G)$  \\
                     0 & else
                 \end{cases*}\end{equation}
The expression $(\delta,R^{\fg}_{\fq}\pi)$ in the formula above requires a bit of explanation. If $\delta \in L^{\Gamma}\setminus L$ is a strong real form of $L^{\Gamma}$ with $\delta^2 \in Z(G)$, then $\delta$ can be regarded as a strong real form of $G^{\Gamma}$. In particular, conjugation by $\delta$ is a real form of $G$ preserving $L$. Write $L(\RR) \subset G(\RR)$ for the corresponding groups of real points. Now we are in the setting described in the beginning of Section \ref{sec:parabolicinduction}. In particular, for $\pi \in K\Pi^{zz(\rho_{\fu})}(L(\RR))$ we can define the induced representation $R^{\fg}_{\fq}\pi \in K\Pi^z(G(\RR))$, and $(\delta,R^{\fg}_{\fq}\pi) \in K\Pi^z(G^{\Gamma})$. 

We now reexamine the issue of unitarity in the setting of extended groups. There are three cases that will concern us.

\begin{cor}\label{cor:unitaryinduction}
Let $\pi \in K\Pi_{\lambda}^{zz(\rho_{\fu})}(L^{\Gamma})$ be a linear combination of unitary irreducible projective type-$zz(\rho_{\fu})$ representations of strong real forms of $L^{\Gamma}$ of infinitesimal character $\lambda \in \fh^*$. Then the following are true:
\begin{itemize}
    \item[(i)] If $Q$ is real, then $R^{\fg}_{\fq}\pi \in K\Pi^z(G^{\Gamma})$ is a linear combination of unitary irreducible projective type-$z$ representations of strong real forms of $G^{\Gamma}$.
    \item[(ii)] If $Q$ is $\theta$-stable and $\lambda$ is in the good range for $\fq$ (Definition \ref{def:weaklyfair}), then $R^{\fg}_{\fq}\pi \in K\Pi^z(G^{\Gamma})$ is a linear combination of unitary irreducible projective type-$z$ representations of strong real forms of $G^{\Gamma}$.
    \item[(iii)] If $Q$ is $\theta$-stable, $\lambda$ is in the weakly fair range for $\fq$ (Definition \ref{def:weaklyfair}), and $\pi$ is weakly unipotent (Definition \ref{def:weaklyunipotent}), then $R^{\fg}_{\fq}\pi \in K\Pi^z(G^{\Gamma})$ is a linear combination of unitary irreducible projective type-$z$ representations of strong real forms of $G^{\Gamma}$ (or $0$).
\end{itemize}
\end{cor}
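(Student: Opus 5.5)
By linearity of $R^{\fg}_{\fq}$ on $K\Pi^{zz(\rho_{\fu})}(L^{\Gamma})$ and its definition (\ref{eq:indextendedgroups}), it is enough to treat a single unitary irreducible $(\delta,\pi)$ of infinitesimal character $\lambda$. If $\delta^2 \notin Z(G)$, then $R^{\fg}_{\fq}(\delta,\pi)=0$ and there is nothing to prove. So assume $\delta^2\in Z(G)$. Then $\delta$ is a strong real form of $G^{\Gamma}$ with $\Ad(\delta)$ preserving $L$, and we are in the setting of Section \ref{sec:parabolicinduction} for the real groups $L(\RR)\subset G(\RR)$. By Lemma \ref{lem:realisreal} (implication (ia)$\Rightarrow$(iiia), resp. (ib)$\Rightarrow$(iiib)), $\Ad(\delta)Q=Q$ in the real case and $\Ad(\delta)Q=Q^{op}$ in the $\theta$-stable case. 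We then choose a Cartan involution $\theta$ commuting with $\Ad(\delta)$ and preserving $L$, for instance by taking a maximal compact subgroup of $L(\RR)$ and extending it to one of $G(\RR)$. With this choice, $\fq$ is the complexified Lie algebra of a real parabolic $Q(\RR)=L(\RR)U(\RR)$ in case (i), and $\fq$ is $\theta$-stable in cases (ii) and (iii).

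\textbf{Case (i).} We rewrite $R^{\fg}_{\fq}\pi = \Ind^{G(\RR)}_{Q(\RR)}(\pi\otimes \rho_{\fu}/|\rho_{\fu}|)$. The twist $\rho_{\fu}/|\rho_{\fu}|$ is a unitary character: on $L(\RR)$ it is a sign character, since $2\rho_{\fu}$ is real-valued there. Hence $\pi\otimes\rho_{\fu}/|\rho_{\fu}|$ is unitary, of type $z$ because the types multiply. By Proposition \ref{prop:propsofrealinduction}, real induction is exact (so the higher $L_i$ vanish) and preserves unitarity. The induced module is therefore a finite-length unitary module. Unitary finite-length modules are completely reducible, so its class in $K\Pi^z(G^{\Gamma})$ is a nonnegative combination of unitary irreducibles.

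\textbf{Cases (ii) and (iii).} Here Proposition \ref{prop:weaklyfaircohomological} applies directly. Under either the good-range hypothesis, or the weakly-fair-plus-weakly-unipotent hypothesis, $L_i\mathcal{R}^{\fg}_{\fq}(\pi)=0$ for $i\neq N$, so $R^{\fg}_{\fq}\pi=(-1)^N[L_N\mathcal{R}^{\fg}_{\fq}\pi]$. Moreover $L_N\mathcal{R}^{\fg}_{\fq}\pi$ is unitary: irreducible in the good range, and possibly zero or reducible but still unitary in case (iii). By complete reducibility again, it is a sum of unitary irreducibles (or $0$). The sign $(-1)^N$ is harmless, since the statement only claims an integral linear combination.

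The main point requiring care is the compatibility of the extended-group notion of ``real'' and ``$\theta$-stable'' with the concrete real-form notions used in Propositions \ref{prop:propsofrealinduction} and \ref{prop:weaklyfaircohomological}. This means choosing the Cartan involution compatibly with $\delta$ and $L$, and checking that the good, weakly fair, and weakly unipotent conditions on $\lambda$ are independent of that choice. These conditions are defined via $\fh^*$ and $\fu$ only, so this check is routine.
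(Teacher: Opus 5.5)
Your proposal is correct and takes the same route as the paper, whose proof simply cites Lemma \ref{lem:realisreal} together with Proposition \ref{prop:propsofrealinduction}(iii) and Proposition \ref{prop:weaklyfaircohomological}; your reduction to a single unitary irreducible $(\delta,\pi)$ with $\delta^2\in Z(G)$ just spells that citation out. Two small imprecisions. First, extending a maximal compact subgroup of $L(\RR)$ to one of $G(\RR)$ does not by itself make the Cartan involution preserve $L$; instead use the standard fact that a $\sigma$-stable Levi subgroup is stable under some Cartan involution commuting with $\sigma$. Second, $\rho_{\fu}/|\rho_{\fu}|$ is a genuine character taking values in fourth roots of unity rather than a sign character, though it is still unitary, which is all your argument needs.
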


\begin{proof}
This follows immediately from Lemma \ref{lem:realisreal} and Propositions \ref{prop:propsofrealinduction}(iii) and \ref{prop:weaklyfaircohomological}.
\end{proof}

\subsection{The Langlands classification via standard limit characters}\label{sec:LLC}

In this section, we will recall the classification of irreducible admissible representations in terms of \emph{standard limit characters} of Cartan subgroups. This formulation of the Langlands classification does not involve the dual group.

\begin{definition}[{\cite[Definition 11.13]{AdamsBarbaschVogan}}]\label{def:limitcharacters}
Let $G^{\Gamma}$ be a weak extended group, and let $G(\RR)$ be a real form of $G$ in the inner class defined by $G^{\Gamma}$. A \emph{limit character for $G(\RR)$} is a quadruple
$$\Lambda = (H(\RR), \Lambda^{can}, \Delta_{i\RR}^+, \Delta_{\RR}^+)$$
subject to the following conditions
\begin{itemize}
    \item[(i)] $H(\RR)$ is the group of real points of a maximal torus $H$ of $G$.
    \item[(ii)] $\Lambda^{can}$ is an irreducible projective representation $H(\RR)$.
    \item[(iii)] $\Delta_{i\RR}^+ \subset \Delta(\fg,\fh)$ is a system of positive imaginary roots.
    \item[(iv)] $\Delta_{\RR}^+ \subset \Delta(\fg,\fh)$ is a system of positive real roots. 
\end{itemize}
Suppose $z$ is a finite-order element in $Z({}^{\vee}G)^{\theta_Z}$. We can regard $z$ as an element of $({}^{\vee}H)^{\theta_Z}$ via the natural inclusion $Z({}^{\vee}G)^{\theta_Z} \subseteq ({}^{\vee}H)^{\theta_Z}$. We say that $\Lambda$ is of \emph{type $z$} if $\Lambda^{can}$ is of type $zz(\rho)$. There are various additional conditions which we may choose to impose on $\Lambda$:
\begin{itemize}
    \item[(v)] We say that $\Lambda$ is \emph{standard} if 
    $$\langle d\Lambda^{can}, \alpha^{\vee} \rangle \geq 0, \qquad \forall \alpha \in \Delta_{i\RR}^+.$$
    \item[(vi)] We say that $\Lambda$ is \emph{nonzero} if the following condition is satisfied: let $\alpha$ be a simple root in $\Delta_{i\RR}^+$ such that
    $$\langle d\Lambda^{can}, \alpha^{\vee}\rangle = 0.$$
    Then $\alpha$ is noncompact.
    \item[(vii)] We say that $\Lambda$ is \emph{final} if the following condition is satisfied: let $\alpha$ be a real root such that
    $$\langle d\Lambda^{can}, \alpha^{\vee}\rangle = 0.$$
    Then $\alpha$ does not satisfy the parity condition for $\Lambda$ (see \textnormal{\cite[Definition 11.10]{AdamsBarbaschVogan}}).
\end{itemize}
There is a notion of \emph{equivalence} of limit characters for $G(\RR)$, see \textnormal{\cite[Definition 11.6]{AdamsBarbaschVogan}}. Write $L^z(G(\RR))$ for the set of equivalence classes of final nonzero standard type-$z$ limit characters for $G(\RR)$. 
\end{definition}


\begin{definition}\label{deflimitcharactersallG}
Let $G^{\Gamma}$ be a weak extended group. A \emph{limit character for $G^{\Gamma}$} is a pair $(\delta,\Lambda)$ subject to the following conditions
\begin{itemize}
    \item[(i)] $\delta$ is a strong real form of $G^{\Gamma}$.
    \item[(ii)] $\Lambda$ is a limit character for $G(\RR,\delta)$.
\end{itemize}
Two such pairs $(\delta,\Lambda)$ and $(\delta',\Lambda')$ are \emph{equivalent} if there is an element $g \in G$ such that $g\delta g^{-1} = \delta'$ and $\Ad(g)\Lambda$ is equivalent to $\Lambda'$. Suppose $z$ is a finite-order element in $Z({}^{\vee}G)^{\theta_Z}$. We say that $(\delta,\Lambda)$ is of type $z$ (resp.~final, resp.~nonzero, resp.~standard) if $\Lambda$ is of type $z$ (resp.~final, resp.~nonzero, resp.~standard). Write $L^z(G^{\Gamma})$ for the set of equivalence classes of final nonzero standard type-$z$ limit characters for $G^{\Gamma}$.
\end{definition}

The idea (made precise in Remark \ref{rmkLanglands} below) is that a standard limit character is exactly what is needed to specify a cuspidal parabolic subgroup $Q(\RR) = MAN(\RR)$ of $G(\RR)$, a limit of discrete series representation $\pi_M$ of $M$, and a character $\nu$ of $A$. An induced representation like $\Ind_{MAN(\RR)}^{G(\RR)}(\pi_M \otimes \nu \otimes 1)$ is the starting point for the original Langlands classification. The {\em nonzero} condition is the same as the requirement that $\pi_M \ne 0$; the {\em final} condition eliminates representations like the reducible unitary principal series of $SL_2(\CC)$.

We will now explain how to attach representations to standard limit characters. Suppose $\Lambda=(H(\RR),\Lambda^{can},\Delta_{i\RR}^+, \Delta_{\RR}^+)$ is a standard type-$z$ limit character for $G(\RR)$. Choose a Cartan involution $\theta$ of $G(\RR)$ which preserves $H(\RR)$, and a system of positive roots $\Delta^+ \subset \Delta(\fg,\fh)$ satisfying the following conditions

\begin{itemize}
    \item[(1)] $\Delta_{\RR}^+ \subseteq \Delta^+$.
    \item[(2)] $\Delta_{i\RR}^+ \subseteq \Delta^+$.
    \item[(3)] If $\alpha \in \Delta^+$ and $\langle d\Lambda^{can}, \alpha^{\vee}\rangle \in \ZZ$, then
    $$\mathrm{Re}\langle \theta d\Lambda^{can} - d\Lambda^{can}, \alpha^{\vee}\rangle \geq 0.$$
    \item[(4)] $\Delta^+ \setminus \Delta_{i\RR}^+$ is $-\theta$-stable.
\end{itemize}
Let $\fb = \fh \oplus \fn$ be the Borel subalgebra of $\fg$ corresponding to $\Delta^+$. Form the modules
$$L_i\mathcal{R}^{\fg}_{\fb}(\Lambda^{can}) \in M^z(\fg,K), \qquad i \geq 0.$$
as in Section \ref{sec:parabolicinduction}. Then $L_i\mathcal{R}^{\fg}_{\fb}(\Lambda^{can}) =0$ unless $i=\dim(\fn \cap \fk)$ (this is \cite[Lemma 8.19]{AdamsVogan}; It follows easily from the transitivity of induction and Propositions \ref{prop:propsofrealinduction} and \ref{prop:weaklyfaircohomological}). The \emph{standard limit representation} attached to $\Lambda$ is the type-$z$ projective $(\fg,K)$-module
$$M(\Lambda):=L_{\dim(\fn \cap \fk)}\mathcal{R}^{\fg}_{\fb}(\Lambda^{can}) \in M^z(\fg,K).$$
Let $\pi(\Lambda)$ denote the co-socle of $M(\Lambda)$, i.e. the largest completely reducible quotient of $M(\Lambda)$. We note that $M(\Lambda)$ (and hence $\pi(\Lambda)$) is independent of the choice of $\Delta^+$ (this is \cite[Proposition 8.20]{AdamsVogan}). Here is one version of the Langlands classification of irreducible representations.

\begin{theorem}[{\cite [Theorem 11.14]{AdamsBarbaschVogan}}]\label{thm:LLC}
Let $G^{\Gamma}$ be a weak extended group and let $G(\RR)$ be a real form of $G$ in the inner class defined by $G^{\Gamma}$. Suppose $z$ is a finite-order element in $Z({}^{\vee}G)^{\theta_Z}$. Then the following are true:
\begin{itemize}
    \item[(i)] If $\Lambda \in L^z(G(\RR))$ is a final nonzero type-$z$ standard limit character, then the corresponding representation $\pi(\Lambda)$ is irreducible.
    \item[(ii)] The map $\Lambda \mapsto \pi(\Lambda)$ defines a bijection
    $$\pi\colon L^z(G(\RR)) \xrightarrow{\sim} \Pi^z(G(\RR))$$
    from equivalence classes of final nonzero type-$z$ standard limit characters for $G(\RR)$ to equivalence classes of irreducible type-$z$ projective representations of $G(\RR)$.
    \item[(iii)] If $\Lambda$ is a standard type-$z$ limit character (not necessarily final or nonzero), then there is a finite (possibly empty) set 
    $$\fin(\Lambda)\subset L^z(G(\RR))$$
    such that
    $$M(\Lambda) = \bigoplus_{\Lambda' \in \fin(\Lambda)} M(\Lambda') \quad \text{and} \quad \pi(\Lambda) = \bigoplus_{\Lambda' \in \fin(\Lambda)} \pi(\Lambda').$$
    \item[(iv)] The bijections in (ii) induce a bijection 
    $$\pi\colon L^z(G^{\Gamma}) \xrightarrow{\sim} \Pi^z(G^{\Gamma}), \qquad \pi(\delta, \Lambda) = (\delta, \pi(\Lambda))$$
    from equivalence classes of final nonzero type-$z$ standard limit characters for $G$ to equivalence classes of irreducible type-$z$ projective representations of strong real forms of $G$.
\end{itemize}
\end{theorem}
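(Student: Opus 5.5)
The statement is the Langlands classification in the form of \cite[Theorem 11.14]{AdamsBarbaschVogan}, and I would prove it by reduction to the classical Langlands classification for (projective) representations of a single real form. The first step is to identify standard limit representations with classical induced representations. Given a standard type-$z$ limit character $\Lambda=(H(\RR),\Lambda^{can},\Delta^+_{i\RR},\Delta^+_{\RR})$, let $H(\RR)=TA$ be the $\theta$-stable decomposition, and let $MA$ be the centralizer of $A$ in $G(\RR)$. Using the positive system $\Delta^+$ chosen as in conditions (1)--(4), I would factor the Borel $\fb$ through a real parabolic $\fq=\fm\fa\oplus\fn_{\RR}$, determined by the real part of $d\Lambda^{can}$ via condition (3), and through a $\theta$-stable Borel of $\fm$. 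Transitivity of the functors $L_i\mathcal{R}$ (a Grothendieck spectral sequence together with the vanishing in Propositions \ref{prop:propsofrealinduction} and \ref{prop:weaklyfaircohomological}) then gives
$$M(\Lambda)=\Ind^{G(\RR)}_{MAN(\RR)}(\pi_M\otimes\nu),$$
where $\pi_M$ is the limit of discrete series of $M$ cohomologically induced (in the weakly good range, hence irreducible or zero) from $\Lambda^{can}|_T$, and $\nu=\Lambda^{can}|_A$ with $\mathrm{Re}\,\nu$ weakly dominant for $N$. Here the type-$z z(\rho)$ bookkeeping is exactly what is needed for the $\rho$-shifts to cancel, so $\pi_M\otimes\nu$ has the correct type.

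With this identification, (i) and (ii) follow from the Langlands quotient theorem for projective representations, that is, for the linear group $G(\RR)^{can}$ restricted to a fixed central character of $\pi_1(G)^{can}$. The standard argument via asymptotics of matrix coefficients (or Vogan's minimal $K$-type argument) applies verbatim to finite central extensions. Concretely:
\begin{itemize}
\item the nonzero condition is equivalent to $\pi_M\neq 0$, by the Knapp--Zuckerman vanishing criterion for limits of discrete series;
\item when $\mathrm{Re}\,\nu$ is strictly dominant, the cosocle is irreducible and determines $(MAN,\pi_M,\nu)$ up to conjugacy;
\item when $\mathrm{Re}\,\nu$ is only weakly dominant, one passes to the larger parabolic on which $\mathrm{Re}\,\nu$ is strictly positive and uses the unitarily induced tempered representation of its Levi.
\end{itemize}
The main obstacle is the last point: tempered induced representations can be reducible. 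Here the final condition enters. By Knapp--Zuckerman, reducibility arises exactly from real roots $\alpha$ with $\langle d\Lambda^{can},\alpha^\vee\rangle=0$ satisfying the parity condition, together with the accompanying $R$-group. Such reducibility is resolved by Hecht--Schmid character identities, which express $M(\Lambda)$ as a direct sum of $M(\Lambda')$ over limit characters on a more compact Cartan obtained by Cayley transform through $\alpha$. This is precisely (iii): one iterates Cayley transforms until the limit characters become final, with the dimension of the split part of the Cartan strictly decreasing at each step. In the same decomposition, nonzero-ness is checked on each summand, and any summand with a compact simple root of zero pairing is discarded, which is why $\fin(\Lambda)$ may be empty. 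The direct-sum statement for $\pi(\Lambda)$ follows by taking cosocles. For final nonzero $\Lambda$ no such root exists, so the tempered piece is irreducible and (i) holds. Injectivity in (ii) comes from recovering $\Lambda$ up to equivalence from the lowest $K$-types of $\pi(\Lambda)$ together with the Langlands data, and surjectivity is the classical existence of Langlands data for every irreducible module.

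Finally, (iv) is formal. Conjugating $(\delta,\Lambda)$ by $g\in G$ transports $\pi(\Lambda)$ to $\pi(\Ad(g)\Lambda)$ by naturality of $\mathcal{R}$. Choosing a set $\Delta$ of representatives of strong real forms and using the decomposition of $\Pi^z(G^{\Gamma})$ as $\bigsqcup_{\delta\in\Delta}\Pi^z(G(\RR,\delta))$, the bijections in (ii) assemble into the bijection in (iv). One must also check that the stabilizer of $\delta$ in $G$ acts compatibly on both sides. It does, because that stabilizer acts on $G(\RR,\delta)$ by automorphisms and equivalence of limit characters was defined via conjugation.
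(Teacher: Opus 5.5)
The paper gives no proof of its own here. It imports the statement from \cite[Theorem 11.14]{AdamsBarbaschVogan}, and your outline is the standard argument behind that citation: induction in stages identifies $M(\Lambda)$ with a real parabolically induced limit of discrete series; Knapp--Zuckerman handles the nonzero and final conditions; Hecht--Schmid identities and Cayley transforms give (iii); the Langlands quotient theorem gives (i)--(ii). Your first step is exactly the paper's Remark~\ref{rmkLanglands}.

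One point to tighten: realness of $\fq$ comes from condition (4), whereas condition (3) only constrains integral roots. So to get $\mathrm{Re}\,\nu$ dominant on all of $\fn$, you need the standard fact that the induced module is unchanged when the chamber is moved across walls of non-integral roots (compare the independence of $M(\Lambda)$ from $\Delta^+$ that the paper quotes right after defining $M(\Lambda)$).
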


\begin{rmk}\label{rmkLanglands}
It is helpful to recall the relationship between construction described above and Langlands' construction in \cite{Langlands1979}. Suppose $\Lambda$ is a final nonzero standard type-$1$ limit character for $G(\RR)$. Let $L \subset G$ denote the Levi subgroup corresponding to the imaginary roots in $G$. Let $Q=LU$ denote the (unique) parabolic subgroup containing the Borel subgroup $B$ constructed above. Since the roots in $\fu$ are stable under $-\theta$, $Q$ is defined over $\RR$; $Q(\RR)$ is the cuspidal parabolic mentioned after Definition \ref{deflimitcharactersallG}, and $L(\RR) = MA$. By the transitivity of induction
\begin{align*}
    \mathcal{R}^{\fg}_{\fb} (\Lambda^{can}) &\simeq \mathcal{R}^{\fg}_{\fq}(\mathcal{R}^{\fl}_{\fl \cap \fb} \Lambda^{can})\\
    &\simeq \mathcal{R}^{\fg}_{\fq}((\mathcal{R}^{\fl}_{\fl \cap \fb} (\Lambda^{can} \otimes \rho_{\fu}/|\rho_{\fu}|) \otimes |\rho_{\fu}|/\rho_{\fu})\\
    &= \Ind^{G(\RR)}_{Q(\RR)}(\mathcal{R}^{\fl}_{\fl \cap \fb} (\Lambda^{can} \otimes \rho_{\fu}/|\rho_{\fu}|))\\
\end{align*}
The inner module $\mathcal{R}^{\fl}_{\fl \cap \fb} (\Lambda^{can} \otimes \rho_{\fu}/|\rho_{\fu}|)$ is the relative limit of discrete series representation of $L(\RR)$ with Harish-Chandra parameter $(\Delta_{i\RR}^+,\Lambda^{can} \otimes \rho_{\fu}/|\rho_{\fu}|)$.
\end{rmk}

\subsection{Coherent families}\label{sec:coherentcontinuation}

Fix all of the notation of Section \ref{sec:parabolicinduction}. Choose a maximal torus $H \subset G$ and let $X = X^*(H) \subset \fh^*$ denote the character lattice. If $\lambda \in \fh^*$, let $KM^z_{\lambda}(\fg,K)$ denote the Grothendieck group of finite-length projective type-$z$ $(\fg,K)$-modules of infinitesimal character $\lambda$.

\begin{definition}[\cite{DufloPolynomes}, see also {\cite[Definition 7.2.5]{Vogan1981}}]
Let $\lambda \in \fh^*$. A \emph{coherent family} of projective type-$z$ $(\fg,K)$-modules based on $\lambda + X$ is a function
$$\Theta\colon \lambda+X \to KM^z(\fg,K)$$
such that
\begin{itemize}
    \item[(i)] For each $\lambda' \in \lambda+X$, $\Theta(\lambda') \in KM^z_{\lambda'}(\fg,K)$, and
    \item[(ii)] For each $\lambda' \in \lambda+X$ and finite-dimensional algebraic $G$-representation $F$, there is an identity in $KM^z(\fg,K)$
    $$\Theta(\lambda') \otimes F = \sum_{\mu \in X} m(\mu,F) \Theta(\mu+\lambda')$$
    where $m(\mu,F)$ denotes the multiplicity of $\mu$ in $F$.
\end{itemize}
\end{definition}


We will need several basic results about coherent families. The first is a uniqueness theorem.

\begin{theorem}[{\cite[Theorem 7.2.27]{Vogan1981}}]\label{thm:coherentunique}
Let $\lambda \in \fh^*$ and let $M \in KM^z_{\lambda}(\fg,K)$. Then there is a coherent family 
$$\Theta\colon \lambda+X \to KM^z(\fg,K)$$
such that $\Theta(\lambda)=M$. If $\lambda$ is regular, then $\Theta$ is unique.
\end{theorem}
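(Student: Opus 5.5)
Existence will come from the Zuckerman tensoring construction, together with the fact that $KM^z_\lambda(\fg,K)$ is spanned by standard limit representations. Uniqueness for regular $\lambda$ will come from a translation argument. Both halves work inside the Grothendieck group.

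\emph{Existence.} By Theorem \ref{thm:LLC}, $KM^z_{\lambda}(\fg,K)$ is spanned by the classes $[M(\Lambda)]$ of standard limit representations, so by linearity it suffices to treat $M=M(\Lambda)$. Write $M(\Lambda)=L_{\dim(\fn\cap\fk)}\mathcal{R}^{\fg}_{\fb}(\Lambda^{can})$, where $\Lambda^{can}$ is a character of $H(\RR)^{can}$ with differential conjugate to $\lambda$. For $\mu\in X$, after transporting $\mu$ to $\fh(\RR)$ by the conjugation identifying the two Cartans, I would define
$$\Theta(\lambda+\mu):=\sum_i(-1)^i\bigl[L_i\mathcal{R}^{\fg}_{\fb}(\Lambda^{can}\otimes\CC_{\mu})\bigr],$$
keeping $\fb$ fixed. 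Condition (i) holds because $R^{\fg}_{\fb}$ preserves infinitesimal character (Theorem \ref{thm:propertiesofI}). Condition (ii) is the standard tensor identity
$$L_i\mathcal{R}^{\fg}_{\fb}(X)\otimes F\simeq L_i\mathcal{R}^{\fg}_{\fb}(X\otimes F|_{\fb}),$$
which holds because $P$, $\Pi$ and $(\bullet)^{\#}$ commute with tensoring by finite-dimensional $G$-modules. Filter $F|_{\fb}$ by its $\fh$-weights and use additivity of the Euler characteristic; the pieces are exactly $\sum_\mu m(\mu,F)\,\Theta(\lambda+\mu+\cdot)$. Taking $\mu=0$ gives $\Theta(\lambda)=[M(\Lambda)]$. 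One caveat: $\Theta(\lambda')$ for nonstandard $\lambda'$ need not be an honest module, only a virtual one. This is allowed, since the target is the Grothendieck group.

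\emph{Uniqueness.} Let $\Theta$ and $\Theta'$ be coherent families with $\Theta(\lambda)=\Theta'(\lambda)$, and put $D=\Theta-\Theta'$, which is again coherent with $D(\lambda)=0$. Fix any $\lambda'=\lambda+\nu$. Choose $F$ with extremal weight $\nu$, and apply the projection $p_{\lambda'}$ onto generalized infinitesimal character $\chi_{\lambda'}$ to the identity
$$D(\lambda)\otimes F=\sum_\mu m(\mu,F)\,D(\lambda+\mu)=0.$$
The surviving terms have $\lambda+\mu\in W\lambda'$. The aim is to reduce to the single term $\mu=\nu$. First move $\lambda'$ into the positive chamber determined by $\lambda$, which regularity makes well defined. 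When $\lambda'$ is also regular and lies in the same chamber, the classical estimate ($\lambda+\mu=w\lambda'$ with $\mu$ a weight of $F$ forces $w=1$) gives $0=D(\lambda')$. To pass between chambers, use the Weyl-group action $w\cdot D$, which is coherent, together with translation out of walls. Vogan's argument shows that the span of coherent families is a $W$-module and that evaluation at a regular point determines the family.

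\emph{Main obstacle.} The delicate step is the uniqueness when $\lambda'$ is singular or in a different chamber. Here I would use the translation principle to prove $D(\lambda')=0$ for $\lambda'$ dominant in the chamber of $\lambda$. Then I would show that $\Theta(w\lambda'')$ is determined by the values on the dominant chamber, using wall-crossing operators $\psi_\alpha\phi_\alpha$, whose effect on a coherent family is given by the formula $D(s_\alpha\lambda'')=\dots$ in terms of values in the dominant chamber. Since the paper explicitly cites \cite[Theorem 7.2.27]{Vogan1981}, I would give the existence construction in detail and refer there for this chamber-crossing bookkeeping.
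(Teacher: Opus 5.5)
The paper gives no proof of this statement; it only cites Vogan's book. Your existence half is correct and is the standard argument: $\mu\mapsto\Lambda^{can}\otimes\CC_\mu$ is trivially a coherent family on the Cartan, and pushing it forward by $R^{\fg}_{\fb}$ is exactly Proposition \ref{prop:coherentinduction}. Two small points need care.
\begin{itemize}
\item The identification of the abstract Cartan with the Cartan of $\Lambda$ must carry $\lambda$ to $d\Lambda^{can}$ exactly, not merely to a $W$-conjugate, or condition (i) fails.
\item The tensor identity is really an identity for $L_i(\Pi\circ P)$ on $(\fb,H\cap K)$-modules, since $X\otimes F|_{\fb}$ is not an $\fh$-module. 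This is what your filtration argument actually uses.
\end{itemize}

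There is a genuine gap in the uniqueness half. The sentence ``evaluation at a regular point determines the family'' is the theorem itself. The one formula that carries the argument across chambers is left as $D(s_\alpha\lambda'')=\dots$. And you cannot ``move $\lambda'$ into the chamber of $\lambda$'', because $D(\lambda')$ and $D(w\lambda')$ are different values. Here is what is needed.

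(a) Your estimate covers every $\lambda'$ in the closed chamber $\bar C$ of $\lambda$, singular or not. In the singular case it yields $\lambda+\mu=\lambda'$ rather than $w=1$. So a coherent family vanishing at a regular point of $C$ vanishes on $\bar C\cap(\lambda+X)$.

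(b) For $w\in W(\lambda)$, the family $D^w(\lambda'):=D(w\lambda')$ is again coherent on $\lambda+X$, as in Lemma \ref{lem:cohfacts}(ii). The sets $w(\bar C\cap(\lambda+X))$ cover $\lambda+X$. So it suffices to show $D(w\lambda)=0$ for all $w\in W(\lambda)$. By induction on length, applying the simple-reflection case to $D^w$, it suffices to show $D(s_\alpha\lambda)=0$ for simple $\alpha\in\Delta^+(\lambda)$.

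(c) Suppose some $\lambda_\alpha\in\bar C\cap(\lambda+X)$ lies on the $\alpha$-wall only. Then $D(\lambda_\alpha)=0$ by (a). Tensor with the irreducible $F$ of extremal weight $\lambda-\lambda_\alpha$ and project to $\chi_\lambda$. Since $W_{\lambda_\alpha}=\{1,s_\alpha\}$, exactly $D(\lambda)+D(s_\alpha\lambda)$ survives, so $D(s_\alpha\lambda)=0$. This is your $\psi_\alpha\phi_\alpha$ step.

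However, $\lambda+X$ can miss the $\alpha$-wall entirely, and then $\phi_\alpha$ does not exist. This happens precisely when $\langle X,\alpha^\vee\rangle=2\ZZ$ and $\langle\lambda,\alpha^\vee\rangle$ is odd. For example, for $G=PGL_2$ and $\lambda=\rho$ one has $\lambda+X=(1+2\ZZ)\rho$; short roots of $SO_{2n+1}$ at $\lambda=\rho$ behave the same way. In that case you must cross the wall directly:
\begin{itemize}
\item Choose $\lambda_1\in\lambda+X$ with $\langle\lambda_1,\alpha^\vee\rangle=1$ and $\langle\lambda_1,\gamma^\vee\rangle\geq 2$ for the other simple $\gamma$. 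Then $D(\lambda_1)=0$ by (a).
\item Tensor with $\fg$ and project to $\chi_{\lambda_1}$. A root $\beta$ survives only if $\langle\lambda_1,\beta^\vee\rangle=-1$, which forces $\beta=-\alpha$. So you get $0=m(0,\fg)D(\lambda_1)+D(s_\alpha\lambda_1)$, hence $D(s_\alpha\lambda_1)=0$.
\item Applying (a) to $D^{s_\alpha}$ at the regular point $\lambda_1$ then gives $D(s_\alpha\lambda)=0$.
\end{itemize}
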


This allows us to define

\begin{definition}\label{def:coherent}
Suppose $\lambda \in \fh^*$ is regular and let $\lambda' \in \lambda + X$. Then
$$T(\lambda \to \lambda')\colon KM_{\lambda}^z(\fg,K) \to KM^z_{\lambda'}(\fg,K)$$
is the $\ZZ$-linear map defined by
$$T(\lambda \to \lambda')M = \Theta^M(\lambda'), \qquad M \in KM^z_{\lambda}(\fg,K),$$
where $\Theta^M\colon \lambda + X \to KM^z(\fg,K)$ is the unique coherent family such that $\Theta^M(\lambda)=M$.
\end{definition}

The second fact we will need is that coherent families are preserved under parabolic induction. Fix the notation of Section \ref{sec:parabolicinduction}, i.e. $\fq$, $\fl$, $R^{\fg}_{\fq}$, and so on.

\begin{prop}[{\cite[Lemma 7.2.9]{Vogan1981}}]\label{prop:coherentinduction}
Let $\Theta_0\colon \lambda+X\to KM^{zz(\rho_{\fu})}(\fl,K \cap L)$ be a coherent family of projective type-$zz(\rho_{\fu})$ $(\fl,K \cap L)$-modules. Then $R^{\fg}_{\fq} \circ \Theta_0$ is a coherent family of projective type-$z$ $(\fg,K)$-modules.
\end{prop}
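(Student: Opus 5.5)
The claim to prove is Proposition \ref{prop:coherentinduction}: $R^{\fg}_{\fq}\circ\Theta_0$ is a coherent family. I will check the two defining conditions directly. Two facts about the derived functors $L_i\mathcal{R}^{\fg}_{\fq}$ do the work. The first is the preservation of infinitesimal character from Theorem \ref{thm:propertiesofI}. The second is a projection formula for tensoring with finite-dimensional representations.

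\emph{Condition (i).} For $\lambda'\in\lambda+X$, the element $\Theta_0(\lambda')$ has infinitesimal character $\lambda'$ for $\fl$. By Theorem \ref{thm:propertiesofI}, each $L_i\mathcal{R}^{\fg}_{\fq}\Theta_0(\lambda')$ has infinitesimal character $\lambda'$ for $\fg$. For this to make sense, the parameter must be normalized so that the $\rho_{\fu}$-twist in $(\bullet)^{\#}$ is built into the conventions; then the $\fl$-parameter $\lambda'$ is sent to the $\fg$-parameter $\lambda'$. Hence $R^{\fg}_{\fq}\Theta_0(\lambda')\in KM^z_{\lambda'}(\fg,K)$.

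\emph{Condition (ii).} Let $F$ be a finite-dimensional algebraic $G$-representation. I first establish the projection formula
$$L_i\mathcal{R}^{\fg}_{\fq}(\pi)\otimes F\simeq L_i\mathcal{R}^{\fg}_{\fq}(\pi\otimes F|_{\fq}).$$
Here $F|_{\fq}$ is $F$ restricted to the $(\fq,L\cap K)$-pair, and tensoring with $F|_{\fq}$ is applied after $(\bullet)^{\#}$ and $\mathcal{F}$. I would verify this at each stage.
\begin{itemize}
\item For $P$, there is the standard isomorphism $U(\fg)\otimes_{U(\fq)}(X\otimes F)\simeq (U(\fg)\otimes_{U(\fq)}X)\otimes F$.
\item For the Bernstein functor $\Pi$, commutation with $\otimes F$ holds because $F$ is a $K$-module.
\item Tensoring with $F$ is exact and takes projectives to projectives, so the identity passes to the derived functors.
\end{itemize}

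Next, $F|_{\fq}$ has a $\fq$-stable filtration whose graded pieces are irreducible $L$-representations $F_j$ with $\fu$ acting trivially. Since $R^{\fg}_{\fq}$ is an Euler characteristic and the $L_i$ form long exact sequences, this gives
$$R^{\fg}_{\fq}(\Theta_0(\lambda'))\otimes F=\sum_j R^{\fg}_{\fq}(\Theta_0(\lambda')\otimes F_j).$$
Coherence of $\Theta_0$ then gives $\Theta_0(\lambda')\otimes F_j=\sum_\mu m_{F_j}(\mu)\Theta_0(\lambda'+\mu)$. Summing over $j$, the weight multiplicities add up to $m(\mu,F)$. This yields
$$R^{\fg}_{\fq}\Theta_0(\lambda')\otimes F=\sum_\mu m(\mu,F)\,R^{\fg}_{\fq}\Theta_0(\lambda'+\mu).$$

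The main obstacle is the projection formula together with the bookkeeping of the projective types and of the twist by $\rho_{\fu}$. I must check that $F\otimes\rho_{\fu}$ is still of type $zz(\rho_{\fu})$ before $\#$. This holds because $F$ is a genuine $G$-representation and hence of trivial type. I must also check that the $\#$ twist commutes with tensoring by $F_j$. The derived-functor step itself is formal once the functor-level isomorphism is natural.
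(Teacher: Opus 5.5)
Your proof is correct and is the standard argument for this result. The paper gives no proof of its own and only cites Vogan's Lemma 7.2.9: preservation of infinitesimal character (Theorem \ref{thm:propertiesofI}), the projection formula $L_i\mathcal{R}^{\fg}_{\fq}(\pi)\otimes F\simeq L_i(\Pi\circ P)(\pi^{\#}\otimes F|_{\fq})$, a $\fq$-stable filtration of $F$ with $\fu$ acting trivially on the graded pieces, and coherence of $\Theta_0$. One small slip in your type bookkeeping: $F\otimes\rho_{\fu}$ has type $z(\rho_{\fu})$, not $zz(\rho_{\fu})$; what you need, and what your stated reason gives, is that $F$ has trivial type, so $\pi\otimes F_j$ stays of type $zz(\rho_{\fu})$ and $(\pi\otimes F_j)^{\#}=\pi^{\#}\otimes F_j$ is of type $z$.
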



%

\subsection{Action of the Weyl group}\label{sec:cohcontsingular}

Choose a maximal torus $H \subset G$, and let $X = X^*(H) \subset \fh^*$ denote the character lattice. For each $\lambda \in \fh^*$, we obtain a system of \emph{$\lambda$-integral roots}
$$\Delta(\lambda) = \{\alpha \in \Delta(\fg,\fh) \mid \langle \lambda,\alpha^{\vee}\rangle \in \ZZ\}.$$
We denote by $W(\lambda)$ the Weyl group of $\Delta(\lambda)$. Note that
$$W(\lambda) = \{w \in W \mid w\lambda-\lambda \text{ is a sum of roots in } \Delta(\fg,\fh)\}.$$
We also define
$$\Delta_{\lambda} = \{\alpha \in \Delta(\fg,\fh) \mid \langle \lambda,\alpha^{\vee}\rangle =0\}, \qquad \Delta_{\geq 0}(\lambda) =\{\alpha \in \Delta(\fg,\fh) \mid \langle \lambda,\alpha^{\vee}\rangle \in \ZZ_{\geq 0}\}.$$
Note that $\Delta_{\geq 0}(\lambda)$ is a parabolic subsystem of $\Delta(\lambda)$ with Levi subsystem $\Delta_{\lambda}$. We denote by $W_{\lambda}$ the Weyl group of $\Delta_{\lambda}$.

If $\lambda$ is regular, it belongs to a unique Weyl chamber. If $\lambda' \in \lambda+X$ is a (possibly singular) weight belonging to this Weyl chamber, then we get a natural positive system for $\Delta_{\lambda'}$:
$$\Delta^+_{\lambda'} = \{\alpha \in \Delta_{\lambda'} \mid \langle \lambda, \alpha^{\vee}\rangle >0\},$$
and hence a set of simple reflections in $W_{\lambda'}$. Note that the positive system $\Delta^+_{\lambda'}$ and the associated simple reflections in $W_{\lambda'}$ depend on the choice of $\lambda$ (more precisely, on its Weyl chamber), although this is not reflected in our notation.

\begin{definition}[{\cite[Definition 7.2.8]{Vogan1981}}]\label{def:coherentcontinuation}
Suppose $\lambda \in \fh^*$ is regular. For each $w \in W(\lambda)$, let 
$$T^{\lambda}_w := T(\lambda \to w^{-1}\lambda),$$
a $\ZZ$-linear automorphism of $KM^z_{\lambda}(\fg,K)$. The map $w \mapsto T^{\lambda}_w$ defines a representation of $W(\lambda)$ on $KM^z_{\lambda}(\fg,K)$, called \emph{coherent continuation} (at $\lambda$).
\end{definition}

Sometimes, when the base point is understood, we will suppress the superscript $\lambda$ from the notation $T^{\lambda}_w$.

\begin{prop}[{\cite[Proposition 7.3.7 and Corollary 7.3.19]{Vogan1981}}]\label{prop:translationtowall}
Let $\lambda \in \fh^*$. Choose $\lambda_r \in \lambda+X$ regular with $\lambda$ contained in its (closed) Weyl chamber. Consider the operator
$$T(\lambda_r \to \lambda)\colon KM^z_{\lambda_r}(\fg,K) \to KM^z_{\lambda}(\fg,K).$$
(Definition \ref{def:coherent}). Then the following are true:
\begin{itemize}
    \item[(i)] If $\pi \in \Pi^z_{\lambda_r}(\fg,K)$, then $T(\lambda_r \to \lambda)\pi$ is either irreducible or $0$.
    \item[(ii)] $T(\lambda_r \to \lambda)$ induces a bijection
    $$\{\pi \in \Pi^z_{\lambda_r}(\fg,K)\mid T(\lambda_r \to \lambda)\pi \neq 0\} \xrightarrow{\sim} \Pi^z_{\lambda}(\fg,K).$$
    \item[(iii)] If $\pi \in \Pi^z_{\lambda_r}(\fg,K)$, then $T(\lambda_r \to \lambda)\pi=0$ if and only if there is a simple reflection $s \in W_{\lambda}$ such that $T^{\lambda_r}_s \pi=-\pi$. 
    \item[(iv)] Let $\pi \in \Pi^z_{\lambda_r}(\fg,K)$ and suppose that $T^{\lambda_r}_s\pi \neq -\pi$ for some simple reflection $s \in W_{\lambda}$. Then $T^{\lambda_r}_s\pi = \pi + U_s\pi$, where $U_s\pi$ is a $\ZZ_{\geq 0}$-linear combination of irreducibles $\pi'$ such that $T^{\lambda_r}_s \pi' = -\pi'$.
\end{itemize}
\end{prop}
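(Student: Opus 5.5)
The plan is to work with the exact translation functor rather than the abstract coherent-continuation operator, and to reduce everything to the Langlands classification (Theorem \ref{thm:LLC}). Let $F$ be the finite-dimensional algebraic representation with extremal weight $\lambda-\lambda_r$. Define $\psi_{\lambda_r}^{\lambda}(M) := \mathrm{pr}_{\lambda}(F\otimes M)$, where $\mathrm{pr}_{\lambda}$ is projection onto generalized infinitesimal character $\lambda$. The dual functor $\phi_{\lambda}^{\lambda_r}$ is defined the same way using $F^*$.

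\textbf{Step 1: identify $T(\lambda_r\to\lambda)$ with the translation functor.} The functor $\psi_{\lambda_r}^{\lambda}$ is exact. Tensoring a coherent family with $F$ and projecting isolates the single term $\Theta(\lambda)$, because $\lambda$ lies in the closed chamber of $\lambda_r$; this is the standard weight argument. Hence $[\psi_{\lambda_r}^{\lambda}M]=T(\lambda_r\to\lambda)[M]$. The two functors $\psi$ and $\phi$ are mutually adjoint on both sides.

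\textbf{Step 2: compute translation on standard modules.} Translation commutes with $L_i\mathcal{R}^{\fg}_{\fb}$ by Proposition \ref{prop:coherentinduction} and its functorial version. Therefore $\psi M(\Lambda)=M(\Lambda')$, where $\Lambda'$ is the standard limit character obtained by shifting $d\Lambda^{can}$ by $\lambda-\lambda_r$. By Theorem \ref{thm:LLC}(iii), $M(\Lambda')$ is a direct sum of standard modules over $\fin(\Lambda')$. I expect $\fin(\Lambda')$ to have at most one element, and to be empty exactly when some simple root of $\Delta^+_{\lambda}$ becomes a compact imaginary root with $\langle d\Lambda'^{can},\alpha^\vee\rangle=0$. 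The positive system here is chosen compatibly with the chamber of $\lambda_r$, which is where condition (3) on $\Delta^+$ is used. Since $\pi(\Lambda)$ is a quotient of $M(\Lambda)$, the module $\psi\pi(\Lambda)$ is a quotient of $M(\Lambda'_{\fin})$. In particular it is $0$ or has irreducible cosocle $\pi(\Lambda'_{\fin})$. This gives the surjectivity half of (ii) and the candidate bijection.

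\textbf{Step 3: irreducibility, part (i).} This is the main obstacle, since one must rule out extra composition factors below the cosocle. I would use adjunction. For any irreducible $\sigma$ at infinitesimal character $\lambda$, write $\sigma=\psi\tau$ using Step 2. Then $\mathrm{Hom}(\psi\pi,\sigma)=\mathrm{Hom}(\pi,\phi\psi\tau)$ and $\mathrm{Hom}(\sigma,\psi\pi)=\mathrm{Hom}(\phi\psi\tau,\pi)$. I would show that $\phi\psi\tau$ has simple socle and simple cosocle, both isomorphic to $\tau$, by realizing it inside $\phi M(\Lambda'_{\fin})$. The latter has a filtration by standard modules attached to the $W_\lambda$-translates of $\lambda_r$.

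From this, $\psi\pi$ has simple socle and simple cosocle, both equal to $\psi\pi$'s Langlands quotient. Self-duality of $\psi\pi$ under the Hermitian/contragredient duality, which commutes with translation, then forces $\psi\pi$ to be irreducible. The injectivity part of (ii) follows because distinct nonvanishing $\Lambda$ give distinct $\Lambda'_{\fin}$: the shift is invertible on the relevant parameters.

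\textbf{Step 4: parts (iii) and (iv), via a single wall.} Fix a simple reflection $s\in W_\lambda$ and translate to a wall $\lambda_s$ singular only for $s$. In the Grothendieck group, $\phi_s\psi_s=1+T_s^{\lambda_r}$, by the coherent family identity $\Theta(\lambda_r)+\Theta(s\lambda_r)$. So $T_s\pi=-\pi$ if and only if $\psi_s\pi=0$.

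If $\psi_s\pi\neq0$, then by Step 3 the module $\phi_s\psi_s\pi$ has $\pi$ as socle and as cosocle, and $\pi$ occurs with multiplicity $2$ by adjunction and the fact that $\psi_s\phi_s=2$ on $\lambda_s$. Hence $T_s\pi=\pi+U_s\pi$ with $U_s\pi$ a genuine module. Applying $\psi_s$ gives $\psi_s U_s\pi = 2\psi_s\pi-2\psi_s\pi=0$, so every factor $\pi'$ of $U_s\pi$ satisfies $T_s\pi'=-\pi'$. This proves (iv).

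For (iii), translation to $\lambda$ factors through translation to $\lambda_s$, so $\psi_s\pi=0$ implies $\psi\pi=0$. Conversely, by Step 2, $\psi\pi=0$ is detected by a single simple root of $\Delta^+_\lambda$ becoming compact singular. That root is already singular at $\lambda_s$ for the corresponding $s$, so $\psi_s\pi=0$ there.
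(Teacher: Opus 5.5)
The paper gives no proof of this statement; it is quoted from Vogan's book. Your argument therefore has to stand on its own, and it has genuine gaps in (i)--(ii) and in the converse half of (iii).

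\textbf{What is fine.} Step 1, the identity $[\phi_s\psi_s]=1+T^{\lambda_r}_s$, the implication $\psi_s\pi=0\Rightarrow\psi\pi=0$, and your deduction of (iv) from (i)--(ii) at a one-wall point $\lambda_s$ are all essentially correct.

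\textbf{The gap in Step 2.} You never treat translates $\Lambda'$ that are \emph{not final}, i.e.\ where a real root of $\Delta_\lambda$ satisfies the parity condition (coherent continuation preserves parity).
For $SL_2(\RR)$ with $\lambda_r=\rho$, $\lambda=0$, the spherical principal series at $\rho$ (Langlands quotient: the trivial representation) translates to the odd principal series at $0$. That module is $D_0^+\oplus D_0^-$, so $|\mathrm{fin}(\Lambda')|=2$ and your expectation $|\mathrm{fin}(\Lambda')|\le 1$ is false.
For $GL_2(\RR)$, the analogous principal series (Langlands quotient a character) and the discrete series at the same regular infinitesimal character both translate to the same limit of discrete series. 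So ``the shift is invertible'' does not give injectivity either.
What you actually need is that $\psi\pi(\Lambda)=0$ whenever $\Lambda'$ is not final. This is real content (the real-root part of the $\tau$-invariant), not a consequence of Step 2. Step 2 also never shows $\psi\pi(\Lambda)\neq 0$, which your surjectivity claim requires.

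\textbf{The gap in Step 3.} The argument is circular. Writing an arbitrary irreducible as $\sigma=\psi\tau$ presupposes (i). By adjunction, the claim that $\phi\psi\tau$ has simple socle and cosocle $\tau$ is essentially a restatement of (i)--(ii). It also does not follow from $\phi M(\Lambda'_{\mathrm{fin}})$: $\phi\psi\tau$ is a quotient of that module, not a submodule, and a filtration by standard modules gives no control over socles.
The closing step fails as well. Simple socle $\cong$ simple cosocle plus self-duality does not force irreducibility without a multiplicity-one input such as $[M(\Lambda'):\pi(\Lambda')]=1$; the antidominant projective in category $\mathcal{O}$ for $\mathfrak{sl}_2$ is a counterexample. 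Moreover, neither duality you name fixes every irreducible. The contragredient sends $\lambda$ to $-\lambda$ and swaps $D^{+}$ and $D^{-}$ for $SL_2(\RR)$. The Hermitian dual fixes only the Hermitian irreducibles. So ``self-duality of $\psi\pi$'' is unjustified.

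\textbf{The gap in the converse of (iii).} Your argument conflates $\psi M(\Lambda)=0$ with $\psi\pi(\Lambda)=0$. In the $SL_2(\RR)$ example, $\psi$ kills the trivial representation even though its standard module translates to a nonzero module and its Cartan has no imaginary roots. For complex groups there are no imaginary roots at all, yet finite-dimensional representations are killed. Vanishing of $\psi\pi$ is governed by the whole $\tau$-invariant (real roots with parity and certain complex roots too), not only by compact imaginary roots.

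\textbf{How to get the converse of (iii) once (i), (ii), (iv) hold.} The argument is then formal:
\begin{itemize}
\item The identities $[\phi\psi]=\sum_{w\in W_\lambda}T_w$ and $[\psi\phi]=|W_\lambda|$ identify $\ker[\psi]\otimes\mathbb{Q}$ with the sum of the nontrivial $W_\lambda$-isotypic components of $KM^z_{\lambda_r}(\fg,K)\otimes\mathbb{Q}$.
\item That sum equals $\sum_s\ker(1+T_s)\otimes\mathbb{Q}$, with $s$ running over the simple reflections of $W_\lambda$; both are the orthogonal complement of the invariants.
\item Both subspaces are spanned by subsets of the basis of irreducibles: the first by (i)--(ii), the second by (iv) as in Lemma~\ref{lem:kers}. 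Hence the two subsets coincide.
\end{itemize}
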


From (iii) and (iv) of Proposition \ref{prop:translationtowall} we deduce the following lemma.

\begin{lemma}\label{lem:kers}
Let $\lambda \in \fh^*$. Choose $\lambda_r \in \lambda+X$ regular with $\lambda$ contained in its (closed) Weyl chamber. Then for any simple reflection $s \in W_{\lambda}$, we have
$$\ker{(1+T_s^{\lambda_r})} \subset  \ker{T(\lambda_r \to \lambda)}.$$
\end{lemma}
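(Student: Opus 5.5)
The plan is to show that $T(\lambda_r \to \lambda)$ kills every element $M$ of $KM^z_{\lambda_r}(\fg,K)$ with $T^{\lambda_r}_s M = -M$. First, write such an $M$ in the basis of irreducibles. By Proposition \ref{prop:translationtowall}(iii)--(iv), this basis $\Pi^z_{\lambda_r}(\fg,K)$ splits into two kinds of element. The set $A$ consists of the irreducibles $\pi$ with $T_s\pi = -\pi$. The set $B$ consists of the remaining ones, which satisfy $T_s\pi = \pi + U_s\pi$ with $U_s\pi$ in the $\ZZ$-span of $A$. By part (iii), $T(\lambda_r \to \lambda)$ kills every $\pi\in A$, since $s\in W_\lambda$ is a simple reflection.

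Next, compute the action of $1+T_s$ on a general element
$$M=\sum_{\pi\in A} a_\pi \pi + \sum_{\pi\in B} b_\pi \pi.$$
The $A$-part is killed by $1+T_s$. On the $B$-part we get
$$(1+T_s)M = \sum_{\pi\in B} b_\pi\,(2\pi + U_s\pi).$$
Here $U_s\pi$ lies in the span of $A$. The elements of $B$ and $A$ together form a basis, so the $B$-coefficients of $(1+T_s)M$ are exactly $2b_\pi$.

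Now suppose $M\in\ker(1+T_s)$. Then $2b_\pi=0$, and since the Grothendieck group is torsion-free, $b_\pi=0$ for all $\pi \in B$. Hence $M$ lies in the span of $A$, and therefore $T(\lambda_r\to\lambda)M=0$.

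The argument is purely combinatorial once Proposition \ref{prop:translationtowall} is available. The only point needing care is that the coefficient extraction uses the linear independence of irreducibles in $KM^z_{\lambda_r}(\fg,K)$ together with torsion-freeness; this is why the factor $2$ causes no trouble. No other obstacle is expected.
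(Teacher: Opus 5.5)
Your proof is correct and follows essentially the same approach as the paper: expand $M$ in irreducibles and use Proposition~\ref{prop:translationtowall}(iii)--(iv) to separate the irreducibles with $T^{\lambda_r}_s\pi=-\pi$ from the rest. Reading off coefficients on the remaining irreducibles forces them to vanish. Your bookkeeping is slightly more careful than the paper's: its displayed formula writes $c_\pi\pi$ where $(1+T^{\lambda_r}_s)\pi = 2\pi+U_s\pi$ gives $2c_\pi\pi$, and you handle the factor $2$ explicitly via torsion-freeness.
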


\begin{proof}
Suppose $M \in KM^z_{\lambda_r}(\fg,K)$ lies in the kernel of $1+T^{\lambda_r}_s$. Write $M$ as a linear combination of irreducibles:
$$M=\sum_{\pi \in \Pi^z_{\lambda_r}(\fg,K)} c_{\pi} \pi, \qquad c_{\pi} \in \ZZ.$$ 
Then by Proposition \ref{prop:translationtowall}(iii) and (iv), we have
$$0= (1+T^{\lambda_r}_s)M = \sum_{(1+T^{\lambda_r}_s)\pi \neq 0} c_{\pi} (1+T^{\lambda_r}_s)\pi = \sum_{(1+T^{\lambda_r}_s) \pi \neq 0} c_{\pi}\pi + \sum_{(1+T^{\lambda_r}_s)\pi \neq 0} c_{\pi}U_s(\pi),$$
where each $U_s(\pi)$ is a linear combination of irreducibles $\pi'$ such that $(1+T^{\lambda_r}_s)\pi'=0$. It follows that
$$0 = \sum_{(1+T^{\lambda_r}_s)\pi \neq 0} c_{\pi}\pi + \text{(linear combination of other irreducibles)}.$$
Hence, $c_{\pi} = 0$ whenever $(1+T^{\lambda_r}_s)\pi \neq 0$ and therefore $T(\lambda_r \to \lambda)M=0$, as desired.
\end{proof}
We will also need the following elementary facts about coherent continuation. 

\begin{lemma}\label{lem:cohfacts}
Let $\lambda_r \in \fh^*$ be regular and let $\lambda \in \lambda_r + X$. Then the following are true:
\begin{itemize}
    \item[(i)] for any regular $\lambda_r' \in \lambda_r + X$, we have
    $$T(\lambda_r \to \lambda) = T(\lambda_r' \to \lambda) \circ T(\lambda_r \to \lambda_r');$$
    \item[(ii)] for any $w \in W(\lambda)$, we have
    $$T(\lambda_r \to \lambda) = T(w\lambda_r \to w\lambda).$$
\end{itemize}
\end{lemma}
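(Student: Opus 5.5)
Both identities follow from the uniqueness of coherent families at regular parameters (Theorem \ref{thm:coherentunique}) together with Definition \ref{def:coherent}. The plan is to show in each case that the two sides are values of the same coherent family. Note that $\lambda$ may be singular, so we never need uniqueness at $\lambda$ itself; it is only invoked at the regular base points $\lambda_r$ and $\lambda_r'$.

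For (i), fix $M \in KM^z_{\lambda_r}(\fg,K)$ and let $\Theta^M\colon \lambda_r + X \to KM^z(\fg,K)$ be the unique coherent family with $\Theta^M(\lambda_r)=M$. Set $M' := T(\lambda_r \to \lambda_r')M = \Theta^M(\lambda_r')$. Since $\lambda_r' \in \lambda_r + X$, we have $\lambda_r'+X = \lambda_r+X$, so $\Theta^M$ is also a coherent family based on $\lambda_r'+X$ with value $M'$ at $\lambda_r'$. Because $\lambda_r'$ is regular, uniqueness gives $\Theta^{M'} = \Theta^M$. Evaluating at $\lambda$ yields
$$T(\lambda_r' \to \lambda)T(\lambda_r \to \lambda_r')M = \Theta^{M'}(\lambda) = \Theta^M(\lambda) = T(\lambda_r \to \lambda)M.$$

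For (ii), first check that the right-hand side makes sense. Since $w \in W(\lambda)$, the difference $w\lambda - \lambda$ lies in the root lattice, hence in $X$. Also $w\lambda_r - \lambda_r = w(\lambda_r - \lambda) - (\lambda_r - \lambda) + (w\lambda - \lambda)$, and this lies in $X$ because $X$ is $W$-stable. Hence $w\lambda_r$ and $w\lambda$ lie in the same coset $\lambda_r+X$. Moreover $w\lambda_r$ is regular, and $KM^z_{w\lambda_r} = KM^z_{\lambda_r}$ and $KM^z_{w\lambda}=KM^z_{\lambda}$, since infinitesimal characters are $W$-orbits.

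Next, given $M$ of infinitesimal character $\lambda_r$, let $\Theta = \Theta^M$ be the coherent family based at $\lambda_r$, and define $\Theta'(\mu) := \Theta(w^{-1}\mu)$ on $\lambda_r + X$. This is well defined because $w^{-1}(\lambda_r+X) = \lambda_r+X$, by the computation above applied to $w^{-1} \in W(\lambda)$. The function $\Theta'$ is again a coherent family: condition (i) holds since $w^{-1}\mu$ and $\mu$ define the same infinitesimal character. For condition (ii), we have $\Theta(w^{-1}\mu)\otimes F = \sum_\nu m(\nu,F)\Theta(\nu+w^{-1}\mu)$; reindexing by $\nu\mapsto w^{-1}\nu$ and using the $W$-invariance $m(w^{-1}\nu,F)=m(\nu,F)$ gives $\sum_\nu m(\nu,F)\Theta'(\nu+\mu)$. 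Now $\Theta'(w\lambda_r)=M$, and $w\lambda_r$ is regular, so uniqueness gives $\Theta'=\Theta^M_{w\lambda_r}$, the family based at $w\lambda_r$. Therefore
$$T(w\lambda_r\to w\lambda)M=\Theta'(w\lambda)=\Theta(\lambda)=T(\lambda_r\to\lambda)M.$$

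The only point needing care is the lattice bookkeeping in (ii): verifying that $w$ preserves the coset $\lambda_r+X$ and that the definition of $T$ only requires regularity at the base point. The hypothesis $w \in W(\lambda)$ is exactly what makes the coset bookkeeping work.
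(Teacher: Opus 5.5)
Your proposal is correct and follows essentially the same route as the paper. For (i) you use uniqueness of the coherent family at the regular point $\lambda_r'$, and for (ii) you use the twisted family $\mu \mapsto \Theta^M(w^{-1}\mu)$ together with uniqueness at $w\lambda_r$. Your added bookkeeping makes explicit two points the paper leaves implicit: that $w \in W(\lambda)$ preserves the coset $\lambda_r + X$, and the reindexing via $W$-invariance of weight multiplicities.
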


\begin{proof}
Given $M \in KM_{\lambda_r}^z(\fg,K)$, let $\Theta^{\lambda_r}$ denote the unique coherent family based on $\lambda_r+X$ such that $\Theta^{\lambda_r}(\lambda_r) = M$. Let $N = \Theta^{\lambda_r}(\lambda_r') = T(\lambda_r \to \lambda_r')M$ and let $\Theta^{\lambda'_r}$ denote the unique coherent family based on $\lambda_r'+X=\lambda_r+X$ such that $\Theta^{\lambda'_r}(\lambda_r') = N$. Since 
$\Theta^{\lambda_r}(\lambda'_r) = N = \Theta^{\lambda'_r}(\lambda_r')$, the uniqueness of coherent families implies that $\Theta^{\lambda_r}=\Theta^{\lambda'_r}$. Thus
$$T(\lambda_r \to \lambda)M = \Theta^{\lambda_r}(\lambda) = \Theta^{\lambda'_r}(\lambda) = T(\lambda_r'\to \lambda)N = T(\lambda_r' \to \lambda)T(\lambda_R \to \lambda_r')M.$$
This proves (i).

For (ii), we argue similarly. For any $\lambda' \in \lambda+X$, let $\Theta^{w\lambda_r}(\lambda') = \Theta^{\lambda_r}(w^{-1}\lambda')$. Since the weights of any finite-dimensional $G$-representation are $W(\lambda)$-invariant, the map $\Theta^{w\lambda_r}\colon \lambda+X \to KM^z(\fg,K)$ is a coherent family. On the other hand, $\Theta^{w\lambda_r}(w\lambda_r) = \Theta^{\lambda_r}(\lambda_r) = M$, so by definition $T(w\lambda_r \to w\lambda)M = \Theta^{w\lambda_r}(w\lambda) = \Theta^{\lambda_r}(\lambda)$. It follows that
$$T(\lambda_r \to \lambda)M = \Theta^{\lambda_r}(\lambda) = T(w\lambda_r \to w\lambda)M.$$
This proves (ii).
\end{proof}

\subsection{$L$-coherent families}

Choose a Levi subgroup $L \subset G$ and a maximal torus $H \subset L$. Write $X^*(L)$ for the lattice of one-dimensional representations of $L$, a sublattice of $X=X^*(H)$. 

\begin{definition}
Let $\lambda \in \fh^*$. An \emph{$L$-coherent family} of projective type-$z$ $(\fg,K)$-modules based on $\lambda+X^*(L)$ is a function
$$\Theta_L\colon \lambda + X^*(L) \to KM^z(\fg,K)$$
which is the restriction to $\lambda+X^*(L)$ of a (classical) coherent family based on $\lambda+X$.
\end{definition}

The following is an analog of Theorem \ref{thm:coherentunique}.

\begin{prop}\label{prop:Lcoherentunique}
Let $\lambda \in \fh^*$ and let $M \in KM_{\lambda}^z(\fg,K)$. Then there is an $L$-coherent family
$$\Theta_L\colon \lambda + X^*(L) \to KM^z(\fg,K)$$
such that $\Theta_L(\lambda)=M$. If $\lambda$ is in the good range for $\fl$ (Definition \ref{def:weaklyfair}), then $\Theta_L$ is unique.
\end{prop}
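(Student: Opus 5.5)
Existence is immediate from Theorem \ref{thm:coherentunique}. Choose a coherent family $\Theta\colon \lambda + X \to KM^z(\fg,K)$ with $\Theta(\lambda)=M$; such a family exists for any $\lambda$, regular or not. Its restriction $\Theta_L := \Theta|_{\lambda+X^*(L)}$ is then an $L$-coherent family with $\Theta_L(\lambda)=M$.

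For uniqueness, suppose $\lambda$ is in the good range for $\fl$, and fix a parabolic $\fq'=\fl\oplus\fu'$ for which $\lambda$ is good. Let $\Theta$ and $\Theta'$ be two classical coherent families on $\lambda+X$ with $\Theta(\lambda)=\Theta'(\lambda)=M$. We must show that they agree on $\lambda+X^*(L)$. Put $\Phi:=\Theta-\Theta'$. This is a coherent family with $\Phi(\lambda)=0$, and the goal is $\Phi(\lambda+\mu)=0$ for every $\mu\in X^*(L)$.

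\textbf{Step 1: pass to a regular point.} Choose $\lambda_r\in\lambda+X$ regular such that $\lambda$ lies in the closed Weyl chamber of $\lambda_r$. Since $\lambda$ is good for $\fu'$, every root $\alpha$ with $\langle\lambda,\alpha^\vee\rangle=0$ lies in $\Delta(\fl,\fh)$, so we may take $\lambda_r=\lambda+\epsilon$ with $\epsilon$ a small dominant perturbation in the $\fl$-directions; the only issue is the singular $\fl$-roots. Because $\Phi$ is determined by $N:=\Phi(\lambda_r)$, we have
$$\Phi(\lambda')=T(\lambda_r\to\lambda')N.$$
In particular $T(\lambda_r\to\lambda)N=0$.

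\textbf{Step 2: show $T(\lambda_r\to\lambda+\mu)N=0$ for all $\mu\in X^*(L)$.} Since $\mu$ is orthogonal to the roots of $\fl$, the singular roots of $\lambda+\mu$ include $\Delta_\lambda\subset\Delta(\fl,\fh)$. Moreover $\lambda_r+\mu$ lies in the same $\fl$-chamber as $\lambda_r$. The danger is that $\lambda+\mu$ may cross walls for roots in $\fu'$, which would change the chamber.

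I would handle this by translation in two stages.

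\emph{Stage (a).} By Lemma \ref{lem:cohfacts}(i), $T(\lambda_r\to\lambda+\mu)=T(\lambda_r+\mu\to\lambda+\mu)\circ T(\lambda_r\to\lambda_r+\mu)$. When $\lambda+\mu$ stays good, $\lambda$ and $\lambda+\mu$ lie in the same facet-type chamber, and translation preserves the structure of irreducibles (Proposition \ref{prop:translationtowall}). Using the bijection (ii) and criterion (iii) of that proposition, translation to the wall for $\lambda+\mu$ kills exactly the same irreducibles as translation to the wall for $\lambda$, namely those with $T_s\pi=-\pi$ for some simple $s\in W_\lambda$. Hence the kernels agree.

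\emph{Stage (b).} For general $\mu$, take $N_0\gg0$ with $\mu+N_0\cdot 2\rho_{\fu'}$ keeping everything good; this is possible by Lemma \ref{lem:rhotwistgoodrange}, since $2\rho_{\fu'}\in X^*(L)$. The point is that $\Phi$ restricted to $\lambda+X^*(L)$ behaves polynomially. The coherent-family identity applied to $F$ ranging over $G$-modules whose extreme weights are $L$-characters gives linear recursions. These recursions, combined with the vanishing of $\Phi$ on the good-range part of $\lambda+X^*(L)$ (a Zariski-dense cone), force $\Phi\equiv0$ on $\lambda+X^*(L)$. A cleaner alternative is to use that the character of $\Phi(\lambda')$ depends polynomially on $\lambda'$ on each Weyl chamber coset (Vogan's description: coherent families are determined by a $W$-equivariant polynomial structure, cf. the proof of \cite[Theorem 7.2.27]{Vogan1981}). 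A polynomial vanishing on a translated cone in $X^*(L)$ vanishes identically.

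\textbf{Main obstacle.} I expect Step 2 to be the hard part, specifically showing that vanishing at $\lambda$ propagates across the whole lattice $\lambda+X^*(L)$, including the walls for $\fu'$-roots. I would first try the polynomial-dependence argument, since it avoids the wall-crossing bookkeeping. If that fails, I would fall back on the kernel comparison via Lemma \ref{lem:kers}, applied to the simple reflections of $W_\lambda$ and noting that $W_\lambda=W_{\lambda+\mu}$ in the good range.
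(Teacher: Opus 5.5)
Existence, your Step 1 reduction to $\ker T(\lambda_r\to\lambda)\subseteq\ker T(\lambda_r\to\lambda+\mu)$, and Stage (a) are all fine. If $\lambda+\mu$ is still good for $\fq'$, it lies in the same facet of the closed chamber of $\lambda_r$ as $\lambda$. Hence $W_{\lambda+\mu}=W_\lambda$ with the same simple reflections, and Proposition \ref{prop:translationtowall}(i)--(iii) shows both kernels are spanned by the same irreducibles.

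Stage (b), however, is the whole content of the uniqueness claim, and there is a genuine gap: neither sketch works.
\begin{itemize}
\item The recursion idea fails because $\Phi(\lambda')\otimes F=\sum_\nu m(\nu,F)\Phi(\lambda'+\nu)$ runs over \emph{all} weights $\nu$ of $F$, most of which are not in $X^*(L)$. Meanwhile $\Phi$ need not vanish off $\lambda+X^*(L)$: $\Phi(\lambda_r)=N$ can be any element of $\ker T(\lambda_r\to\lambda)$. So nothing closes up on $\lambda+X^*(L)$.
\item The ``polynomial on each Weyl chamber'' idea is self-defeating. The character of $\Phi(\lambda')$ is not polynomial in $\lambda'$. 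Moreover, a chamber-by-chamber structure says nothing about points $\lambda+\mu$ on the far side of a wall for a root of $\fu'$, and those are exactly the points you still need.
\item Your fallback breaks at the same place. Outside the good cone, $W_{\lambda+\mu}\neq W_\lambda$ in general, and $\lambda+\mu$ is not in the closed chamber of $\lambda_r$. So Lemma \ref{lem:kers} and Proposition \ref{prop:translationtowall} do not apply to $T(\lambda_r\to\lambda+\mu)$ as stated.
\end{itemize}

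The missing idea, which is the paper's argument, is a Weyl-group conjugation that handles every $\lambda'\in\lambda+X^*(L)$ at once, with no wall-crossing and no need for Stage (a).
\begin{itemize}
\item Goodness gives $W_\lambda\subseteq W_L$, and $\lambda'-\lambda\in X^*(L)$ gives $W_L\cap W_{\lambda'}=W_L\cap W_\lambda=W_\lambda$. So $W_\lambda$ is a parabolic subgroup of $W_{\lambda'}$.
\item Pick $w\in W(\lambda)$ with $w\lambda'$ in the closed chamber of $\lambda_r$. Then pick $w_0\in W_{w\lambda'}$ such that $w_1:=w_0w$ conjugates the simple reflections of $W_\lambda$ to simple reflections of $W_{w_1\lambda'}$; note $w_1\lambda'=w\lambda'$.
\item Lemma \ref{lem:cohfacts} gives $T(\lambda_r\to\lambda')=T(\lambda_r\to w_1\lambda')\,T^{\lambda_r}_{w_1}$.
\item If $T^{\lambda_r}_s\pi=-\pi$ for a simple $s\in W_\lambda$, then $T^{\lambda_r}_{w_1sw_1^{-1}}T^{\lambda_r}_{w_1}\pi=T^{\lambda_r}_{w_1}T^{\lambda_r}_s\pi=-T^{\lambda_r}_{w_1}\pi$. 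So Lemma \ref{lem:kers} kills $T^{\lambda_r}_{w_1}\pi$.
\end{itemize}

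Alternatively, your analytic route can be repaired, but it needs the correct input. On the regular set of each Cartan subgroup, $D\cdot\Phi(\lambda')=\sum_{w\in W}c_we^{w\lambda'}$, where $D$ is the Weyl denominator and the $c_w$ are independent of $\lambda'$ on all of $\lambda+X$. This is a global exponential dependence, not a chamberwise polynomial one, and you would have to derive it from the tensoring identity. The lattice points of $\{\mu\in X^*(L):\langle\mu,\alpha^\vee\rangle\ge 0\ \forall\alpha\in\Delta(\fu')\}$ generate $X^*(L)$, because $2\rho_{\fu'}$ is an interior point. Artin's independence of characters then carries the vanishing from Stage (a) to all of $\lambda+X^*(L)$. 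As written, though, the proposal only establishes uniqueness on the good cone.
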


\begin{proof}
By Theorem \ref{thm:coherentunique}, there is a (classical) coherent family $\Theta\colon \lambda + X \to KM(\fg,K)$ such that $\Theta(\lambda)=M$. Its restriction to $\lambda+X^*(L)$ is an $L$-coherent family with the desired property.

Now suppose that $\lambda$ is in the good range for a parabolic subalgebra $\fq \subset \fg$ with Levi factor $\fl$. Choose any $\lambda' \in \lambda+X^*(L)$ and $\lambda_r \in \lambda+X$ regular with $\lambda$ in its chamber. For the uniqueness claim, it suffices to show that
$$\ker{T(\lambda_r \to \lambda)} \subseteq \ker{T(\lambda_r \to \lambda')}.$$
Write $W_{L,\lambda'} = W_L \cap W_{\lambda'}$, where $W_L \subset W$ is the Weyl group of $L$. Note that $W_{L,\lambda'}$ is a Levi subgroup of $W_{\lambda'}$. Since $\lambda$ is in the good range for $\fq$, we have that $W_{L,\lambda}=W_{\lambda}$, and since $\lambda-\lambda' \in X^*(L)$, we have that $W_{L,\lambda}=W_{L,\lambda'}$. Hence, $W_{\lambda}$ is a Levi subgroup of $W_{\lambda'}$. Now choose an element $w \in W(\lambda)$ such that $\lambda_r$ contains $w\lambda'$ in its (closed) Weyl chamber, and let $s_1,...,s_n$ be the simple reflections in $W_{\lambda}$ (corresponding to the positive system determined by $\lambda_r$). Then $ws_1w^{-1},...,ws_nw^{-1}$ are a set of (possibly nonstandard) simple reflections for $wW_{\lambda}w^{-1}$, a Levi subgroup of $wW_{\lambda'}w^{-1} = W_{w\lambda'}$. To make them standard, choose an element $w_0 \in W_{w\lambda'}$ such that $w_0wW_{\lambda}w^{-1}w_0^{-1}$ is a standard Levi subgroup of $W_{w\lambda'}$ with simple reflections $w_0ws_1w^{-1}w_0^{-1},...,w_0ws_nw^{-1}w_0^{-1}$, and let $w_1=w_0w$. Note that $w_1\lambda' = w_0w\lambda' = w\lambda'$. 

By Proposition \ref{prop:translationtowall}, $\ker{T(\lambda_r \to \lambda)}$ is spanned by irreducibles $\pi$ such that $T_s^{\lambda_r}\pi=-\pi$ for a simple reflection $s \in W_{\lambda}$. Fix some such pair $(\pi,s)$ and let $s' = w_1sw_1^{-1}$, a simple reflection in $W_{w_1\lambda'}$ by the construction of $w_1$. By Lemma \ref{lem:cohfacts}, we have that
$$T(\lambda_r \to \lambda')\pi = T(w_1^{-1}\lambda_r \to \lambda') T(\lambda_r \to w_1^{-1}\lambda_r)\pi = T(\lambda_r \to w_1\lambda') T^{\lambda_r}_{w_1}\pi.$$
On the other hand,
$$T_{s'}^{\lambda_r}T^{\lambda_r}_{w_1}\pi = T^{\lambda_r}_{w_1sw_1^{-1}w_1}\pi = T_{w_1}^{\lambda_r}T_s^{\lambda_r}\pi = -T_{w_1}^{\lambda_r}\pi.$$
Thus, $T_{w_1}^{\lambda_r}\pi$ is contained in $\ker{T(\lambda_r \to w_1\lambda')}$ by Lemma \ref{lem:kers}. And so $T(\lambda_r \to \lambda')\pi = T(\lambda_r \to w_1\lambda')T^{\lambda_r}_{w_1}\pi=0$, as desired. This completes the proof.
\end{proof}

This allows us to define 

\begin{definition}\label{def:Lcoherentcontinuation}
Suppose $\lambda \in \fh^*$ is in the good range for $\fl$ (Definition \ref{def:weaklyfair}) and let $\lambda' \in \lambda + X^*(L)$. Then
$$T_L(\lambda \to \lambda')\colon KM_{\lambda}^z(\fg,K) \to KM^z_{\lambda'}(\fg,K)$$
is the $\ZZ$-linear map defined by
$$T_L(\lambda \to \lambda')M = \Theta^M_L(\lambda'),$$
where $\Theta_L^M\colon \lambda + X^*(L) \to KM^z(\fg,K)$ is the unique $L$-coherent family such that $\Theta_L^M(\lambda)=M$.
\end{definition}

We conclude by mentioning an important construction of $L$-coherent families. Assume that $L$ is $\theta$-stable. Then there is a group homomorphism
$$R^{\fg}_{\fq}\colon KM^{zz(\rho_{\fu})}(\fl,L \cap K) \to KM^z(\fg,K).$$

\begin{prop}\label{prop:cohindLcoherent}
Let $\lambda \in \fh^*$ and let $M_L \in KM_{\lambda}^{zz(\rho_{\fu})}(\fl, L \cap K)$. Then the function
$$\Theta_L\colon \lambda + X^*(L) \to KM^z(\fg,K), \qquad \Theta_L(\lambda+\mu) = R^{\fg}_{\fq}(M_L \otimes \mu)$$
is an $L$-coherent family.
\end{prop}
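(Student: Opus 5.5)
The plan is to exhibit a classical coherent family $\Theta_0$ of projective $(\fl, L\cap K)$-modules based on $\lambda+X$ whose restriction to $\lambda+X^*(L)$ is $\lambda+\mu \mapsto M_L\otimes\mu$. Then we push it forward by $R^{\fg}_{\fq}$ and apply Proposition \ref{prop:coherentinduction}. The function $R^{\fg}_{\fq}\circ\Theta_0$ is then a coherent family of type-$z$ $(\fg,K)$-modules based on $\lambda+X$. Its restriction to $\lambda+X^*(L)$ is $\Theta_L$, which is exactly the definition of an $L$-coherent family.

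\textbf{Step 1.} Use Theorem \ref{thm:coherentunique}, applied to the reductive group $L$ (with its maximal torus $H$, lattice $X=X^*(H)$, and type $zz(\rho_{\fu})$). It gives a coherent family $\Theta_0\colon\lambda+X\to KM^{zz(\rho_{\fu})}(\fl,L\cap K)$ with $\Theta_0(\lambda)=M_L$.

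\textbf{Step 2.} Show $\Theta_0(\lambda+\mu)=M_L\otimes\mu$ for every $\mu\in X^*(L)$. A character $\mu\in X^*(L)$ is a one-dimensional algebraic representation $F_\mu$ of $L$, and its only $H$-weight is $\mu$. The coherence identity (ii) for $L$ therefore gives $\Theta_0(\lambda)\otimes F_\mu=\Theta_0(\lambda+\mu)$, that is, $M_L\otimes\mu=\Theta_0(\lambda+\mu)$. No regularity of $\lambda$ is needed here, since we only use existence together with the defining identity, not uniqueness.

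\textbf{Step 3.} Apply Proposition \ref{prop:coherentinduction} to $\Theta_0$. This shows $R^{\fg}_{\fq}\circ\Theta_0$ is a coherent family, and by Step 2 it restricts on $\lambda+X^*(L)$ to $\lambda+\mu\mapsto R^{\fg}_{\fq}(M_L\otimes\mu)=\Theta_L(\lambda+\mu)$.

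\textbf{Main obstacle.} The only delicate point is bookkeeping with types and infinitesimal characters. We must check that $M_L\otimes\mu$ is still of type $zz(\rho_{\fu})$: an honest character of $L$ is trivial on $\pi_1(L)^{can}$, so tensoring preserves the type. We must also check that it has infinitesimal character $\lambda+\mu$, which holds because $\mu$ is central in $\fl^*$. Finally, the shift by $\rho_{\fu}$ built into $R^{\fg}_{\fq}$ has to be compatible with the parametrizations. Proposition \ref{prop:coherentinduction} already handles this with the same indexing as ours, since $R^{\fg}_{\fq}$ preserves infinitesimal character by Theorem \ref{thm:propertiesofI}. So I expect this check to be routine.
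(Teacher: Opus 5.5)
Your proposal is correct and follows the paper's proof essentially step for step. The paper proceeds the same way: existence of a classical coherent family $\Theta_0$ for $L$ with $\Theta_0(\lambda)=M_L$ via Theorem~\ref{thm:coherentunique}, then $\Theta_0(\lambda+\mu)=M_L\otimes\mu$ for $\mu\in X^*(L)$ directly from the coherence identity, and finally Proposition~\ref{prop:coherentinduction} applied to $R^{\fg}_{\fq}\circ\Theta_0$ followed by restriction to $\lambda+X^*(L)$.
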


\begin{proof}
By the existence claim in Theorem \ref{thm:coherentunique}, there is a (classical) coherent family for $L$
$$\Theta_0\colon \lambda + X \to KM^{zz(\rho_{\fu})}(\fl,L \cap K)$$
such that $\Theta_0(\lambda)=M_L$. For any $\mu \in X^*(L)$, we have (by the definition of coherent family)
$$\Theta_0(\lambda+\mu) = \Theta_0(\lambda) \otimes \mu.$$
Now let $\Theta := R^{\fg}_{\fq} \circ \Theta_0$.
By Proposition \ref{prop:coherentinduction}, this is a coherent family for $G$, so its restriction to $\lambda+X^*(L)$ is an $L$-coherent family. But for any $\mu \in X^*(L)$ we have
$$\Theta(\lambda+\mu) = R^{\fg}_{\fq} \Theta_0(\lambda+\mu) = R^{\fg}_{\fq}(M_L \otimes \mu) = \Theta_L(\lambda+\mu).$$
This completes the proof. 
\end{proof}

\section{The Langlands classification via geometric parameters}\label{sec:ABV}

In this section, we will recall the ABV formulation of the Langlands classification and the definition of Arthur packets. We will also define in Section \ref{sec:convolution} some convolution functors on categories of sheaves on spaces of Langlands parameters. We will show in Section \ref{sec:convolutiondualtocoherentcontinuation} that these functors are dual, in a certain precise sense, to coherent continuation.

\subsection{Geometric parameters}\label{sec:ABVparameters}

In this subsection, we will recall the definition of the ABV parameter space. It is helpful to introduce some general notation for reductive groups. Suppose $A$ is a complex reductive group with Lie algebra $\mathfrak{a}$ and let $\lambda \in \mathfrak{a}$ be a semisimple element. For each $z \in \CC$, let $\fa_z$ denote the $z$-eigenspace of the operator $\ad(\lambda)$. Consider the nilpotent subalgebra
$$\mathfrak{n}(\lambda) = \bigoplus_{n \in \ZZ_{>0}} \fa_n \subset \fa.$$
The \emph{canonical flat through $\lambda$} is the affine subspace of $\fa$ defined by
$$\mathcal{F}(\lambda) = \lambda+\mathfrak{n}(\lambda).$$
If $\OO \subset \fa$ is a semisimple $A$-orbit, we write
$$\mathcal{F}(\OO) = \{\mathcal{F}(\lambda) \mid \lambda \in \OO\}.$$
It is easy to see that the set $\mathcal{F}(\OO)$ is a partition of $\OO$. Moreover, the map $e: \fa \to A$ given by $e(\lambda)=\exp(2\pi i \lambda)$ is constant on canonical flats. So if $\Lambda$ is a canonical flat, we can write $e(\Lambda)$ without ambiguity.

\begin{definition}
Let ${}^{\vee}G^{\Gamma}$ be a weak $E$-group. A \emph{geometric parameter} for ${}^{\vee}G^{\Gamma}$ is a pair $(y,\Lambda)$ consisting of a semisimple element $y \in {}^{\vee}G^{\Gamma} - {}^{\vee}G$ and a canonical flat $\Lambda \subset {}^{\vee}\fg$ such that $y^2 = e(\Lambda)$.
\end{definition}

The set of geometric parameters for ${}^{\vee}G^{\Gamma}$ is denoted $X({}^{\vee}G^{\Gamma})$. Note that ${}^{\vee}G$ acts by conjugation on $X({}^{\vee}G^{\Gamma})$. Two geometric parameters are said to be \emph{equivalent} if they are conjugate under this action.

The relation between geometric and classical Langlands parameters (Definition \ref{def:Langlandsparam}) is described in the following proposition.

\begin{prop}[{\cite[Corollary 5.9]{AdamsBarbaschVogan}}]\label{prop:classicaltogeometric}
For any Langlands parameter $\varphi \in P({}^{\vee}G^{\Gamma})$, there is a (unique) semisimple element $\lambda = \lambda(\varphi) \in {}^{\vee}\fg$ (called the `infinitesimal character' of $\varphi$) such that
$$\varphi(\exp(t)) = \exp(t\lambda + \bar{t}\Ad(\varphi(j))\lambda), \qquad t \in \CC.$$
The ${}^{\vee}G$-equivariant map 
$$P({}^{\vee}G^{\Gamma}) \to X({}^{\vee}G^{\Gamma}), \qquad \varphi \mapsto (\exp(\pi i \lambda)\varphi(j), \mathcal{F}(\lambda)).$$
induces a bijection on ${}^{\vee}G$-orbits
$$\Phi({}^{\vee}G^{\Gamma}) = P({}^{\vee}G^{\Gamma})/{}^{\vee}G \xrightarrow{\sim} X({}^{\vee}G^{\Gamma})/{}^{\vee}G.$$
\end{prop}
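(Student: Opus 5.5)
The plan is to build the inverse map explicitly from the structure of $W_{\RR}$ and then check the two compositions. The forward direction is direct. Since $\varphi|_{\CC^{\times}}$ is a continuous homomorphism from $\CC^{\times}$ into ${}^{\vee}G$ with semisimple image, it comes from a pair of commuting semisimple elements $\lambda, \lambda' \in {}^{\vee}\fg$ with $\varphi(\exp(t)) = \exp(t\lambda + \bar t \lambda')$. Here we use that $\CC^{\times}$ is the quotient of $\CC$ by $2\pi i\ZZ$, so $\lambda-\lambda'$ must satisfy an integrality condition: $\exp(2\pi i(\lambda-\lambda'))=1$. The relation $jzj^{-1}=\bar z$ then forces $\lambda' = \Ad(\varphi(j))\lambda$. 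Uniqueness of $\lambda$ follows by differentiating in $t$ and $\bar t$ separately.

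Next I check that the forward map lands in $X({}^{\vee}G^{\Gamma})$. Put $y = \exp(\pi i\lambda)\varphi(j)$. Then
$$y^2 = \exp(\pi i\lambda)\,\exp(\pi i \Ad(\varphi(j))\lambda)\,\varphi(j)^2.$$
Since $\varphi(j)^2 = \varphi(-1) = \exp(\pi i\lambda - \pi i\lambda')$, using $-1=\exp(\pi i)$ and the fact that $\lambda,\lambda'$ commute, this gives $y^2 = \exp(2\pi i\lambda) = e(\mathcal{F}(\lambda))$. The element $y$ is semisimple because $\varphi(j)$ is semisimple and $y$ commutes with suitable data; more carefully, I would note that a power of $y$ lies in the semisimple image of $\CC^{\times}$ up to the semisimple $\varphi(j)^2$. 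Equivariance under ${}^{\vee}G$ is clear.

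For bijectivity on orbits, the inverse construction goes as follows. Given $(y,\Lambda)$, first use ${}^{\vee}G$-conjugacy within the canonical flat, via the group $N(\lambda)=\exp\fn(\lambda)$, which acts transitively on $\mathcal{F}(\lambda)$ ([ABV, Prop. 5.x]). This lets me choose a representative $\lambda \in \Lambda$ with $\Ad(y)\lambda$ commuting with $\lambda$. Concretely, the pair $(y,\lambda)$ must be adjusted so that $\Ad(y)$ preserves the centralizer of $\lambda$; the key input is $y^2 = \exp(2\pi i\lambda)$, which shows that $\Ad(y)^2$ acts on ${}^{\vee}\fg_z$ by $e^{2\pi i z}$. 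I then define $\varphi(j) = \exp(-\pi i\lambda)y$ and $\varphi(\exp t) = \exp(t\lambda+\bar t\Ad(\varphi(j))\lambda)$, and verify the Weil group relations. This verification reverses the computation above.

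Injectivity on orbits means showing that two representatives $\lambda_1,\lambda_2$ of the same flat with valid $y$ give conjugate parameters. This comes down to showing that the stabilizer of $(y,\Lambda)$ acts transitively on the admissible choices of $\lambda$ in $\Lambda$. I expect this to be the main obstacle. I would first try to prove it with a Jacobson–Morozov/Levi-type argument: the admissible $\lambda$ are those in $\Lambda$ fixed by an involution built from $\Ad(y)$, and the fixed points of a finite-order automorphism acting on the unipotent group $N(\lambda)$ form a single orbit, by vanishing of $H^1$ of a finite group with unipotent coefficients.
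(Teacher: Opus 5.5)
The paper gives no proof of this statement; it quotes \cite[Corollary 5.9]{AdamsBarbaschVogan}. The first half of your proposal is correct and is the routine part: extracting commuting semisimple $\lambda,\lambda'$, checking $y^2=\exp(2\pi i\lambda)=e(\mathcal{F}(\lambda))$, and rebuilding $\varphi$ from a pair $(y,\lambda)$ with $[\lambda,\Ad(y)\lambda]=0$. The substance is the passage from $\lambda$ to the flat. For every geometric parameter $(y,\Lambda)$ you must show that $\Lambda^{a}:=\{\mu\in\Lambda \mid [\mu,\Ad(y)\mu]=0\}$ is nonempty (surjectivity) and is a single $\mathrm{Stab}_{{}^{\vee}G}(y,\Lambda)$-orbit (injectivity). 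Here there is a genuine gap.

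For existence you give no real argument. Transitivity of $N(\lambda)$ on $\Lambda$, and triviality of $\Ad(y)^2$ on ${}^{\vee}\mathfrak{r}$, do not produce a point of $\Lambda^{a}$. Your proposed criterion, that $\Ad(y)$ preserve $Z(\mu)$, is strictly stronger than admissibility and generally cannot be met. Take ${}^{\vee}G=GL_3$, $\varphi(z)=\mathrm{diag}(z^{s}\bar z^{s-1},z^{s-1}\bar z^{s},(z\bar z)^{s})$, and $\varphi(j)\colon e_1\mapsto e_2,\ e_2\mapsto -e_1,\ e_3\mapsto e_3$. Then $\lambda=\mathrm{diag}(s,s-1,s)$, and $y$ is a scalar multiple of a monomial matrix interchanging $\CC e_1$ and $\CC e_2$ and fixing $\CC e_3$. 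The flat is $\Lambda=\lambda+\CC E_{12}+\CC E_{32}$. For every $\mu\in\Lambda$, $Z(\mu)$ is the stabilizer of a decomposition $\langle e_1,e_3\rangle\oplus\ell_\mu$ into a plane and a line. $\Ad(y)$ sends this to the stabilizer of $\langle e_2,e_3\rangle\oplus y\ell_\mu$, a different Levi subgroup. So no $\mu\in\Lambda$ satisfies your criterion, even though $\lambda$ itself is admissible ($\Ad(y)\lambda=\mathrm{diag}(s-1,s,s)$).

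For uniqueness, the non-abelian $H^1$ idea is the right one, but you aim it at the wrong object. $\theta:=\Ad(y)$ carries ${}^{\vee}P={}^{\vee}P(\Lambda)$ to ${}^{\vee}P(\Ad(y)\Lambda)$, which is usually a different parabolic subgroup of ${}^{\vee}R$; in the example these are the stabilizers of $\langle e_1,e_3\rangle$ and of $\langle e_2,e_3\rangle$. So $\theta$ neither preserves $\Lambda$ nor normalizes $N(\lambda)$, and there is no involution of $\Lambda$ whose fixed points you can take.

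What works, and handles existence at the same time, is the following.
\begin{enumerate}
\item Because $y^2=e(\Lambda)$ is central in ${}^{\vee}R$, $\theta$ is an involution of ${}^{\vee}R$.
\item The group $H={}^{\vee}P\cap\theta({}^{\vee}P)$ is connected, $\theta$-stable, and contains a maximal torus of ${}^{\vee}R$. Hence its Levi factors form a torsor for $U=R_u(H)$, with a compatible $\theta$-action.
\item Send a Levi factor $M$ to the unique point $\lambda_M\in\Lambda$ centralizing $M$; this map is $U$-equivariant.
\item A point $\mu\in\Lambda$ is admissible if and only if $\mu=\lambda_M$ for some $\theta$-stable $M$. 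In one direction take $M=Z(\mu)\cap Z(\theta\mu)$. In the other, $\lambda_M$ and $\theta\lambda_M$ both centralize a maximal torus of $M$, hence commute.
\item $H^1(\ZZ/2\ZZ,U)$ vanishes for unipotent $U$ in characteristic zero. So $\theta$-stable Levi factors exist, and they form a single $U^{\theta}$-orbit.
\item Since $U^{\theta}\subset Z_{{}^{\vee}G}(y)\cap{}^{\vee}P\subset \mathrm{Stab}_{{}^{\vee}G}(y,\Lambda)$, this gives both surjectivity and injectivity on orbits.
\end{enumerate}
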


Next, we recall some facts from \cite[Section 6]{AdamsBarbaschVogan} about the local geometry of $X({}^{\vee}G^{\Gamma})$. Fix a semisimple ${}^{\vee}G$-orbit $\OO \subset {}^{\vee}\fg$, and consider the sets
$$X({}^{\vee}G^{\Gamma},\OO) = \{(y,\Lambda) \in X({}^{\vee}G^{\Gamma}) \mid \Lambda \in \mathcal{F}(\OO)\}, \qquad Y(\OO) = \{y \in {}^{\vee}G^{\Gamma} - {}^{\vee}G \mid y^2 \in e(\OO)\}.$$
There are ${}^{\vee}G$-equivariant surjections
$$(\bullet)^2\colon Y(\OO) \twoheadrightarrow e(\OO), \qquad e\colon \mathcal{F}(\OO) \twoheadrightarrow e(\OO),$$
and hence a ${}^{\vee}G$-equivariant isomorphism
$$X({}^{\vee}G^{\Gamma},\OO) \simeq Y(\OO) \times_{e(\OO)} \mathcal{F}(\OO).$$
By \cite[Proposition 6.13]{AdamsBarbaschVogan}, the set $Y(\OO)$ decomposes into finitely many ${}^{\vee}G$-orbits, denoted
$$Y_1,\ Y_2, ..., \ Y_k \subset Y(\OO).$$
(of course, the integer $k$ and the sets $Y_i$ depend on the choice of $\OO$, even though this is not reflected in the notation). Fix one such ${}^{\vee}G$-orbit $Y_i \subset Y(\OO)$, and consider the subset
\begin{equation}\label{eq:XYO}X({}^{\vee}G^{\Gamma},\OO,Y_i)  = \{(y_i,\Lambda) \in X({}^{\vee}G^{\Gamma}) \mid y_i \in Y_i, \ \Lambda \in \mathcal{F}(\OO)\} \subset X({}^{\vee}G^{\Gamma},\OO).\end{equation}
There is a ${}^{\vee}G$-equivariant surjection
$$ (\bullet)^2\colon Y_i \twoheadrightarrow e(\OO),$$
and hence a ${}^{\vee}G$-equivariant isomorphism
$$X({}^{\vee}G^{\Gamma},\OO,Y_i) \simeq Y_i \times_{e(\OO)} \mathcal{F}(\OO).$$
Now choose an element $(y_i, \Lambda) \in X({}^{\vee}G^{\Gamma},\OO,Y_i)$. Define the following subgroups of ${}^{\vee}G$:
$${}^{\vee}R = Z_{{}^{\vee}G}(e(\Lambda))^0, \qquad 
{}^{\vee}K_i = Z_{{}^{\vee}R}(y_i), 
\qquad
{}^{\vee}P = \mathrm{Stab}_{^{\vee}R}(\Lambda).$$
Note that ${}^{\vee}R$ is (the identity component of) a pseudo-Levi subgroup of ${}^{\vee}G$ and ${}^{\vee}K_i$ (resp.~${}^{\vee}P$) is a symmetric (resp.~parabolic) subgroup of ${}^{\vee}R$. Sometimes we will write
$${}^{\vee}R = {}^{\vee}R(\Lambda), \qquad {}^{\vee}K_i = {}^{\vee}K(y_i), \qquad {}^{\vee}P={}^{\vee}P(\Lambda)$$
to indicate the dependence of these subgroups on the geometric parameter $(y_i,\Lambda)$.

There are ${}^{\vee}G$-equivariant isomorphisms
$$Y \simeq {}^{\vee}G/{}^{\vee}K_i, \qquad \mathcal{F}(\OO) \simeq {}^{\vee}G/{}^{\vee}P, \qquad e(\OO) \simeq {}^{\vee}G/{}^{\vee}R$$
and the maps $(\bullet)^2\colon Y_i \to e(\OO)$ and $e\colon \mathcal{F}(\OO) \twoheadrightarrow e(\OO)$ correspond to the inclusions ${}^{\vee}K_i \subset {}^{\vee}R$ and ${}^{\vee}P \subset {}^{\vee}R$. Thus
\begin{align*}
X({}^{\vee}G^{\Gamma},\OO,Y_i) &\simeq ({}^{\vee}G/{}^{\vee}K_i) \times_{({}^{\vee}G/{}^{\vee}R)} ({}^{\vee}G/{}^{\vee}P)\\
&\simeq {}^{\vee}G \times_{^{\vee}K_i} ({}^{\vee}R/{}^{\vee}P)
\end{align*}
In particular, there is an equivalence of categories (restrition to the fiber)
\begin{equation}\label{eq:equivalence}
D^bC^{{}^{\vee}G^{alg}}X({}^{\vee}G^{\Gamma},\OO,Y_i) \xrightarrow{\sim} D^bC^{{}^{\vee}K_i^{alg}}({}^{\vee}R/{}^{\vee}P)
\end{equation}

\subsection{Complete geometric parameters}

Let ${}^{\vee}G^{\Gamma}$ be a weak $E$-group. Fix a semisimple ${}^{\vee}G$-orbit $\OO \subset {}^{\vee}\fg$ and a ${}^{\vee}G$-orbit $Y_i \subset Y(\OO)$. Form the algebraic variety $X({}^{\vee}G^{\Gamma},\OO,Y_i)$ as in (\ref{eq:XYO}), and let ${}^{\vee}G^{alg}$ denote the algebraic universal covering group of ${}^{\vee}G$, i.e., the projective limit of all finite coverings of ${}^{\vee}G$.

\begin{definition}[{\cite[Definition 7.1]{AdamsBarbaschVogan}}]\label{def:completeparam}
A \emph{complete geometric parameter} for $({}^{\vee}G^{\Gamma},\OO,Y_i)$ is a pair $(x,\tau)$ consisting of a geometric parameter $x \in X({}^{\vee}G^{\Gamma},\OO,Y_i)$ and an irreducible representation $\tau$ of the profinite group
$$A^{loc}_x := \mathrm{Com}(\mathrm{Stab}_{{}^{\vee}G^{alg}}(x)).$$
\end{definition}
Note that ${}^{\vee}G^{alg}$ acts by conjugation on the set of complete geometric parameters for $({}^{\vee}G^{\Gamma},\OO,Y_i)$. Two complete geometric parameters are said to be \emph{equivalent} if they are conjugate under this action. Write $\Xi({}^{\vee}G^{\Gamma},\OO,Y_i)$ for the set of equivalence classes of complete geometric parameters for $({}^{\vee}G^{\Gamma},\OO,Y_i)$. Note that an element of $\Xi({}^{\vee}G^{\Gamma},\OO,Y_i)$ can be regarded as a pair $(S,\tau)$ consisting of a ${}^{\vee}G$-orbit $S \subset X({}^{\vee}G^{\Gamma},\OO,Y_i)$ (i.e. an equivalence class of geometric parameters) and an irreducible ${}^{\vee}G^{alg}$-equivariant local system $\tau$ on $S$.

Write 
\begin{align*}C^{{}^{\vee}G^{alg}}X({}^{\vee}G^{\Gamma},\OO,Y_i) &= \text{category of ${}^{\vee}G^{alg}$-equivariant constructible sheaves on $X({}^{\vee}G^{\Gamma},\OO,Y_i)$}\\
P^{{}^{\vee}G^{alg}}X({}^{\vee}G^{\Gamma},\OO,Y_i) &= \text{category of ${}^{\vee}G^{alg}$-equivariant perverse sheaves on $X({}^{\vee}G^{\Gamma},\OO,Y_i)$}
\end{align*}
Denote by $K^{{}^{\vee}G^{alg}}X({}^{\vee}G^{\Gamma},\OO,Y_i)$ the shared Grothendieck group of these two categories. To each complete geometric parameter $(S,\tau) \in \Xi({}^{\vee}G^{\Gamma},\OO,Y_i)$, we can associate two natural extensions: 
\begin{align*}
\mu(S,\tau) &= j_!\tau \in C^{{}^{\vee}G^{alg}}X({}^{\vee}G^{\Gamma},\OO,Y_i)\\
P(s,\tau) &= j_{!*}\tau[\dim(S)] \in P^{{}^{\vee}G^{alg}}X({}^{\vee}G^{\Gamma},\OO,Y_i)
\end{align*}
(here, $j$ denotes the inclusion of $S$ in its closure and $j_!$ (resp.~$j_{!*}$) denotes the extension-byzero (resp.~intermediate extension)). For details, we refer the reader to the discussion following \cite[Theorem 7.9]{AdamsBarbaschVogan}. The sets
$$\{\mu(S,\tau) \mid (S,\tau) \in \Xi({}^{\vee}G^{\Gamma},\OO,Y_i)\}, \qquad \{P(S,\tau) \mid (S,\tau) \in \Xi({}^{\vee}G^{\Gamma},\OO,Y_i)\}$$
form two natural $\ZZ$-bases, in canonical bijection, for $K^{{}^{\vee}G^{alg}}X({}^{\vee}G^{\Gamma},\OO,Y_i)$. 

It is convenient to introduce some analogous notation for $X({}^{\vee}G^{\Gamma},\OO)$:

\begin{align*}
C^{{}^{\vee}G^{alg}}X({}^{\vee}G^{\Gamma},\OO) &:= \bigoplus_{i} C^{{}^{\vee}G^{alg}}X({}^{\vee}G^{\Gamma},\OO,Y_i) \\
P^{{}^{\vee}G^{alg}}X({}^{\vee}G^{\Gamma},\OO) &:= \bigoplus_{i} P^{{}^{\vee}G^{alg}}X({}^{\vee}G^{\Gamma},\OO,Y_i) \\
K^{{}^{\vee}G^{alg}}X({}^{\vee}G^{\Gamma},\OO) &:= \bigoplus_{i} K^{{}^{\vee}G^{alg}}X({}^{\vee}G^{\Gamma},\OO,Y_i)\\
\Xi({}^{\vee}G^{\Gamma},\OO) &:= \bigsqcup_{i} \Xi({}^{\vee}G^{\Gamma},\OO,Y_i)
\end{align*}
\subsection{The Langlands classification via geometric parameters}\label{sec:LLCABV}

In this section, we will recall the statement of the Langlands classification of irreducible representations, as formulated in \cite{AdamsBarbaschVogan}.

\begin{theorem}[{\cite[Theorem 10.4]{AdamsBarbaschVogan}}]\label{thm:LLCABV}
Let $(G^{\Gamma},\mathcal{W})$ be an extended group for $G$ and let $({}^{\vee}G^{\Gamma},\mathcal{S})$ be a corresponding $E$-group with second invariant $z$. Fix an orbit $\OO \subset {}^{\vee}\fg$ of semisimple elements. Then there is a natural bijection
\begin{equation}\label{eq:XiLz}\Xi({}^{\vee}G^{\Gamma},\OO) \xrightarrow{\sim} L^z_{\OO}(G^{\Gamma})\end{equation}
from complete geometric parameters to equivalence classes of nonzero final type-$z$ standard limit characters. Composing with the bijection of Theorem \ref{thm:LLC}(iv), we obtain a further bijection
$$\pi\colon \Xi({}^{\vee}G^{\Gamma},\OO) \xrightarrow{\sim} \Pi^z_{\OO}(G^{\Gamma})$$
from complete geometric parameters to equivalence classes of irreducible type-$z$ representations of strong real forms of $G$.
\end{theorem}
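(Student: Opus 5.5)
The plan is to go through the intermediate set of final limit characters, using Theorem \ref{thm:LLC}(iv) for the second half, so all the work lies in constructing the bijection (\ref{eq:XiLz}). First I reduce to a single orbit $Y_i \subset Y(\OO)$. Using the equivalence (\ref{eq:equivalence}), a ${}^{\vee}G$-orbit $S$ in $X({}^{\vee}G^{\Gamma},\OO,Y_i)$ corresponds to a ${}^{\vee}K_i$-orbit on the partial flag variety ${}^{\vee}R/{}^{\vee}P$. Each such orbit contains a point $\Lambda'$ whose stabilizer in ${}^{\vee}K_i$ contains a $\theta$-stable maximal torus ${}^{\vee}T \subset {}^{\vee}P(\Lambda')$. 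Here $\theta = \Ad(y_i)$. The pair $({}^{\vee}T, y_i)$ determines an involution of ${}^{\vee}T$, and hence, by duality of tori, an involution of a maximal torus $H$ of $G$. Using the extended group $(G^{\Gamma},\mathcal{W})$ and the $E$-group datum $\mathcal{S}$, I would fix base points on both sides (a large $({}^{\vee}\delta,{}^{\vee}B)$ and a Whittaker triple). These transport the involution to a strong real form $\delta$ together with a real Cartan $H(\RR,\delta)$, well defined up to $G$-conjugacy.

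Next I record the data of the limit character. The semisimple element $\lambda \in \Lambda'$ gives $d\Lambda^{can}$ under $\fh^* \simeq {}^{\vee}\ft$. The element $y_i$ restricted to ${}^{\vee}T$, together with $\lambda$, gives the character $\Lambda^{can}$ of the canonical cover $H(\RR)^{can}$. This is the usual Langlands correspondence for tori, $\varphi\colon W_{\RR}\to {}^{\vee}T^{\Gamma}$, via Proposition \ref{prop:classicaltogeometric}. The type of $\Lambda^{can}$ comes out as $zz(\rho)$ because the square of the chosen base point ${}^{\vee}\delta$ is encoded in the second invariant $z$. The positive imaginary and real systems $\Delta_{i\RR}^+,\Delta_{\RR}^+$ are read off from the Borel subalgebras of ${}^{\vee}\fr$ that lie in ${}^{\vee}\fp(\Lambda')$ and are compatible with $\theta$; imaginary and real roots are interchanged under duality. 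With this choice $\Lambda$ is standard.

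Then I need to handle the local system $\tau$ on $A^{loc}_x$. The component group $\mathrm{Com}(\mathrm{Stab}_{{}^{\vee}G^{alg}}(x))$ is computed from ${}^{\vee}T^{\theta}$ modulo identity components, together with the real Weyl group. Its characters correspond to the choices of $\Lambda^{can}$ on the component group of $H(\RR)^{can}$ that have a fixed differential. This has to be compared with the final/nonzero conditions of Definition \ref{def:limitcharacters}. Nonzero should correspond to the requirement that the flat $\Lambda'$ corresponds to the correct $\theta$-orbit data for compact simple imaginary roots. Final should correspond to the fact that a real root satisfying the parity condition makes the stabilizer disconnected in a way that identifies two parameters. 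This part is purely combinatorial, and I would check it rank-one by rank-one via Cayley transforms.

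The main obstacle is injectivity and surjectivity up to equivalence. One must show that $G$-conjugacy of final limit characters matches exactly ${}^{\vee}G^{alg}$-conjugacy of complete parameters, including strong real form equivalence, and that every type-$z$ final limit character arises. I would attack this by reducing both sides to pairs (torus, real Weyl group orbit). On each side I would use the known classifications, namely the Matsuki/Kostant–Sekiguchi parametrization of ${}^{\vee}K_i$-orbits on ${}^{\vee}R/{}^{\vee}P$ and the Cartan-subgroup conjugacy classes for strong real forms, and then compare the Tits-group lifts. Summing over $i$ gives (\ref{eq:XiLz}), and composing with Theorem \ref{thm:LLC}(iv) finishes the proof.
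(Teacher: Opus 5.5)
The paper gives no proof of this statement. It is quoted from \cite[Theorem 10.4]{AdamsBarbaschVogan}, whose proof compares based Cartan subgroups of $G^{\Gamma}$ with based $d$-Cartan subgroups of ${}^{\vee}G^{\Gamma}$ (ABV, Chapters 11--13). Your outline has roughly that shape, but the step where you produce the strong real form is wrong, and that step carries the real content.

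You claim that the involution of $H$ dual to $\theta|_{{}^{\vee}T}$, together with the base points, determines a strong real form $\delta$ and a Cartan $H(\RR,\delta)$ up to $G$-conjugacy. You further claim that the characters of $A^{loc}_x$ then correspond to characters of $H(\RR,\delta)^{can}$ with fixed differential. Neither is true. If $\delta$ induces $\sigma_H$ on $H$, so does every $t\delta$ with $t\in H$ and $t\sigma_H(t)\delta^2$ central of finite order. These are in general not $G$-conjugate, and it is the local system $\tau$ that chooses among them.

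For example, take $G=SL_2$, ${}^{\vee}G=PGL_2$, and the discrete series parameter at $\lambda=\rho$. The point $x$ lies in the open ${}^{\vee}K$-orbit on the flag variety ${}^{\vee}R/{}^{\vee}P$ of $PGL_2$. Its stabilizer in $PGL_2$ is $\{1,\mathrm{diag}(1,-1)\}$, so $A^{loc}_x\simeq\ZZ/4\ZZ$. The four characters correspond to the trivial representations of the two compact strong real forms and to the two discrete series of $SL_2(\RR)$. These three strong real forms are pairwise inequivalent and all induce the compact involution on the diagonal torus. Each has exactly one character of its compact Cartan with the given differential. Your recipe therefore produces a single limit character where four are required.

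The missing ingredient is a Langlands correspondence for tori that includes strong real forms. Consider the torus $E$-group generated by ${}^{\vee}T$ and $y$. Characters of the component group of the preimage of $({}^{\vee}T)^{\theta}$ in ${}^{\vee}G^{alg}$ must correspond to pairs consisting of:
\begin{itemize}
\item a strong real form normalizing $H$ with the prescribed action, up to $H$-conjugacy;
\item a character of $H(\RR,\delta)^{can}$ of the right type with the given parameter.
\end{itemize}
The data $\mathcal{W},\mathcal{S}$ serve to normalize this bijection, not to single out one $\delta$. In the example this component group is $\ZZ/4\ZZ$, matching the four strong real forms based at the diagonal torus.

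You must then compare this group with $A^{loc}_x$, and here the choice of ${}^{\vee}T$ matters. It can only be required to be $\theta$-stable and contained in ${}^{\vee}P(\Lambda')$, not in ${}^{\vee}K_i$, and even then it is not unique up to conjugacy in the stabilizer. Take $SL_2$ at $\lambda=0$ on the orbit of $y=\mathrm{diag}(1,-1)$:
\begin{itemize}
\item the torus on which $\theta$ is trivial (the one your literal requirement ${}^{\vee}T\subset{}^{\vee}K_i$ would select) gives the non-final odd principal series;
\item the torus on which $\theta$ is inversion gives the two limits of discrete series, matching $A^{loc}_x\simeq\ZZ/2\ZZ$.
\end{itemize}
So finality is enforced by choosing the torus correctly, not by a disconnected stabilizer identifying two parameters. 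The non-final standard module is the sum of the two standard modules attached to the two local systems. Your description of the nonzero condition is similarly too vague to be checked.
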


\begin{rmk}
Let $\varphi\colon W_{\RR} \to {}^{\vee}G^{\Gamma}$ be a Langlands parameter (Definition \ref{def:Langlandsparam}). Then the \emph{$L$-packet} corresponding to $\varphi$ is the set of irreducible projective type-$z$ representations
$$\Pi^z(G^{\Gamma})_{\varphi} := \{\pi(x,\tau)\}$$
where $x$ is the geometric parameter corresponding to $\varphi$ (Proposition \ref{prop:classicaltogeometric}) and $\tau$ runs over all irreducible representations of $A_x^{loc}$.
\end{rmk}

For each complete geometric parameter $\xi \in \Xi({}^{\vee}G^{\Gamma},\OO)$, let $M(\xi) \in K\Pi^z_{\OO}(G^{\Gamma})$ denote the standard representation with Langlands quotient $\pi(\xi)$. Note that the set
$$\{M(\xi) \mid \xi \in \Xi({}^{\vee}G^{\Gamma},\OO)\}$$
is a basis for $K\Pi^z_{\OO}(G^{\Gamma})$. Also define $d(\xi) := \dim(S)$, where $\xi = (S,\tau)$. 

\begin{theorem}[{\cite[Theorem 1.24]{AdamsBarbaschVogan}}]\label{thm:Voganduality}
Let $(G^{\Gamma},\mathcal{W})$ be an extended group for $G$ and let $({}^{\vee}G^{\Gamma},\mathcal{S})$ be a corresponding $E$-group with second invariant $z$. Fix an infinitesimal character $\OO \subset {}^{\vee}\fg$. Then there is a unique perfect pairing
$$\langle -, - \rangle\colon K\Pi^z_{\OO}(G^{\Gamma}) \times KX^{{}^{\vee}G^{alg}}({}^{\vee}G^{\Gamma},\OO) \to \ZZ$$
such that
$$\langle M(\xi), \mu(\xi')\rangle = e(\xi) \delta_{\xi,\xi'}, \qquad \xi, \xi' \in \Xi({}^{\vee}G^{\Gamma})$$
and
$$\langle \pi(\xi), P(\xi')\rangle = e(\xi) (-1)^{d(\xi)} \delta_{\xi,\xi'}, \qquad \xi, \xi' \in \Xi({}^{\vee}G^{\Gamma}). $$
Here $e(\xi) \in \{\pm 1\}$ is the Kottwitz sign associated to the real form of $G$ represented by $\pi(\xi)$, see \textnormal{\cite{Kottwitzsign}} or \textnormal{\cite[Definition 15.8]{AdamsBarbaschVogan}}.
\end{theorem}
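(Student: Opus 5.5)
Since $\{M(\xi)\}$ is a $\ZZ$-basis of $K\Pi^z_{\OO}(G^{\Gamma})$ and $\{\mu(\xi)\}$ is a $\ZZ$-basis of $KX^{{}^{\vee}G^{alg}}({}^{\vee}G^{\Gamma},\OO)$, indexed compatibly by $\Xi({}^{\vee}G^{\Gamma},\OO)$ through Theorem \ref{thm:LLCABV}, I would simply \emph{define} the pairing by the first formula. This gives existence, uniqueness and perfectness at once, because the Gram matrix is diagonal with entries $e(\xi)=\pm 1$. The whole content of the statement is therefore the second formula, and I would reduce it to a statement about change-of-basis matrices.

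On the representation side, write
$$M(\xi) = \sum_{\xi'} m(\xi',\xi)\,\pi(\xi'),$$
where $m$ is unitriangular for the closure order on orbits. On the geometric side, decompose each perverse sheaf into standard sheaves,
$$P(\xi) = \sum_{\xi'} (-1)^{d(\xi)} c(\xi',\xi)\,\mu(\xi'),$$
where $c(\xi',\xi)=\sum_i (-1)^i \mathrm{mult}\bigl(\tau',\mathcal{H}^{i-d(\xi)}P(\xi)|_{S'}\bigr)$ records the stalk cohomology of $P(\xi)$ along $S'$. Substituting both expansions into the pairing, the second formula becomes the matrix identity
$$\sum_{\xi''} e(\xi'')\, m(\xi'',\xi)\,(-1)^{d(\xi'')} c(\xi'',\xi') \,\sim\, \delta_{\xi,\xi'}$$
up to the bookkeeping of the signs $e(\xi)$ and $(-1)^{d(\xi)}$. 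In other words, the multiplicity matrix $m$ must be inverse to the stalk matrix $c$, twisted by $(-1)^{d(\xi)-d(\xi')}$ and transposed. This is the generalized Kazhdan--Lusztig--Vogan character formula in dual form.

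To prove this identity I would proceed in three steps.
\begin{enumerate}
\item Use the restriction equivalence (\ref{eq:equivalence}) to replace sheaves on $X({}^{\vee}G^{\Gamma},\OO,Y_i)$ by ${}^{\vee}K_i$-equivariant sheaves on the partial flag variety ${}^{\vee}R/{}^{\vee}P$. This turns the geometric side into a finite-dimensional Kazhdan--Lusztig-type problem for a symmetric pair.
\item On the representation side, use Beilinson--Bernstein localization, or equivalently the Kazhdan--Lusztig--Vogan algorithm, to express $m$ through the inverse of the matrix of local intersection cohomology for $K$-orbits on $G/B$. This is done first at regular integral infinitesimal character, and then extended to singular $\OO$ by translation to the wall (Proposition \ref{prop:translationtowall}), which on the dual side corresponds to passing from ${}^{\vee}R/{}^{\vee}B$ to ${}^{\vee}R/{}^{\vee}P$.
\item Invoke Vogan's character duality, which identifies the KLV polynomials for $(G,K)$ with the inverse of those for the dual pair $({}^{\vee}R,{}^{\vee}K_i)$, after summing over all strong real forms and all orbits $Y_i$.
\end{enumerate}
Summing over strong real forms is exactly what makes the correspondence of Theorem \ref{thm:LLCABV} a bijection, so the two sides index the same set.

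The main obstacle will be step 3 together with the signs. Matching the KLV polynomials with their duals requires showing that the Hecke-module structures (coherent continuation on one side, the convolution action on the other) are intertwined by the pairing up to the sign twist. The signs $e(\xi)$ and $(-1)^{d(\xi)}$ also have to be tracked exactly. I would first verify this intertwining at regular integral infinitesimal character using the explicit formulas for the action of simple reflections, where the unique characterization of KL bases forces the identity. The Kottwitz signs should then arise from comparing the grading by $d(\xi)$ with the length function on the representation side, and this comparison is where I expect the real difficulty.
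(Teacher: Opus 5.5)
Your first paragraph is exactly the paper's remark after the statement; the paper itself proves nothing further and simply cites ABV. Your reduction of the second formula, however, is wrong.

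To apply the first formula you must expand $\pi(\xi)$ in standard modules, so the matrix in your displayed identity should be $m^{-1}$, not $m$. It is cleanest to pair $M(\xi)$ with $P(\xi')$ in two ways:
$$\langle M(\xi),P(\xi')\rangle=(-1)^{d(\xi')}e(\xi)\,c(\xi,\xi') \quad\text{and}\quad \langle M(\xi),P(\xi')\rangle=(-1)^{d(\xi')}e(\xi')\,m(\xi',\xi).$$
The first comes from the first formula and your expansion of $P(\xi')$; the second from the second formula and your expansion of $M(\xi)$. Since $m$ is invertible, the second formula is equivalent to $e(\xi')m(\xi',\xi)=e(\xi)c(\xi,\xi')$ for all $\xi,\xi'$.

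So the content of the theorem is that the multiplicity matrix \emph{equals} the transposed dual-side stalk matrix. This is the Kazhdan--Lusztig conjecture in ABV's geometric form. It does not say $m$ is the inverse of that matrix. The two statements differ in general, much as ordinary and inverse Kazhdan--Lusztig polynomials differ, so your announced target is false. Your own steps 2 and 3 actually compose two inversions: $m$ is inverse to the $G$-side KLV matrix, and Vogan duality makes that matrix inverse to the ${}^{\vee}R$-side one. They therefore yield $m=c^{T}$ up to indexing, and as written the proposal contradicts its own target.

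There are two further problems. First, the Kottwitz signs carry no difficulty, contrary to your expectation. Every composition factor of $M(\xi)$ is a representation of the same strong real form. Rescaling each summand $K\Pi^z_{\OO}(G(\RR,\delta))$ of $K\Pi^z_{\OO}(G^{\Gamma})$ by $e(G(\RR,\delta))$ multiplies both $M(\xi)$ and $\pi(\xi)$ by $e(\xi)$. Hence the theorem with the signs $e(\xi)$ is equivalent to the theorem without them. In your normalization the target is simply $m(\xi',\xi)=c(\xi,\xi')$, with no twist by $(-1)^{d(\xi)-d(\xi')}$. The bookkeeping that actually matters is matching dimensions and lengths on the two sides, via parity vanishing for stalks of $P(\xi)$. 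You also need to match Vogan's duality bijection with the parametrization of Theorem \ref{thm:LLCABV}.

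Second, ``regular integral, then translation to the wall'' never reaches non-integral $\OO$, because coherent continuation stays inside the coset $\lambda+X$. For non-integral $\lambda$ you need the Kazhdan--Lusztig--Vogan theory at non-integral infinitesimal character. That theory is governed by the integral Weyl group $W(\lambda)$, e.g.\ via twisted $\mathcal{D}$-modules on $G/B$. You also need Vogan's duality in its non-integral form, where the dual group has root system dual to $\Delta(\lambda)$, namely ${}^{\vee}R=Z_{{}^{\vee}G}(e(\Lambda))^0$. This is exactly why ${}^{\vee}R$ rather than ${}^{\vee}G$ appears in (\ref{eq:equivalence}). With these corrections your outline is, in substance, the route by which ABV obtain the theorem from the Kazhdan--Lusztig--Vogan conjectures and character--multiplicity duality.
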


Since $\{M(\xi)\}$ (resp.~$\{\mu(\xi)\}$) is a basis for $K\Pi^z_{\OO}(G^{\Gamma})$ (resp.~$K^{{}^{\vee}G^{alg}}X({}^{\vee}G^{\Gamma},\OO)$), the uniqueness of the pairing defined by {\em one} of the two properties is obvious. The content of the theorem is that the pairings defined by both conditions coincide.

\subsection{Langlands functoriality}\label{subsec:ABVfunctoriality}

In this section, we will review the general formalism of Langlands functoriality, as developed in \cite{AdamsBarbaschVogan}. Suppose $(G^{\Gamma},\mathcal{W})$ and $(H^{\Gamma},\mathcal{W}_H)$ are extended groups. Let $({}^{\vee}G^{\Gamma},\mathcal{S})$ and $({}^{\vee}H^{\Gamma},\mathcal{S}_H)$ be corresponding $E$-groups with second invariants $z$ and $z_H$, respectively. Suppose we are given an $L$-homomorphism $\epsilon\colon {}^{\vee}H^{\Gamma} \to {}^{\vee}G^{\Gamma}$. Then there is a natural map
\begin{equation}\label{eq:epsilon0}\epsilon\colon X({}^{\vee}H^{\Gamma}) \to X({}^{\vee}G^{\Gamma}), \qquad \epsilon(y_H,\Lambda_H) = (\epsilon(y),\epsilon(\Lambda_H)).\end{equation}
Here, $\epsilon(\Lambda_H)$ denotes the unique canonical flat in ${}^{\vee}\fg$ containing $d\epsilon(\Lambda_H)$. This map is clearly ${}^{\vee}H$-equivariant. Now fix
$$\OO_H = \text{semisimple ${}^{\vee}H$-orbit on ${}^{\vee}\fh$}, \qquad Y_H = \text{semisimple ${}^{\vee}H$-conjugacy class in ${}^{\vee}H^{\Gamma}\setminus{}^{\vee}H$}$$
so that $Y_H^2 = e(\OO_H)$. Let $\OO = {}^{\vee}G \cdot \OO_H$ and $Y := {}^{\vee}G \cdot Y_H$. Then (\ref{eq:epsilon0}) restricts to a closed ${}^{\vee}H$-equivariant morphism of algebraic varieties
\begin{equation}\label{eq:epsilon2}\epsilon\colon X({}^{\vee}H^{\Gamma},Y_H,\OO_H) \to X({}^{\vee}G^{\Gamma},Y,\OO).\end{equation}

Choose an element $(y_H,\Lambda_H) \in X({}^{\vee}H^{\Gamma},Y_H,\OO_H)$ and let $(y,\Lambda) = \epsilon(y_H,\Lambda_H) \in X({}^{\vee}G^{\Gamma},Y,\OO)$. Define
$${}^{\vee}R_H = Z_{{}^{\vee}H}(e(\Lambda_H))^0, \quad {}^{\vee}K_H = Z_{{}^{\vee}H}(y_H), \quad {}^{\vee}P_H = \mathrm{Stab}_{^{\vee}H}(\Lambda_H)$$
and
$${}^{\vee}R = Z_{{}^{\vee}G}(e(\Lambda))^0, \quad {}^{\vee}K = Z_{{}^{\vee}G}(\epsilon(y_H)), \quad {}^{\vee}P = \mathrm{Stab}_{^{\vee}G}(\Lambda).$$
As explained in Section \ref{sec:ABVparameters}, there are natural identifications
$$X({}^{\vee}H^{\Gamma},Y_H,\OO_H) \simeq {}^{\vee}H \times_{{}^{\vee}K_H} ({}^{\vee}R_H/{}^{\vee}P_H), \qquad X({}^{\vee}G^{\Gamma},Y,\OO) \simeq {}^{\vee}G \times_{{}^{\vee}K} ({}^{\vee}R/{}^{\vee}P).$$
Under these identifications, (\ref{eq:epsilon2}) corresponds to the ${}^{\vee}H$-equivariant morphism
$$\epsilon\colon {}^{\vee}H \times_{{}^{\vee}K_H} ({}^{\vee}R_H/{}^{\vee}P_H) \to  {}^{\vee}G \times_{{}^{\vee}K} ({}^{\vee}R/{}^{\vee}P)$$
induced by the maps $\epsilon\colon{}^{\vee}H \to {}^{\vee}G$, $\epsilon\colon {}^{\vee}R_H \to {}^{\vee}R$, and so on.

Restriction along $\epsilon$ defines a functor
$$\epsilon^*\colon D^bC^{{}^{\vee}G^{alg}}X({}^{\vee}G^{\Gamma},Y,\OO) \to D^bC^{{}^{\vee}H^{alg}}X({}^{\vee}H^{\Gamma},Y_H,\OO_H)$$
of bounded derived categories of equivariant constructible sheaves, and hence a group homomorphism
$$\epsilon^*\colon K^{{}^{\vee}G^{alg}}X(^{\vee}G^{\Gamma},Y,\OO) \to K^{{}^{\vee}H^{alg}}X(^{\vee}H^{\Gamma},Y_H,\OO_H)$$
of Grothendieck groups. Assembling these homomorphisms over the various $Y_H$ and $\OO_H$, we get a group homomorphism
$$\epsilon^*\colon K^{{}^{\vee}G^{alg}}X(^{\vee}G^{\Gamma}) \to K^{{}^{\vee}H^{alg}}X(^{\vee}H^{\Gamma})$$
Taking transpose, we get a group homomorphism in the opposite direction
\begin{equation}\label{eq:epsilonstar1}\epsilon_*\colon K^{{}^{\vee}H^{alg}}X(^{\vee}H^{\Gamma})^* \to K^{{}^{\vee}G^{alg}}X(^{\vee}G^{\Gamma})^*.\end{equation}
(here, $(\bullet)^*$ stands for $\ZZ$-linear dual). Theorem \ref{thm:Voganduality} provides identifications
$$\overline{K}\Pi^{z_H}(H^{\Gamma}) \simeq  K^{{}^{\vee}H^{alg}}X(^{\vee}H^{\Gamma})^*, \qquad \overline{K}\Pi^z(G^{\Gamma}) \simeq K^{{}^{\vee}G^{alg}}X(^{\vee}G^{\Gamma})^*$$
of these dual spaces with formal virtual representations of strong real forms for $H$ and $G$ respectively. Under these identifications, the map (\ref{eq:epsilonstar1}) corresponds to a group homomorphism
\begin{equation}\label{eq:functoriality}\epsilon_*\colon \overline{K}\Pi^{z_H}(H^{\Gamma}) \to \overline{K}\Pi^z(G^{\Gamma})\end{equation}
from formal virtual representations of strong real forms of $H$ to formal virtual representations of strong real forms of $G$. This final map is called \emph{Langlands functoriality}. It is easy to describe its behavior on standard representations.

If $S_H \subset X({}^{\vee}H^{\Gamma})$ is a ${}^{\vee}H$-orbit (of geometric parameters for $H$) and $S = {}^{\vee}G \cdot \epsilon(S_H)$, there is a natural homomorphism of local component groups
$$A^{loc}_{\epsilon}\colon A^{loc}_{S_H} \to A^{loc}_S,$$
(notation from \ref{def:completeparam}). Thus, for each complete geometric parameter $(S_H,\tau_H) \in \Xi({}^{\vee}H^{\Gamma})$ there is an associated subset $\epsilon_*(S_H,\tau_H) \subset \Xi({}^{\vee}G^{\Gamma})$ of complete geometric parameters for $G$, defined by
\begin{equation}\label{eq:epsilonparameters}\epsilon_*(S_H,\tau_H) := \{({}^{\vee}G \cdot S_H,\tau) \mid \epsilon^*\tau = \tau_H\}\end{equation}

\begin{prop}[{\cite[Proposition 23.7]{AdamsBarbaschVogan}}]\label{prop:liftingstandards}
Let $(S_H, \tau_H) \in \Xi({}^{\vee}H^{\Gamma})$ be a complete geometric parameter for $H$. Recall from Theorem \ref{thm:Voganduality}  the corresponding (extension by zero) constructible sheaf $\mu(S_H,\tau_H)$;  the corresponding standard representation $M(S_H,\tau_H)$; and the Kottwitz sign $e(S_H,\tau_H)$. Suppose that $(S,\tau) \in \epsilon_*(S_H,\tau_H)$ is a complete geometric parameter for $G$. Then
$$
\epsilon^*(\mu(S,\tau)) = \mu(S_H,\tau\circ A^{loc}_{\epsilon})
$$
(Here $\tau\circ A^{loc}_{\epsilon}$ is a one-dimensional character of $A_{S_H}^{loc}$.) Equivalently,
$$\epsilon_* M(S_H,\tau_H) = e(S_H,\tau_H) \sum_{(S,\tau) \in \epsilon_*(S_H,\tau_H)} e(S,\tau) M(S,\tau).$$
\end{prop}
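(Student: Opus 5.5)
The plan is to compute $\epsilon^*$ on extension-by-zero sheaves through stalks and base change, and then dualize with Theorem \ref{thm:Voganduality}.

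\emph{Step 1: stalks.} Since $\epsilon$ is ${}^{\vee}H$-equivariant, $\epsilon(S_H)$ is contained in a single ${}^{\vee}G$-orbit $S={}^{\vee}G\cdot\epsilon(S_H)$. For a point $x_H\in X({}^{\vee}H^{\Gamma})$, the stalk of $\epsilon^*\mu(S,\tau)$ at $x_H$ equals the stalk of $\mu(S,\tau)$ at $\epsilon(x_H)$. That stalk is zero unless $\epsilon(x_H)\in S$. Hence $\epsilon^*\mu(S,\tau)$ is supported on $\epsilon^{-1}(S)$, and there it equals the pullback of the local system $\tau$.

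\emph{Step 2: the preimage.} The key geometric claim is that $\epsilon^{-1}(S)\cap X({}^{\vee}H^{\Gamma},Y_H,\OO_H)$ is exactly $S_H$, at least for the $L$-homomorphisms in play here, namely inclusions of Levi-type $E$-subgroups. I would try to prove this from the identifications $X\simeq {}^{\vee}H\times_{{}^{\vee}K_H}({}^{\vee}R_H/{}^{\vee}P_H)$. This reduces the claim to the statement that ${}^{\vee}K$-orbits on ${}^{\vee}R/{}^{\vee}P$ meet the image of ${}^{\vee}R_H/{}^{\vee}P_H$ in single ${}^{\vee}K_H$-orbits. If the preimage is a finite union of orbits, one instead gets a sum over those orbits. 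In that case the first displayed formula should be read as holding orbit by orbit, as the statement implicitly assumes for $(S_H,\tau_H)$.

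\emph{Step 3: the local system.} On $S_H$, the pullback of the equivariant local system $\tau$ on $S$ is the equivariant local system attached to $\tau\circ A^{loc}_{\epsilon}$. This follows from the description of equivariant local systems on a homogeneous space as representations of the component group of the stabilizer, together with the fact that $\epsilon$ maps $\mathrm{Stab}_{{}^{\vee}H^{alg}}(x_H)$ into $\mathrm{Stab}_{{}^{\vee}G^{alg}}(\epsilon x_H)$. Combining Steps 1–3 gives $\epsilon^*\mu(S,\tau)=\mu(S_H,\tau\circ A^{loc}_\epsilon)$.

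\emph{Step 4: dualize.} By definition, $\epsilon_*M(S_H,\tau_H)$ is determined by its pairings $\langle \epsilon_*M(S_H,\tau_H),\mu(S',\tau')\rangle=\langle M(S_H,\tau_H),\epsilon^*\mu(S',\tau')\rangle$. By the first formula and Theorem \ref{thm:Voganduality}, this equals $e(S_H,\tau_H)$ exactly when $S'=S$ and $\tau'\circ A^{loc}_\epsilon=\tau_H$, that is, when $(S',\tau')\in\epsilon_*(S_H,\tau_H)$. It is zero otherwise. Since $\langle M(\xi),\mu(\xi')\rangle=e(\xi)\delta$, expanding $\epsilon_*M$ in the basis $M(S,\tau)$ forces the coefficient $e(S_H,\tau_H)e(S,\tau)$, using $e^2=1$. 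This step is routine bookkeeping.

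The main obstacle is Step 2, the orbit-preimage claim, and the accompanying check that the pullback of a $j_!$ extension is again a $j_!$ extension. Everything else is formal.
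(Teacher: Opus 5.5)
Your Steps 1, 3 and 4 are sound, and Step 4 is exactly the paper's argument. The paper does not reprove the sheaf identity: it quotes it from \cite[Proposition 23.7]{AdamsBarbaschVogan}. It then obtains the formula for $\epsilon_*M(S_H,\tau_H)$ by pairing against the basis $\{\mu(\xi)\}$ of Theorem \ref{thm:Voganduality}, with the same sign bookkeeping you describe. Your base-change computation is a reasonable sketch of what that citation supplies: $\epsilon^*j_!\simeq j'_!\epsilon'^*$, followed by restricting the equivariant local system along $\mathrm{Stab}_{{}^{\vee}H^{alg}}(x_H)\to\mathrm{Stab}_{{}^{\vee}G^{alg}}(\epsilon(x_H))$.

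What needs correcting is Step 2. The claim that $\epsilon^{-1}(S)\cap X({}^{\vee}H^{\Gamma},Y_H,\OO_H)=S_H$ for Levi-type embeddings is false in general, so you should not try to prove it. Here is a counterexample.
\begin{itemize}
\item Take ${}^{\vee}H=GL_2\times GL_2\subset GL_4={}^{\vee}G$ with trivial Galois action, i.e.\ the real Levi $GL_2(\RR)\times GL_2(\RR)\subset GL_4(\RR)$. Take $\lambda=(\tfrac12,-\tfrac12,\tfrac12,-\tfrac12)$ and $y=\mathrm{diag}(i,-i,i,-i)$.
\item Then ${}^{\vee}K_H$ is the diagonal torus acting on ${}^{\vee}R_H/{}^{\vee}P_H=\mathbb{P}^1\times\mathbb{P}^1$.
\item On the other side, ${}^{\vee}R=GL_4$ and ${}^{\vee}K=GL(V_+)\times GL(V_-)$, where $V_+=\langle e_1,e_3\rangle$ and $V_-=\langle e_2,e_4\rangle$ are the $\pm i$-eigenspaces of $y$. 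Also ${}^{\vee}R/{}^{\vee}P=\mathrm{Gr}(2,4)$, with the embedding given by $(\ell_1,\ell_2)\mapsto\ell_1\oplus\ell_2$.
\item The points $(\langle e_1\rangle,\langle e_3+e_4\rangle)$ and $(\langle e_1+e_2\rangle,\langle e_3\rangle)$ lie in different ${}^{\vee}K_H$-orbits (fixed$\times$open versus open$\times$fixed).
\item Their images are exchanged by the element of ${}^{\vee}K$ swapping $e_1\leftrightarrow e_3$ and $e_2\leftrightarrow e_4$.
\end{itemize}
So two ${}^{\vee}H$-orbits in the same $X({}^{\vee}H^{\Gamma},Y_H,\OO_H)$ map into one ${}^{\vee}G$-orbit. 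In parameter terms, $\phi_a\oplus\phi_b$ and $\phi_b\oplus\phi_a$ are $GL_4$-conjugate but not ${}^{\vee}H$-conjugate.

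Hence the first displayed formula is only correct in the form you give as a fallback, summing over the ${}^{\vee}H$-orbits $S'_H\subset\epsilon^{-1}(S)$:
\[
\epsilon^*\mu(S,\tau)=\sum_{S'_H\subset\epsilon^{-1}(S)}\mu\bigl(S'_H,\tau\circ A^{loc}_{\epsilon}\bigr)\qquad\text{in the Grothendieck group.}
\]
This is exactly what your Steps 1 and 3 prove. It is also this summed form, not the single-term one, that is equivalent to the second formula.

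The summed form is all Step 4 needs. Since $\langle M(S_H,\tau_H),\mu(S'_H,\tau'_H)\rangle=0$ for $S'_H\neq S_H$, pairing with $M(S_H,\tau_H)$ only sees the $S_H$-summand. So the formula for $\epsilon_*M(S_H,\tau_H)$ follows with no preimage statement at all.

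The ``main obstacle'' you identify is therefore unnecessary. Single-orbit preimages do occur in the special situations the paper exploits later (Lemmas \ref{lem:real1} and \ref{lem:thetastable2}), but they are not part of this proposition.
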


The first formula is the content of {\cite[Proposition 23.7]{AdamsBarbaschVogan}}. The second follows from the first and the definition of duality in Theorem \ref{thm:Voganduality}.

\subsection{Convolution functors}\label{sec:convolution} 

Let ${}^{\vee}G^{\Gamma}$ be a weak $E$-group. Fix a semisimple ${}^{\vee}G$-orbit $\OO \subset {}^{\vee}\fg$ and a ${}^{\vee}G$-orbit $Y \subset Y(\OO)$. Form the algebraic variety $X({}^{\vee}G^{\Gamma},\OO,Y)$ as in (\ref{eq:XYO}).

\begin{definition}
Suppose $\OO' \subset {}^{\vee}\fg$ is a semisimple ${}^{\vee}G$-orbit. A \emph{translation datum} from $\OO$ to $\OO'$ is a pair $(\Lambda,\Lambda')$ such that
\begin{itemize}
    \item[(i)] $\Lambda \subset \OO$ and $\Lambda' \subset \OO'$ are canonical flats, and
    \item[(ii)] $e(\Lambda)=e(\Lambda')$; this condition ensures that ${}^{\vee}R(\Lambda)={}^{\vee}R(\Lambda') =: {}^{\vee}R$. 
\end{itemize}
Two translation data are \emph{equivalent} if they are conjugate under ${}^{\vee}G$.
\end{definition}

\begin{rmk}\label{rmk:translationdatum}
If we have a fixed maximal torus $H \subset G$, then an equivalence class of translation data determines, and is determined by, a $W(G,H)$-orbit of pairs $(\lambda,\lambda')$ in $\fh^*$ such that $\lambda-\lambda' \in X^*(H)$. Given such a pair $(\lambda,\lambda')$, we obtain a translation datum $(\Lambda,\Lambda')$ by identifying $\fh^* = \fh^{\vee}$ and setting $\Lambda = \mathcal{F}(\lambda)$ and $\Lambda'=\mathcal{F}(\lambda')$. Conversely, given a translation datum $(\Lambda, \Lambda')$, we obtain a pair $(\lambda,\lambda')$ by identifying $\fh^* = \fh^{\vee}$ and choosing $\lambda \in \Lambda \cap \fh^*$ and $\lambda' \in \Lambda' \cap \fh^*$. 
\end{rmk}

Now fix a translation datum $(\Lambda,\Lambda')$ from $\OO$ to $\OO'$ and an element $(y,\Lambda) \in X({}^{\vee}G^{\Gamma},\OO,Y)$.  Below we will define a functor
$$I(\Lambda \to \Lambda')\colon K^{{}^{\vee}G^{alg}}X({}^{\vee}G^{\Gamma},\OO,Y) \to K^{{}^{\vee}G^{alg}}X({}^{\vee}G^{\Gamma},\OO',Y).$$
First, form the subgroups
$${}^{\vee}K = {}^{\vee}K(y) \subset {}^{\vee}R, \quad {}^{\vee}P={}^{\vee}P(\Lambda) \subset {}^{\vee}R, \quad {}^{\vee}P' = {}^{\vee}P(\Lambda') \subset {}^{\vee}R$$
as in Section \ref{sec:ABVparameters}. There are natural maps
$$q\colon {}^\vee R/(^\vee P\cap {}^\vee P') \to {}^\vee R/{}^\vee P, \qquad q'\colon {}^\vee R/({}^\vee P\cap {}^\vee P') \to {}^\vee R/{}^\vee P'.$$
Consider the `convolution' functor 
$$I(\Lambda \to \Lambda'):=q'_!q^*\colon D^bC({}^\vee R/^\vee P) \to D^bC({}^\vee R/^\vee P')$$ 
%
(the direct image $q'_!$ preserves constructibility by \cite[Exercise VIII.4]{KashiwaraSchapira}). This functor is compatible with left ${}^{\vee}K^{alg}$-actions. Thus it induces a functor
$$I(\Lambda \to \Lambda')\colon D^bC^{{}^{
\vee}K^{alg}}({}^\vee R/^\vee P) \to D^bC^{{}^{
\vee}K^{alg}}({}^\vee R/^\vee P').$$
Composing with the equivalences (\ref{eq:equivalence}), this induces a functor
$$I(\Lambda \to \Lambda')\colon D^bC^{{}^{\vee}G^{alg}}X({}^{\vee}G^{\Gamma},\OO,Y) \to D^bC^{{}^{\vee}G^{alg}}X({}^{\vee}G^{\Gamma},\OO',Y),$$
and hence a homomorphism (by taking Euler characteristics)
$$I(\Lambda \to \Lambda')\colon K^{{}^{\vee}G^{alg}}X({}^{\vee}G^{\Gamma},\OO,Y) \to K^{{}^{\vee}G^{alg}}X({}^{\vee}G^{\Gamma},\OO',Y).$$
The next lemma establishes a composition property for such homomorphisms. 

\begin{lemma}\label{lem:compositionintertwiners}
Let $\OO$, $\OO'$, and $\OO_r$ be semisimple ${}^{\vee}G$-orbits such that $\OO_r$ regular and let $(\Lambda,\Lambda_r)$ and $(\Lambda_r,\Lambda')$ be translation data from $\OO$ to $\OO_r$ and $\OO_r$ to $\OO'$, respectively. Suppose in addition that 
$${}^{\vee}P_r = {}^{\vee}P(\Lambda_r) \subseteq {}^{\vee}P(\Lambda) = {}^{\vee}P.$$
Then on the level of Grothendieck groups, the following are true:
    $$
    \chi((^\vee P\cap ^\vee P')/(^\vee P_r\cap ^\vee P')) I(\Lambda \to \Lambda')= I(\Lambda_r \to \Lambda')\circ I(\Lambda \to \Lambda_r)$$
    and 
    $$
    \chi(({}^\vee P\cap {}^\vee P')/({}^\vee P_r\cap {}^\vee P')) I(\Lambda' \to \Lambda)= I(\Lambda_r \to \Lambda)\circ I(\Lambda' \to \Lambda_r),$$
    where $\chi$ denotes the Euler characteristic, which is moreover a positive integer. 
\end{lemma}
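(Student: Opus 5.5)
We prove the first identity by base change and the projection formula; the second follows by the same argument with the roles of source and target swapped. Write ${}^\vee R/{}^\vee P$ etc. for the partial flag varieties of ${}^\vee R$. The composition $I(\Lambda_r \to \Lambda')\circ I(\Lambda\to\Lambda_r)$ is $q'_{r!}q_r^{*}\,q_{r}''_{!}q^*$, where the inner maps come from the correspondence ${}^\vee R/({}^\vee P\cap {}^\vee P_r)\to {}^\vee R/{}^\vee P,\ {}^\vee R/{}^\vee P_r$ and the outer ones from ${}^\vee R/({}^\vee P_r\cap{}^\vee P')\to {}^\vee R/{}^\vee P_r,\ {}^\vee R/{}^\vee P'$.

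Since ${}^\vee P_r\subseteq {}^\vee P$, we have ${}^\vee P\cap{}^\vee P_r={}^\vee P_r$. The first correspondence therefore collapses: its left leg is the projection $p\colon {}^\vee R/{}^\vee P_r\to {}^\vee R/{}^\vee P$ and its right leg is the identity. Hence $I(\Lambda\to\Lambda_r)=p^*$.

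The composite is then $q'_{r!}q_r^*p^* = q'_{r!}(p\circ q_r)^*$, where $p\circ q_r\colon {}^\vee R/({}^\vee P_r\cap{}^\vee P')\to {}^\vee R/{}^\vee P$. This map factors as
$${}^\vee R/({}^\vee P_r\cap{}^\vee P')\xrightarrow{f}{}^\vee R/({}^\vee P\cap{}^\vee P')\xrightarrow{q}{}^\vee R/{}^\vee P,$$
and $q'_r=q'\circ f$. So the composite equals $q'_!\,f_!f^*q^*$.

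By the projection formula, $f_!f^*\mathcal F\cong \mathcal F\otimes f_!\underline{\CC}$. Here $f$ is a ${}^\vee R$-equivariant fibration with fiber $F=({}^\vee P\cap{}^\vee P')/({}^\vee P_r\cap{}^\vee P')$. On Grothendieck groups of ${}^\vee K^{alg}$-equivariant constructible sheaves, tensoring with $f_!\underline{\CC}$ acts as multiplication by $\chi_c(F)=\chi(F)$, provided the cohomology local systems $R^if_!\underline{\CC}$ contribute only trivial classes. This is the step I expect to be the main obstacle. Because $f$ is homogeneous, the fiberwise monodromy factors through the component group of ${}^\vee P\cap{}^\vee P'$, and we must check that this action on $H^*_c(F)$ is trivial.

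To handle it, I would argue as follows. ${}^\vee P\cap{}^\vee P'$ contains a maximal torus, and its component group acts through a finite group. The fiber $F$ is paved by affine spaces: it is a homogeneous space for the connected group $({}^\vee P\cap {}^\vee P')^0$, of the form $(L_1U_1)/(\text{parabolic of }L_1)\cdot(\text{unipotent})$. Its cohomology is therefore spanned by classes of Bruhat cells, which are permuted trivially by the connected group. The equivariant structure then makes $f_!\underline{\CC}$ a successive extension of constant sheaves, and in the Grothendieck group $[f_!\underline{\CC}]=\chi(F)[\underline{\CC}]$. This gives the first identity.

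Positivity of $\chi(F)$ follows from the same description. $F$ fibers, with affine-space fibers, over a partial flag variety of a Levi factor of ${}^\vee P\cap {}^\vee P'$. Its Euler characteristic is therefore the number of cells, a ratio of Weyl group orders $|W_1|/|W_2|\ge 1$. Equivalently, $\chi(F)$ is the number of torus fixed points, which is positive.

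Finally, the second identity is obtained by the symmetric argument. $I(\Lambda_r\to\Lambda)$ collapses to $p_!$, and $p_!\,q'_{r}{}_!\,q_r'^{*}$ is rewritten via the same factorization through $f$. Here $f^*$ is now on the source side, using $q_r'^*=f^*q'^*$, and the projection formula again produces the factor $\chi(F)$.
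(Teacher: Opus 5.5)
Your proposal is correct and is essentially the paper's argument: collapse $I(\Lambda\to\Lambda_r)$ to a pullback, factor through ${}^{\vee}R/({}^{\vee}P\cap{}^{\vee}P')$ to get $q'_!\,f_!f^*q^*$, and apply the projection formula. The paper then uses, as you do, that $f$ is a fibration with fiber $({}^{\vee}P\cap{}^{\vee}P')/({}^{\vee}P_r\cap{}^{\vee}P')$ whose compactly supported cohomology is a trivial local system in even degrees; the monodromy worry you flag is settled at once because ${}^{\vee}P\cap{}^{\vee}P'$, an intersection of two parabolic subgroups of the connected group ${}^{\vee}R$, is connected (also, in your last paragraph the middle factor should be $q_{r!}$, not $q'_{r!}$).
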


\begin{proof} We have the following commutative diagram
    \begin{center}
\begin{tikzcd}
    ^\vee R/^\vee P_r \ar[d,"q_r"] & ^\vee R/(^\vee P_r\cap {}^\vee P') \ar[d,"\tilde{q}'_r"]\ar[l,"\tilde{q}_r"] &  \\
    ^\vee R/^\vee P   & ^\vee R/(^\vee P\cap {}^\vee P') \ar[r,"q'"] \ar[l,"q"] & ^\vee R/^\vee P'. 
\end{tikzcd}
\end{center}
Note that by definition $I(\Lambda \to \Lambda_r)=q_r^*$ and $I(\Lambda_r \to \Lambda)=(q_r)_!$. So
$$I(\Lambda_r \to \Lambda')\circ I(\Lambda \to \Lambda_r)\simeq q'_!(\tilde{q}'_r)_!(\tilde{q}_r)^*q_r^*\simeq q'_!(\tilde{q}'_r)_!(\tilde{q}'_r)^*q^*$$
and 
$$I(\Lambda_r \to \Lambda)\circ I(\Lambda' \to \Lambda_r)\simeq (q_r)_!(\tilde{q}_r)_!(\tilde{q}'_r)^*(q')^*\simeq q_!(\tilde{q}'_r)_!(\tilde{q}'_r)^*(q')^*.$$
Moreover, by the projection formula
$$(\tilde{q}'_r)_!(\tilde{q}'_r)^* \simeq (\tilde{q}'_r)_!(\underline{\CC}_{^\vee R/(^\vee P_r\cap ^\vee P')}
)\otimes -.$$
Note that $\tilde{q}'_r$ is a fibration with fiber $(^\vee P\cap ^\vee P')/(^\vee P_r\cap ^\vee P')$, which is itself an affine fibration over a flag variety of $^\vee P\cap ^\vee P'$. Hence, the constant sheaf $\underline{\CC}_{^\vee R/(^\vee P_r\cap ^\vee P')}$ is quasi-isomorphic to its cohomology sheaf, which is concentrated in even degrees and forms a trivial local system over $^\vee R/(^\vee P\cap ^\vee P')$ with stalk $H^*_c((^\vee P\cap ^\vee P')/(^\vee P_r\cap ^\vee P'))$. The claim follows. 
\end{proof}

Now assume that $\OO_r \subset {}^{\vee}\fg$ is a regular semisimple ${}^{\vee}G$-orbit. Choose a maximal torus ${}^{\vee}H \subset {}^{\vee}R$ and a Weyl group element $w \in W(\OO_r) = N_{{}^{\vee}R}({}^{\vee}H)/{}^{\vee}H$. Let $\dot{w}$ be a lift of $w$ to $N_{{}^{\vee}R}({}^{\vee}H)$ and let $\Lambda'=\Ad(\dot{w})\Lambda_r \subset \OO_r$. Then ${}^{\vee}P={}^{\vee}B$ and ${}^{\vee}P' = {}^{\vee}B'$ are Borel subgroups of ${}^{\vee}R$ satisfying ${}^{\vee}B' = \Ad(\dot{w}){}^{\vee}B$. Consider the functor
\begin{equation}\label{eq:convolutionw}
I_w\colon D^bC^{{}^{
\vee}K^{alg}}({}^\vee R/^\vee B) \to D^bC^{{}^{
\vee}K^{alg}}({}^\vee R/^\vee B)
\end{equation}
obtained by composing the convolution functor 
$$I(\Lambda_r \to \Lambda')\colon D^bC^{{}^{
\vee}K^{alg}}({}^\vee R/^\vee B) \to D^bC^{{}^{
\vee}K^{alg}}({}^\vee R/^\vee B')$$ 
with the equivalence 
$$D^bC^{{}^{
\vee}K^{alg}}({}^\vee R/^\vee B') \to D^bC^{{}^{
\vee}K^{alg}}({}^\vee R/^\vee B)$$
obtained by pullback along the isomorphism ${}^{\vee}R/{}^{\vee}B' \xrightarrow{\sim} {}^{\vee}R/{}^{\vee}B$, $r{}^{\vee}B' \mapsto r{}^{\vee}B'\dot{w} = r \dot{w}{}^{\vee}B$. It is well known that the functors $\{I_w \mid w \in W(\OO)\}$ define an action of $W(\OO)$ on $K^{^\vee K^{alg}}(^\vee R/^\vee B)$ and hence on $K^{{}^{\vee}G^{alg}}X({}^{\vee}G^{\Gamma},\OO,Y)$.


\begin{lemma}\label{lem:tothewallandback}
Let $\OO$ and $\OO_r$ be semisimple ${}^{\vee}G$-orbits such that $\OO_r$ is regular and let $(\Lambda,\Lambda_r)$ be a translation datum from $\OO$ to $\OO_r$. Then on the level of Grothendieck groups we have 
$$I(\Lambda\to\Lambda_r) \circ I(\Lambda_r\to\Lambda)=\sum_{w\in W_{\Lambda}} I_w,$$
where $W_{\Lambda}$ is the singular Weyl group of $\Lambda$, a subgroup of $W(\OO)$.
\end{lemma}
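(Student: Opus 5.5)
We compute the composite $I(\Lambda\to\Lambda_r)\circ I(\Lambda_r\to\Lambda)$ directly on the fiber ${}^{\vee}R/{}^{\vee}B$, using the equivalence (\ref{eq:equivalence}). Write ${}^{\vee}B={}^{\vee}P(\Lambda_r)\subseteq {}^{\vee}P={}^{\vee}P(\Lambda)$, and let $q\colon {}^{\vee}R/{}^{\vee}B\to {}^{\vee}R/{}^{\vee}P$ be the projection. By definition,
$$I(\Lambda_r\to\Lambda)=q_!, \qquad I(\Lambda\to\Lambda_r)=q^*.$$
Hence the composite is $q^*q_!$. By base change, this is the convolution $p_{2!}p_1^*$ along the correspondence
$$Z={}^{\vee}R/{}^{\vee}B\times_{{}^{\vee}R/{}^{\vee}P}{}^{\vee}R/{}^{\vee}B,$$
where $p_1,p_2$ are the two projections.

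\textbf{Step 1: stratify $Z$.} The variety $Z$ is ${}^{\vee}R$-stable and ${}^{\vee}R$ acts on it with finitely many orbits. These orbits are indexed by ${}^{\vee}B\backslash{}^{\vee}P/{}^{\vee}B\simeq W_{\Lambda}$, the Weyl group of the Levi factor of ${}^{\vee}P$. We identify this group with the singular Weyl group of $\Lambda$: the Levi of $\mathrm{Stab}(\Lambda)$ is the centralizer of $\lambda$ in ${}^{\vee}R$.

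Write $Z_w$ for the orbit ${}^{\vee}R\times_{{}^{\vee}B}({}^{\vee}B\dot w{}^{\vee}B/{}^{\vee}B)$. Because Euler characteristics are additive under decompositions into locally closed pieces, on Grothendieck groups we get
$$p_{2!}p_1^* = \sum_{w\in W_\Lambda} (p_2|_{Z_w})_!(p_1|_{Z_w})^*.$$

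\textbf{Step 2: identify each term with $I_w$.} For fixed $w$, the correspondence $Z_w$ is exactly the one defining $I(\Lambda_r\to\Ad(\dot w)\Lambda_r)$, composed with the identification ${}^{\vee}R/{}^{\vee}B'\simeq{}^{\vee}R/{}^{\vee}B$ built into (\ref{eq:convolutionw}). Concretely, a point $(r{}^{\vee}B, r\dot w{}^{\vee}B)$ corresponds to ${}^{\vee}R/({}^{\vee}B\cap{}^{\vee}B')$, with ${}^{\vee}B'=\Ad(\dot w){}^{\vee}B$.

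The key check is that the relative-position-$w$ locus in ${}^{\vee}R/{}^{\vee}B\times{}^{\vee}R/{}^{\vee}B$ is ${}^{\vee}R/({}^{\vee}B\cap \dot w{}^{\vee}B\dot w^{-1})$. Since this locus lies inside $Z$ when $w\in W_\Lambda$, each term equals $I_w$. All constructions are ${}^{\vee}K^{alg}$-equivariant, so the identity descends through (\ref{eq:equivalence}) to $X({}^{\vee}G^{\Gamma},\OO,Y)$.

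\textbf{Main obstacle.} The subtle point is the convention in Step 2: that the translation datum $(\Lambda_r,\Ad(\dot w)\Lambda_r)$, together with the twist $r{}^{\vee}B'\mapsto r\dot w{}^{\vee}B$, produces exactly the orbit $Z_w$ and not $Z_{w^{-1}}$. If the indexing turns out reversed, this is harmless, since $w\mapsto w^{-1}$ is a bijection of $W_\Lambda$ and so the sum is unchanged.

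A smaller point is that the pieces $Z_w$ give a filtration of $Z$ by closed unions of orbits, namely Bruhat order in each fiber ${}^{\vee}P/{}^{\vee}B$. This filtration supplies the distinguished triangles needed for the additivity used in Step 1.
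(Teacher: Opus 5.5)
Your proposal is correct and follows essentially the same route as the paper, which decomposes ${}^{\vee}P/{}^{\vee}B$ into the Bruhat cells ${}^{\vee}B\dot{w}{}^{\vee}B/{}^{\vee}B \simeq {}^{\vee}B/({}^{\vee}B\cap \mathrm{Ad}(\dot w){}^{\vee}B)$, $w\in W_{\Lambda}$, and reads off a filtration of $I(\Lambda\to\Lambda_r)\circ I(\Lambda_r\to\Lambda)$ whose subquotients are the $I_w$. You make this explicit through the base change $q^*q_!\simeq p_{2!}p_1^*$ on the correspondence $Z$ and its stratification by relative position; like the paper, you implicitly assume ${}^{\vee}P(\Lambda_r)\subseteq {}^{\vee}P(\Lambda)$, which is the situation in which the lemma is used.
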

\begin{proof}
    Consider the Bruhat decomposition $^\vee P/^\vee B=\bigsqcup_{w\in W_P} \, ^\vee B\dot{w}^\vee B/^\vee B$. Note that $^\vee B\dot{w}^\vee B/^\vee B\simeq ^\vee B/(^\vee B\cap \Ad(\dot{w})^\vee B)$ as $^\vee B$-spaces. Thus the functor $I(\Lambda\to\Lambda_r) \circ I(\Lambda_r\to\Lambda)$ admits a filtration with adjoint quotients $I_w$. The claim follows. 
\end{proof}

\subsection{Convolution is dual to coherent continuation}\label{sec:convolutiondualtocoherentcontinuation}

Let $(G^{\Gamma},\mathcal{W})$ be an extended group for $G$ and let $({}^{\vee}G^{\Gamma},\mathcal{S})$ be a corresponding $E$-group with second invariant $z$. Let $(\Lambda,\Lambda')$ be a translation datum from $\OO$ to $\OO'$. As explained in Section \ref{sec:convolution}, there is a convolution map
\begin{equation}\label{eq:convolution}I(\Lambda \to \Lambda')\colon K^{{}^{\vee}G^{alg}}X({}^{\vee}G^{\Gamma},\OO) \to K^{{}^{\vee}G^{alg}}X({}^{\vee}G^{\Gamma},\OO').\end{equation}
Choose a maximal torus $H \subset G$. Recall, Remark \ref{rmk:translationdatum}, that $(\Lambda,\Lambda')$ corresponds to a pair $(\lambda,\lambda')$ of elements of $\fh^*$, well-defined up to simultaneous $W(\OO)$-conjugacy, such that $\lambda-\lambda' \in X^*(T)$. Let us assume that $H$ belongs to a Levi subgroup $L$ such that

\begin{itemize}
    \item[(GR1)] $\lambda'$ is in the good range for $\fl$ (Definition \ref{def:weaklyfair}), and
    \item[(GR2)] $\lambda'-\lambda \in X^*(L)$.
\end{itemize}
Under these conditions, there is an $L$-coherent continuation map
\begin{equation}\label{eq:Lcoherentcontinuation}T_L(\lambda' \to \lambda)\colon K\Pi_{\lambda'}^z(G^{\Gamma}) \to K\Pi_{\lambda}^z(G^{\Gamma})\end{equation}
see Definition \ref{def:Lcoherentcontinuation}. In this section, we will show that the maps (\ref{eq:convolution}) and (\ref{eq:Lcoherentcontinuation}) are dual under the Vogan duality pairings
\begin{equation}\label{eq:Vogandualities}K\Pi^z_{\lambda}(G^{\Gamma}) \times K^{{}^{\vee}G^{alg}}X({}^{\vee}G^{\Gamma},\OO) \to \ZZ, \qquad K\Pi^z_{\lambda'}(G^{\Gamma}) \times K^{{}^{\vee}G^{alg}}X({}^{\vee}G^{\Gamma},\OO') \to \ZZ,\end{equation}
We begin with some special cases. The simplest case is when both $\OO$ and $\OO'$ are regular. In this case, the duality is well known.

\begin{prop}[{\cite[Proposition 17.16]{AdamsBarbaschVogan}}, see also {\cite[Lemma 14.5]{VoganIC4}}]\label{prop:Heckematch}
Assume $\OO$ and $\OO'$ are regular and let $w \in W(\lambda)$ be the 
relative position of the Borel subgroups $^\vee B= {}^\vee P(\Lambda)$ and $^\vee B'={}^\vee P(\Lambda')$ of ${}^{\vee}R$. Then the operators $T(\lambda' \to \lambda)$ and $I(\Lambda \to \Lambda')$ are dual under the Vogan duality pairings (\ref{eq:Vogandualities}).
\end{prop}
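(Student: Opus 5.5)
We would prove the statement by reducing it to simple reflections, where both operators can be computed explicitly on the standard bases. By Theorem \ref{thm:Voganduality}, the pairing identifies $\{M(\xi)\}$ with the dual basis of $\{e(\xi)\mu(\xi)\}$. So it suffices to show that the matrix of $T(\lambda' \to \lambda)$ in the bases $\{M(\xi)\}$ is the transpose of the matrix of $I(\Lambda \to \Lambda')$ in the bases $\{\mu(\xi)\}$, up to the Kottwitz signs $e(\xi)$. Because $\OO$ and $\OO'$ are regular, ${}^{\vee}P(\Lambda)$ and ${}^{\vee}P(\Lambda')$ are Borel subgroups ${}^{\vee}B,{}^{\vee}B'$ of the common group ${}^{\vee}R$. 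On the representation side, $T(\lambda'\to\lambda)$ is the composite of two pieces, by Lemma \ref{lem:cohfacts}: a coherent continuation operator $T^{\lambda'}_w$, and a translation between two regular weights lying in the same Weyl chamber. The convolution factors in the same way. By definition $I(\Lambda\to\Lambda')$ is $I_w$ followed by the identification ${}^{\vee}R/{}^{\vee}B'\simeq {}^{\vee}R/{}^{\vee}B$ given by $\dot w$, with a translation datum between flats having the same ${}^{\vee}P$.

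\emph{Step 1 (same chamber).} Suppose ${}^{\vee}P(\Lambda)={}^{\vee}P(\Lambda')$. Then $q$ and $q'$ are the identity, so $I(\Lambda\to\Lambda')$ is the tautological identification, and it sends $\mu(\xi)$ to the corresponding $\mu(\xi')$ under the bijection of Theorem \ref{thm:LLCABV}. On the other side, translation between regular weights in the same chamber sends each standard module $M(\Lambda)$ to the standard module whose limit character is shifted by $\lambda-\lambda'$. This follows from Proposition \ref{prop:coherentinduction} applied to $R^{\fg}_{\fb}$ with $\Lambda^{can}\otimes\mu$: $M(\Lambda)$ is cohomologically induced from the character $\Lambda^{can}$, and tensoring that character by $\mu$ gives a coherent family. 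We must also check that the bijection (\ref{eq:XiLz}) is compatible with this shift and that the Kottwitz signs do not change; this holds because the real form is unchanged. Together these give the duality in this case.

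\emph{Step 2 (reduction to simple reflections).} Both $w\mapsto T^{\lambda'}_w$ and $w\mapsto I_w$ are representations of $W(\lambda)$ on the respective Grothendieck groups. So it is enough to prove that $T^{\lambda'}_s$ and $I_s$ are mutually transpose, with respect to the pairing, for each simple reflection $s$. Since the pairing is perfect and transposition reverses products, the statement for general $w$ then follows by induction on length. The order reversal is harmless because we compare $T_w$ with $I_{w^{-1}}$ consistently, which matches $T^{\lambda}_w=T(\lambda\to w^{-1}\lambda)$.

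\emph{Step 3 (the simple reflection computation).} This is the main obstacle. On the representation side we use the explicit formulas of Hecht--Schmid/Vogan for $T_s M(\Lambda)$. These split into cases according to the type of the root $\alpha$ for $s$ relative to the Cartan of $\Lambda$: complex, compact imaginary, noncompact imaginary (types I and II), and real (satisfying or failing the parity condition). On the sheaf side, $I_s=q'_!q^*$ factors through the $\mathbb{P}^1$-fibration ${}^{\vee}R/{}^{\vee}B\to {}^{\vee}R/{}^{\vee}P_s$. We would compute $I_s\mu(S,\tau)$ by restricting to the ${}^{\vee}K$-orbits in the preimage of the image of $S$. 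The fibre $\mathbb{P}^1$ meets at most three ${}^{\vee}K$-orbits, and the local systems extend according to the action of the component group. The duality of Theorem \ref{thm:LLCABV} interchanges imaginary with real and compact with "parity fails", so the cases can be matched one by one. The delicate points are the signs $e(\xi)$, when the Cayley transform changes the strong real form, and the type I versus type II distinction, which corresponds to whether a local system on the closed orbit extends or splits. Here we would rely on the detailed orbit-by-orbit analysis in \cite[Proposition 17.16]{AdamsBarbaschVogan} and \cite[Lemma 14.5]{VoganIC4}.
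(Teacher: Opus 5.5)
Your outline is correct and follows essentially the same route as the paper. The paper proves nothing here and only cites \cite[Proposition 17.16]{AdamsBarbaschVogan} and \cite[Lemma 14.5]{VoganIC4}; your reduction (same-chamber translation, then simple reflections, matching the Hecht--Schmid formulas against the projective-line fibration computation of $I_s$) ends by invoking exactly those case-by-case analyses. One correction: the Kottwitz signs are not a delicate point, because coherent continuation never changes the strong real form, and neither does any Cayley transform appearing in the Hecht--Schmid formulas; so every nonzero matrix entry pairs $\xi,\xi'$ with $e(\xi)=e(\xi')$, and the signs cancel.
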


The next simplest case is when there is an inclusion of parabolics ${}^{\vee}P(\Lambda') \subseteq {}^{\vee}P(\Lambda)$.

\begin{lemma}\label{lem:convolutionpreliminary}
In the setting described in the beginning of this section (in particular, assuming conditions (GR1) and (GR2)), the following are true:
\begin{itemize}
    \item[(i)] Suppose that 
    $${}^\vee P' = {}^{\vee}P(\Lambda') \subseteq {}^{\vee}P(\Lambda) = {}^\vee P.$$
    Then the operators $T_L(\lambda' \to \lambda)$ and $I(\Lambda \to \Lambda')$ are dual under the Vogan duality pairings (\ref{eq:Vogandualities}).
 
\item[(ii)]  Suppose in addition that $^\vee P'$ is a Borel. Let $u\in K\Pi^z_{\lambda'}(G^{\Gamma})$ be invariant under $T_w$ for all $w\in W_\lambda$ and let $v\in KX^{{}^{\vee}G^{alg}}({}^{\vee}G^{\Gamma},\OO')$ be arbitrary.
Then 
$$\langle T_L(\lambda'\to\lambda)u, I(\Lambda'\to\Lambda)v\rangle=|W_\lambda|\langle u,v\rangle,$$
where, recall, $W_{\lambda}$ is the singular Weyl group of $\lambda$.
\end{itemize}
\end{lemma}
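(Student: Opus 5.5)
Part (i) is proved by factoring through a regular orbit. Part (ii) then follows from (i) together with Lemma \ref{lem:tothewallandback}.

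\textbf{Part (i).} Choose a regular $\lambda_r \in \lambda'+X$ whose closed Weyl chamber contains $\lambda'$. Compatibly, choose a canonical flat $\Lambda_r$ with $e(\Lambda_r)=e(\Lambda')$ and ${}^{\vee}P(\Lambda_r)={}^{\vee}B\subseteq {}^{\vee}P'\subseteq{}^{\vee}P$.

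On the dual side, ${}^{\vee}P\cap{}^{\vee}P'={}^{\vee}P'$. Lemma \ref{lem:compositionintertwiners}, applied with the roles $(\Lambda,\Lambda_r,\Lambda')$, then gives
$$I(\Lambda\to\Lambda')=I(\Lambda_r\to\Lambda')\circ I(\Lambda\to\Lambda_r).$$
The Euler characteristic factor is $\chi({}^{\vee}P'/{}^{\vee}P')=1$. Here $I(\Lambda\to\Lambda_r)=q_r^*$ is pullback to ${}^{\vee}R/{}^{\vee}B$, and $I(\Lambda_r\to\Lambda')$ is pushforward along ${}^{\vee}R/{}^{\vee}B\to{}^{\vee}R/{}^{\vee}P'$.

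On the representation side, I will show that
$$T_L(\lambda'\to\lambda)\circ T(\lambda_r\to\lambda')=T(\lambda_r\to\lambda).$$
Since $T(\lambda_r\to\lambda')$ is surjective onto $K\Pi^z_{\lambda'}$ by Proposition \ref{prop:translationtowall}(ii), this identity characterizes $T_L(\lambda'\to\lambda)$. To prove it, note that the coherent family through $M$ at $\lambda_r$ restricts to an $L$-coherent family taking the value $T(\lambda_r\to\lambda')M$ at $\lambda'$. By (GR1), (GR2) and the uniqueness in Proposition \ref{prop:Lcoherentunique}, this restriction is the $L$-coherent family defining $T_L(\lambda'\to\lambda)$.

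Duality now reduces to two statements:
\begin{itemize}
\item[(a)] $T(\lambda_r\to\lambda')$ is dual to pullback $q_r^*$ from the partial flag variety;
\item[(b)] $T(\lambda_r\to\lambda)$ is dual to $I(\Lambda\to\Lambda_r)$, and this composes correctly with (a).
\end{itemize}
Statement (a) is the standard translation-to-the-wall versus pullback duality of \cite{AdamsBarbaschVogan}, §16--17. I would prove it by checking on standard modules and on $\mu(\xi)$: translation to the wall of a standard is a standard or zero, and the pullback of an extension by zero is an extension by zero. The step I expect to be the main obstacle is statement (b), namely the bookkeeping needed so that the composition is dual to the composite pushforward/pullback. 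I would handle it by writing $I(\Lambda\to\Lambda')$ as $(\text{push }{}^{\vee}B\to{}^{\vee}P')\circ(\text{pull }{}^{\vee}P\to{}^{\vee}B)$ and verifying duality on the bases $\{M(\xi)\}$ and $\{\mu(\xi)\}$, using Theorem \ref{thm:Voganduality} and Proposition \ref{prop:Heckematch} for the regular pieces.

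\textbf{Part (ii).} Now assume ${}^{\vee}P'={}^{\vee}B$, so $\Lambda'$ is regular. The adjointness built into the pairings, together with part (i), gives
$$\langle T_L(\lambda'\to\lambda)u, I(\Lambda'\to\Lambda)v\rangle=\langle u, I(\Lambda\to\Lambda')I(\Lambda'\to\Lambda)v\rangle.$$
By Lemma \ref{lem:tothewallandback}, the right-hand side equals
$$\sum_{w\in W_\lambda}\langle u, I_w v\rangle.$$
By Proposition \ref{prop:Heckematch}, each $I_w$ is dual to coherent continuation $T_w$ at the regular point $\lambda'$. Hence
$$\langle u, I_w v\rangle=\langle T_w u, v\rangle=\langle u, v\rangle,$$
using that $u$ is $T_w$-invariant for all $w\in W_\lambda$. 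Summing over $W_\lambda$ gives $|W_\lambda|\langle u,v\rangle$, as claimed.
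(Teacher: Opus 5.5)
Your part (ii) is the same argument as the paper's. Part (i), however, rests on a false identity. With ${}^{\vee}P_r={}^{\vee}B\subseteq{}^{\vee}P'\subseteq{}^{\vee}P$ one has ${}^{\vee}P_r\cap{}^{\vee}P'={}^{\vee}B$, not ${}^{\vee}P'$. So Lemma \ref{lem:compositionintertwiners} actually gives
\[I(\Lambda_r\to\Lambda')\circ I(\Lambda\to\Lambda_r)=\chi({}^{\vee}P'/{}^{\vee}B)\,I(\Lambda\to\Lambda')=|W_{\lambda'}|\,I(\Lambda\to\Lambda').\]
You can see this directly. Let $a\colon{}^{\vee}R/{}^{\vee}B\to{}^{\vee}R/{}^{\vee}P'$ and $b\colon{}^{\vee}R/{}^{\vee}P'\to{}^{\vee}R/{}^{\vee}P$ be the projections. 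Then $I(\Lambda\to\Lambda')=b^*$, while the composite is $a_!a^*b^*$, and $a_!a^*$ is multiplication by the Euler characteristic of the flag variety ${}^{\vee}P'/{}^{\vee}B$.

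So your description of $I(\Lambda\to\Lambda')$ as pushforward to ${}^{\vee}R/{}^{\vee}P'$ after pullback to ${}^{\vee}R/{}^{\vee}B$ is off by $|W_{\lambda'}|$ whenever $\lambda'$ is singular. The bookkeeping you flag as the main obstacle is built on that description, so it cannot close. Pair with $u=T(\lambda_r\to\lambda')u_r$ and use (a), (b), Lemma \ref{lem:tothewallandback} and Proposition \ref{prop:Heckematch}. One side becomes $\langle u_r,I(\Lambda\to\Lambda_r)v\rangle$ and the other $\langle\sum_{w\in W_{\lambda'}}T_wu_r,\,I(\Lambda\to\Lambda_r)v\rangle$. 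There is a second slip: as written, (a) pairs $T(\lambda_r\to\lambda')$ with $q_r^*$, the pullback from ${}^{\vee}R/{}^{\vee}P$. Its transpose must instead be the pullback from ${}^{\vee}R/{}^{\vee}P'$, namely $a^*=I(\Lambda'\to\Lambda_r)$.

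The repair is to factor through pullbacks only:
\[I(\Lambda\to\Lambda_r)=(ba)^*=a^*b^*=I(\Lambda'\to\Lambda_r)\circ I(\Lambda\to\Lambda').\]
Combine this with your correct identity $T_L(\lambda'\to\lambda)\circ T(\lambda_r\to\lambda')=T(\lambda_r\to\lambda)$ and with surjectivity of $T(\lambda_r\to\lambda')$, writing $u=T(\lambda_r\to\lambda')u_r$. Then
\[\langle T_L(\lambda'\to\lambda)u,v\rangle=\langle T(\lambda_r\to\lambda)u_r,v\rangle=\langle u_r,a^*b^*v\rangle=\langle T(\lambda_r\to\lambda')u_r,b^*v\rangle=\langle u,I(\Lambda\to\Lambda')v\rangle.\]

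Even so, (a) and (b) are themselves the case of (i) in which ${}^{\vee}P'$ is a Borel subgroup, which is the only case the paper uses later. Your route therefore only reduces (i) to that case, and imports the real content (translation to the wall is dual to smooth pullback) from \cite{AdamsBarbaschVogan}. The paper instead asserts the general case directly. It matches irreducibles, which $T_L(\lambda'\to\lambda)$ sends to irreducibles or $0$, with IC sheaves, which the smooth pullback $I(\Lambda\to\Lambda')$ sends to shifted IC sheaves.

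If you check (a) on standards, ``a standard or zero'' is not the right statement. A final standard translated to a wall is a standard limit module, which by Theorem \ref{thm:LLC}(iii) may split into several final standards. An example is the principal series of $SL_2(\RR)$ that becomes the sum of two limits of discrete series at $\nu=0$. Correspondingly, $a^*\mu(\xi')$ is in general a sum of several $\mu(\xi_r)$.
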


\begin{proof}
By Proposition \ref{prop:translationtowall}, $T_L(\lambda'\to\lambda)$ takes irreducibles in $K\Pi^z_{\lambda'}(G^{\Gamma})$ to irreducibles or $0$. On the other hand,  $I(\Lambda \to \Lambda')$ is just the pullback along the fiber bundle $X({}^{\vee}G^{\Gamma},\OO') \to X({}^{\vee}G^{\Gamma},\OO)$ (with fiber ${}^{\vee}P/{}^{\vee}P'$). This functor takes irreducible perverse sheaves (shifted by the dimension of their support) to irreducible perverse sheaves (shifted by the dimension of their support). This proves (i).

Using part (i), Lemma \ref{lem:tothewallandback} and Proposition \ref{prop:Heckematch}, we get:
     \begin{align*}
\langle T_L(\lambda'\to\lambda)u, I(\Lambda'\to\Lambda)v\rangle &= \langle u, I(\Lambda\to\Lambda')I(\Lambda'\to\Lambda)v\rangle\\
&=\langle u, \sum_{w\in W_P} I_w v\rangle\\
&=\langle \sum_{w\in W_\lambda} T_w u,  v\rangle\\
&=|W_\lambda|\langle u,v\rangle.
\end{align*}
This proves (ii).
\end{proof}

\begin{theorem}\label{thm:convolutionisdualtocoherentcontinuation}
In the setting described in the beginning of this section (in particular, assuming conditions (GR1) and (GR2)), the operators $T_L(\lambda' \to \lambda)$ and $I(\Lambda \to \Lambda')$ are dual under the Vogan duality pairings (\ref{eq:Vogandualities}). That is, for any $u\in K\Pi^z_{\lambda'}(G^{\Gamma})$ and $v\in KX^{{}^{\vee}G^{alg}}({}^{\vee}G^{\Gamma},\OO)$, we have
$$\langle T_L(\lambda' \to \lambda)u,v\rangle=\langle u,I(\Lambda \to \Lambda')v\rangle.$$
\end{theorem}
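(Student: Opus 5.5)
The plan is to reduce to the two special cases already available by inserting a regular intermediate orbit. Choose $\lambda_r \in \lambda' + X^*(L)$, equivalently $\lambda_r \in \lambda + X^*(L)$, regular and such that both $\lambda$ and $\lambda'$ lie in the closed Weyl chamber of $\lambda_r$. For instance, take $\lambda_r = \lambda' + N\rho_{\fu}$ with $N \gg 0$ (Lemma \ref{lem:rhotwistgoodrange}), possibly perturbed by a generic element of $X^*(L)$ to make it regular. Let $\Lambda_r = \mathcal{F}(\lambda_r)$, so that $(\Lambda,\Lambda_r)$ and $(\Lambda_r,\Lambda')$ are translation data. After conjugating by $W(\lambda)$ (Lemma \ref{lem:cohfacts}(ii)) we can arrange ${}^{\vee}P(\Lambda_r) \subseteq {}^{\vee}P(\Lambda)$ and ${}^{\vee}P(\Lambda_r)\subseteq {}^{\vee}P(\Lambda')$.

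On the representation side, $L$-coherent continuation factors through $\lambda_r$. Here $T_L(\lambda'\to\lambda_r)$ is the $L$-coherent continuation (well defined by (GR1) and Proposition \ref{prop:Lcoherentunique}), and composing with the classical $T(\lambda_r\to\lambda)$ gives $T_L(\lambda'\to\lambda)$. This holds because the unique $L$-coherent family through $u$ extends to a classical coherent family $\Theta$, and $\Theta$ restricted to $\lambda_r+X$ is the unique classical family through $\Theta(\lambda_r)$. On the sheaf side, Lemma \ref{lem:compositionintertwiners} gives
$$\chi\cdot I(\Lambda\to\Lambda') = I(\Lambda_r\to\Lambda')\circ I(\Lambda\to\Lambda_r).$$
Lemma \ref{lem:convolutionpreliminary}(i) shows that $T(\lambda_r\to\lambda)$ is dual to $I(\Lambda\to\Lambda_r)$.

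It therefore remains to show that $T_L(\lambda'\to\lambda_r)$ is, up to the same positive scalar $\chi$, dual to $I(\Lambda_r\to\Lambda')$, which is pushforward along a flag fibration. Both sides are $\ZZ$-valued, and Grothendieck groups are torsion-free, so the scalar can be cancelled at the end. To handle the pushforward, I would use Lemma \ref{lem:convolutionpreliminary}(ii) with the Borel ${}^{\vee}P(\Lambda_r)$. For $u \in K\Pi^z_{\lambda'}$, the element $T_L(\lambda'\to\lambda_r)u$ is a "translation out of the wall" and so is $T_w$-invariant for $w\in W_{\lambda'}$. Since $T(\lambda_r\to\lambda')$ composed with $T_L(\lambda'\to\lambda_r)$ equals $|W_{\lambda'}|$ times the identity (push-to-wall after push-off), one gets $\langle T_L(\lambda'\to\lambda_r)u, v\rangle = \langle u, I(\Lambda_r\to\Lambda')v\rangle$ up to the factor $|W_{\lambda'}|$. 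The Euler characteristic $\chi$ of the relevant partial flag variety should match these Weyl group orders. Combining the three dualities then gives $\langle T_L(\lambda'\to\lambda)u,v\rangle=\langle u,I(\Lambda\to\Lambda')v\rangle$.

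The main obstacle is the middle step: making the Weyl group bookkeeping consistent. This means choosing $\lambda_r$ so that the parabolic inclusions hold simultaneously, identifying $\chi$ with the ratio of singular Weyl group orders, and checking invariance of $T_L(\lambda'\to\lambda_r)u$ under $W_{\lambda'}$. The last point is where (GR1) and (GR2) enter: $W_{\lambda'}\subseteq W_L$, and the family $\mu\mapsto\Theta(\lambda'+\mu)$ is insensitive to $W_L$. If this direct approach is messy, I would instead check the identity on a basis. Pair the standard modules $M(\xi)$ against $\mu(\xi')$, using that for good-range $L$-parameters $T_L$ sends standard limit characters to standard limit characters, by shifting $\Lambda^{can}$ by a character of $L$. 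Then compute $I(\Lambda\to\Lambda')\mu(\xi')$ directly as $q'_!q^*$ of an orbit extension-by-zero; this is the Lie-theoretic calculation I expect to be relegated to the appendix.
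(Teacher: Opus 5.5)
\textbf{There is a genuine gap.} Your reduction needs a single regular $\lambda_r$ with ${}^{\vee}P(\Lambda_r)\subseteq {}^{\vee}P(\Lambda)\cap{}^{\vee}P(\Lambda')$. In general no such $\lambda_r$ exists, for two reasons.

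The first reason is repairable bookkeeping. If $\lambda'$ is singular, (GR1) forces its singular roots to be roots of $L$. Every element of $\lambda'+X^*(L)$ stays singular along them, since characters of $L$ pair to zero with coroots of $L$. So no perturbation inside $X^*(L)$ yields a regular $\lambda_r$, and $T_L(\lambda'\to\lambda_r)$ is undefined for regular $\lambda_r$. What you actually describe is translation out of the wall, say $T^{\mathrm{out}}$, which is exactly transpose to the pushforward $I(\Lambda_r\to\Lambda')$. Since $W_{\lambda'}\subseteq W_L$ fixes $\lambda-\lambda'$, one gets $T(\lambda_r\to\lambda)\circ T^{\mathrm{out}}=|W_{\lambda'}|\,T_L(\lambda'\to\lambda)$, not $T_L(\lambda'\to\lambda)$.

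The second reason is fatal. A Borel subgroup of ${}^{\vee}R$ lies in ${}^{\vee}P(\Lambda)\cap{}^{\vee}P(\Lambda')$ exactly when $\lambda$ and $\lambda'$ lie in a common closed chamber for the integral roots. Simultaneous $W$-conjugation (Lemma \ref{lem:cohfacts}(ii)) cannot change the relative position of the two parabolics. (GR1)--(GR2) make $\lambda$ and $\lambda'$ agree on the roots of $L$ and make $\lambda'$ positive on $\fu$, but say nothing about the sign of $\lambda$ on $\fu$. Negativity there is precisely the weakly fair situation in which the theorem is later applied. Already for $G=SL_2$, $L=H$, $\lambda'=\rho$, $\lambda=-\rho$, the two Borels are opposite, and the statement \emph{is} Proposition \ref{prop:Heckematch} for the simple reflection, which your argument never uses. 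Your basis-by-basis fallback is not yet a plan. The appendix is also unrelated: Theorem \ref{thm:threeparabolics} serves the microstalk comparison in Lemma \ref{lem:intandmic}.

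\textbf{The missing idea} is to use \emph{two} regular points, one adapted to each side, and to cross between them with the regular-to-regular duality. This is the paper's route.
\begin{enumerate}
\item Take regular $\lambda_r\in\lambda+X$ and $\lambda'_r\in\lambda'+X$ with ${}^{\vee}B={}^{\vee}P(\Lambda_r)\subseteq{}^{\vee}P$ and ${}^{\vee}B'={}^{\vee}P(\Lambda'_r)\subseteq{}^{\vee}P'$.
\item Two applications of Lemma \ref{lem:compositionintertwiners} give $|W_{\lambda'}|\,I(\Lambda\to\Lambda')=I(\Lambda'_r\to\Lambda')\,I(\Lambda_r\to\Lambda'_r)\,I(\Lambda\to\Lambda_r)$. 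The first Euler characteristic, $\chi(({}^{\vee}P\cap{}^{\vee}B')/({}^{\vee}B\cap{}^{\vee}B'))$, is $1$. The second, $\chi(({}^{\vee}P\cap{}^{\vee}P')/({}^{\vee}P\cap{}^{\vee}B'))$, equals $|W_{\lambda'}|$ because (GR1)--(GR2) force the Levi factor of ${}^{\vee}P\cap{}^{\vee}P'$ to be that of ${}^{\vee}P'$. This is where the hypotheses genuinely enter.
\item Write $u=T(\lambda'_r\to\lambda')u_r$ with $u_r$ invariant under $W_{\lambda'}$. Then Lemma \ref{lem:convolutionpreliminary}(ii) cancels $|W_{\lambda'}|$.
\item Proposition \ref{prop:Heckematch} transposes $I(\Lambda_r\to\Lambda'_r)$ into $T(\lambda'_r\to\lambda_r)$, whatever the relative position.
\item Lemma \ref{lem:convolutionpreliminary}(i) transposes $I(\Lambda\to\Lambda_r)$.
\item Lemma \ref{lem:cohfacts} together with Proposition \ref{prop:Lcoherentunique} identifies $T(\lambda'_r\to\lambda)u_r$ with $T_L(\lambda'\to\lambda)u$.
\end{enumerate}

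Your ingredients are correct and reappear in this argument: the $|W_{\lambda'}|$ bookkeeping, out-of-the-wall translation versus pushforward, and $W_{\lambda'}\subseteq W_L$. With the correction above, your single-$\lambda_r$ argument proves exactly the special case in which $\lambda$ and $\lambda'$ share a closed chamber.
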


\begin{proof}
Choose conjugacy classes $\OO_r, \OO'_r 
\subset {}^{\vee}\fg$ of regular semisimple elements and translation data $(\Lambda,\Lambda_r)$ and $(\Lambda',\Lambda'_r)$ from $\OO$ to $\OO_r$ and $\OO'$ to $\OO'_r$, respectively, such that ${}^{\vee}P_r = {}^{\vee}B$ is a Borel subgroup contained in ${}^{\vee}P$ and ${}^{\vee}P'_r = {}^{\vee}B'$ is a Borel subgroup contained in ${}^{\vee}P$. Also choose $u_r\in K\Pi^z_{\OO'_r}(G^{\Gamma})$, invariant under $\{T_w \mid w \in W_{\lambda'}\}$, such that $u=T_L(\lambda'_r\to\lambda')u_r$ (such a $u_r$ exists by Proposition \ref{prop:translationtowall}). 

Note that
$$\chi((^\vee P\cap ^\vee B')/(^\vee B\cap ^\vee B'))=\chi((^\vee B\cap ^\vee P')/(^\vee B\cap ^\vee B'))=1,$$ since in both cases the numerator and denominator are solvable groups sharing a common maximal torus, and so the quotients are affine spaces. Moreover, 
$$\chi((^\vee P\cap ^\vee P')/(^\vee P\cap ^\vee B'))=\chi((^\vee P\cap ^\vee P')/(^\vee B\cap ^\vee P'))=|W_{\lambda'}|.$$ 
To see this, note that both quotients are affine fibrations over the flag variety of the Levi factor of $^\vee P\cap ^\vee P'$, and conditions (GR1) and (GR2) guarantee that this Levi factor coincides with the Levi factor of $P'$, which has Weyl group $W_{\lambda'}$.

%
%
%
%
%
Now we compute
\begin{align*}
\langle u,I(\Lambda' \to \Lambda)v\rangle = &\frac{1}{|W_{\lambda'}|}\langle T_L(\lambda'_r\to\lambda')u_r,I(\Lambda'_r \to \Lambda')I(\Lambda_r \to \Lambda'_r)I(\Lambda \to \Lambda_r)v\rangle &&\text{(Lemma \ref{lem:compositionintertwiners})}\\
= &\frac{|W_{\lambda'}|}{|W_{\lambda'}|} \langle u_r,I(\Lambda_r \to \Lambda'_r)I(\Lambda \to \Lambda_r)v\rangle &&\text{(Lemma \ref{lem:convolutionpreliminary}(ii))}\\
= &\langle T_L(\lambda'_r \to \lambda_r)u_r,I(\Lambda \to \Lambda_r)v\rangle &&\text{(Proposition \ref{prop:Heckematch})}\\
= &\langle T_L(\lambda_r \to \lambda)T(\lambda'_r \to \lambda_r)u_r,v\rangle && \text{(Lemma \ref{lem:convolutionpreliminary}(i))}\\
= &\langle T_L(\lambda'_r \to \lambda)u_r,v\rangle &&\text{(Lemma \ref{lem:cohfacts})}\\
= &\langle T_L(\lambda' \to \lambda)u,v\rangle &&\text{(Proposition \ref{prop:Lcoherentunique})}.
\end{align*}

\end{proof}


\subsection{Arthur packets}\label{sec:Arthur}

Let $(G^{\Gamma},\mathcal{W})$ be an extended group for $G$ and let $({}^{\vee}G^{\Gamma},\mathcal{S})$ be a corresponding $E$-group with second invariant $z$. Fix an Arthur parameter $\psi\colon W_{\RR} \times SL_2(\CC) \to {}^{\vee}G^{\Gamma}$ (Definition \ref{def:Arthurparam}). Recall that $\psi$ determines a Langlands parameter $\varphi_{\psi}$ (\ref{eq:ArthurtoLanglands}) which in turn determines a point $(y,\Lambda) \in X({}^{\vee}G^{\Gamma})$ (Proposition \ref{prop:classicaltogeometric}). This point can be described a bit more explicitly as follows. 

Let $\psi_0$ denote the restriction of $\psi$ to $W_{\RR}$, a tempered Langlands parameter, and let $(y_0,\lambda_0)$ denote the corresponding pair as in Proposition \ref{prop:classicaltogeometric}. Next, let $\psi_1$ denote the restriction of $\psi$ to $SL_2(\CC)$, an algebraic homomorphism into ${}^{\vee}G$. Define the elements
$$e = d\psi_1\begin{pmatrix} 0 & 1\\0 & 0\end{pmatrix}, \qquad f = d\psi_1\begin{pmatrix} 0 & 0\\1 & 0\end{pmatrix}, \qquad h = d\psi_1\begin{pmatrix} 1 & 0\\0 & -1\end{pmatrix},\qquad y_1 = \psi_1\begin{pmatrix} i & 0\\0 & -i\end{pmatrix}$$
Then the point $(y,\Lambda) \in X({}^{\vee}G^{\Gamma})$ is given by the formulas
$$y = y_0y_1, \qquad \lambda = \lambda_0 + \frac{1}{2}h, \qquad \Lambda = \mathcal{F}(\lambda).$$
Now use $(y,\Lambda)$ to define the subgroups ${}^{\vee}R \subset {}^{\vee}R$, ${}^{\vee}K \subset {}^{\vee}R$, and ${}^{\vee}P \subset {}^{\vee}R$ and the algebraic variety $X({}^{\vee}G^{\Gamma},\OO,Y)$ as in Section \ref{sec:ABVparameters}. Let ${}^{\vee}\theta = \Ad(y)$, an involution of ${}^{\vee}\mathfrak{r}$ with fixed points ${}^{\vee}\fk$. It is easy to see that $e$ belongs to the subspace
$$\mathrm{nil}({}^{\vee}\fp)^{-{}^{\vee}\theta} \subset {}^{\vee}\mathfrak{r}$$

\begin{prop}[{\cite[Proposition 21.3]{AdamsBarbaschVogan}}]\label{prop:Arthuroconormal}
In the setting described above, let $S \subset X({}^{\vee}G^{\Gamma},\OO,Y)$ denote the ${}^{\vee}G$-orbit of $(y,\Lambda)$. Then the co-normal space $T^*_{X({}^{\vee}G^{\Gamma},\OO,Y), (y,\Lambda)}S$ to $S \subset X({}^{\vee}G^{\Gamma},\OO,Y)$ at the point $(y,\Lambda)$ is canonically identified with the subspace
$$({}^{\vee}\mathfrak{r}/{}^{\vee}\fp + {}^{\vee}\fk)^* \subset {}^{\vee}\mathfrak{r}^*.$$
If we fix a non-degenerate ${}^{\vee}G^{\Gamma}$-invariant symmetric bilinear form on ${}^{\vee}\fg$, then the induced isomorphism
$${}^{\vee}\mathfrak{r}^* \xrightarrow{\sim} {}^{\vee}\mathfrak{r}$$
restricts to an isomorphism
$$({}^{\vee}\mathfrak{r}/{}^{\vee}\fp + {}^{\vee}\fk)^* \xrightarrow{\sim} \mathrm{nil}({}^{\vee}\fp)^{-{}^{\vee}\theta}.$$
In this way, the element $e \in \mathfrak{nil}({}^{\vee}\fp)^{-{}^{\vee}\theta}$ corresponds to a vector in $T^*_{X({}^{\vee}G^{\Gamma},\OO,Y), (y,\Lambda)}S$. This vector is generic, in that its ${}^{\vee}G$-orbit is open and dense in $T^*_{X({}^{\vee}G^{\Gamma},\OO,Y), (y,\Lambda)}S$. 
\end{prop}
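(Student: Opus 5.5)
The plan is to work entirely in the fiber model $X({}^{\vee}G^{\Gamma},\OO,Y)\simeq {}^{\vee}G\times_{{}^{\vee}K}({}^{\vee}R/{}^{\vee}P)$ from Section \ref{sec:ABVparameters}. For a homogeneous bundle, the conormal space to a ${}^{\vee}G$-orbit at a point of the fiber coincides with the conormal space, inside ${}^{\vee}R/{}^{\vee}P$, to the ${}^{\vee}K$-orbit through that point. So I first replace $S$ by the ${}^{\vee}K$-orbit of the base point ${}^{\vee}P\in{}^{\vee}R/{}^{\vee}P$.

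The tangent space to ${}^{\vee}R/{}^{\vee}P$ at the base point is ${}^{\vee}\mathfrak{r}/{}^{\vee}\fp$. The tangent space to the ${}^{\vee}K$-orbit is the image of ${}^{\vee}\fk$, namely $({}^{\vee}\fk+{}^{\vee}\fp)/{}^{\vee}\fp$. The conormal space is therefore the annihilator, which is $({}^{\vee}\mathfrak{r}/({}^{\vee}\fp+{}^{\vee}\fk))^*\subset{}^{\vee}\mathfrak{r}^*$; this gives the first claim.

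Next I transport this through an invariant form $B$ on ${}^{\vee}\fg$, restricted to ${}^{\vee}\mathfrak{r}$. Two inputs are needed:
\begin{itemize}
\item The restriction of $B$ to ${}^{\vee}\mathfrak{r}$ is nondegenerate, because ${}^{\vee}\mathfrak{r}$ is the centralizer of the semisimple element $e(\Lambda)$.
\item $B$ is invariant under $\Ad(y)$, since it is ${}^{\vee}G^{\Gamma}$-invariant.
\end{itemize}
The annihilator of ${}^{\vee}\fp$ is $\mathrm{nil}({}^{\vee}\fp)$, by the standard fact for parabolics with $\ad(\lambda)$-graded pieces pairing ${}^{\vee}\mathfrak{r}_n$ with ${}^{\vee}\mathfrak{r}_{-n}$. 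The $\pm1$ eigenspaces of ${}^{\vee}\theta$ are $B$-orthogonal, so the annihilator of ${}^{\vee}\fk$ is ${}^{\vee}\mathfrak{r}^{-{}^{\vee}\theta}$. Intersecting the two annihilators gives $\mathrm{nil}({}^{\vee}\fp)^{-{}^{\vee}\theta}$. Here I should check that ${}^{\vee}\fp$ is ${}^{\vee}\theta$-stable, so that the intersection makes sense compatibly. This holds because $y^2=e(\Lambda)$, and the flat $\Lambda$ is stabilized appropriately by $y$ via $\Ad(y)\lambda$ and the description of the parameter from $\varphi$. This was the computation already hinted at by "$e$ belongs to $\mathrm{nil}({}^{\vee}\fp)^{-{}^{\vee}\theta}$".

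Finally I show genericity of $e$: the ${}^{\vee}K\cap{}^{\vee}P$-orbit (equivalently the stabilizer-of-point orbit) of $e$ is dense in $\mathrm{nil}({}^{\vee}\fp)^{-{}^{\vee}\theta}$. I take $\lambda=\lambda_0+\tfrac12 h$. Since $\lambda_0$ commutes with the $SL_2$, the grading by $\ad(\lambda)$ refines the $\tfrac12\ad(h)$ grading on the centralizer of $\lambda_0$.

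The key step is to show $[{}^{\vee}\fk\cap{}^{\vee}\fp,e]=\mathrm{nil}({}^{\vee}\fp)^{-{}^{\vee}\theta}$, which gives an open orbit by a dimension count. The $\fsl_2$-representation theory identity $\mathrm{im}(\ad e)\supseteq$ all positive $h$-weight spaces handles the part commuting with $\lambda_0$. The part where $\lambda_0$ acts nontrivially needs care: there $\mathrm{Re}$ of $\lambda_0$ is zero (temperedness), so integrality forces the relevant $\ad\lambda$-eigenvalues to be integers only in controlled ways. Taking $-{}^{\vee}\theta$-parts uses that $\ad e$ intertwines $\theta$-eigenspaces with a sign flip, since $\Ad(y)e=-e$ (as $y_1$ acts on $e$ by $i^2=-1$ and $y_0$ commutes with the $SL_2$).

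I expect the main obstacle to be this surjectivity, in particular reducing to the centralizer of $\lambda_0$ correctly. If the direct argument stalls, I would fall back to citing \cite[Proposition 21.3]{AdamsBarbaschVogan} and Kostant–Rallis type results for graded/symmetric pairs.
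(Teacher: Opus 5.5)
The paper gives no argument of its own here; it only quotes \cite[Proposition 21.3]{AdamsBarbaschVogan}. I am therefore judging your sketch on its own terms, and it has a genuine gap in the genericity step.

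\textbf{What is fine.} Your first two steps are correct (I drop the ${}^{\vee}$'s). You reduce to the ${}^{\vee}K$-orbit of the base point in ${}^{\vee}R/{}^{\vee}P$, identify the conormal space with the annihilator of $\fp+\fk$, and transport it by the invariant form to $\mathrm{nil}(\fp)\cap\mathfrak{r}^{-\theta}$. None of this needs $\fp$ to be $\theta$-stable.

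\textbf{Where it fails.} Your claim that $\fp$ \emph{is} $\theta$-stable is false, and your genericity step leans on it. Put $\mu_0=\Ad(\psi_0(j))\lambda_0$, $a=\frac12(\lambda_0-\mu_0)$ and $b=\frac12(\lambda_0+\mu_0)$.
\begin{itemize}
\item $\Ad(y)$ fixes $h$ and swaps $\lambda_0$ and $\mu_0$ (for the second half, use $y^2=e(\lambda)$). Hence $\theta a=-a$ and $\theta b=b$.
\item Temperedness makes $b$ imaginary. Your ``$\mathrm{Re}$ of $\lambda_0$ is zero'' is backwards: $a$ is half-integral on roots and typically nonzero, e.g.\ for discrete series.
\item The argument of Lemma \ref{lem:RM1equalsR} gives $\ad b=0$ on $\mathfrak{r}$.
\end{itemize}
So on $\mathfrak{r}$, $\fp$ is cut out by $a+\frac12 h$, while $\theta(\fp)$ is cut out by $-a+\frac12 h$. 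These agree only when $a$ acts trivially on $\mathfrak{r}$, for example when $\psi$ is unipotent.

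This is load-bearing. If $\fp$ were $\theta$-stable, $\mathrm{nil}(\fp)^{-\theta}$ would in general contain vectors of positive $\ad\lambda$-weight but nonpositive $\ad h$-weight. Such vectors are generally not reached by $\ad e$, so $e$ would \emph{not} be generic. It is precisely the failure of $\theta$-stability that removes those pieces. Your proposed reduction to the centralizer of $\lambda_0$ only covers the part where $a$ acts by $0$, and gives no mechanism for the rest.

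\textbf{The missing idea} is exactly that $\theta$ reverses $a$. Bigrade $\mathfrak{r}=\bigoplus\mathfrak{r}_{p,q}$ by the eigenvalues of $\ad a$ and $\ad h$. Nonzero pieces have $2p\in\ZZ$ and $p+\frac q2\in\ZZ$. Since $a$ commutes with $(e,f,h)$, each $\bigoplus_q\mathfrak{r}_{p,q}$ is an $\mathfrak{sl}_2$-module, and $\theta\mathfrak{r}_{p,q}=\mathfrak{r}_{-p,q}$. Hence
\[ \fk\cap\fp=\Bigl(\bigoplus_{q\ge 2|p|}\mathfrak{r}_{p,q}\Bigr)^{\theta},\qquad \mathrm{nil}(\fp)\cap\mathfrak{r}^{-\theta}=\Bigl(\bigoplus_{q>2|p|}\mathfrak{r}_{p,q}\Bigr)^{-\theta}. \]
Three facts then finish the argument.
\begin{itemize}
\item Integrality gives $\frac q2-|p|\in\ZZ$, so $q>2|p|$ if and only if $q\ge 2|p|+2$.
\item For $q\ge 2|p|\ge 0$, the map $\ad e\colon\mathfrak{r}_{p,q}\to\mathfrak{r}_{p,q+2}$ is onto by $\mathfrak{sl}_2$-theory.
\item Since $\theta e=-e$ (which you checked correctly), $\ad e$ anticommutes with $\theta$. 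Averaging a preimage with its $\theta$-translate shows that $\ad e$ maps the $\theta$-fixed part of a $\theta$-stable subspace onto the $(-\theta)$-part of its image.
\end{itemize}
Therefore $[\fk\cap\fp,e]=\mathrm{nil}(\fp)\cap\mathfrak{r}^{-\theta}$. So the orbit of $e$ under ${}^{\vee}K\cap{}^{\vee}P$ is open, hence dense, in the conormal fiber.

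Your overall strategy, tangent-space surjectivity, is right. As written, though, its key step rests on a false structural claim and omits the mechanism that actually makes it work.
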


\begin{definition}[{\cite[Definition 22.4]{AdamsBarbaschVogan}}]\label{def:Arthurcomponentgroup}
The \emph{Arthur component group} of $\psi$ is the pro-finite group
$$A_{\psi}^{mic} := \mathrm{Com}(\mathrm{Stab}_{{}^{\vee}G^{alg}}(\psi)).$$
\end{definition}

\begin{prop}[{\cite[Proposition 22.9]{AdamsBarbaschVogan}}]\label{prop:Arthurcomponent}
In the setting described above, the inclusion
$$\mathrm{Stab}_{{}^{\vee}G^{alg}}(\psi) \subset Z_{{}^{\vee}G^{alg}}(e)$$
induces an isomorphism
$$A_{\psi}^{mic} \xrightarrow{\sim} \mathrm{Com}(Z_{{}^{\vee}G^{alg}}(e)).$$
Moreover, 
$$\mathrm{Com}(Z_{{}^{\vee}K^{alg}}(e)) = \mathrm{Com}(Z_{{}^{\vee}G^{alg}}(e)).$$
Thus, there is a canonical isomorphism 
$$A_{\psi}^{mic} \xrightarrow{\sim} \mathrm{Com}(Z_{{}^{\vee}K^{alg}}(e)).$$
\end{prop}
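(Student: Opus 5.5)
The statement has two parts. First, the inclusion $\mathrm{Stab}_{{}^{\vee}G^{alg}}(\psi) \subset Z_{{}^{\vee}G^{alg}}(e)$ induces an isomorphism on component groups. Second, $\mathrm{Com}(Z_{{}^{\vee}K^{alg}}(e)) = \mathrm{Com}(Z_{{}^{\vee}G^{alg}}(e))$. I would prove both by the standard Jacobson--Morozov / Kostant argument, which says that the centralizer of a nilpotent element is a semidirect product of a reductive part and a unipotent radical. The stabilizer of $\psi$ will turn out to be exactly a Levi factor of a suitable nilpotent centralizer, and unipotent groups are connected, so passing to a Levi factor does not change the component group.

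The first thing to fix is what ``$Z(e)$'' means. The component group of the full centralizer $Z_{{}^{\vee}G}(e)$ does not see $\psi|_{W_{\RR}}$. So $Z(e)$ should be read inside the group fixing the rest of the parameter, namely inside ${}^{\vee}K = Z_{{}^{\vee}R}(y)$, or inside $Z_{{}^{\vee}G}(y,\Lambda)$. Accordingly I work in $H := Z_{{}^{\vee}G^{alg}}(y,\lambda)$. This is a reductive group, being the fixed points of a finite-order semisimple element acting on the Levi subgroup $Z(\lambda)$. With this reading, $e$ lies in the $(-{}^{\vee}\theta)$-eigenspace and in the eigenvalue-$1$ part for $\mathrm{ad}(\lambda)$, equivalently for $\mathrm{ad}(h/2)$, given the grading on $\mathrm{nil}({}^{\vee}\fp)$.

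Step 1: show that $\mathrm{Stab}(\psi) = Z_{H}(e,h,f)$. An element fixing $\psi$ fixes $\psi_0$, $\psi_1$, $y$, $\lambda_0$ and $h$, hence fixes $\lambda$, $e$, $f$ and $h$. Conversely, an element of ${}^{\vee}G$ fixing $y$, $\lambda$, $e$, $h$ and $f$ fixes $\psi_1$, since it fixes $d\psi_1$ and $SL_2$ is connected. It fixes $\lambda_0 = \lambda - h/2$, and it fixes $y_0 = y\,y_1^{-1}$. Since $\psi_0$ is determined by $\lambda_0$ and $y_0$ (Proposition \ref{prop:classicaltogeometric}), it fixes $\psi_0$.

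Step 2: the graded Kostant--Jacobson--Morozov theorem. For the $\ZZ$-graded, $\theta$-twisted setting, $Z_{H}(e) = Z_{H}(e,h,f) \ltimes U$ with $U$ unipotent and connected. The input is that every graded, ${}^{\vee}\theta$-compatible $\mathfrak{sl}_2$-triple through $e$ is $Z_H(e)^0$-conjugate to a given one. This is the Kostant--Rallis / Kostant--Sekiguchi argument, applied inside the fixed points of the commuting pair of semisimple automorphisms $(\Ad y,\ \exp 2\pi i t\lambda)$. From this decomposition, $\mathrm{Com}(\mathrm{Stab}(\psi)) \cong \mathrm{Com}(Z_H(e))$.

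Step 3: identify $Z_H(e)$ with $Z_{{}^{\vee}K^{alg}}(e)$, and with the stabilizer of the conormal vector, up to connected unipotent pieces. Here I use that ${}^{\vee}P \cap {}^{\vee}K$ acts on $\mathrm{nil}({}^{\vee}\fp)^{-\theta}$. The part of ${}^{\vee}P \cap {}^{\vee}K$ lying in $\mathrm{nil}$ is unipotent, and its stabilizer of $e$ is connected. The Levi part is exactly $H$ restricted to ${}^{\vee}R$. This gives the equality $\mathrm{Com}(Z_{{}^{\vee}K}(e)) = \mathrm{Com}(Z(e))$ in the sense intended, namely the stabilizer in ${}^{\vee}K$ of the point together with the conormal vector.

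I expect the main obstacle to be Step 2, specifically the conjugacy of graded, $\theta$-stable $\mathfrak{sl}_2$-triples under the connected centralizer. I would handle it by first finding $f$ in the correct graded $\theta$-eigenspace, by projecting a Jacobson--Morozov $f$ onto that eigenspace, and then using Kostant's argument that the affine space of such $f$ is a single orbit of the unipotent group $\exp(\mathfrak{z}(e) \cap \mathrm{Im}\,\mathrm{ad}\, e)$.
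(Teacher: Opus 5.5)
The paper gives no argument of its own here; it simply cites \cite[Proposition 22.9]{AdamsBarbaschVogan}. Your strategy is the standard way to prove that statement: realize $\mathrm{Stab}(\psi)$ as a Levi factor of the stabilizer of the point $x=(y,\Lambda)$ together with the covector $e$, using Kostant's decomposition of nilpotent centralizers and the connectedness of unipotent groups. You are also right that the statement cannot be read literally. For tempered $\psi$ one has $e=0$, so $Z_{{}^{\vee}G^{alg}}(e)$ is connected while $A^{mic}_\psi$ need not be trivial.

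Reading it ``inside ${}^{\vee}K$'' is also wrong, however. The relevant group is the centralizer of $e$ in the preimage of ${}^{\vee}K\cap{}^{\vee}P$, i.e.\ the stabilizer of the base point and the covector. For the discrete series parameter of $SL_2(\RR)$ one gets $A^{mic}_\psi\cong\ZZ/4\ZZ$ but $\mathrm{Com}({}^{\vee}K^{alg})\cong\ZZ/2\ZZ$. You do land on the right object at the end of Step 3.

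A smaller error sits in the setup of Step 2. $\Ad(y)$ does not fix $\lambda$: one computes $\Ad(y)\lambda=\Ad(\psi(j))\lambda_0+\tfrac12 h=:\mu$, and $\mu\neq\lambda$ whenever $\psi|_{S^1}$ is nontrivial. So $y$ does not act on $Z(\lambda)$, and $(\Ad y,\exp(t\lambda))$ is not a commuting pair. Instead, $H=Z(y)\cap Z(\lambda)$ is the fixed-point group of the involution $\Ad(y)$ on the Levi subgroup $Z(\lambda)\cap Z(\mu)$.

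This is harmless, and Step 2 is easier than you fear. Let $D$ be the group generated by $y$ and $\exp(\CC\lambda)$. It fixes $h$ and scales $e$ and $f$ by mutually inverse characters, so it preserves Kostant's decomposition $Z_{{}^{\vee}G}(e)=Z_{{}^{\vee}G}(e,h,f)\ltimes U_e$. Taking $D$-fixed points gives $Z_H(e)=Z_H(e,h,f)\ltimes\exp(\mathfrak{u}_e^{D})$, with the second factor connected. Since $\psi$ already supplies a compatible triple, you need neither the projection of a Jacobson--Morozov $f$ nor a conjugacy theorem for graded $\theta$-stable triples.

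The genuine gap is Step 3. Write ${}^{\vee}P=L(\lambda)N(\lambda)$ and $\Ad(y){}^{\vee}P=L(\mu)N(\mu)$. Because ${}^{\vee}P$ is not $\Ad(y)$-stable, ${}^{\vee}K\cap{}^{\vee}P$ is not $H\ltimes({}^{\vee}K\cap N(\lambda))$. It can contain elements $\exp(X+\Ad(y)X)$ with $0\neq X\in\mathfrak{l}(\lambda)\cap\mathfrak{n}(\mu)$, whose $L(\lambda)$-component $\exp X$ does not commute with $y$. The correct statement is ${}^{\vee}K\cap{}^{\vee}P=Z(y)\cap{}^{\vee}P\cap\Ad(y){}^{\vee}P=H\ltimes U'$. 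Here $U'$ is the group of $y$-fixed points of the unipotent radical $N'$ of ${}^{\vee}P\cap\Ad(y){}^{\vee}P$.

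More seriously, even given $H\ltimes U'$, the stabilizer of $e$ in a semidirect product need not split as $Z_H(e)\ltimes Z_{U'}(e)$. Your sketch gives no reason why it does here. The missing input is a grading:
\begin{itemize}
\item[(a)] On ${}^{\vee}\mathfrak{r}$ one has $\ad(\lambda+\mu)=\ad(h)$, because $\lambda_0+\mu_0$ has only zero eigenvalues there, as in the proof of Lemma~\ref{lem:RM1equalsR}.
\item[(b)] $\mathrm{Lie}(N')$ lies in strictly positive $h$-degree.
\item[(c)] $H$ preserves degrees, since it centralizes $\lambda+\mu$.
\item[(d)] $e$ is homogeneous of degree $2$.
\end{itemize}
Hence $\Ad(lu)e=e$ with $l\in H$ and $u\in U'$ forces both $\Ad(l)e=e$ and $\Ad(u)e=e$.

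With this in place, use that closed subgroups of unipotent groups are connected in characteristic zero. Your Steps 1--3 then give $\mathrm{Stab}(x,e)=\mathrm{Stab}(\psi)\ltimes(\text{connected unipotent})$. This decomposition lifts to ${}^{\vee}G^{alg}$ and yields the proposition in its intended form.
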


If $P \in P^{{}^{\vee}G^{alg}}X({}^{\vee}G^{\Gamma},\OO,Y)$, then we can define the \emph{microstalk} of $P$ at the cotangent vector $((y,\Lambda),e) \in T^*X({}^{\vee}G^{\Gamma},\OO,Y)$. This is a finite-dimensional representation of $A_{\psi}^{mic}$, and defines an exact functor
$$\chi^{mic}_{\psi}\colon P^{{}^{\vee}G^{alg}}X({}^{\vee}G^{\Gamma},\OO,Y) \to \Rep(A_{\psi}^{mic}).$$
from the category of equivariant perverse sheaves on $X({}^{\vee}G^{\Gamma},\OO,Y)$ to finite-dimensional representations of $A^{mic}_{\psi}$, see \cite[Theorem 24.8]{AdamsBarbaschVogan}. For any $P \in P^{{}^{\vee}G^{alg}}X({}^{\vee}G^{\Gamma},\OO,Y)$, the cotangent vector $((y,\Lambda),e)$ belongs to the singular support of $P$ if and only if $\chi^{mic}_{\psi}(P) \neq 0$.

\begin{rmk}\label{rmk:vanishingcycles}
Below, we will need an alternative description of the vector space $\chi^{mic}_{\psi}(P)$ in terms of the vanishing cycles functor. Let $F$ be a complex analytic function defined on an open neighborhood of $(y,\Lambda)\in X({}^{\vee}G^{\Gamma},\OO,Y)$ such that 
\begin{itemize}
    \item $F(y,\Lambda) = 0$, and
    \item $dF|_{(y,\Lambda)} = e$.
\end{itemize}
Then there is a vanishing cycles functor
$$\Phi_F\colon PX({}^{\vee}G^{\Gamma},\OO,Y) \to P(F^{-1}(0)),$$
see e.g. \cite[Section 6]{GinsburgVanishing}. For any $P \in PX({}^{\vee}G^{\Gamma},\OO,Y)$, there is an isomorphism of vector spaces 
$$\chi^{mic}_{\psi}(P) \simeq \Phi_F(P)_{(y,\Lambda)},$$
where $\Phi_F(P)_{(y,\Lambda)}$ denotes the stalk of $\Phi_F(P)$ at $(y,\Lambda)$. In particular, we see that $((y,\Lambda),e) \in \mathrm{SS}(P)$ if and only if $\Phi_F(P)_{(y,\Lambda)} \neq 0$.  
\end{rmk}

For any $s \in A^{mic}_{\psi}$, we get a linear functional
$$m^{mic}_{\psi}(s)\colon K^{{}^{\vee}G^{alg}}X({}^{\vee}G^{\Gamma},\OO,Y) \to \ZZ, \qquad m^{mic}_{\psi}(s)(P) = \mathrm{Tr}(\chi^{mic}_{\psi}(P)(s)).$$

\begin{definition}[{\cite[Definition 22.6]{AdamsBarbaschVogan}}]\label{def:Arthurpacket}
Let $\psi\colon W_{\RR} \times SL_2(\CC) \to {}^{\vee}G^{\Gamma}$ be an Arthur parameter. The \emph{Arthur packet} attached to $\psi$ is the set of irreducible projective type-$z$ representations of strong real forms of $G^{\Gamma}$ given by
$$\Pi^z(G^{\Gamma})^{mic}_{\psi} := \{\pi(\xi) \mid \chi^{mic}_{\psi}(P(\xi)) \neq 0\}.$$
For an element $s \in A^{mic}_{\psi}$, the \emph{$s$-stable sum} associated to $\psi$ is the virtual representation
$$\eta^{mic}_{\psi}(s) := \sum _{\xi \in \Xi({}^{\vee}G^{\Gamma},Y,\OO)} e(\xi) (-1)^{d(\xi)-d(\varphi_{\psi})}m^{mic}_{\psi}(s)(P(\xi))\pi(\xi)$$
in $\overline{K}\Pi^z_{\OO}(G^{\Gamma})$.
\end{definition}

The following is immediate from Theorem \ref{thm:Voganduality}.

\begin{lemma}\label{lem:eta}
Let $\psi\colon W_{\RR} \times SL_2(\CC) \to {}^{\vee}G^{\Gamma}$ be an Arthur parameter and let $s \in A_{\psi}^{mic}$. Then the $s$-stable sum $\eta_{\psi}^{mic}(s) \in \overline{K}\Pi_{\OO}^z(G^{\Gamma})$ corresponds to the functional $m^{mic}_{\psi}(s) \in K^{{}^{\vee}G^{alg}}X({}^{\vee}G^{\Gamma},\OO)^*$ under the Vogan duality pairing
$$\langle -,-\rangle\colon K\Pi^z_{\OO}(G^{\Gamma}) \times K^{{}^{\vee}G^{alg}}X({}^{\vee}G^{\Gamma},\OO) \to \ZZ$$
\end{lemma}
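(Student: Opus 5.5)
The plan is to unwind the definitions and use the two defining properties of the Vogan duality pairing in Theorem \ref{thm:Voganduality}. Since $\{P(\xi)\}$ is a $\ZZ$-basis of $K^{{}^{\vee}G^{alg}}X({}^{\vee}G^{\Gamma},\OO)$, the pairing is perfect, and $\{\pi(\xi)\}$ is the dual basis up to sign. Concretely, the second normalization gives
$$\langle \pi(\xi), P(\xi')\rangle = e(\xi)(-1)^{d(\xi)}\delta_{\xi,\xi'}.$$
So the element of $\overline{K}\Pi^z_{\OO}(G^{\Gamma})$ representing an arbitrary functional $m$ on $K^{{}^{\vee}G^{alg}}X({}^{\vee}G^{\Gamma},\OO)$ is $\sum_{\xi} e(\xi)(-1)^{d(\xi)} m(P(\xi))\,\pi(\xi)$. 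This uses $e(\xi)^2=1$ and $((-1)^{d(\xi)})^2=1$. The completion $\overline{K}$ is needed because there may be infinitely many $\xi$ once all strong real forms are included, and this identification is the one already used in Section \ref{subsec:ABVfunctoriality}.

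Apply this to $m = m^{mic}_{\psi}(s)$. To make sense of this, first extend $m^{mic}_{\psi}(s)$ by zero on the summands $K^{{}^{\vee}G^{alg}}X({}^{\vee}G^{\Gamma},\OO,Y')$ with $Y'\neq Y$. With this convention, the sum over $\Xi({}^{\vee}G^{\Gamma},\OO)$ collapses to the sum over $\Xi({}^{\vee}G^{\Gamma},\OO,Y)$. The result agrees with $\eta^{mic}_{\psi}(s)$ as written in Definition \ref{def:Arthurpacket}, except for the global sign $(-1)^{d(\varphi_{\psi})}$. That sign is a constant independent of $\xi$, and the intended statement of the lemma has to absorb it: either the correspondence is read up to this fixed normalizing sign, or one twists $m^{mic}_{\psi}(s)$ by $(-1)^{d(\varphi_{\psi})}$. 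I would state this convention explicitly.

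There is one more point to check: $m^{mic}_{\psi}(s)$, defined on perverse sheaves, must be additive and so descend to the Grothendieck group. This holds because $\chi^{mic}_{\psi}$ is exact (\cite[Theorem 24.8]{AdamsBarbaschVogan}), and the trace of $s$ is additive on short exact sequences of $A^{mic}_{\psi}$-representations.

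I expect no real obstacle here. The only delicate points are the bookkeeping ones: the sign normalization $(-1)^{d(\varphi_{\psi})}$, and the extension by zero across the different orbits $Y$.
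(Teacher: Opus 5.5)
Your proposal is correct and matches the paper's argument. The paper simply calls the lemma immediate from Theorem \ref{thm:Voganduality}, and what that means is exactly your dual-basis computation using $\langle \pi(\xi),P(\xi')\rangle = e(\xi)(-1)^{d(\xi)}\delta_{\xi,\xi'}$.

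Your sign bookkeeping is also right. With Definition \ref{def:Arthurpacket} as written one gets $\langle \eta^{mic}_{\psi}(s),P(\xi)\rangle = (-1)^{d(\varphi_{\psi})}\, m^{mic}_{\psi}(s)(P(\xi))$, so the stated correspondence holds only up to this fixed global sign. That sign really can be $-1$: for a one-dimensional compact torus, every orbit in $X({}^{\vee}G^{\Gamma},\OO,Y)$ has dimension one. The paper's one-line proof does not address this.
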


\begin{prop}\label{prop:unipotentimpliesweaklyunipotent}
Let $\psi\colon W_{\RR} \times SL_2(\mathbb{C}) \to {}^{\vee}G^{\Gamma}$ be a unipotent Arthur parameter. Then the members of $\Pi^z(G^{\Gamma})^{mic}_{\psi}$ are weakly unipotent (Definition \ref{def:weaklyunipotent}).
\end{prop}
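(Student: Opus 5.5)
The claim is that every member of a unipotent Arthur packet is weakly unipotent (Definition \ref{def:weaklyunipotent}), with $L=G$ there. My plan is to control the infinitesimal character of every $\pi \in \Pi^z(G^{\Gamma})^{mic}_{\psi}$ and then compare it with the infinitesimal characters that can occur in $\pi\otimes F$.

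\textbf{Step 1: the infinitesimal character.} By Definition \ref{def:Arthurpacket}, all members of the packet have infinitesimal character $\OO$, the orbit of $\lambda = \lambda_0 + \frac12 h$. Since $\psi$ is unipotent, $\psi(\CC^{\times}) \subset Z({}^{\vee}G)$. Hence $\lambda_0$, which is determined by $\psi|_{\CC^\times}$ through Proposition \ref{prop:classicaltogeometric}, is central in ${}^{\vee}\fg$. Under the decomposition (\ref{eq:levidecompCartan}) with $\fl = \fg$, this gives
$$\lambda_s = \tfrac12 h, \qquad \lambda_z = \lambda_0,$$
where $h$ is the neutral element of the $\mathfrak{sl}_2$-triple $\{e,h,f\}$ in ${}^{\vee}\fg$.

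\textbf{Step 2: tensoring with $F$.} Let $F$ be a finite-dimensional algebraic $G^{ad}$-representation. Every constituent of $\pi\otimes F$ has infinitesimal character $\lambda + \mu$, where $\mu$ is a weight of $F$. Since $\mu$ lies in the root lattice, $(\lambda+\mu)_z = \lambda_z$ and $(\lambda+\mu)_s = \frac12 h + \mu$, up to the Weyl group. Weakly unipotent then amounts to the inequality
$$\|\tfrac12 h + \mu\| \ge \|\tfrac12 h\|$$
for every $\mu$ occurring as a weight of $F$ such that the infinitesimal character $\lambda+\mu$ actually appears in $\pi\otimes F$.

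\textbf{Step 3: the norm inequality.} The naive inequality $\|\frac12 h+\mu\|\ge\|\frac12h\|$ is false for arbitrary $\mu$. So I would use the known fact that $\frac12 h$ is the minimal-length element in its coset of $\frac12 h$ plus the root lattice, in the following precise sense. The infinitesimal character $\frac12 h$ attached to an even or arbitrary nilpotent of the dual group has minimal norm among all infinitesimal characters $\nu$ that are integral translates and admit representations with the relevant annihilator. This is the statement that special unipotent infinitesimal characters are "minimal"; see the work of Barbasch–Vogan and McGovern (Davis–Mason-Brown). Concretely, I would cite the result that for any irreducible $(\fg,K)$-module $\pi$ with infinitesimal character $\frac12 h$ and with annihilator the maximal primitive ideal $J(\frac12 h)$, every $\nu \in \frac12 h + X$ at which a constituent of $\pi\otimes F$ appears satisfies $\|\nu\|\ge \|\frac12 h\|$. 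This is exactly the argument of \cite[Proposition 12.?]{KnappVogan1995} for unipotent representations, and is also in \cite{DavisMasonBrownOrbitMethod}.

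\textbf{Step 4: the annihilator condition.} It remains to check that members of the packet have maximal annihilator. By \cite[Corollary 27.13]{AdamsBarbaschVogan}, as quoted in the proof of Theorem \ref{thm:unipotentunitary}, the members of a unipotent packet are special unipotent. In particular their annihilator is the maximal ideal $J(\frac12 h)$, after reducing as there to $G$ almost simple and twisting away the central character $\lambda_0$. This verifies the hypothesis of Step 3.

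The main obstacle is Step 3: the minimality statement for maximal-ideal modules under tensoring. Any constituent of $\pi\otimes F$ at $\nu$ has associated variety contained in that of $\pi$, which is the closure of $d(\OO^\vee)$ for the Barbasch–Vogan dual $d$. One then needs that the smallest-norm $\nu$ in the integral coset with $\mathrm{AV}(J(\nu))\subseteq \overline{d(\OO^\vee)}$ is $\frac12 h$. I would prove this via translation principles, or else quote it from \cite{DavisMasonBrownOrbitMethod}, where it is established in general.
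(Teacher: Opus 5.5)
Your proposal is essentially the paper's argument. You discard the central part $\lambda_0$ (the paper simply reduces to $\fg$ semisimple), use \cite[Corollary 27.13]{AdamsBarbaschVogan} to get $\mathrm{Ann}_{U(\fg)}(\pi)=I_{max}(\tfrac{1}{2}h)$, and then invoke the Barbasch--Vogan fact that modules with this maximal annihilator are weakly unipotent. The statement you isolate in Step 3 is exactly \cite[Corollary 5.18]{BarbaschVogan1985}, which is the reference the paper cites, so you do not need the associated-variety detour or the tentative references to Knapp--Vogan and Davis--Mason-Brown.
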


\begin{proof}
Definition \ref{def:weaklyunipotent} allows us to reduce the case when $\fg$ is semisimple. In this case, by \cite[Corollary 27.13]{AdamsBarbaschVogan}, a representation $\pi \in \Pi^z(G^{\Gamma})$ belongs to a unipotent Arthur packet if and only if
$$\Ann_{U(\fg)}(\pi) = I_{max}(\frac{1}{2}h),$$
where $h$ is the semisimple element of an $\mathfrak{sl}_2$-triple $(e,f,h)$ in ${}^{\vee}\fg$ and $I_{max}(\frac{1}{2}h)$ is the unique maximal ideal in $U(\fg)$ of infinitesimal character $\frac{1}{2}h \in {}^{\vee}\fh/W = \fh^*/W$. By \cite[Corollary 5.18]{BarbaschVogan1985}, all such representations are weakly unipotent.
\end{proof}

\section{The Jordan Decomposition of an Arthur packet}\label{sec:functoriality}

In this section, we will define the \emph{Jordan decomposition} of an Arthur parameter $\psi\colon W_{\RR} \times SL_2(\CC) \to {}^{\vee}G^{\Gamma}$. This is a complicated collection of structures, including 
\begin{itemize}
    \item A factorization of $\psi$:
    \begin{center}
        \begin{tikzcd}
            {}^{\vee}M_2^{\Gamma} \ar[r,"\epsilon_2"] & {}^{\vee}M_1^{\Gamma} \ar[r,"\epsilon_1"] & {}^{\vee}G^{\Gamma}\\
            W_{\RR} \times SL_2(\CC) \ar[u,"\psi_2"] \ar[ur,"\psi_1"] \ar[urr, "\psi"]
        \end{tikzcd}
    \end{center}
    where ${}^{\vee}M_2^{\Gamma} \subset {}^{\vee}M_1^{\Gamma} \subset {}^{\vee}G^{\Gamma}$ are $E$-Levi subgroups and $\psi_2$ is unipotent (Definition \ref{def:Arthurparam}).
    \item Extended Levi subgroups $M_2^{\Gamma} \subset M_1^{\Gamma} \subset G^{\Gamma}$ dual to the subgroups ${}^{\vee}M_2^{\Gamma} \subset {}^{\vee}M_1^{\Gamma} \subset {}^{\vee}G^{\Gamma}$.
     \item A parabolic subgroup $Q_2 \subset M_1$ with Levi factor $M_2$. 
\end{itemize}
The definitions will be given in Section \ref{subsec:functorialityparameter}. By construction, the parabolic subgroup $Q_2 \subset M_1$ is `$\theta$-stable' in the sense of Lemma \ref{lem:realisreal}. Thus, we get a map (normalized cohomological induction)
$$R^{\fm_1}_{\fq_2}\colon K\Pi^{zz(\rho_{\fu_1})z(\rho_{\fu_2})}(M_2^{\Gamma}) \to K\Pi^{zz(\rho_{\fu_2})}(M_1^{\Gamma}),$$
(see Section \ref{sec:parabolicinductionextended}). Now, choose a parabolic $Q_1 \subset G$ with Levi factor $M_1$. Then $Q_1$ is `real' in the sense of Lemma \ref{lem:realisreal}. Thus, we get a map (normalized real parabolic induction)
$$R^{\fg}_{\fq_1}\colon K\Pi^{zz(\rho_{\fu_1})}(M_1^{\Gamma}) \to K\Pi^z(G^{\Gamma}).$$
(this map, unlike the previous one, is independent of the choice of parabolic $Q_1$, which is why we do not include $Q_1$ in the list of structures above). In Section \ref{subsec:functorialityvsinduction}, we will show that these maps agree (up to Kottwitz signs) with the Langlands functoriality maps $(\epsilon_2)_*$ and $(\epsilon_1)_*$ defined in Section \ref{subsec:ABVfunctoriality}, provided that the Arthur parameter $\psi$ is `in the good range' (Definition \ref{def:good}). In Sections \ref{subsec:e1}, \ref{subsec:e2}, and \ref{subsec:good}, we will show that, under the good range hypothesis, there are character identities
\begin{equation}\label{eq:goodrangecharacteridentity}
\eta_{\psi}^{mic}(s) = (\epsilon_2)_*(\epsilon_1)_* \eta^{mic}_{\psi_2}(s)\end{equation}
for elements $s \in A^{mic}_{\psi_2}$. These immediately imply
\begin{equation}\label{eq:goodrangecharacteridentity2}
\eta_{\psi}^{mic}(s) = e(G^{\Gamma})R^{\fg}_{\fq_1}R^{\fm_1}_{\fq_2} e(M_2^{\Gamma})\eta^{mic}_{\psi_2}(s) ,\end{equation}
where $e(G^{\Gamma})$ and $e(M_2^{\Gamma})$ are (multiplication by) Kottwitz signs (recall that the sum $\eta^{mic}_{\psi}(s)$ runs over various real forms of $G$. So the right hand side of (\ref{eq:goodrangecharacteridentity2}) may not be of the form $\pm R^{\fg}_{\fq_1}R^{\fm_1}_{\fq_2}\eta^{mic}_{\psi_2}(s)$).

For an arbitrary Arthur parameter (i.e., if we relax the `good range' hypothesis), the relationship between cohomological induction and Langlands functoriality is more complicated; in particular, the identities (\ref{eq:goodrangecharacteridentity}) are probably false. However, we show in Section \ref{subsec:maintheorem} that the identities (\ref{eq:goodrangecharacteridentity2}) nonetheless hold. Specializing to $s=1$, we deduce that every representation in
the Arthur packet $\Pi(G^{\Gamma})^{mic}_{\psi}$ is obtained via $R^{\fg}_{\fq_1}R^{\fm_1}_{\fq_2}$ from a representation in the (unipotent) Arthur packet $\Pi(M_2^{\Gamma})$ (possibly after extracting irreducible summands). This is what we call the `Jordan decomposition' of $\Pi(G^{\Gamma})^{mic}_{\psi}$. The main technical ingredient in the proof of this fact is a Lie theory calculation, which is carried out in Appendix \ref{sec:appendix}.

\subsection{The Jordan Decomposition of an Arthur parameter}\label{subsec:functorialityparameter}

In this subsection we will define the \emph{Jordan decomposition} of an Arthur parameter. Later, in Section \ref{subsec:maintheorem}, we will describe a corresponding decomposition of the associated Arthur packet. Let $(G^{\Gamma},\mathcal{W})$ be an extended group and let $({}^{\vee}G^{\Gamma},\mathcal{S})$ be a corresponding $E$-group with second invariant $z$. Fix an Arthur parameter
$\psi\colon W_{\RR} \times SL_2(\CC) \to {}^{\vee}G^{\Gamma}$. The restriction of $\psi$ to $\RR_+ \subset W_{\RR}$ defines a Levi subgroup of ${}^{\vee}G$
$${}^{\vee}M_1 := Z_{{}^{\vee}G}(\psi(\RR_+)).$$
Since $j$ normalizes $\RR_+$, $\psi(j)$ normalizes ${}^{\vee}M_1$. Thus, we obtain a weak $E$-group
$${}^{\vee}M_1^{\Gamma} := \langle {}^{\vee}M_1, \psi(j)\rangle$$
with second invariant $zz(\rho_{\fu_1})$. 

Since $S^1 \subset W_{\RR}$ commutes with $\RR_+$, $\psi$ takes $S^1$ into ${}^{\vee}M_1$. The co-character $\psi|_{S^1}\colon S^1 \to {}^{\vee}M_1$ defines a $\ZZ$-grading of ${}^{\vee}\fm_1$
$${}^{\vee}\fm_1 = \bigoplus_n {}^\vee\fm_{1,n}, \qquad {}^{\vee}\fm_{1,n} = \{X \in {}^{\vee}\fm_1 \mid \Ad(\psi(z))X = z^n, \ z \in S^1\},$$
and hence a parabolic subalgebra of ${}^{\vee}\fm_1$
$${}^{\vee}\fq_2 := {}^\vee\fm_2 \oplus {}^{\vee}\fu_2, \qquad {}^{\vee}\fm_2 := {}^{\vee}\fm_{1,0} = Z_{{}^{\vee}\fm_1}(\psi(S^1)) = Z_{{}^{\vee}\fg}(\psi(\CC^{\times})), \qquad {}^{\vee}\fu_2 := \bigoplus_{n>0} {}^{\vee}\fm_{1,n},$$
Let 
$${}^{\vee}Q_2 = {}^{\vee}M_2{}^{\vee}U_2 \subset {}^{\vee}M_1$$
denote the corresponding parabolic subgroup. Since $j$ normalizes $S^1$, $\psi(j)$ normalizes ${}^{\vee}M_2$. Thus, we obtain a weak $E$-group
$${}^{\vee}M_2^{\Gamma} := \langle {}^{\vee}M_2, \psi(j)\rangle$$
with second invariant $zz(\rho_{\fu_1})z(\rho_{\fu_2})$.

Denote the $E$-group inclusions by
$$\epsilon_1\colon {}^{\vee}M_1^{\Gamma} \subseteq {}^{\vee}G^{\Gamma}, \qquad \epsilon_2\colon {}^{\vee}M_2^{\Gamma} \subseteq {}^{\vee}M_1^{\Gamma}, \qquad \epsilon\colon {}^{\vee}M_2^{\Gamma} \subseteq {}^{\vee}G^{\Gamma}.$$
Since the image of $\psi$ belongs to ${}^{\vee}M_2^{\Gamma}$ (resp.~${}^{\vee}M_1^{\Gamma}$) we can regard $\psi$ also as an Arthur parameter for ${}^{\vee}M_1^{\Gamma}$ (resp.~${}^{\vee}M_2^{\Gamma}$). When viewing $\psi$ as a parameter for ${}^{\vee}M_1^{\Gamma}$ (resp.~${}^{\vee}M_2^{\Gamma}$), we will denote it by $\psi_1$ (resp.~$\psi_2$).

In order to apply the formalism of Section \ref{subsec:ABVfunctoriality}, we need to define $E$-groups $({}^{\vee}M_1^{\Gamma},\mathcal{S}_1)$,  $({}^{\vee}M_2^{\Gamma},\mathcal{S}_2)$ and extended groups $(M_1^{\Gamma},\mathcal{W}_{M_1})$, $(M_2^{\Gamma},\mathcal{W}_{M_2})$. This will require some care. We will work in the setting of \cite[Chapter 13]{AdamsBarbaschVogan}. First, choose a Cartan subgroup ${}^dH^{\Gamma} \subset {}^{\vee}G^{\Gamma}$ containing $\psi(W_{\RR})$. Note that ${}^dH^{\Gamma}$ is contained in ${}^{\vee}M_2^{\Gamma}$, ${}^{\vee}M_1^{\Gamma}$. Recall the element $\lambda = \lambda(\psi) \in {}^d\fh$ (Proposition \ref{prop:classicaltogeometric}). Fix a system of positive real roots ${}^d\Delta_{\RR}^+$ such that
$$\langle \lambda, \alpha\rangle \geq 0, \qquad \forall \alpha^{\vee} \in {}^d\Delta_{\RR}^+$$
and any system of positive imaginary roots $^d\Delta_{i\RR}^+$. By \cite[Proposition 13.8]{AdamsBarbaschVogan}, there is a unique $E$-group structure $\mathcal{S}_H$ on ${}^dH^{\Gamma}$ such that the quadruple $(^dH^{\Gamma},\mathcal{S}_H, {}^d\Delta_{i\RR}^+, {}^d\Delta_{\RR}^+)$ is a based Cartan subgroup of $({}^{\vee}G^{\Gamma},\mathcal{S})$ (\cite[Definition 13.7]{AdamsBarbaschVogan}). 

Choose a strong involution ${}^{\vee}\delta \in \mathcal{S}_H$. By the definition of a based Cartan subgroup ${}^{\vee}\delta$ acts on ${}^{\vee}G$ by a principal involution, and there is a ${}^{\vee}\delta$-stable Borel subgroup ${}^{\vee}B \subset {}^{\vee}G$ such that $({}^{\vee}\delta,{}^{\vee}B) \in \mathcal{S}$. Let ${}^{\vee}K$ denote the subgroup of ${}^{\vee}\delta$-fixed points in ${}^{\vee}G$. After replacing $({}^{\vee}\delta,{}^{\vee}B)$ with a conjugate under ${}^{\vee}K$, we may assume that ${}^{\vee}M_1$ is standard with respect to ${}^{\vee}B$. Hence ${}^{\vee}B_1 := {}^{\vee}M_1 \cap {}^{\vee}B$ is a ${}^{\vee}\delta$-stable Borel subgroup in ${}^{\vee}M_1$ and the pair $({}^{\vee}\delta, {}^{\vee}B_1)$ is large (note: neither ${}^{\vee}B$ nor ${}^{\vee}B_1$ must contain ${}^dH$). Define
$$\mathcal{S}_{M_1} := \Ad({}^{\vee}M_1)({}^{\vee}\delta, {}^{\vee}B_1).$$
 To define $\mathcal{S}_{M_2}$, choose a large ${}^{\vee}\delta$-stable Borel subgroup ${}^{\vee}B_2$ of ${}^{\vee}M_2$ with the following property: if $\pi_2$ is a large discrete series representation of ${}^{\vee}M_2$ corresponding to ${}^{\vee}B_2$, then the principal series representation of ${}^{\vee}M_1$ induced from $\pi_{M_2}$ contains $({}^{\vee}\fn_2)^{-{}^{\vee}\delta} \cap {}^{\vee}\fm_1$ in its associated variety (by the results of \cite[Section 6]{AdamsVogan}, ${}^{\vee}B_2$ exists and is unique up to conjugation by ${}^{\vee}K \cap {}^{\vee}M_2$). Define
$$\mathcal{S}_{M_2} = \Ad({}^{\vee}M_2) \cdot ({}^{\vee}\delta, {}^{\vee}B_2) $$
Now choose a based Cartan subgroup $(H^{\Gamma},\mathcal{W}_H, \Delta_{i\RR}^+,\Delta_{\RR}^+)$ of $(G^{\Gamma},\mathcal{W})$ (cf. \cite[Definition 13.5]{AdamsBarbaschVogan}) and an identification $\zeta\colon {}^{\vee}H \to {}^dH$ as in \cite[Definition 13.9]{AdamsBarbaschVogan} (it is possible to do so by \cite[Proposition 13.10]{AdamsBarbaschVogan}). Let
$$M_2 \subset M_1 \subset G$$
be the Levi subgroups whose roots correspond under $\zeta$ to the co-roots of ${}^{\vee}M_2 \subset {}^{\vee}M_1 \subset {}^{\vee}G$, and let
$$Q_2 = M_2 U_2 \subset M_1$$
be the parabolic subgroup whose roots correspond under $\zeta$ to the co-roots of ${}^{\vee}Q_2={}^{\vee}M_2{}^{\vee}U_2 \subset {}^{\vee}M_1$. Choose also any parabolic subgroup $Q_1 = M_1U_1 \subset G$ with Levi factor $M_1$.

Now choose a strong real form $\delta \in \mathcal{W}_H$; conjugation by $\delta$ determines a quasisplit real form of $G$. Since the action of $\delta$ on the roots of $H$ in $G$ is the transpose of the action of ${}^{\vee}\delta$ on the roots of $^dH$ in ${}^{\vee}G$, it is clear that $M_1$ and $M_2$ are preserved by $\delta$. Define weak extended groups
$$M_1^{\Gamma} = \langle M_1, \delta\rangle, \qquad M_2^{\Gamma} = \langle M_2, \delta \rangle.$$
Note that $\Ad(\delta)Q_1 = Q_1$ is and $\Ad(\delta)Q_2 = Q_2^{op}$. Thus, by Lemma \ref{lem:realisreal}, $Q_1 \subset G$ is `real' and $Q_2 \subset M_1$ is `$\theta$-stable' with respect to the inclusions $M_2^{\Gamma} \subset M_1^{\Gamma} \subset G^{\Gamma}$.

By definition, there is a triple $(\delta, N(\RR, \delta),\chi) \in \mathcal{W}$. Conjugating by $G(\RR,\delta)$, we can arrange that $M_1$ is standard with respect to $N(\RR,\delta)$. Now we define
$$\mathcal{W}_{M_1} = M_1(\RR,\delta) \cdot (\delta, N(\RR,\delta) \cap M_1, \chi|_{ N(\RR,\delta) \cap M_1}).$$
This is a Whittaker datum for $M_1^{\Gamma}$. 

To define $\mathcal{W}_{M_2}$, choose a Borel $B_1 \subset M_1$, large with respect to $\delta$, such that the corresponding large discrete series admits a Whittaker model for $\mathcal{W}_{M_1}$. Conjugate $B_1$ by $M_1(\RR,\delta)$ so that $M_2$ is a standard Levi subgroup. Then $B_1 \cap M_2$ is a Borel subgroup of $M_2$, large with respect to $\delta$. There is a unique Whittaker datum $\mathcal{W}_{M_2}$ for $M_2^{\Gamma}$ such that the large discrete series representation of $M_2$ corresponding to $B_1 \cap M_2$ admits a a Whittaker model for $\mathcal{W}_{M_2}$.

\begin{definition}\label{notation}
Let $(G^{\Gamma},\mathcal{W})$ be an extended group, and let $({}^{\vee}G^{\Gamma},\mathcal{S})$ be a corresponding $E$-group with second invariant $z$. Fix an Arthur parameter $\psi: W_{\RR} \times SL_2(\CC) \to {}^{\vee}G^{\Gamma}$. Then a \emph{Jordan decomposition} of $\psi$ is the following collection of data (all objects are defined in the discussion above):
\begin{enumerate}
    \item Extended groups $(M_1^{\Gamma},\mathcal{W}_{M_1})$ and $(M_2^{\Gamma},\mathcal{W}_{M_2})$;
    \item Corresponding $E$-groups $({}^{\vee}M_1^{\Gamma},\mathcal{S}_1)$ and $({}^{\vee}M_2^{\Gamma},\mathcal{S}_2)$ with second invariants $zz(\rho_{\fu_1})$ and $zz(\rho_{\fu_1})z(\rho_{\fu_2})$;
    \item $E$-Levi subgroups
    $$\epsilon_1\colon {}^{\vee}M_1^{\Gamma} \subseteq {}^{\vee}G^{\Gamma}, \qquad  \epsilon_2\colon {}^{\vee}M_2^{\Gamma} \subseteq {}^{\vee}M_1^{\Gamma}, \qquad \epsilon\colon {}^{\vee}M_2^{\Gamma} \subseteq {}^{\vee}G^{\Gamma};$$
    \item Arthur parameters
    $$\psi_1\colon W_{\RR} \times SL_2(\CC) \to {}^{\vee}M_1^{\Gamma}, \qquad  \psi_2\colon W_{\RR} \times SL_2(\CC) \to {}^{\vee}M_2^{\Gamma}$$
    such that $\psi_1 = \epsilon_2 \circ \psi_2$ and $\psi = \epsilon_1 \circ \psi_1$. Note that $\psi_2$ is unipotent;
    \item Extended Levi subgroups
    $$M_1^{\Gamma} \subseteq G^{\Gamma}, \qquad  M_2^{\Gamma} \subseteq M_1^{\Gamma}, \qquad  M_2^{\Gamma} \subseteq G^{\Gamma};$$
    \item Parabolic subgroups ${}^{\vee}Q_1 = {}^{\vee}M_1{}^{\vee}U_1 \subset {}^{\vee}G$ and ${}^{\vee}Q_2 = {}^{\vee}M_2{}^{\vee}U_2 \subset {}^{\vee}M_1$. Note that ${}^{\vee}Q_1$ (resp.~${}^{\vee}Q_2$) is preserved (resp.~taken to its opposite) by any ${}^{\vee}\delta \in {}^{\vee}G^{\Gamma} \setminus {}^{\vee}G$ (resp.~any ${}^{\vee}\delta \in {}^{\vee}M_1^{\Gamma} \setminus {}^{\vee}M_1$).
    \item Parabolic subgroups $Q_1 = M_1U_1 \subset G$ and $Q_2 = M_2U_2 \subset M_1$. Note that $Q_1$ is `real' and $Q_2$ is `$\theta$-stable' in the language of Lemma \ref{lem:realisreal}.
\end{enumerate} 
\end{definition}

It will be convenient to define a notion of `good range' for Arthur parameters. 

\begin{definition}\label{def:good}
Let $\psi\colon W_{\RR} \times SL_2(\CC) \to {}^{\vee}G^{\Gamma}$ be an Arthur parameter for ${}^{\vee}G^{\Gamma}$. Define the Levi subgroup ${}^{\vee}M_1 \subset {}^{\vee}G$ and parabolic subgroup ${}^{\vee}Q_2 = {}^{\vee}M_2{}^{\vee}U_2 \subset {}^{\vee}M_1$ as above. We say that $\psi$ is \emph{in the good range} if the infinitesimal character $\lambda(\varphi_{\psi}) \in {}^{\vee}\fm_2$ (Proposition \ref{prop:classicaltogeometric}) is strictly dominant with respect to the roots contained in ${}^{\vee}\fu_2$.
\end{definition}

\subsection{Langlands functoriality vs. parabolic induction}\label{subsec:functorialityvsinduction}

Let $(G^{\Gamma},\mathcal{W})$ be an extended group, and let $({}^{\vee}G^{\Gamma},\mathcal{S})$ be a corresponding $E$-group with second invariant $z$. Let $\psi\colon W_{\RR} \times SL_2(\CC) \to {}^{\vee}G^{\Gamma}$ be an Arthur parameter, and fix the notation of Definition \ref{notation}.

As explained in Section \ref{subsec:ABVfunctoriality}, structures (1), (2), and (3) of Definition \ref{notation} give rise to group homomorphisms (Langlands functoriality)
\begin{equation}\label{eq:transferinstages}
(\epsilon_1)_*\colon K\Pi^{zz(\rho_{\fu_1})}(M_1^{\Gamma}) \to K\Pi^z(G^{\Gamma}), \qquad (\epsilon_2)_*\colon K\Pi^{zz(\rho_{\fu_1})z(\rho_{\fu_2})}(M_2^{\Gamma}) \to K\Pi^{zz(\rho_{\fu_2})}(M_1^{\Gamma})
\end{equation}
On the other hand, structures (5) and (7) give rise to group homomorphisms (normalized real parabolic and cohomological induction)
\begin{equation}\label{eq:inductioninstages}
R^{\fg}_{\fq_1}\colon K\Pi^{zz(\rho_{\fu_1})}(M_1^{\Gamma}) \to K\Pi^z(G^{\Gamma}), \qquad R^{\fm_1}_{\fq_2}\colon K\Pi^{zz(\rho_{\fu_1})z(\rho_{\fu_2})}(M_2^{\Gamma}) \to K\Pi^{zz(\rho_{\fu_2})}(M_1^{\Gamma})
\end{equation}
see (\ref{eq:indextendedgroups}). We will now show that if $\psi$ is in the good range, the homomorphisms (\ref{eq:transferinstages}) and (\ref{eq:inductioninstages}) agree (up to signs).

\begin{definition}\label{def:hashparameter}
Suppose $\sigma$ is a real form of $G$ and let $Q=MU \subset G$ be a parabolic subgroup such that $M$ is $\sigma$-stable. Let
$$\Lambda =(H(\RR),\Lambda^{can},\Delta_{i\RR}^+,\Delta^+_{\RR}) \in L^{zz(\rho_{\fu})}(M(\RR)).$$
Choose an element $w$ in the integral Weyl group for $d\Lambda^{can}$ such that $w \cdot \Lambda^{can}$ is (weakly) dominant for $\Delta_{i\RR}^+ \cup \Delta_{i\RR}(\fu)$. Then we define
$$\Lambda^{\#} := (H(\RR),w \cdot \Lambda^{can},\Delta_{i\RR}^+ \cup \Delta_{i\RR}(\fu),\Delta^+_{\RR}\cup \Delta_{\RR}(\fu))$$
This is a (possibly non-final) standard type-$z$ limit character for $G(\RR)$. If $Q$ is $\sigma$-stable, then $\Delta_{i\RR}(\fu) = \emptyset$, so we may take $w=1$. 
\end{definition}

The following lemma is a reformulation of Proposition \ref{prop:liftingstandards}.

\begin{lemma}\label{lem:liftinginduction}
Let $(G^{\Gamma},\mathcal{W})$ be an extended group and let $({}^{\vee}G^{\Gamma},\mathcal{S})$ be a corresponding $E$-group with second invariant $z$. Let $\psi\colon W_{\RR} \times SL_2(\CC) \to {}^{\vee}G^{\Gamma}$ be an Arthur parameter, and fix the notation of Definition \ref{notation}. Then the following are true: 
\begin{itemize}
    \item[(i)] Let $(\delta_1,\Lambda_1) \in L^{zz(\rho_{\fu_1})}(M_1^{\Gamma})$. Then
      $$(\epsilon_1)_*M(\delta_1,\Lambda_1) = \begin{cases*}
                    M(\delta_1,\Lambda_1^{\#})
                     & if  $\delta_1^2 \in Z(G)$  \\
                     0 & else
                 \end{cases*}$$
        \item[(ii)] Let $(\delta_2,\Lambda_2) \in L^{zz(\rho_{\fu_1})z(\rho_{\fu_2})}(M_2^{\Gamma})$. Then
      $$(\epsilon_2)_*M(\delta_2,\Lambda_2) = \begin{cases*}
                    e(M_2(\RR,\delta_2))e(M_1(\RR,\delta_2))M(\delta_2,\Lambda_2^{\#})
                     & if  $\delta_2^2 \in Z(M_1)$  \\
                     0 & else
                 \end{cases*}$$
where $e(M_1(\RR,\delta_2))$, $e(M_2(\RR,\delta_2))$, are the Kottwitz signs, see \cite{Kottwitzsign} or \cite[Definition 15.8]{AdamsBarbaschVogan}.
\end{itemize}
\end{lemma}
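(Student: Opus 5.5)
The plan is to apply Proposition \ref{prop:liftingstandards} to the $L$-homomorphisms $\epsilon_1$ and $\epsilon_2$. This requires two identifications.

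First, for each complete geometric parameter $\xi_H=(S_H,\tau_H)$ for the Levi, we must identify the set $\epsilon_*(S_H,\tau_H)$ of complete geometric parameters for the larger group. We must also identify the corresponding limit characters under the bijection (\ref{eq:XiLz}).

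Second, we must compute the Kottwitz signs $e(S_H,\tau_H)e(S,\tau)$ that occur.

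\textbf{Step 1: parameters.} Since $\epsilon_i$ is an $E$-Levi inclusion, an ${}^{\vee}M$-orbit $S_H$ of geometric parameters maps into a single ${}^{\vee}G$-orbit $S$. The map $A^{loc}_{S_H}\to A^{loc}_S$ should be surjective, because the stabilizer in ${}^{\vee}G$ of a geometric parameter meets each component through a maximal torus lying in the Levi. So $\epsilon_*(S_H,\tau_H)$ has at most one element, and it is empty exactly when $\tau_H$ does not factor through $A^{loc}_S$.

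We then translate via \cite[Chapter 13]{AdamsBarbaschVogan}. We use the based Cartan subgroups and the identification $\zeta$ that were fixed compatibly in Section \ref{subsec:functorialityparameter}, together with the choices of $\mathcal S_{M_i}$ and $\mathcal W_{M_i}$ (large Borels, Whittaker data). The goal is to show two things:
\begin{itemize}
    \item The limit character attached to the unique lift is $(\delta,\Lambda^{\#})$, with the positive real and imaginary systems enlarged by the roots of $\fu$ as in Definition \ref{def:hashparameter}.
    \item The factoring condition on $\tau_H$ is equivalent to $\delta^2$ being central in the larger group.
\end{itemize}
For the second point, the character $\tau_H$ records the strong real form, and descending from $Z({}^{\vee}M)^{\theta}$-data to $Z({}^{\vee}G)^{\theta}$-data is exactly the centrality of $\delta^2$. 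The $\rho_{\fu}$-shift in the type, from $zz(\rho_{\fu})$ to $z$, is matched by the shift in the second invariants. The condition $\Ad(\delta)Q_1=Q_1$ makes the roots of $\fu_1$ real, and $\Ad(\delta)Q_2=Q_2^{op}$ makes the roots of $\fu_2$ imaginary. This is why $w$ appears in case (ii) but not in case (i).

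Non-finality of $\Lambda^{\#}$ is harmless. Theorem \ref{thm:LLC}(iii) rewrites $M(\Lambda^{\#})$ as a sum over $\fin(\Lambda^{\#})$. Since every final summand is still a lift, with one lift per $\tau$, this matches the sum in Proposition \ref{prop:liftingstandards}.

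\textbf{Step 2: signs.} The sign $e(\xi)$ is the Kottwitz sign of the real form $G(\RR,\delta)$. In case (i) the Levi $M_1$ is the centralizer of a split torus (via $Q_1$ real), and Kottwitz's sign is unchanged by passing to such a Levi: $e(M_1(\RR,\delta))=e(G(\RR,\delta))$. So the product of signs is $1$. In case (ii) no such cancellation occurs, and the product is $e(M_2(\RR,\delta_2))e(M_1(\RR,\delta_2))$, as stated.

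\textbf{Main obstacle.} The hardest part is Step 1 in case (ii). We must check that the normalizations of $\mathcal S_{M_2}$ and $\mathcal W_{M_2}$ produce exactly $w\cdot\Lambda^{can}$ with the enlarged imaginary positive system, rather than a Cayley or cross-action translate. The first approach would be to track the large discrete series of $M_2$ attached to ${}^{\vee}B_2$ and $B_1\cap M_2$ and check the bookkeeping of \cite[Prop.~13.10]{AdamsBarbaschVogan} under the inclusion.
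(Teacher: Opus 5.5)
Your route matches the paper's: apply Proposition \ref{prop:liftingstandards} to $\epsilon_1$ and $\epsilon_2$, use \cite[Chapter 13]{AdamsBarbaschVogan} to identify the lifted complete parameters with limit characters, collapse the sum with Theorem \ref{thm:LLC}(iii), and use Kottwitz's equality $e(M_1(\RR,\delta_1))=e(G(\RR,\delta_1))$ in (i) but not in (ii).

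\textbf{The error in Step 1.} Your claim that $A^{loc}_{S_H}\to A^{loc}_S$ is surjective, so that $\epsilon_*(S_H,\tau_H)$ has at most one element, is false.
\begin{itemize}
\item[(a)] Take $G=SL_2$, ${}^{\vee}G=PGL_2(\CC)$, and $\psi$ with $\psi(\RR_+)$ regular. Then $M_1=T$ is the split torus.
\item[(b)] Let $\Lambda_1$ correspond to the sign character of $\RR^{\times}$ at $\nu=0$. Then $M(\delta_1,\Lambda_1^{\#})$ is the reducible principal series, and $\fin(\delta_1,\Lambda_1^{\#})$ consists of the two limits of discrete series.
\item[(c)] On the dual side, the stabilizer of the ${}^{\vee}G$-parameter is $N_{{}^{\vee}G}({}^{\vee}T)$. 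Its non-identity component does not meet ${}^{\vee}M_1={}^{\vee}T$, so the image of $A^{loc}_{S_H}$ has index two and $\tau_H$ has two extensions.
\end{itemize}
Your justification, that every component of the stabilizer meets a torus in the Levi, fails exactly in such non-final situations. The claim ``at most one lift'' also contradicts your own later remark that the final summands are lifts ``one per $\tau$''.

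\textbf{What to use instead.} Drop the surjectivity claim; no cardinality statement is needed. The statement to extract from \cite[Chapter 13]{AdamsBarbaschVogan}, which is the one the paper uses, is this: under (\ref{eq:XiLz}), the lifts of the parameter of $(\delta,\Lambda)$ are exactly the elements of $\fin(\delta,\Lambda^{\#})$ when $\delta^2$ is central in the larger group, and there are none otherwise. This set can have several elements or none.
\begin{itemize}
\item[(i)] In case (i), real roots of $\fu_1$ can violate finality, giving several elements.
\item[(ii)] In case (ii), singular compact imaginary roots of $\fu_2$ can make $\Lambda_2^{\#}$ zero. An example is $M_1(\RR,\delta_2)=SU(2)$, $M_2$ the compact torus, at infinitesimal character $0$.
\end{itemize}
So your asserted equivalence between existence of a lift and centrality of $\delta^2$ is too strong in (ii). It does not affect the formula, since in that case $M(\delta_2,\Lambda_2^{\#})=0$.

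\textbf{A smaller inaccuracy.} $\Ad(\delta)Q_1=Q_1$ does not make the roots of $\fu_1$ real; it only makes them non-imaginary (real or complex). Likewise, $\Ad(\delta)Q_2=Q_2^{op}$ only makes the roots of $\fu_2$ non-real. This is what decides whether $w$ is needed in $\Lambda^{\#}$.

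With Step 1 corrected this way, the rest of your argument, including the sign computation, goes through as in the paper.
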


\begin{proof}
First we prove (i). Write
$$\alpha_G\colon \Xi({}^{\vee}G^{\Gamma}) \xrightarrow{\sim} L^z(G^{\Gamma})$$
for the bijection of Theorem \ref{thm:LLCABV}, and write $\alpha_{M_1}$ for the corresponding bijection for ${}^{\vee}M_1^{\Gamma}$. Tracing through the construction of $\alpha_G$ in \cite[Chapter 13]{AdamsBarbaschVogan}, we see that
$$\alpha_G((\epsilon_1)_*(\alpha_{M_1}^{-1}(\delta_1, \Lambda_1))) = \begin{cases*}
                \fin(\delta_1,\Lambda_1^{\#})
                     & if $\delta_1^2 \in Z(G)$  \\
                     0 & else
                 \end{cases*}$$
where $(\epsilon_1)_*(\alpha_{M_1}^{-1}(\delta_1, \Lambda_1)) \subset \Xi({}^{\vee}G^{\Gamma})$ is the subset defined in (\ref{eq:epsilonparameters}) and $\fin(\delta_1,\Lambda_1^{\#})$ is as defined in Theorem \ref{thm:LLCABV} . So by Proposition \ref{prop:liftingstandards}, we have
$$(\epsilon_1)_* M(\delta_1,\Lambda_1) =  \begin{cases*}
                e(M_1(\RR,\delta_1))e(G(\RR,\delta_1)) \sum_{(\delta_1,\Lambda') \in \fin(\delta_1,\Lambda_1^{\#})}  M(\delta_1,\Lambda')
                     & if $\delta_1^2 \in Z(G)$  \\
                     0 & else
                 \end{cases*}.$$
Since $M_1(\RR,\delta_1)$ is the Levi subgroup of a real parabolic subgroup of $G(\RR,\delta_1)$, we have that  $e(M_1(\RR,\delta_1))=e(G(\RR,\delta_1))$ by \cite[(6) of Corollary on p.~295]{Kottwitzsign}. And by Theorem \ref{thm:LLC}(iii), we have
$$ \sum_{(\delta_1,\Lambda') \in \fin(\delta_1,\Lambda_1^{\#})} M(\delta_1,\Lambda') = M(\delta_1,\Lambda_1^{\#})$$
This completes the proof of (i). 

For (ii), the argument is completely analogous, except we do not have that $e(M_1(\RR,\delta_2))=e(M_2(\RR,\delta_2))$ (since $M_2(\RR,\delta_2)$ is not the Levi subgroup of a real parabolic subgroup of $M_1(\RR,\delta_2)$). 
\end{proof}

\begin{theorem}\label{thm:liftinginduction}
Let $(G^{\Gamma},\mathcal{W})$ be an extended group and let $({}^{\vee}G^{\Gamma},\mathcal{S})$ be a corresponding $E$-group with second invariant $z$. Let $\psi\colon W_{\RR} \times SL_2(\CC) \to {}^{\vee}G^{\Gamma}$ be an Arthur parameter, and fix the notation of Definition \ref{notation}. Then the following are true:  
\begin{itemize}
    \item[(i)] 
    $$(\epsilon_1)_* = R^{\fg}_{\fq_1}\colon K\Pi^{zz(\rho_{\fu_1})}(M_1^{\Gamma}) \to K\Pi^z(G^{\Gamma}).$$
    \item[(ii)] Assume $\psi$ is in the good range (Definition \ref{def:good}). Then
    $$(\epsilon_2)_* = e(M_1^{\Gamma})R^{\fm_1}_{\fq_2}e(M_2^{\Gamma})\colon K\Pi^{zz(\rho_{\fu_1})z(\rho_{\fu_2})}(M_2^{\Gamma})^{mic}_{\psi_2} \to K\Pi^{zz(\rho_{\fu_1})}(M_1^{\Gamma})^{mic}_{\psi_1},$$
    where $e(M_1^{\Gamma})\colon K\Pi_{\lambda}^{zz(\rho_{\fu_1})}(M_1^{\Gamma}) \to K\Pi_{\lambda}^{zz(\rho_{\fu_1})}(M_1^{\Gamma})$ is multiplication by the Kottwitz sign, i.e. 
    $$e(M_1^{\Gamma})(\delta_1,\pi_1) = e(M_1(\RR,\delta_1))(\delta_1,\pi_1)$$
    and $e(M_2^{\Gamma})$ is defined similarly.
\end{itemize}
\end{theorem}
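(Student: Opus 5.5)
Both parts reduce, via Lemma \ref{lem:liftinginduction}, to comparing $R^{\fg}_{\fq_1}$ and $R^{\fm_1}_{\fq_2}$ with the standard modules $M(\delta,\Lambda^{\#})$. Standard modules form a basis of the relevant Grothendieck groups, so it suffices to check each identity on standard modules $M(\delta_1,\Lambda_1)$ (resp.~$M(\delta_2,\Lambda_2)$). In (ii) we restrict to the span of the packet $\Pi(M_2^{\Gamma})^{mic}_{\psi_2}$, which lies in a single infinitesimal character. There we express everything in the standard basis at that infinitesimal character, so it again suffices to compare the two maps on standards.

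For (i), take a standard limit character $\Lambda_1=(H(\RR),\Lambda_1^{can},\Delta^+_{i\RR},\Delta^+_{\RR})$ for $M_1(\RR,\delta_1)$ with $\delta_1^2\in Z(G)$. If $\delta_1^2\notin Z(G)$, both sides vanish: by (\ref{eq:indextendedgroups}) and by Lemma \ref{lem:liftinginduction}(i). Choose a positive system $\Delta^+_{M_1}$ satisfying conditions (1)--(4) of Section \ref{sec:LLC}, so that $M(\Lambda_1)=L_{N_1}\mathcal{R}^{\fm_1}_{\fb_1}(\Lambda_1^{can})$. Since $Q_1$ is real, $\Delta(\fu_1)$ consists of roots whose imaginary part is empty, and $\Delta^+_{M_1}\cup\Delta(\fu_1)$ should again satisfy (1)--(4) for $\Lambda_1^{\#}$. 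Here $\Delta^+_{\RR}\cup\Delta_{\RR}(\fu_1)$ is the real positive system, and the $-\theta$-stability of $\Delta(\fu_1)$ follows from $\Ad(\delta)Q_1=Q_1$. The one condition needing care is (3), for roots in $\fu_1$. Independence of $M(\Lambda)$ from the choice of $\Delta^+$ (\cite[Proposition 8.20]{AdamsVogan}), together with the freedom to choose $Q_1$, should absorb this, since real induction does not depend on $Q_1$ in the Grothendieck group. Induction in stages, $\mathcal{R}^{\fg}_{\fb}=\mathcal{R}^{\fg}_{\fq_1}\circ\mathcal{R}^{\fm_1}_{\fb_1}$, with the $\rho$-shifts matching as in Remark \ref{rmkLanglands}, then gives $R^{\fg}_{\fq_1}M(\Lambda_1)=M(\Lambda_1^{\#})$ in $K\Pi^z(G^{\Gamma})$. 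The real induction functor is exact (Proposition \ref{prop:propsofrealinduction}), so no higher derived terms appear. Comparing with Lemma \ref{lem:liftinginduction}(i) gives (i).

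For (ii), we again use transitivity. Since $Q_2$ is $\theta$-stable, $\Delta(\fu_2)$ contributes only imaginary and complex roots, and this is where the Weyl element $w$ of Definition \ref{def:hashparameter} intervenes. The good-range hypothesis says $\lambda(\varphi_{\psi})$ is strictly dominant for $\fu_2$. Because the packet has that infinitesimal character, every $\pi_2$ in $\Pi(M_2^{\Gamma})^{mic}_{\psi_2}$, and every standard module at that infinitesimal character, has differential $d\Lambda_2^{can}$ in the good range for $\fq_2$. So we may take $w=1$, and $\Delta^+_{M_2}\cup\Delta(\fu_2)$ is an admissible positive system for $\Lambda_2^{\#}$. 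Proposition \ref{prop:weaklyfaircohomological}, applied in stages, then gives vanishing outside the degree $\dim(\fu_2\cap\fk)$, so
$$R^{\fm_1}_{\fq_2}M(\Lambda_2)=(-1)^{\dim(\fu_2\cap\fk)}M(\Lambda_2^{\#}).$$

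The main obstacle is the sign bookkeeping. We must show that $(-1)^{\dim(\fu_2\cap\fk)}$ equals $e(M_1(\RR,\delta_2))e(M_2(\RR,\delta_2))$, which is what Lemma \ref{lem:liftinginduction}(ii) requires. I would use Kottwitz's formula $e(G(\RR))=(-1)^{q(G)-q(G_{qs})}$ with $q=\dim(\fg/\fk)/2$. For $\theta$-stable $\fq_2$ one has $\dim(\fm_1\cap\fk)-\dim(\fm_2\cap\fk)=2\dim(\fu_2\cap\fk)$. This reduces the problem to a parity comparison with the quasisplit forms, using that $M_2$ is a fundamental Levi of $M_1$ in the quasisplit form. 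Having matched the signs, we multiply by the involutive sign operators $e(M_1^{\Gamma})$ and $e(M_2^{\Gamma})$ to conclude (ii).
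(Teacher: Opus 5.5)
Your overall route is the paper's. You check on standard modules, compute $(\epsilon_i)_*$ by Lemma \ref{lem:liftinginduction}, take $w=1$ (by realness of $Q_1$, resp.\ the good range), and identify the induced standard with $M(\Lambda^{\#})$ by induction in stages. Part (i) is fine, and no sign arises there because $\fu_1\cap\fk=0$.

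The gap is the sign paragraph of (ii), which fails in two ways.

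First, it targets the wrong identity. Lemma \ref{lem:liftinginduction}(ii) gives $(\epsilon_2)_*M(\delta_2,\Lambda_2)=e(M_1(\RR,\delta_2))e(M_2(\RR,\delta_2))M(\delta_2,\Lambda_2^{\#})$. The right-hand side of (ii) applied to $M(\delta_2,\Lambda_2)$ is $e(M_1(\RR,\delta_2))e(M_2(\RR,\delta_2))R^{\fm_1}_{\fq_2}M(\delta_2,\Lambda_2)$. The Kottwitz signs cancel, so what (ii) needs is $R^{\fm_1}_{\fq_2}M(\delta_2,\Lambda_2)=M(\delta_2,\Lambda_2^{\#})$ with no sign at all. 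Your identity $(-1)^{\dim(\fu_2\cap\fk)}=e(M_1(\RR,\delta_2))e(M_2(\RR,\delta_2))$ would instead give $(\epsilon_2)_*=R^{\fm_1}_{\fq_2}$. That contradicts (ii) for any $\delta_2$ with $e(M_1(\RR,\delta_2))e(M_2(\RR,\delta_2))=-1$. Composing with the sign operators only produces $e(M_1^{\Gamma})(\epsilon_2)_*e(M_2^{\Gamma})=e(M_1^{\Gamma})R^{\fm_1}_{\fq_2}e(M_2^{\Gamma})$, which is not (ii).

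Second, the identity is false. For $\theta$-stable $\fq_2$ one has $q(M_1)-q(M_2)=\dim(\fu_2\cap\fp)$, with $q$ as in your Kottwitz formula. So Kottwitz's formula gives $e(M_1(\RR,\delta_2))e(M_2(\RR,\delta_2))=(-1)^{\dim(\fu_2\cap\fk)-c}$, where $c$ is $\dim(\fu_2\cap\fk)$ computed for the quasisplit form. This $c$ can be odd. Take $M_1=SL_3$ split and $M_2$ its fundamental Cartan; this arises from $\psi$ trivial on $SL_2(\CC)$ with $\psi(\RR_+)$ central and $\psi(S^1)$ regular. There $\dim(\fu_2\cap\fk)=1$, while both Kottwitz signs are $+1$.

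The paper does no parity computation. Its transitivity step reads off $R^{\fm_1}_{\fq_2}M(\delta_2,\Lambda_2)=M(\delta_2,\Lambda_2^{\#})$ directly. In effect this normalizes good-range $R^{\fm_1}_{\fq_2}$ as the single surviving functor $L_{\dim(\fu_2\cap\fk)}\mathcal{R}^{\fm_1}_{\fq_2}$. That matches $M(\Lambda)=L_{\dim(\fn\cap\fk)}\mathcal{R}^{\fg}_{\fb}(\Lambda^{can})$ together with $\dim(\fn_1\cap\fk)=\dim(\fn_2\cap\fk)+\dim(\fu_2\cap\fk)$. The Kottwitz signs in (ii) are then exactly those of Lemma \ref{lem:liftinginduction}(ii).

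So the factor $(-1)^{\dim(\fu_2\cap\fk)}$ you found is a normalization issue, not something to prove away. With the literal alternating-sum $R^{\fm_1}_{\fq_2}$, the computation above gives $(\epsilon_2)_*=(-1)^{c}R^{\fm_1}_{\fq_2}$: one global sign and no Kottwitz signs. No sign lemma converts that into the displayed formula.

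A smaller point: when $\fu_2$ contains complex roots, $\Delta^+_{M_2}\cup\Delta(\fu_2)$ violates condition (4) of Section \ref{sec:LLC}, because $\theta$ preserves $\Delta(\fu_2)$. So it is not an admissible positive system for $\Lambda_2^{\#}$. The identification with $M(\Lambda_2^{\#})$ therefore rests on the good-range fact that cohomological induction carries standard modules to standard modules. The paper's transitivity step also uses this tacitly.
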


\begin{proof}
We first prove (i). We will show that the homomorphisms $(\epsilon_1)_*$ and $R^{\fg}_{\fq_1}$ coincide on standard modules. Since standard modules form a basis for $K\Pi^{zz(\rho_{\fu_1})}(M_1^{\Gamma})$, this will prove the claim. Let $(\delta_1,\Lambda_1) \in L^{zz(\rho_{\fu_1})}(M_1^{\Gamma})$. If $\delta_1^2 \notin Z(G)$, then $(\epsilon_1)_* M(\delta_1,\Lambda_1)=0$ by Lemma \ref{lem:liftinginduction} and $R^{\fg}_{\fq_1}M(\delta_1,\Lambda_1)=0$ by definition. In particular, $(\epsilon_1)_* M(\delta_1,\Lambda_1) = R^{\fg}_{\fq_1} M(\delta_1,\Lambda_1)$. Now assume $\delta_1^2 \in Z(G)$. Then by Lemma \ref{lem:liftinginduction}, we have
$$(\epsilon_1)_* M(\delta_1,\Lambda_1) = M(\delta_1,\Lambda_1^{\#})$$
Since $Q_1$ is $\delta_1$-stable, $\Delta_{i\RR}(\fu_1) = \emptyset$. So we can take $w =1$ in the definition of $\Lambda_1^{\#}$, i.e. $\Lambda_1^{\#} = (H(\RR), \Lambda^{can}, \Delta_{i\RR}^+, \Delta_{\RR}^+ \cup \Delta_{\RR}(\fu_1))$. Now fix a Borel subalgebra $\fb_1 \subset \fm_1$ as in Section \ref{sec:LLC} and let $\fb = \fb_1 \oplus \fu_1$. Then 
\begin{align*}
M(\delta_1,\Lambda_1^{\#}) &= R^{\fg}_{\fb}(\Lambda^{can})\\
                 &= 
                    R^{\fg}_{\fq_1}(R^{\fm_1}_{\fb_1}(\Lambda^{can}))\\
                 &= R^{\fg}_{\fq_1} M(\delta_1,\Lambda_1)
\end{align*}
by the transitivity of induction. For (ii), the argument is similar. Let $(\delta_2,\Lambda_2) \in L_{\lambda}^{zz(\rho_{\fu_1})z(\rho_{\fu_2})}(M_2^{\Gamma})$. Arguing as above, we can assume that $\delta_2^2 \in Z(M_1)$. Then by Lemma \ref{lem:liftinginduction}, we have
$$(\epsilon_2)_* M(\delta,\Lambda_2) = M(\delta_2,\Lambda_2^{\#}),$$
where $\Lambda_2^{\#} = (H(\RR), w\cdot \Lambda^{can}, \Delta_{i\RR}^+ \cup \Delta_{i\RR}(\fu_2), \Delta_{\RR}^+)$. Since $\psi$ is in the good range, $\lambda$ is dominant for the roots in $\fu_2$. So we can take $w=1$ in the definition of $\Lambda_2^{\#}$, i.e. $\Lambda_2^{\#} = (H(\RR), \Lambda^{can}, \Delta_{i\RR}^+, \Delta_{\RR}^+ \cup \Delta_{\RR}(\fu_2))$. Now fix a Borel subalgebra $\fb_2 \subset \fm_2$ as in Section \ref{sec:LLC} and let $\fb_1 = \fb_2 \oplus \fu_2$. Then $M(\delta_2,\Lambda_2^{\#}) = R^{\fm_1}_{\fq_2} M(\delta_2,\Lambda_2)$ by the transitivity of induction.
\end{proof}

\subsection{Analysis of $(\epsilon_1)_*$}\label{subsec:e1}

Let $(G^{\Gamma},\mathcal{W})$ be an extended group and let $({}^{\vee}G^{\Gamma},\mathcal{S})$ be a corresponding $E$-group with second invariant $z$. Let $\psi\colon W_{\RR} \times SL_2(\CC) \to {}^{\vee}G^{\Gamma}$ be an Arthur parameter, and fix the notation of Definition \ref{notation}. 

As explained in Section \ref{subsec:ABVfunctoriality}, the $L$-embedding $\epsilon_1\colon {}^{\vee}M_1^{\Gamma} \subset {}^{\vee}G^{\Gamma}$ induces a ${}^{\vee}M_1$-equivariant embedding of geometric parameter spaces
\begin{equation}\label{eq:epsilon1}\epsilon_2\colon  X({}^{\vee}M_1^{\Gamma}) \subseteq X({}^{\vee}G^{\Gamma}) .\end{equation}
Let $(y_{M_1},\Lambda_{M_1}) \in X({}^{\vee}M_1^{\Gamma})$ denote the geometric parameter corresponding to the Langlands parameter $\varphi_{\psi_1}$ (Proposition \ref{prop:classicaltogeometric}), and let $$(y,\Lambda)= \epsilon_1(y_{M_1},\Lambda_{M_1}).$$
Also let
$$\OO_{M_1} = {}^{\vee}M_1 \cdot \Lambda_{M_1}, \qquad \OO = {}^{\vee}G \cdot \Lambda,$$ 
and 
$$Y_{M_1} = {}^{\vee}M_1 \cdot y_{M_1}, \qquad Y = {}^{\vee}G \cdot y.$$
The embedding (\ref{eq:epsilon1}) restricts to a closed ${}^{\vee}M_1$-equivariant embedding of algebraic varieties
\begin{equation}\label{eq:epsilon11}\epsilon_1\colon  X({}^{\vee}M_1^{\Gamma},Y_{M_1},\OO_{M_1}) \subseteq X({}^{\vee}G^{\Gamma},Y,\OO).\end{equation}
Form the subgroups ${}^{\vee}R_{M_1} \subset {}^{\vee}M_1$, ${}^{\vee}R \subseteq {}^{\vee}G$, ${}^{\vee}K_{M_1} \subset {}^{\vee}R_{M_1}$,${}^{\vee}K \subseteq {}^{\vee}R$, ${}^{\vee}P_{M_1} \subset {}^{\vee}M_1$, and ${}^{\vee}P \subseteq {}^{\vee}G$ as in Section \ref{subsec:ABVfunctoriality}. Then the embedding (\ref{eq:epsilon11}) corresponds to the natural map
$$
\epsilon_1\colon {}^{\vee}M_1 \times_{{}^{\vee}K_{M_1}} ({}^{\vee}R_{M_1}/{}^{\vee}P_{M_1}) \to  {}^{\vee}G \times_{{}^{\vee}K} ({}^{\vee}R/{}^{\vee}P)
$$
induced by the inclusions ${}^{\vee}M_1 \subset {}^{\vee}G$, ${}^{\vee}R_{M_1} \subset {}^{\vee}R$, and so on.

\begin{lemma}\label{lem:RM1equalsR}
In the setting described above, ${}^{\vee}R_{M_1} = {}^{\vee}R$.
\end{lemma}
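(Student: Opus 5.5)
We need to show $Z_{{}^{\vee}M_1}(e(\Lambda_{M_1}))^0 = Z_{{}^{\vee}G}(e(\Lambda))^0$, where $e(\Lambda)=\exp(2\pi i\lambda)$ and $\lambda=\lambda(\varphi_\psi)\in{}^{\vee}\fm_1$ is the common infinitesimal character; the element $e(\Lambda_{M_1})=e(\Lambda)$ is the same in both cases since $\epsilon_1$ is an inclusion. The inclusion ${}^{\vee}R_{M_1}\subseteq {}^{\vee}R$ is obvious. Both groups are connected reductive and contain a common maximal torus, namely the centralizer torus ${}^dH\ni \lambda$. So equality reduces to an equality of root systems: every root $\alpha$ of ${}^{\vee}G$ with respect to ${}^dH$ satisfying $\alpha(e(\Lambda))=1$ must be a root of ${}^{\vee}M_1$. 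Since $\alpha(\exp(2\pi i\lambda))=e^{2\pi i\langle\lambda,\alpha\rangle}$, the condition is $\langle\lambda,\alpha\rangle\in\ZZ$.

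Next I would decompose $\lambda$ according to the Weil group. By Proposition \ref{prop:classicaltogeometric}, $\varphi_\psi(\exp t)=\exp(t\lambda+\bar t\,\Ad(\varphi(j))\lambda)$. Writing $\lambda'=\Ad(\varphi(j))\lambda$, restriction to $\RR_+$ (real $t$) gives the element $\lambda+\lambda'$, and restriction to $S^1$ ($t\in i\RR$) gives $\lambda-\lambda'$. Now $\varphi_\psi(\RR_+)=\psi(\RR_+)\cdot\psi_1(\mathrm{diag}(r^{1/2},r^{-1/2}))$, with $\lambda=\lambda_0+\tfrac12 h$. The tempered part contributes $\psi(\RR_+)$, and ${}^{\vee}M_1$ is by definition the centralizer of $\psi(\RR_+)$. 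Since the image of $\psi$ is bounded on $W_\RR$, $\psi(\RR_+)$ is a compact connected group, and its Lie algebra is generated by $\lambda_0+\lambda_0'$, which has purely imaginary eigenvalues. The roots of ${}^{\vee}M_1$ are then exactly those $\alpha$ with $\langle\lambda_0+\lambda_0',\alpha\rangle=0$. Here the $\psi_1$-contribution commutes with $\psi(\RR_+)$, so $h\in{}^{\vee}\fm_1$.

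The key step is then to show that integrality of $\langle\lambda,\alpha\rangle$ forces $\langle\lambda_0+\lambda'_0,\alpha\rangle=0$. I would choose ${}^dH$ so that it contains $h$ as well, which is possible since $h$ centralizes $\psi(W_\RR)$. Then $\langle\lambda_0,\alpha\rangle=a+ib$, where $\mathrm{Re}$ comes from the $S^1$ part and is a half-integer (the differential of an algebraic character of $S^1$ via the $t\mapsto \exp$ normalization), and $i\RR$ comes from the $\RR_+$ part. Also $\tfrac12\langle h,\alpha\rangle\in\tfrac12\ZZ$ is real. Integrality of $\langle\lambda,\alpha\rangle$ therefore forces the imaginary part $b$ to vanish, and $b$ is exactly (up to a factor) the pairing of $\alpha$ with the $\RR_+$-generator $\lambda_0+\lambda_0'$. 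Hence $\alpha$ is a root of ${}^{\vee}M_1$.

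The main obstacle is the bookkeeping of real and imaginary parts. I need to verify carefully, from the conventions of Proposition \ref{prop:classicaltogeometric}, that boundedness of $\psi(W_\RR)$ makes the $\RR_+$-direction contribute purely imaginary eigenvalues on ${}^d\fh$, while the $S^1$-direction contributes real, half-integral eigenvalues. This requires diagonalizing $\lambda_0$ and $\lambda_0'$ simultaneously in ${}^d\fh$, using that $\Ad(\varphi(j))$ preserves ${}^dH$.
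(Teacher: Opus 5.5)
Your proposal is correct and follows the paper's own argument: choose a maximal torus containing $\psi_0(\CC^{\times})$ and $h$, and split $\lambda=\tfrac12(\lambda_0-\mu_0)+\tfrac12(\lambda_0+\mu_0)+\tfrac12 h$ with $\mu_0=\Ad(\psi(j))\lambda_0$. Roots take half-integral real values on the first and third pieces and, by boundedness, purely imaginary values on the middle one, so $\alpha(\lambda)\in\ZZ$ forces $\alpha(\lambda_0+\mu_0)=0$, i.e.\ $\alpha$ is a root of ${}^{\vee}M_1$. (One harmless slip: for $r\in\RR_+$ one has $\|r\|^{1/2}=r$, so the $SL_2$-factor of $\varphi_\psi(r)$ is $\psi_1(\mathrm{diag}(r,r^{-1}))$; this does not affect $\lambda=\lambda_0+\tfrac12 h$.)
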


\begin{proof}
Write $\psi_0 = \psi|_{W_{\RR}}$, a tempered Langlands parameter, and $\psi_1 = \psi|_{SL_2(\CC)}$. Choose a maximal torus ${}^{\vee}T \subset {}^{\vee}G$ containing both $\psi_0(\CC^{\times})$ and the image under $\psi_1$ of the diagonal torus in $SL_2(\CC)$. The restriction of $\psi_0$ to $\CC^{\times}$ is a continuous group homomorphism $\CC^{\times} \to {}^{\vee}T$, which is necessarily of the form
$$\psi_0(z) = z^{\lambda_0}\bar{z}^{\mu_0}, \qquad z \in \CC^{\times},$$
for $\lambda_0,\mu_0 \in {}^{\vee}\ft$ satisfying $\lambda_0 - \mu_0 \in X^*(T)$ (since the homomorphism $\CC^{\times} \to {}^{\vee}T$ is restricted from a Langlands parameter $\psi_0$, we in fact have that $\mu_0=\Ad(\psi_0(j))\lambda_0$, but we will not use this fact). If $z \in \RR_{>0}$, we have
$$\psi_0(z) = z^{\lambda_0}\bar{z}^{\mu_0} = z^{\lambda_0 + \mu_0}.$$
Since $\psi_0$ is tempered, $\psi_0(\RR_{>0})$ is bounded. Hence, $\lambda_0+\mu_0 \in i\RR X_*({}^{\vee}T)$. 

Next, let $h$ denote the image of $\mathrm{diag}(1,-1)$ under $d\psi_1$. Then
$$\lambda = \lambda_0 + \frac{1}{2}h \in {}^{\vee}\ft.$$
By definition ${}^{\vee}R$ is the connected subgroup of ${}^{\vee}G$ with Lie algebra
$${}^{\vee}\mathfrak{r} = \bigoplus_{n \in \ZZ} {}^{\vee}\fg_n, \qquad {}^{\vee}\fg_n = \{X \in {}^{\vee}\fg \mid [\lambda,X] = nX\}.$$
On the other hand, ${}^{\vee}R_1$ is the connected subgroup of ${}^{\vee}G$ with Lie algebra
$${}^{\vee}\mathfrak{r}_1 = Z_{\psi_0(\RR_{>0})}({}^{\vee}\mathfrak{r}).$$
Thus, it suffices to show that 
$$\alpha \in \Delta({}^{\vee}\fg,{}^{\vee}T), \ \alpha(\lambda) \subset \ZZ \implies \alpha(\psi_0(\RR_{>0})) = \{1\}.$$
Write
$$\lambda = \lambda_{0,re} + \lambda_{0,im} + \frac{1}{2}h, \qquad \lambda_{0,re} = \frac{1}{2}(\lambda_0 - \mu_0), \quad \lambda_{0,im} = \frac{1}{2}(\lambda_0+\mu_0).$$
Since $\lambda_0 - \mu_0 \in X_*({}^{\vee}T)$, we have
$$\alpha(\lambda_{0,re}) = \alpha(\frac{1}{2}(\lambda_0-\mu_0)) \in \frac{1}{2}\ZZ, \qquad \forall \alpha \in \Delta({}^{\vee}\fg,{}^{\vee}\ft).$$
Since $\lambda_0 + \mu_0 \in i\RR X_*({}^{\vee}T)$, we have
$$\alpha(\lambda_{0,im}) = \alpha(\frac{1}{2}(\lambda_0+\mu_0)) \in i\RR, \qquad \forall \alpha \in \Delta({}^{\vee}\fg,{}^{\vee}\ft).$$
Finally, by the representation theory of $\mathfrak{sl}_2(\CC)$
$$\alpha(\frac{1}{2}h) \in \frac{1}{2}\ZZ, \qquad \forall \alpha \in \Delta({}^{\vee}\fg,{}^{\vee}\ft).$$
So if $\alpha(\lambda) \in \ZZ$, then $\alpha(\lambda_{0,im}) = 0$. Since $\psi_0(\RR_{>0}) = \exp(\RR\lambda_{0,im})$, this implies that $\alpha(\psi_0(\RR_{>0})) = \{1\}$.
\end{proof}

\begin{lemma}\label{lem:real1}
$\epsilon_1$ induces a ${}^{\vee}G$-equivariant isomorphism
$$\epsilon_1\colon {}^{\vee}G \times_{{}^{\vee}M_1} X({}^{\vee}M_1^{\Gamma},Y_{M_1},\OO_{M_1}) \xrightarrow{\sim} X({}^{\vee}G^{\Gamma},Y,\OO).$$
\end{lemma}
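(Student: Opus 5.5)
We will use the fiber-bundle descriptions of both sides from Section \ref{sec:ABVparameters}. The source is
$${}^{\vee}G \times_{{}^{\vee}M_1} X({}^{\vee}M_1^{\Gamma},Y_{M_1},\OO_{M_1}) \simeq {}^{\vee}G \times_{{}^{\vee}M_1}\bigl({}^{\vee}M_1 \times_{{}^{\vee}K_{M_1}} ({}^{\vee}R_{M_1}/{}^{\vee}P_{M_1})\bigr) \simeq {}^{\vee}G \times_{{}^{\vee}K_{M_1}} ({}^{\vee}R_{M_1}/{}^{\vee}P_{M_1}),$$
and the target is ${}^{\vee}G \times_{{}^{\vee}K} ({}^{\vee}R/{}^{\vee}P)$. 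The map induced by $\epsilon_1$ is the obvious one coming from the inclusions of groups. It is ${}^{\vee}G$-equivariant by construction, since $\epsilon_1$ on $X({}^{\vee}M_1^{\Gamma})$ is ${}^{\vee}M_1$-equivariant and we extend by ${}^{\vee}G$. The proof therefore reduces to comparing the three pieces of group data: we will show
$${}^{\vee}R_{M_1}={}^{\vee}R, \qquad {}^{\vee}P_{M_1}={}^{\vee}P, \qquad {}^{\vee}K_{M_1}={}^{\vee}K.$$
Once these hold, the map is the identity ${}^{\vee}G\times_{{}^{\vee}K}({}^{\vee}R/{}^{\vee}P)\to {}^{\vee}G\times_{{}^{\vee}K}({}^{\vee}R/{}^{\vee}P)$ and hence an isomorphism.

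The first equality is exactly Lemma \ref{lem:RM1equalsR}. For ${}^{\vee}P$, note that ${}^{\vee}P = \mathrm{Stab}_{{}^{\vee}R}(\Lambda)$ is the parabolic of ${}^{\vee}R$ attached to the nonnegative integral eigenvalues of $\ad(\lambda)$ on ${}^{\vee}\mathfrak{r}$. Since ${}^{\vee}\mathfrak{r}={}^{\vee}\mathfrak{r}_{M_1}\subset {}^{\vee}\fm_1$ and $\Lambda_{M_1}$ is defined from the same $\lambda$, the flat $\Lambda=\lambda+\fn(\lambda)$ computed in ${}^{\vee}\fg$ agrees with the one computed in ${}^{\vee}\fm_1$ on the relevant part. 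Concretely, $\fn(\lambda)\subset{}^{\vee}\mathfrak{r}\subset{}^{\vee}\fm_1$, because the positive integer eigenspaces of $\ad\lambda$ already lie in ${}^{\vee}\mathfrak{r}$. Hence $\epsilon_1(\Lambda_{M_1})=\Lambda_{M_1}$ and the two stabilizers in the common group ${}^{\vee}R$ coincide.

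For ${}^{\vee}K$, we have ${}^{\vee}K=Z_{{}^{\vee}R}(y)$ and ${}^{\vee}K_{M_1}=Z_{{}^{\vee}R_{M_1}}(y_{M_1})$, with $y=\epsilon_1(y_{M_1})$ the same element. Since the ambient groups agree, the centralizers agree. The one caveat is whether ${}^{\vee}K$ is taken in ${}^{\vee}R$ or in ${}^{\vee}G$, as the notation in Section \ref{subsec:ABVfunctoriality} differs slightly. If it is taken in ${}^{\vee}G$, we would use the identification with ${}^{\vee}G\times_{{}^{\vee}K_i}({}^{\vee}R/{}^{\vee}P)$ from Section \ref{sec:ABVparameters}, where the relevant group is $Z_{{}^{\vee}R}(y)$, and argue there instead.

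The main obstacle is making sure the map is really bijective at the level of the whole fiber products rather than only on fibers. In particular, injectivity requires that two points of $X({}^{\vee}M_1^{\Gamma},Y_{M_1},\OO_{M_1})$ which are ${}^{\vee}G$-conjugate are already ${}^{\vee}M_1$-conjugate. Our plan handles this implicitly through the fibration description: both sides fiber over ${}^{\vee}G/{}^{\vee}K$ with the same fiber. Surjectivity is equally clear, since every point of the target is ${}^{\vee}G$-conjugate to a point of the fiber ${}^{\vee}R/{}^{\vee}P$ over the base point, and that fiber lies in the image of $\epsilon_1$.

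An alternative check is to argue via the projection to $e(\OO)\simeq{}^{\vee}G/{}^{\vee}R$. Since ${}^{\vee}R\subset{}^{\vee}M_1$ by Lemma \ref{lem:RM1equalsR}, the fiber of each side over the base point $e(\Lambda)$ is identified compatibly, which gives the same conclusion. Finally, since both spaces are smooth varieties and the map is a ${}^{\vee}G$-equivariant bijective morphism that is an isomorphism on fibers of homogeneous bundles, it is an isomorphism of varieties.
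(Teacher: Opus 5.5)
Your proposal is correct and follows the paper's proof. Both arguments use the fiber-bundle descriptions ${}^{\vee}M_1\times_{{}^{\vee}K_{M_1}}({}^{\vee}R_{M_1}/{}^{\vee}P_{M_1})$ and ${}^{\vee}G\times_{{}^{\vee}K}({}^{\vee}R/{}^{\vee}P)$, obtain ${}^{\vee}R_{M_1}={}^{\vee}R$ from Lemma \ref{lem:RM1equalsR} and then the equalities of ${}^{\vee}P$ and ${}^{\vee}K$, and conclude that the map is the one induced by ${}^{\vee}M_1\subset{}^{\vee}G$. Your added observation that $\mathfrak{n}(\lambda)\subset{}^{\vee}\mathfrak{r}\subset{}^{\vee}\mathfrak{m}_1$, so that the two canonical flats coincide, simply spells out the paper's ``hence'' step.
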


\begin{proof}
By Lemma \ref{lem:RM1equalsR}, we have that ${}^{\vee}R_{M_1} = {}^{\vee}R$ and hence ${}^{\vee}P_{M_1} = {}^{\vee}P$, ${}^{\vee}K_{M_1} = {}^{\vee}K$. Thus, $\epsilon_1$ is the map
$$\epsilon_1\colon {}^{\vee}M_1 \times_{{}^{\vee}K} ({}^{\vee}R/{}^{\vee}P) \to {}^{\vee}G \times_{{}^{\vee}K} ({}^{\vee}R/{}^{\vee}P)$$
induced by the inclusion ${}^{\vee}M_1 \subset {}^{\vee}G$. The claim follows.
\end{proof}

\begin{prop}\label{prop:realpackets}
Let $(G^{\Gamma},\mathcal{W})$ be an extended group and let $({}^{\vee}G^{\Gamma},\mathcal{S})$ be a corresponding $E$-group with second invariant $z$. Let $\psi\colon W_{\RR} \times SL_2(\CC) \to {}^{\vee}G^{\Gamma}$ be an Arthur parameter, and fix the notation of Definition \ref{notation}. Then the following are true:
\begin{itemize}
    \item[(i)] $\epsilon_1$ induces isomorphism of Arthur component groups
    $$A_{\epsilon_1}^{mic}\colon A^{mic}_{\psi_1} \xrightarrow{\sim} A^{mic}_{\psi}$$
    \item[(ii)] For any $s \in A^{mic}_{\psi_1}$, there is an identity in $K\Pi^z(G^{\Gamma})$
    $$\eta^{mic}_{\psi}(A^{mic}_{\epsilon_1}(s)) = (\epsilon_1)_*\eta^{mic}_{\psi_1}(s).$$
\end{itemize}
\end{prop}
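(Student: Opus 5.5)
The plan is to exploit Lemma \ref{lem:real1}: $\epsilon_1$ identifies $X({}^{\vee}G^{\Gamma},Y,\OO)$ with the induced space ${}^{\vee}G \times_{{}^{\vee}M_1} X({}^{\vee}M_1^{\Gamma},Y_{M_1},\OO_{M_1})$. Restriction to the fiber over the identity coset therefore gives an equivalence
$$D^bC^{{}^{\vee}G^{alg}}X({}^{\vee}G^{\Gamma},Y,\OO) \xrightarrow{\sim} D^bC^{{}^{\vee}M_1^{alg}}X({}^{\vee}M_1^{\Gamma},Y_{M_1},\OO_{M_1}),$$
and this equivalence is exactly $\epsilon_1^*$. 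Under it, orbits correspond to orbits, and equivariant local systems correspond bijectively, since stabilizers in ${}^{\vee}G$ of points of the fiber lie in ${}^{\vee}M_1$. Hence $\epsilon_1^*$ sends $P(\xi)$ to $P(\xi_{M_1})$, with codimensions of orbits preserved. Equivalently, $d(\xi)-d(\varphi_{\psi})=d(\xi_{M_1})-d(\varphi_{\psi_1})$, because the dimensions shift by the same constant $\dim {}^{\vee}G/{}^{\vee}M_1$.

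For (i), I would use Proposition \ref{prop:Arthurcomponent} to identify $A^{mic}_{\psi}\simeq \mathrm{Com}(Z_{{}^{\vee}K^{alg}}(e))$ and $A^{mic}_{\psi_1}\simeq\mathrm{Com}(Z_{{}^{\vee}K_{M_1}^{alg}}(e))$. By Lemma \ref{lem:RM1equalsR}, ${}^{\vee}K_{M_1}={}^{\vee}K$, since both are $Z_{{}^{\vee}R}(y)$. The two groups therefore coincide, and the map $A^{mic}_{\epsilon_1}$ is the identity under these identifications. Alternatively, one can argue directly that $\mathrm{Stab}_{{}^{\vee}G}(\psi)$ centralizes $\psi(\RR_+)$, so it lies in ${}^{\vee}M_1$ and equals $\mathrm{Stab}_{{}^{\vee}M_1}(\psi_1)$. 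The one point that needs care is the algebraic universal covers: ${}^{\vee}M_1^{alg}$ should be taken as the preimage of ${}^{\vee}M_1$ in ${}^{\vee}G^{alg}$, and I would check that this choice is compatible with ABV's conventions.

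For (ii), I would work with the microlocal side. Under the bundle isomorphism, the conormal vector $((y,\Lambda),e)$ to $S$ restricts to the conormal vector $((y_{M_1},\Lambda_{M_1}),e)$ to $S_{M_1}$ in the fiber. This uses two facts. First, the conormal space of $S$ at a point of the fiber is $({}^{\vee}\mathfrak r/{}^{\vee}\fp+{}^{\vee}\fk)^*$ in both settings, by Proposition \ref{prop:Arthuroconormal}. Second, the base directions ${}^{\vee}\fg/{}^{\vee}\fm_1$ are tangent to $S$. Microstalks are compatible with smooth pullback, since locally the space is a product of the fiber with a smooth base. It follows that $\chi^{mic}_{\psi}(P)\simeq\chi^{mic}_{\psi_1}(\epsilon_1^*P)$ as representations of $A^{mic}_{\psi_1}\simeq A^{mic}_{\psi}$, and therefore $m^{mic}_{\psi}(A^{mic}_{\epsilon_1}(s))=m^{mic}_{\psi_1}(s)\circ\epsilon_1^*$. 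One might also need a shift: perversity shifts by $\dim$ of the base, so the microstalks could differ by a sign $(-1)^{\dim}$. I expect any such sign to be absorbed by the normalization $(-1)^{d(\xi)-d(\varphi_\psi)}$, and I will check this.

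Given that identity, Lemma \ref{lem:eta} lets me conclude by transposing. The class $\eta^{mic}_{\psi}(A_{\epsilon_1}(s))$ corresponds to $m^{mic}_{\psi}(A_{\epsilon_1}(s))=m^{mic}_{\psi_1}(s)\circ\epsilon_1^*$. By the definition of $(\epsilon_1)_*$ as the transpose of $\epsilon_1^*$ in (\ref{eq:functoriality}), this functional is $(\epsilon_1)_*\eta^{mic}_{\psi_1}(s)$. One also has to account for components $X({}^{\vee}G^{\Gamma},Y',\OO)$ with other $Y'$. There $m^{mic}_\psi$ vanishes, and their preimages either miss $Y_{M_1}$ or are handled by the same argument. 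Finally, $\eta^{mic}_\psi(s)$ has only finitely many terms, so it lies in $K\Pi^z(G^{\Gamma})$ rather than only in the completion.

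I expect the main obstacle to be the precise compatibility of the microlocal functor $\chi^{mic}$ with the fiber-restriction equivalence, including the perverse shift and the sign conventions. My first attempt would use the vanishing-cycles description in Remark \ref{rmk:vanishingcycles}. I would take $F$ on a neighborhood in $X({}^{\vee}G^{\Gamma},\OO,Y)$ to be the pullback of a function $F_{M_1}$ on the fiber, via a local trivialization ${}^{\vee}G\times_{{}^{\vee}M_1}(-)\simeq U\times(\text{fiber})$. Then $\Phi_F(P)\simeq \Phi_{F_{M_1}}(\epsilon_1^*P)\boxtimes\CC_U$ up to the shift, and taking stalks gives the isomorphism.
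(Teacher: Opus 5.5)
Your proposal is correct and follows essentially the same route as the paper: (i) comes from Proposition \ref{prop:Arthurcomponent} together with ${}^{\vee}K_{M_1}={}^{\vee}K$ from Lemma \ref{lem:RM1equalsR}, and (ii) is reduced via Lemma \ref{lem:eta} to $m^{mic}_{\psi_1}(s)\circ\epsilon_1^*=m^{mic}_{\psi}(s)$, which the paper proves by inverting the equivalence $\epsilon_1^*$ of Lemma \ref{lem:real1} as pullback along the smooth bundle $p\colon X({}^{\vee}G^{\Gamma},Y,\OO)\to X({}^{\vee}M_1^{\Gamma},Y_{M_1},\OO_{M_1})$ and then citing compatibility of microlocalization with smooth pullback \cite[Proposition 4.3.5]{KashiwaraSchapira}. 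The points you flag are left implicit in the paper, so your extra care there is appropriate rather than a deviation: the shift by $\dim {}^{\vee}G/{}^{\vee}M_1$ cancelling against the normalization $(-1)^{d(\xi)-d(\varphi_{\psi})}$, and taking the covering of ${}^{\vee}M_1$ inside ${}^{\vee}G^{alg}$ so that the component groups really match.
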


\begin{proof}
By Proposition \ref{prop:Arthurcomponent}, $A^{mic}_{\epsilon_1}$ is the natural map
$$\mathrm{Com}(Z_{{}^{\vee}K_{M_1}^{alg}}(e)) \to \mathrm{Com}(Z_{{}^{\vee}K^{alg}}(e)),$$
But by Lemma \ref{lem:RM1equalsR}, ${}^{\vee}K_{M_1}^{alg} = {}^{\vee}K^{alg}$, so this map is an isomorphism. We note that (i) is also immediate from Lemma \ref{lem:real1} and the genericity of $e$ (Proposition \ref{prop:Arthuroconormal}).

In view of Lemma \ref{lem:eta}, (ii) is equivalent to the following identity in $K^{{}^{\vee}G^{alg}}X({}^{\vee}G^{\Gamma},Y,\OO)^*$:
\begin{equation}\label{eq:matchingmicrostalksreal}m^{mic}_{\psi_1}(s) \circ \epsilon_1^* = m^{mic}_{\psi}(s)\end{equation}
By Lemma \ref{lem:real1}, the functor
$$\epsilon_1^*\colon D^bC^{{}^{\vee}G^{alg}}X({}^{\vee}G^{\Gamma},Y,\OO) \to D^bC^{{}^{\vee}M_1} X({}^{\vee}M_1^{\Gamma},Y_{M_1},\OO_{M_1})$$
is an equivalence, with inverse the functor
$$p^*\colon D^bC^{{}^{\vee}M_1} X({}^{\vee}M_1^{\Gamma},Y_{M_1},\OO_{M_1}) \to D^bC^{{}^{\vee}G^{alg}}X({}^{\vee}G_1^{\Gamma},Y,\OO),$$
where $p\colon X({}^{\vee}G^{\Gamma},Y,\OO) \to X({}^{\vee}M_1^{\Gamma},Y_{M_1},\OO_{M_1})$ is the fiber bundle (with fiber ${}^{\vee}G/{}^{\vee}M_1$). Thus (\ref{eq:matchingmicrostalksreal}) is equivalent to the identity in $K^{{}^{\vee}M_1^{alg}}X({}^{\vee}M_1^{\Gamma},Y_{M_1},\OO_{M_1})^*$
$$m^{mic}_{\psi_1}(s) =m^{mic}_{\psi}(s) \circ p^*.$$
Since $p$ is a smooth morphism, this is immediate from \cite[Proposition 4.3.5]{KashiwaraSchapira}.
\end{proof}

\subsection{Analysis of $(\epsilon_2)_*$}\label{subsec:e2}

Let $(G^{\Gamma},\mathcal{W})$ be an extended group and let $({}^{\vee}G^{\Gamma},\mathcal{S})$ be a corresponding $E$-group with second invariant $z$. Let $\psi\colon W_{\RR} \times SL_2(\CC) \to {}^{\vee}G^{\Gamma}$ be an Arthur parameter and fix the notation of Definition \ref{notation}. In this section, we will analyze the behavior of $(\epsilon_2)_*$ with respect to Arthur packets. This step will require a more detailed analysis of the geometry of $\epsilon_2$. 

As explained in Section \ref{subsec:ABVfunctoriality}, the $L$-embedding $\epsilon_2\colon {}^{\vee}M_2^{\Gamma} \subset {}^{\vee}M_1^{\Gamma}$ induces a ${}^{\vee}M_2$-equivariant embedding of geometric parameter spaces
\begin{equation}\label{eq:epsilons}\epsilon_2\colon  X({}^{\vee}M_2^{\Gamma}) \subseteq X({}^{\vee}M_1^{\Gamma}) .\end{equation}
Let $(y_{M_2},\Lambda_{M_2}) \in X({}^{\vee}M_2^{\Gamma})$ denote the geometric parameter corresponding to the Langlands parameter $\varphi_{\psi}$ (Proposition \ref{prop:classicaltogeometric}), and let $$(y_{M_1},\Lambda_{M_1})= \epsilon_2(y_{M_2},\Lambda_{M_2}).$$
Also let
$$\OO_{M_2} = {}^{\vee}M_2 \cdot \Lambda_{M_2}, \qquad \OO_{M_1} = {}^{\vee}M_1 \cdot \Lambda_1,$$ 
and 
$$Y_{M_2} = {}^{\vee}M_2 \cdot y_{M_2}, \qquad Y_{M_1} = {}^{\vee}M_1 \cdot y_{M_1}.$$
The embedding (\ref{eq:epsilons}) restricts to a closed ${}^{\vee}M_1$-equivariant embedding of algebraic varieties
\begin{equation}\label{eq:epsilons2}\epsilon_2\colon  X({}^{\vee}M_2^{\Gamma},Y_{M_2},\OO_{M_2}) \subseteq X({}^{\vee}M_1^{\Gamma},Y_{M_1},\OO_{M_1}).\end{equation}
Define the subgroups ${}^{\vee}R_{M_1} \subset {}^{\vee}M_1$, ${}^{\vee}R_{M_2} \subseteq {}^{\vee}M_2$, ${}^{\vee}K_{M_1} \subset {}^{\vee}R_{M_1}$,${}^{\vee}K_{M_2} \subseteq {}^{\vee}R_{M_2}$, ${}^{\vee}P_{M_1} \subset {}^{\vee}M_1$, and ${}^{\vee}P_{M_2} \subseteq {}^{\vee}M_2$ as in Section \ref{subsec:ABVfunctoriality}. Then the embedding (\ref{eq:epsilons2}) corresponds to the natural map
$$
\epsilon_2\colon {}^{\vee}M_2 \times_{{}^{\vee}K_{M_2}} ({}^{\vee}R_{M_2}/{}^{\vee}P_{M_2}) \to  {}^{\vee}M_1 \times_{{}^{\vee}K_{M_1}} ({}^{\vee}R_{M_1}/{}^{\vee}P_{M_1})
$$
induced by the inclusions ${}^{\vee}M_2 \subset {}^{\vee}M_1$, ${}^{\vee}R_{M_2} \subset {}^{\vee}R_{M_1}$, and so on.

\begin{lemma}\label{lem:parabolics}
The following are true:
\begin{itemize}
    \item[(i)] ${}^{\vee}Q_2 \cap {}^{\vee}R_{M_1}$ is a parabolic subgroup of ${}^{\vee}R_{M_1}$ with Levi factor ${}^{\vee}R_{M_2}$. 
    \item[(ii)] Suppose $\psi$ is in the good range (Definition \ref{def:good}). Then ${}^{\vee}P_{M_1} \subset {}^{\vee}Q_2$. More precisely, ${}^{\vee}Q_2 \cap {}^{\vee}R_{M_1}={}^{\vee}R_{M_2}{}^{\vee}P_{M_1}$.
\end{itemize}
\end{lemma}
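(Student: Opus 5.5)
We will work entirely at the Lie algebra level inside a single Cartan subalgebra, since all the groups in question are connected subgroups determined by sets of roots. Fix a maximal torus ${}^{\vee}T \subset {}^{\vee}M_2$ containing $\psi(\CC^{\times})$ and the image under $\psi$ of the diagonal torus of $SL_2(\CC)$; then $\lambda = \lambda(\varphi_\psi) = \lambda_0 + \frac{1}{2}h \in {}^{\vee}\ft$. Write $\psi|_{S^1}$ as the cocharacter $\nu$, so that ${}^{\vee}\fm_{1,n}$ is the sum of the root spaces ${}^{\vee}\fg_\alpha$ with $\alpha \in \Delta({}^{\vee}\fm_1,{}^{\vee}\ft)$ and $\langle \alpha,\nu\rangle = n$. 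We have the following root descriptions. The subalgebra ${}^{\vee}\mathfrak{r}_{M_1}$ is spanned by ${}^{\vee}\ft$ and the roots $\alpha$ of ${}^{\vee}\fm_1$ with $\alpha(\lambda)\in\ZZ$. The subalgebra ${}^{\vee}\mathfrak{r}_{M_2}$ is the analogous object for the roots of ${}^{\vee}\fm_2$, that is, those with $\langle\alpha,\nu\rangle=0$. The parabolic ${}^{\vee}\fp_{M_1}$ is spanned by ${}^{\vee}\ft$ and the roots with $\alpha(\lambda)\in\ZZ_{\ge 0}$. Finally ${}^{\vee}\fq_2$ is spanned by ${}^{\vee}\ft$ and the roots with $\langle\alpha,\nu\rangle\ge 0$.

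For (i), we use that the integral root system $\Delta(\lambda)\cap\Delta({}^{\vee}\fm_1)$ is closed. The function $\alpha\mapsto\langle\alpha,\nu\rangle$ is additive, so it defines a grading of this root system. Hence the roots with $\langle\alpha,\nu\rangle\ge0$ form a parabolic subsystem whose Levi subsystem is the set of roots with $\langle\alpha,\nu\rangle=0$. This is the standard fact that a cocharacter of ${}^{\vee}T\subset{}^{\vee}R_{M_1}$ defines a parabolic of the reductive group ${}^{\vee}R_{M_1}$, with Levi factor the centralizer of the cocharacter. The Levi factor is $Z_{{}^{\vee}R_{M_1}}(\psi(S^1))$. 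Its identity component has Lie algebra ${}^{\vee}\mathfrak{r}_{M_1}\cap{}^{\vee}\fm_2={}^{\vee}\mathfrak{r}_{M_2}$, since $e(\Lambda_{M_2})=e(\Lambda_{M_1})$. It remains to check connectedness, that is, to identify ${}^{\vee}Q_2\cap{}^{\vee}R_{M_1}$ with the connected parabolic. Centralizers of tori in connected reductive groups are connected, and ${}^{\vee}Q_2\cap{}^{\vee}R_{M_1}$ is the attracting subgroup $P(\nu)$ of ${}^{\vee}R_{M_1}$, which is connected. So (i) follows.

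For (ii), the key point is to compare the sign of $\alpha(\lambda)$ with the sign of $\langle\alpha,\nu\rangle$ for integral roots. As in the proof of Lemma \ref{lem:RM1equalsR}, write $\lambda_0=\lambda_{0,re}+\lambda_{0,im}$. On ${}^{\vee}\fm_1$ the imaginary part vanishes on integral roots. The part $\lambda_{0,re}=\frac12(\lambda_0-\mu_0)$ is exactly $\frac12 d\nu$ up to the identification of $\psi|_{S^1}$ with $z\mapsto z^{\lambda_0}\bar z^{\mu_0}$ restricted to $|z|=1$, which gives $z^{\lambda_0-\mu_0}$. Thus $\alpha(\lambda)=\frac12\langle\alpha,\nu\rangle+\frac12\alpha(h)$ for integral $\alpha$ in ${}^{\vee}\fm_1$. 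We need the following implication: if $\alpha\in\Delta({}^{\vee}\fm_1)$ is integral with $\alpha(\lambda)\ge0$ and $\langle\alpha,\nu\rangle\neq 0$, then $\langle\alpha,\nu\rangle>0$. This is exactly where the good range hypothesis enters. Strict dominance of $\lambda$ on the roots of ${}^{\vee}\fu_2$ gives $\mathrm{Re}\,\alpha(\lambda)>0$ whenever $\langle\alpha,\nu\rangle>0$. Applying this to $-\alpha$ shows that if $\langle\alpha,\nu\rangle<0$ then $\alpha(\lambda)<0$, which is the contrapositive we need. Consequently every root of ${}^{\vee}\fp_{M_1}$ is either in ${}^{\vee}\fm_2$ or in ${}^{\vee}\fu_2$, so ${}^{\vee}\fp_{M_1}\subset{}^{\vee}\fq_2$. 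Conversely, every integral root of ${}^{\vee}\fu_2$ has $\alpha(\lambda)>0$ and therefore lies in ${}^{\vee}\fp_{M_1}$. Hence ${}^{\vee}\fq_2\cap{}^{\vee}\mathfrak{r}_{M_1}={}^{\vee}\mathfrak{r}_{M_2}+{}^{\vee}\fp_{M_1}$. Passing to connected groups then gives ${}^{\vee}Q_2\cap{}^{\vee}R_{M_1}={}^{\vee}R_{M_2}{}^{\vee}P_{M_1}$. The product is a group because ${}^{\vee}R_{M_2}$ normalizes ${}^{\vee}\fq_2\cap{}^{\vee}\mathfrak r_{M_1}$ and both factors contain ${}^{\vee}T$. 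Connectedness of ${}^{\vee}P_{M_1}$ comes from its being a parabolic of ${}^{\vee}R_{M_1}$, and connectedness of the intersection was established in (i).

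The main obstacle is bookkeeping rather than conceptual. We must keep the normalization straight in identifying $d(\psi|_{S^1})$ with $\lambda_0-\mu_0$ (the factor $\frac12$ and the sign). We also must not lean on the strict inequality in the good range definition being stated for $\mathrm{Re}$ versus the actual value: on integral roots the value is real, so either reading suffices.
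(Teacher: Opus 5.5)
Your proof is correct and follows essentially the same route as the paper: in (i) the cocharacter $\psi|_{S^1}$ of a torus in ${}^{\vee}R_{M_1}$ cuts out the parabolic ${}^{\vee}Q_2\cap{}^{\vee}R_{M_1}$ with Levi factor its centralizer ${}^{\vee}R_{M_2}$, and in (ii) applying the good range hypothesis to $-\alpha$ is the paper's argument that $\mathrm{nil}({}^{\vee}\fq_2\cap{}^{\vee}\mathfrak{r}_{M_1})\subseteq\mathrm{nil}({}^{\vee}\fp_{M_1})$, read through the reverse inclusion of parabolics. One step needs a different justification: ${}^{\vee}\fq_2\cap{}^{\vee}\mathfrak{r}_{M_1}={}^{\vee}\mathfrak{r}_{M_2}+{}^{\vee}\fp_{M_1}$ together with a common torus does not by itself make ${}^{\vee}R_{M_2}{}^{\vee}P_{M_1}$ a group (compare $\fg=\fb+\fb^{op}$), so argue instead that the unipotent radical $U'$ of ${}^{\vee}Q_2\cap{}^{\vee}R_{M_1}$ lies in ${}^{\vee}P_{M_1}$ (its roots are the integral roots of ${}^{\vee}\fu_2$, which you showed lie in ${}^{\vee}\fp_{M_1}$), whence ${}^{\vee}R_{M_2}U'\subseteq{}^{\vee}R_{M_2}{}^{\vee}P_{M_1}\subseteq{}^{\vee}Q_2\cap{}^{\vee}R_{M_1}={}^{\vee}R_{M_2}U'$.
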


\begin{proof}
Since $S^1$ commutes with $\CC^{\times} \times SL_2(\CC)$ in $W_{\RR} \times SL_2(\CC)$, $\psi(S^1)$ commutes with $\lambda = \lambda(\varphi_{\psi})$ in ${}^{\vee}G$. Since ${}^{\vee}R_{M_1} = Z_{{}^{\vee}M_1}(e(\lambda))^0$, this implies that $\psi(S^1) \subset {}^{\vee}R_{M_1}$. Thus, ${}^{\vee}Q_2 \cap {}^{\vee}R_{M_1}$ is the connected subgroup of ${}^{\vee}R_{M_1}$ defined by the co-character $\psi|_{S^1}: S^1 \to {}^{\vee}R_{M_1}$. This is a parabolic subgroup of ${}^{\vee}R_{M_1}$ with Levi factor 
$$Z_{\psi(S^1)}({}^{\vee}R_{M_1}) = Z_{\psi(S^1)}(Z_{{}^{\vee}M_1}(e(\lambda))^0) = Z_{Z_{\psi(S^1)}({}^{\vee}M_1)}(e(\lambda))^0 = Z_{{}^{\vee}M_1}(e(\lambda))^0 = {}^{\vee}R_{M_2}.$$
This proves (i)

For (ii), choose any maximal torus ${}^{\vee}H$ containing $\psi(\CC^{\times})$ so that ${}^{\vee}H \subset {}^{\vee}P_1$ and ${}^{\vee}H \subset {}^{\vee}Q_2$. We have
   $$\Delta(\mathrm{nil}({}^{\vee}\fp_{M_1}),{}^{\vee}\fh) = \{\alpha^{\vee} \in \Delta \mid \langle  \psi|_{\RR^+}, \alpha^{\vee}\rangle =0 \text{ and } \langle \lambda, \alpha^{\vee}\rangle \in \ZZ_{> 0}\},$$
   and
   $$\Delta(\mathrm{nil}({}^{\vee}\fq_2 \cap {}^{\vee}\mathfrak{r}_{M_1}),{}^{\vee}\fh) = \{\alpha^{\vee} \in \Delta \mid \langle \psi|_{\RR^+},  \alpha^{\vee}\rangle =0, \ \langle \lambda, \alpha^{\vee}\rangle \in \ZZ,  \text{ and } \langle  \psi|_{S^1}, \alpha^{\vee}\rangle > 0\}$$
So, assuming $\psi$ is in the good range, we have that $\Delta(\mathrm{nil}({}^{\vee}\fq_2 \cap {}^{\vee}\mathfrak{r}_{M_1}),{}^{\vee}\fh) \subset \Delta(\mathrm{nil}({}^{\vee}\fp_{M_1}),{}^{\vee}\fh)$, and hence $\mathrm{nil}({}^{\vee}\fq_2 \cap {}^{\vee}\mathfrak{r}_{M_1}) \subset \mathrm{nil}({}^{\vee}\fp_{M_1})$. For parabolic subalgebras $\fa$ and $\fb$ containing a common Cartan subalgebra, $\mathrm{nil}(\fa) \subset \mathrm{nil}(\fb)$ implies $\fb \subset \fa$. In our case, this implies ${}^{\vee}\fp_{M_1} \subset {}^{\vee}\fq_2 \cap {}^{\vee}\mathfrak{r}_{M_1}$ and therefore ${}^{\vee}P_{M_1} \subset {}^{\vee}Q_2 \cap {}^{\vee}R_{M_1}$. The claim that ${}^{\vee}Q_2 \cap {}^{\vee}R_{M_1} = {}^{\vee}R_{M_2} {}^{\vee}P_{M_1}$ follows immediately.
\end{proof}

Now consider the ${}^{\vee}M_1$-invariant subset 
$$\tilde{U} := {}^{\vee}M_1 \cdot \epsilon_2 X({}^{\vee}M_2^{\Gamma}, Y_{M_2}, \OO_{M_2}) \subset X({}^{\vee}M_1^{\Gamma}, Y_{M_1}, \OO_{M_1}).$$

\begin{lemma}\label{lem:thetastable2}
Suppose $\psi$ is in the good range (Definition \ref{def:good}). Then the following are true:
\begin{itemize}
    \item[(i)] $\epsilon_2$ induces a ${}^{\vee}M_1$-equivariant isomorphism
    $${}^{\vee}M_1 \times_{{}^{\vee}M_2} X({}^{\vee}M_2^{\Gamma},Y_{M_2},\OO_{M_2}) \xrightarrow{\sim} \widetilde{U}.$$
    \item[(ii)] $\tilde{U}$ is an open subset of $X({}^{\vee}M_1^{\Gamma}, Y_{M_1}, \OO_{M_1})$.
\end{itemize}
\end{lemma}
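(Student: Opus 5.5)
We work throughout with the fiber-bundle descriptions of Section \ref{sec:ABVparameters}:
$$X({}^{\vee}M_2^{\Gamma},Y_{M_2},\OO_{M_2}) \simeq {}^{\vee}M_2 \times_{{}^{\vee}K_{M_2}} ({}^{\vee}R_{M_2}/{}^{\vee}P_{M_2}), \qquad X({}^{\vee}M_1^{\Gamma},Y_{M_1},\OO_{M_1}) \simeq {}^{\vee}M_1 \times_{{}^{\vee}K_{M_1}} ({}^{\vee}R_{M_1}/{}^{\vee}P_{M_1}).$$
Under these identifications $\epsilon_2$ is induced by the inclusions ${}^{\vee}M_2 \subset {}^{\vee}M_1$ and ${}^{\vee}R_{M_2} \subset {}^{\vee}R_{M_1}$. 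Hence the map in (i) is
$${}^{\vee}M_1 \times_{{}^{\vee}K_{M_2}} ({}^{\vee}R_{M_2}/{}^{\vee}P_{M_2}) \to {}^{\vee}M_1 \times_{{}^{\vee}K_{M_1}} ({}^{\vee}R_{M_1}/{}^{\vee}P_{M_1}).$$
Both (i) and (ii) should therefore reduce to statements about the single fiber ${}^{\vee}R_{M_1}/{}^{\vee}P_{M_1}$ with its ${}^{\vee}K_{M_1}$-action.

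The first step is to identify the subgroups. Set $\theta = \Ad(y_{M_1})$. Since $y = \psi(j)\cdot(\text{stuff commuting with } \psi(S^1))$ and $j$ inverts $S^1$, $\theta$ sends the cocharacter $\psi|_{S^1}$ to its inverse. So ${}^{\vee}K_{M_1}$ normalizes nothing in ${}^{\vee}Q_2$, but $\theta$ takes ${}^{\vee}Q_2\cap {}^{\vee}R_{M_1}$ to its opposite. Consequently
$${}^{\vee}K_{M_1} \cap ({}^{\vee}Q_2\cap{}^{\vee}R_{M_1}) = {}^{\vee}K_{M_1}\cap {}^{\vee}R_{M_2} = {}^{\vee}K_{M_2}$$
(the last equality uses $y_{M_2}=y_{M_1}$). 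Next, by Lemma \ref{lem:parabolics}(ii) the good-range hypothesis gives ${}^{\vee}P_{M_1} \subset {}^{\vee}Q_2\cap{}^{\vee}R_{M_1} = {}^{\vee}R_{M_2}{}^{\vee}P_{M_1}$, and ${}^{\vee}P_{M_2} = {}^{\vee}P_{M_1}\cap {}^{\vee}R_{M_2}$. Thus
$${}^{\vee}R_{M_2}/{}^{\vee}P_{M_2} \xrightarrow{\sim} ({}^{\vee}Q_2\cap {}^{\vee}R_{M_1})/{}^{\vee}P_{M_1} =: Z,$$
and $Z$ is the fiber over the base point of the projection ${}^{\vee}R_{M_1}/{}^{\vee}P_{M_1} \to {}^{\vee}R_{M_1}/({}^{\vee}Q_2\cap{}^{\vee}R_{M_1})$.

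This reduces (i) to showing that ${}^{\vee}K_{M_1}\times_{{}^{\vee}K_{M_2}} Z \to {}^{\vee}R_{M_1}/{}^{\vee}P_{M_1}$ is an isomorphism onto its image, namely the preimage of the ${}^{\vee}K_{M_1}$-orbit of the base point in the partial flag variety ${}^{\vee}R_{M_1}/({}^{\vee}Q_2\cap{}^{\vee}R_{M_1})$. Since the map is a map of homogeneous bundles over that orbit, this holds once the stabilizer of the base point in ${}^{\vee}K_{M_1}$ equals ${}^{\vee}K_{M_2}$, which is the computation of the previous paragraph. The image of $\epsilon_2$ composed with ${}^{\vee}M_1$-translation is then exactly ${}^{\vee}M_1\times_{{}^{\vee}K_{M_1}}(\text{that preimage}) = \widetilde U$, and inducing up from ${}^{\vee}K_{M_1}$ to ${}^{\vee}M_1$ gives (i). Injectivity and the isomorphism property (not merely bijectivity) follow because everything is a homogeneous fibration, and smoothness is automatic.

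For (ii) it suffices that the ${}^{\vee}K_{M_1}$-orbit of the base point in ${}^{\vee}R_{M_1}/({}^{\vee}Q_2\cap {}^{\vee}R_{M_1})$ is open. Because $\theta({}^{\vee}Q_2\cap{}^{\vee}R_{M_1})$ is opposite to ${}^{\vee}Q_2\cap{}^{\vee}R_{M_1}$, the tangent space computation gives
$${}^{\vee}\mathfrak r_{M_1} = {}^{\vee}\fk_{M_1} + ({}^{\vee}\fq_2\cap{}^{\vee}\mathfrak r_{M_1}).$$
Indeed, for $X$ in the opposite nilradical, $X = (X+\theta X) - \theta X$ with $X+\theta X \in {}^{\vee}\fk_{M_1}$ and $\theta X \in {}^{\vee}\fq_2$. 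The orbit is therefore open.

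The main obstacle is the first step: verifying carefully that $\theta=\Ad(y_{M_1})$ really interchanges ${}^{\vee}Q_2\cap{}^{\vee}R_{M_1}$ with its opposite. This requires checking that the $y_1=\psi_1(\mathrm{diag}(i,-i))$ and $\exp(\pi i\lambda)$ factors of $y$ commute with $\psi(S^1)$, and that $\psi(j)$ inverts it. The second delicate point is the equality ${}^{\vee}P_{M_2}={}^{\vee}P_{M_1}\cap{}^{\vee}R_{M_2}$, which I would check on roots using the descriptions in the proof of Lemma \ref{lem:parabolics}.
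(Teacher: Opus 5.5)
Your proposal is correct and follows essentially the same route as the paper. Both arguments reduce to the fiber ${}^{\vee}R_{M_1}/{}^{\vee}P_{M_1}$, use Lemma \ref{lem:parabolics}(ii) to get ${}^{\vee}R_{M_2}{}^{\vee}P_{M_1} = {}^{\vee}Q_2\cap{}^{\vee}R_{M_1}$, use that $\Ad(y)$ sends ${}^{\vee}Q_2$ to its opposite to obtain ${}^{\vee}K_{M_1}\cap{}^{\vee}Q_2\cap{}^{\vee}R_{M_1} = {}^{\vee}K_{M_2}$ for (i), and prove (ii) via ${}^{\vee}\mathfrak{r}_{M_1} = {}^{\vee}\fk_{M_1} + ({}^{\vee}\fq_2\cap{}^{\vee}\mathfrak{r}_{M_1})$. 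Routing through the partial flag variety ${}^{\vee}R_{M_1}/({}^{\vee}Q_2\cap{}^{\vee}R_{M_1})$ is a slightly cleaner packaging of the paper's stabilizer and dimension-count arguments, since injectivity and openness become statements about a single ${}^{\vee}K_{M_1}$-orbit; you also make explicit the opposite-parabolic property and ${}^{\vee}P_{M_2} = {}^{\vee}P_{M_1}\cap{}^{\vee}R_{M_2}$, which the paper uses tacitly.
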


\begin{proof}
For (i), it is equivalent to show that the natural map
\begin{equation}\label{eq:embedding}
{}^{\vee}K_{M_1} \times_{{}^{\vee}K_{M_2}} {}^{\vee}R_{M_2}/{}^{\vee}P_{M_2} \to {}^{\vee}R_{M_1}/{}^{\vee}P_{M_1}
\end{equation}
is injective, i.e. that the stabilizer in ${}^{\vee}K_{M_1}$ of each point in ${}^{\vee}R_{M_2}{}^{\vee}P_{M_1}/{}^{\vee}P_{M_1} \subset {}^{\vee}R_{M_1}/{}^{\vee}P_{M_1}$ belongs to ${}^{\vee}K_{M_2}$. But every such stabilizer is contained in
$${}^{\vee}K_{M_1} \cap {}^{\vee}R_{M_2}{}^{\vee}P_{M_1},$$
and by Lemma \ref{lem:parabolics}, we have that ${}^{\vee}R_{M_2}{}^{\vee}P_{M_1} = {}^{\vee}Q_2 \cap {}^{\vee}R_{M_1}$. So 
$${}^{\vee}K_{M_1} \cap {}^{\vee}R_{M_2}{}^{\vee}P_{M_1} = {}^{\vee}K_{M_1} \cap {}^{\vee}Q_2 \cap {}^{\vee}R_{M_1} = {}^{\vee}K_{M_1} \cap {}^{\vee}M_2 \cap {}^{\vee}R_{M_1} = {}^{\vee}K_{M_2}.$$
This proves (i).

For (ii), we must show that the image of (\ref{eq:embedding}), namely ${}^{\vee}K_{M_1}{}^{\vee}R_{M_2}{}^{\vee}P_{M_1}/{}^{\vee}P_{M_1}$ is an open subset of ${}^{\vee}R_{M_1}/{}^{\vee}P_{M_1}$. By part (i), this subset is the image of an injective map from an irreducible variety, and hence locally closed and irreducible. So it suffices to show that 
$$\dim({}^{\vee}K_{M_1}{}^{\vee}R_{M_2}{}^{\vee}P_{M_1}/{}^{\vee}P_{M_1}) = \dim({}^{\vee}R_{M_1}/{}^{\vee}P_{M_1}).$$
This follows by comparing tangent spaces at the point $^\vee P_{M_1}/^\vee P_{M_1}$. The tangent space in $^\vee R_{M_1}/^\vee P_1$ is $^\vee\mathfrak{r}_{M_1}/^\vee\mathfrak{p}_{1}$, whereas the tangent space in ${}^{\vee}K_{M_1}{}^{\vee}R_{M_2}{}^{\vee}P_{M_1}/{}^{\vee}P_{M_1}$ is 
$$\left(^\vee\mathfrak{k}_{M_1}+^\vee\mathfrak{r}_{M_2}+^\vee\mathfrak{p}_{M_1}\right)/^\vee\mathfrak{p}_{M_1}=\left(^\vee\mathfrak{k}_{M_1}+(^\vee\mathfrak{q}_{2}\cap ^\vee\mathfrak{r}_{M_1})\right)/^\vee\mathfrak{p}_{M_1}.$$
Thus, it suffices to check
$$^\vee\mathfrak{k}_{M_1}+(^\vee\mathfrak{q}_{2}\cap ^\vee\mathfrak{r}_{M_1})=^\vee\mathfrak{r}_{M_1}.$$
This follows from the fact that the parabolic  $^\vee\mathfrak{q}_{2}\cap ^\vee\mathfrak{r}_{M_1} \subset {}^{\vee}\mathfrak{r}_{M_1}$ is `real', i.e. taken to its opposite by the Cartan involution defining ${}^{\vee}\fk_{M_1}$.
\end{proof}

\begin{prop}\label{prop:thetastablepackets}
Let $(G^{\Gamma},\mathcal{W})$ be an extended group and let $({}^{\vee}G^{\Gamma},\mathcal{S})$ be a corresponding $E$-group with second invariant $z$. Let $\psi\colon W_{\RR} \times SL_2(\CC) \to {}^{\vee}G^{\Gamma}$ be an Arthur parameter, and fix the notation of Definition \ref{notation}. Then the following are true:
\begin{itemize}
    \item[(i)] $\epsilon_2$ induces an isomorphism of Arthur component groups
    $$A_{\epsilon_2}^{mic}\colon A_{\psi_2}^{mic} \xrightarrow{\sim} A_{\psi_1}^{mic}$$
    \item[(ii)] For any $s \in A_{\psi_2}^{mic}$, there is an identity in $K\Pi^{zz(\rho_{\fu_1})}(M_1^{\Gamma})$
    $$\eta^{mic}_{\psi_2}(A^{mic}_{\epsilon_2}(s)) = (\epsilon_2)_*\eta^{mic}_{\psi_2}(s).$$
\end{itemize}
\end{prop}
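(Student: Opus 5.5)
Two remarks on reading the statement. The displayed identity in (ii) should read $\eta^{mic}_{\psi_1}(A^{mic}_{\epsilon_2}(s)) = (\epsilon_2)_*\eta^{mic}_{\psi_2}(s)$. I will also assume, as throughout this subsection, that $\psi$ is in the good range (Definition \ref{def:good}), so that Lemmas \ref{lem:parabolics} and \ref{lem:thetastable2} are available. The plan is to copy the proof of Proposition \ref{prop:realpackets}. The one new feature is that $\epsilon_2$ no longer induces an isomorphism onto the whole of $X({}^{\vee}M_1^{\Gamma},Y_{M_1},\OO_{M_1})$. It induces one only onto the open subset $\widetilde{U}$, and microlocal invariants are local, so this suffices.

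For (i), I will argue directly with stabilizers rather than through $e$. An element of ${}^{\vee}M_1^{alg}$ fixing $\psi_1$ centralizes $\psi(S^1)$. Since ${}^{\vee}M_2 = Z_{{}^{\vee}M_1}(\psi(S^1))$, and Levi subgroups are connected centralizers of tori, this centralizer is exactly ${}^{\vee}M_2$. Hence $\mathrm{Stab}_{{}^{\vee}M_1^{alg}}(\psi_1) = \mathrm{Stab}_{{}^{\vee}M_2^{{}^{\vee}M_1,alg}}(\psi_2)$. I then need to check that the algebraic universal cover of ${}^{\vee}M_2$ and the preimage of ${}^{\vee}M_2$ in ${}^{\vee}M_1^{alg}$ give the same component group. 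This is the same bookkeeping as in \cite[Chapter 22]{AdamsBarbaschVogan}, and I expect it to be routine. As a cross-check, Proposition \ref{prop:Arthurcomponent} identifies both sides with $\mathrm{Com}(Z_{{}^{\vee}K^{alg}}(e))$. Here $e \in {}^{\vee}\fm_2$ because $e$ commutes with $\psi(S^1)$, and by Lemma \ref{lem:thetastable2}(i) the stabilizer in ${}^{\vee}K_{M_1}$ of the base point lies in ${}^{\vee}K_{M_2}$.

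For (ii), Lemma \ref{lem:eta} reduces the claim to an identity of functionals on $K^{{}^{\vee}M_1^{alg}}X({}^{\vee}M_1^{\Gamma},Y_{M_1},\OO_{M_1})$:
$$m^{mic}_{\psi_2}(s)\circ \epsilon_2^* = m^{mic}_{\psi_1}(A^{mic}_{\epsilon_2}(s)),$$
up to a global sign coming from the normalization $(-1)^{d(\xi)-d(\varphi_\psi)}$ and from the shift in perversity. I will prove it in three steps.
\begin{enumerate}
\item The point $(y_{M_1},\Lambda_{M_1})$ lies in $\widetilde{U}$, which is open by Lemma \ref{lem:thetastable2}(ii). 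The microstalk at $((y_{M_1},\Lambda_{M_1}),e)$ depends only on the restriction of a sheaf to $\widetilde{U}$; this is clear from the vanishing-cycles description in Remark \ref{rmk:vanishingcycles}. So $m^{mic}_{\psi_1}$ factors through $j^*$, where $j\colon \widetilde{U}\hookrightarrow X$ is the inclusion.
\item By Lemma \ref{lem:thetastable2}(i), $\widetilde{U}\simeq {}^{\vee}M_1\times_{{}^{\vee}M_2}X({}^{\vee}M_2^{\Gamma},Y_{M_2},\OO_{M_2})$. Hence restriction along $\epsilon_2$ is an equivalence from ${}^{\vee}M_1$-equivariant sheaves on $\widetilde{U}$ to ${}^{\vee}M_2$-equivariant sheaves on $X({}^{\vee}M_2^{\Gamma},\dots)$. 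Its inverse is $p^*$ for the smooth fibration $p\colon\widetilde{U}\to X({}^{\vee}M_2^{\Gamma},\dots)$ with fiber ${}^{\vee}M_1/{}^{\vee}M_2$.
\item Under the identification of conormal spaces in Proposition \ref{prop:Arthuroconormal}, the covector $e$ for $X({}^{\vee}M_1^{\Gamma},\dots)$ is the pullback along $p$ of the covector $e$ for $X({}^{\vee}M_2^{\Gamma},\dots)$. This uses again that $e\in{}^{\vee}\fm_2$. Then \cite[Proposition 4.3.5]{KashiwaraSchapira} for the smooth morphism $p$ gives $m^{mic}_{\psi_1}\circ p^* = m^{mic}_{\psi_2}$, equivariantly for the component groups identified in (i).
\end{enumerate}

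I expect the main obstacle to be the bookkeeping in step (iii) and in the sign normalizations, not the geometry. The pullback $p^*$ shifts perverse sheaves by $\dim({}^{\vee}M_1/{}^{\vee}M_2)$, and orbit dimensions $d(\xi)$ on the two sides differ by the same constant. So the factor $(-1)^{d(\xi)-d(\varphi_\psi)}$ should absorb the shift exactly, but I will verify this carefully. I will also check that the local systems in (\ref{eq:epsilonparameters}) match, via $A^{loc}_{\epsilon_2}$, with the microstalk representations. The Kottwitz signs do not enter at this stage: they appear only when $(\epsilon_2)_*$ is rewritten as cohomological induction in Theorem \ref{thm:liftinginduction}(ii).
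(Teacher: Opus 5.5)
\textbf{Part (ii).} Your argument here is the paper's proof written out. The paper only says that, once Lemma \ref{lem:thetastable2} is available, the argument is identical to that of Proposition \ref{prop:realpackets}. Restricting to the open set $\widetilde{U}$ and then using smooth pullback is exactly what that means. Your correction of the typo in (ii) and your addition of the good-range hypothesis are both right.

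There are two cosmetic points. First, $\widetilde{U}\simeq {}^{\vee}M_1\times_{{}^{\vee}M_2}X({}^{\vee}M_2^{\Gamma},Y_{M_2},\OO_{M_2})$ fibers over ${}^{\vee}M_1/{}^{\vee}M_2$ with fiber $X({}^{\vee}M_2^{\Gamma},Y_{M_2},\OO_{M_2})$, not the other way round, so the map $p$ you describe does not exist. Use instead the smooth correspondence $\widetilde{U}\leftarrow {}^{\vee}M_1\times X({}^{\vee}M_2^{\Gamma},Y_{M_2},\OO_{M_2})\rightarrow X({}^{\vee}M_2^{\Gamma},Y_{M_2},\OO_{M_2})$, given by the action map (a principal ${}^{\vee}M_2$-bundle) and the projection. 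Second, your sign check is vacuous, because $\dim {}^{\vee}M_1/{}^{\vee}M_2=2\dim{}^{\vee}\fu_2$ is even.

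\textbf{Part (i): the gap.} The step you defer as routine bookkeeping actually fails. As in Definition \ref{def:Arthurcomponentgroup}, $A^{mic}_{\psi_2}$ is defined through the algebraic universal cover ${}^{\vee}M_2^{alg}$. The map ${}^{\vee}M_2^{alg}\to {}^{\vee}M_1^{alg}$ has kernel $\ker(\pi_1({}^{\vee}M_2)^{alg}\to\pi_1({}^{\vee}M_1)^{alg})$. This kernel lies in $\mathrm{Stab}_{{}^{\vee}M_2^{alg}}(\psi_2)$, but it need not die in its component group.

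Here is an example. Take $G=SL_2$ and $\psi$ a discrete series parameter. Then ${}^{\vee}M_1={}^{\vee}G=PGL_2$, and ${}^{\vee}M_2={}^{\vee}T\simeq\CC^{\times}$ with $\psi(j)$ acting by inversion; this $\psi$ is in the good range.
\begin{itemize}
\item $\mathrm{Stab}_{{}^{\vee}T}(\psi_2)=\{t\mid t^2=1\}$.
\item Its preimage in $SL_2={}^{\vee}G^{alg}$ is $\mu_4$, so $A^{mic}_{\psi_1}\simeq\ZZ/4$.
\item Its preimage in the solenoid ${}^{\vee}T^{alg}$ is the infinite profinite group $\varprojlim_n\mu_{2n}$. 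This reflects the infinitely many strong real forms of the compact torus, of which only four have $\delta^2\in Z(SL_2)$.
\end{itemize}
So (i) cannot be proved as stated. Your cross-check through Proposition \ref{prop:Arthurcomponent} has the same problem, since $Z_{{}^{\vee}K_{M_2}^{alg}}(e)$ and $Z_{{}^{\vee}K_{M_1}^{alg}}(e)$ live in different covering groups.

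\textbf{What survives.} Your stabilizer argument still gives something. The preimage of ${}^{\vee}M_2$ in ${}^{\vee}M_1^{alg}$ is connected, because $\pi_1$ of a Levi subgroup surjects onto $\pi_1({}^{\vee}M_1)$. Combined with your stabilizer comparison, this shows that $A^{mic}_{\epsilon_2}$ is surjective. It is an isomorphism if $A^{mic}_{\psi_2}$ is computed inside that preimage rather than in ${}^{\vee}M_2^{alg}$. On the representation side this means keeping only the strong real forms of $M_2$ with $\delta^2\in Z(M_1)$, which are exactly the ones $R^{\fm_1}_{\fq_2}$ does not kill by (\ref{eq:indextendedgroups})).

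Your three steps prove (ii) for every $s\in A^{mic}_{\psi_2}$ using only the homomorphism $A^{mic}_{\epsilon_2}$. That is all the character identity in Corollary \ref{cor:goodinduction}(ii), and hence the unitarity application, requires.

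The paper's one-line proof has the same blind spot. In Proposition \ref{prop:realpackets} it writes ${}^{\vee}K_{M_1}^{alg}={}^{\vee}K^{alg}$, which identifies preimages taken in two different covering groups.
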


\begin{proof}
In view of Lemma \ref{lem:thetastable2}, the proof is basically identical to that of Proposition \ref{prop:realpackets}.
\end{proof}

\subsection{The Jordan Decomposition of an Arthur packet: good range case}\label{subsec:good}

In this section, we combine Theorem \ref{thm:liftinginduction} (describing the relationship between the Langlands functoriality maps $(\epsilon_1)_*$, $(\epsilon_2)_*$ and parabolic induction) with Propositions \ref{prop:realpackets} and \ref{prop:thetastablepackets} (analyzing the behavior of the functoriality maps with respect to Arthur packets). The conclusion is summarized in the Corollary below.

\begin{cor}[Jordan Decomposition of a Good Range Arthur Packet]\label{cor:goodinduction}
Let $(G^{\Gamma},\mathcal{W})$ be an extended group and let $({}^{\vee}G^{\Gamma},\mathcal{S})$ be a corresponding $E$-group with second invariant $z$. Let $\psi\colon W_{\RR} \times SL_2(\CC) \to {}^{\vee}G^{\Gamma}$ be an Arthur parameter in the good range (Definition \ref{def:good}) and fix the notation of Definition \ref{notation}. The image of $\psi$ is contained in ${}^{\vee}M_2^{\Gamma}$; let $\psi_2\colon W_{\RR} \times SL_2(\CC) \to {}^{\vee}M_2^{\Gamma}$ denote the corresponding (unipotent) Arthur parameter. Then 

\begin{itemize}
    \item[(i)] $\epsilon$ induces an isomorphism of Arthur component groups
    $$A_{\epsilon}^{mic}\colon A^{mic}_{\psi_2} \xrightarrow{\sim} A^{mic}_{\psi}.$$
    \item[(ii)] For any $s \in  A_{\psi_2}^{mic}$, there is an identity in $K\Pi^z(G^{\Gamma})$
$$\eta^{mic}_{\psi}(A^{mic}_{\epsilon}(s)) =  e(G^{\Gamma})R^{\fg}_{\fq_1} R^{\fm_1}_{\fq_2}e(M_2^{\Gamma})\eta^{mic}_{\psi_2}(s)$$
    Here, $R^{\fg}_{\fq_1}$ is (normalized) real parabolic induction, $R^{\fm_1}_{\fq_2}$ is (normalized) cohomological induction in the good range, and $e(G^{\Gamma})$, $e(M_2^{\Gamma})$ are the  Kottwitz sign maps (see Theorem \ref{thm:liftinginduction}). 
\end{itemize}
\end{cor}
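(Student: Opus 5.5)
The plan is to compose the two stages already analyzed, using $\epsilon = \epsilon_1\circ\epsilon_2$ and the functoriality of the constructions involved.

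\textbf{Part (i).} Functoriality of taking component groups of stabilizers gives
$$A^{mic}_{\epsilon} = A^{mic}_{\epsilon_1}\circ A^{mic}_{\epsilon_2}.$$
Proposition \ref{prop:thetastablepackets}(i) makes $A^{mic}_{\epsilon_2}$ an isomorphism; this uses the good range hypothesis through Lemma \ref{lem:thetastable2}. Proposition \ref{prop:realpackets}(i) makes $A^{mic}_{\epsilon_1}$ an isomorphism. The composite is therefore an isomorphism.

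\textbf{Part (ii), character identity.} First I would record that pullback of sheaves is functorial,
$$\epsilon^* = \epsilon_2^*\circ\epsilon_1^*$$
on Grothendieck groups of equivariant sheaves. This holds because the maps on geometric parameter spaces compose and the equivariance groups are compatible. Taking transposes under Vogan duality (Theorem \ref{thm:Voganduality}) gives $\epsilon_* = (\epsilon_1)_*\circ(\epsilon_2)_*$.

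Then for $s \in A^{mic}_{\psi_2}$ I would compute:
\begin{align*}
\eta^{mic}_{\psi}(A^{mic}_{\epsilon}(s)) &= \eta^{mic}_{\psi}(A^{mic}_{\epsilon_1}A^{mic}_{\epsilon_2}(s)) = (\epsilon_1)_*\eta^{mic}_{\psi_1}(A^{mic}_{\epsilon_2}(s)) \\
&= (\epsilon_1)_*(\epsilon_2)_*\eta^{mic}_{\psi_2}(s).
\end{align*}
The second equality is Proposition \ref{prop:realpackets}(ii). The third is Proposition \ref{prop:thetastablepackets}(ii), read with $\eta^{mic}_{\psi_1}$ on the left; this is the intended statement, as the displayed version there has a typo.

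\textbf{Part (ii), conversion to induction.} Theorem \ref{thm:liftinginduction}(ii) applies because $\eta^{mic}_{\psi_2}(s)$ is supported on the packet $\Pi(M_2^{\Gamma})^{mic}_{\psi_2}$ and $\psi$ is in the good range. It gives
$$(\epsilon_2)_* = e(M_1^{\Gamma})R^{\fm_1}_{\fq_2}e(M_2^{\Gamma})$$
on this element. Theorem \ref{thm:liftinginduction}(i) gives $(\epsilon_1)_* = R^{\fg}_{\fq_1}$. It remains to move the sign $e(M_1^{\Gamma})$ past $R^{\fg}_{\fq_1}$ and replace it by $e(G^{\Gamma})$.

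\textbf{Main obstacle: the sign bookkeeping.} The definition (\ref{eq:indextendedgroups}) sends $(\delta_1,\pi_1)$ either to $0$, when $\delta_1^2\notin Z(G)$, or to a combination of representations of the same strong real form $\delta_1$ of $G$. In the second case $M_1(\RR,\delta_1)$ is the Levi factor of a real parabolic subgroup of $G(\RR,\delta_1)$. Kottwitz's result \cite{Kottwitzsign}, already used in the proof of Lemma \ref{lem:liftinginduction}, then gives $e(M_1(\RR,\delta_1)) = e(G(\RR,\delta_1))$. Hence
$$R^{\fg}_{\fq_1}\circ e(M_1^{\Gamma}) = e(G^{\Gamma})\circ R^{\fg}_{\fq_1}.$$

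A second point to check is that the hypotheses of Theorem \ref{thm:liftinginduction}(ii) are met exactly on the relevant packets, so that the operator identity can be applied to $\eta^{mic}_{\psi_2}(s)$. This holds by definition of $\eta^{mic}_{\psi_2}(s)$: its support is the packet $\Pi(M_2^{\Gamma})^{mic}_{\psi_2}$. Combining everything yields
$$\eta^{mic}_{\psi}(A^{mic}_{\epsilon}(s)) = e(G^{\Gamma})R^{\fg}_{\fq_1}R^{\fm_1}_{\fq_2}e(M_2^{\Gamma})\eta^{mic}_{\psi_2}(s).$$
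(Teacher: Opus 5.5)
Your proposal is correct and follows the paper's proof essentially verbatim. You compose the component-group isomorphisms and character identities of Propositions \ref{prop:realpackets} and \ref{prop:thetastablepackets}, correctly reading the latter's (ii) with $\eta^{mic}_{\psi_1}$ on the left. You then rewrite $(\epsilon_1)_*(\epsilon_2)_*$ via Theorem \ref{thm:liftinginduction} and use Kottwitz's equality $e(M_1(\RR,\delta_1))=e(G(\RR,\delta_1))$ to trade $e(M_1^{\Gamma})$ for $e(G^{\Gamma})$. You also make explicit two points the paper leaves implicit: the sign only matters when $\delta_1^2\in Z(G)$, since otherwise $R^{\fg}_{\fq_1}$ kills the term; and $\eta^{mic}_{\psi_2}(s)$ lies in the span of the packet, so Theorem \ref{thm:liftinginduction}(ii) applies.
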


\begin{proof}
(i) is a consequence of Proposition \ref{prop:realpackets}(i) and Proposition \ref{prop:thetastablepackets}(i). Let $s \in A^{mic}_{\psi_2}$. Then by Proposition \ref{prop:realpackets}(ii) and Proposition \ref{prop:thetastablepackets}(ii), we have the character identity
$$\eta^{mic}_{\psi}(A_{\epsilon}(s)) = \epsilon_* \eta^{mic}_{\psi_2}(s).$$
On the other hand, Theorem \ref{thm:liftinginduction} implies that
\begin{equation}\label{eq:epsilonstar}\epsilon_* = (\epsilon_1)_*(\epsilon_2)_* = R^{\fg}_{\fq_1}e(M_1^{\Gamma})R^{\fm_1}_{\fq_2}e(M_2^{\Gamma})\colon K\Pi^{zz(\rho_{\fu_1})z(\rho_{\fu_2})}(M_2^{\Gamma})^{mic}_{\psi_2} \to K\Pi^{zz(\rho_{\fu_1})}(G^{\Gamma})^{mic}_{\psi}.\end{equation}
By \cite[(6) of Corollary on p. 295]{Kottwitzsign}, $e(M_1(\RR,\delta_1)) = e(G(\RR,\delta_1))$ for any strong real form $\delta_1$ of $M_1$. Thus, we can rewrite \ref{eq:epsilonstar} in the desired form.
\end{proof}

\begin{rmk}\label{rmk:bijectiongoodrange}
We note that under the good range hypothesis, a bit more is true: the map $R^{\fg}_{\fq_1}R^{\fm_1}_{\fq_2}$ takes irreducibles in $\Pi(M_2^{\Gamma})^{mic}_{\psi_2}$ to irreducibles in $\Pi(G^{\Gamma})^{mic}_{\psi}$ (up to signs), and this induces a bijection
$$\Pi(M_2^{\Gamma})^{mic}_{\psi_2} \xrightarrow{\sim} \Pi(G^{\Gamma})^{mic}_{\psi}.$$
These statements follow easily from Lemmas \ref{lem:real1} and \ref{lem:thetastable2}.
\end{rmk}

\subsection{The Jordan Decomposition of an Arthur packet: general case}\label{subsec:maintheorem}

In this section, we will extend Corollary \ref{cor:goodinduction} to general Arthur packets. 

For a virtual representation $\eta \in K\Pi^z(G^{\Gamma})$, let $[\eta] \subset \Pi^z(G^{\Gamma})$ denote the set of irreducible representations appearing (with nonzero multiplicity) in the formula for $\eta$ as a linear combination of irreducibles. For a set $S \subset K\Pi^z(G^{\Gamma})$, let $[S] = \bigcup_{\eta \in S} [\eta]$.

\begin{theorem}[Jordan Decomposition of a General Arthur Packet]\label{thm:Jordangeneral}
Let $(G^{\Gamma},\mathcal{W})$ be an extended group and let $({}^{\vee}G^{\Gamma},\mathcal{S})$ be a corresponding $E$-group with second invariant $z$. Let $\psi\colon W_{\RR} \times SL_2(\CC) \to {}^{\vee}G^{\Gamma}$ be an arbitrary Arthur parameter and fix the notation of Definition \ref{notation}. The image of $\psi$ is contained in ${}^{\vee}M_2^{\Gamma}$; let $\psi_2\colon W_{\RR} \times SL_2(\CC) \to {}^{\vee}M_2^{\Gamma}$ denote the corresponding (unipotent) Arthur parameter. Then 

\begin{itemize}
    \item[(i)] $\epsilon$ induces an isomorphism of Arthur component groups
    $$A_{\epsilon}^{mic}\colon A^{mic}_{\psi_2} \xrightarrow{\sim} A^{mic}_{\psi}.$$
    \item[(ii)] For any $s \in  A^{mic}_{\psi_2}$, there is an identity in $K\Pi^z(G^{\Gamma})$
$$\eta^{mic}_{\psi}(A^{mic}_{\epsilon}(s)) =  e(G^{\Gamma})R^{\fg}_{\fq_1}R^{\fm_1}_{\fq_2}e(M_2^{\Gamma})\eta^{mic}_{\psi_2}(s)$$
Here, $R^{\fg}_{\fq_1}$ is (normalized) real parabolic induction, $R^{\fm_1}_{\fq_2}$ is (normalized) cohomological induction in the weakly fair range, and $e(M_1^{\Gamma})$, $e(M_2^{\Gamma})$ are the Kottwitz sign maps (see Theorem \ref{thm:liftinginduction}).

   \item[(iii)] There is an inclusion of subsets of $\Pi^z(G^{\Gamma})$
   $$\Pi^z(G^{\Gamma})^{mic}_{\psi} \subseteq [R^{\fg}_{\fq_1} R^{\fm_1}_{\fq_2} \Pi^{zz(\rho_{\fu_1})z(\rho_{\fu_2})}(M_2^{\Gamma})^{mic}_{\psi_2}].$$
   \item[(iv)]  The map $R^{\fg}_{\fq_1} R^{\fm_1}_{\fq_2}$ takes unitary irreducibles in $\Pi^{zz(\rho_{\fu_1})z(\rho_{\fu_2})}(M_2^{\Gamma})^{mic}_{\psi_2}$ to (linear combinations of) unitary irreducibles. In particular
   $$\Pi^{zz(\rho_{\fu_1})z(\rho_{\fu_2})}(M_2^{\Gamma})^{mic}_{\psi_2} \text{ is unitary} \implies \Pi^z(G^{\Gamma})^{mic}_{\psi} \text{ is unitary}$$
\end{itemize}
\end{theorem}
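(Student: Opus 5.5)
The plan is to reduce the general case to the good range case (Corollary \ref{cor:goodinduction}) by translating $\psi$ along the center of ${}^{\vee}M_2$. Concretely, choose $\mu \in X^*(M_2)$ (identified with a cocharacter of $Z({}^{\vee}M_2)$) with $\mu$ dominant and strictly positive on the roots of ${}^{\vee}\fu_2$ (e.g. a large multiple of $\rho_{\fu_2}$, as in Lemma \ref{lem:rhotwistgoodrange}). Twist the $S^1$-part of $\psi$ by $\mu$: replace $\psi|_{\CC^{\times}}$ by $z \mapsto \psi(z)(z/\bar z)^{N\mu}$, extended to $W_{\RR}$ compatibly with $\psi(j)$. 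This is possible because $\theta$ acts on $Z({}^{\vee}M_2)$ so that $\rho_{\fu_2}$ is sent to $-\rho_{\fu_2}$. The result is an Arthur parameter $\psi'$ with the same $M_1,M_2,Q_2$ and the same $\mathfrak{sl}_2$, which is in the good range for $N\gg 0$. Its infinitesimal character is $\lambda'=\lambda+N\mu$, and $\lambda'-\lambda\in X^*(M_2)$. So the translation datum $(\Lambda,\Lambda')$ satisfies (GR1) and (GR2) with $L=M_2$, and $T_{M_2}(\lambda'\to\lambda)$ is defined.

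Next I would compare the microlocal data on the dual side. The varieties $X({}^{\vee}G^{\Gamma},\OO,Y)$ and $X({}^{\vee}G^{\Gamma},\OO',Y)$ share ${}^{\vee}R$ and ${}^{\vee}K$, since $e(\Lambda)=e(\Lambda')$. The nilpotent $e$ lies in $\mathrm{nil}({}^{\vee}\fp')^{-\theta}\cap \mathrm{nil}({}^{\vee}\fp)^{-\theta}$. Using the convolution $I(\Lambda\to\Lambda')=q'_!q^*$ and the microlocal functoriality of $q^*$ (smooth) and $q'_!$ (proper), I want to show
$$m^{mic}_{\psi'}(s)\circ I(\Lambda\to\Lambda') = c\cdot m^{mic}_{\psi}(s)$$
for a nonzero constant $c$ (plausibly $c=1$, possibly an Euler characteristic as in Lemma \ref{lem:compositionintertwiners}). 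The geometric input is that the fibre of $T^*$ over $e$ on ${}^{\vee}R/({}^{\vee}P\cap{}^{\vee}P')$ meets the conormal geometry transversally in a single point. I expect this Lie-theoretic transversality statement, the one deferred to Appendix \ref{sec:appendix}, to be the main obstacle. I would first try to verify it by computing $({}^{\vee}\mathfrak{r}/({}^{\vee}\fp\cap{}^{\vee}\fp')+{}^{\vee}\fk)^*$ and checking that the Arthur covector pulls back to a unique generic covector, using the vanishing cycles description of Remark \ref{rmk:vanishingcycles}. The isomorphism $A^{mic}_{\psi'}\cong A^{mic}_{\psi}$ comes along: both are $\mathrm{Com}(Z_{{}^{\vee}K^{alg}}(e))$ by Proposition \ref{prop:Arthurcomponent}, and together with Corollary \ref{cor:goodinduction}(i) this gives (i).

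With this identity, Theorem \ref{thm:convolutionisdualtocoherentcontinuation} and Lemma \ref{lem:eta} give $\eta^{mic}_{\psi}(s)=c^{-1}T_{M_2}(\lambda'\to\lambda)\eta^{mic}_{\psi'}(s)$. By Corollary \ref{cor:goodinduction}, $\eta^{mic}_{\psi'}(s)=e(G^{\Gamma})R^{\fg}_{\fq_1}R^{\fm_1}_{\fq_2}e(M_2^{\Gamma})\eta^{mic}_{\psi_2'}(s)$. Here $\psi_2'$ differs from $\psi_2$ by the central twist $N\mu$, so $\eta^{mic}_{\psi'_2}(s)=\eta^{mic}_{\psi_2}(s)\otimes N\mu$. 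Proposition \ref{prop:cohindLcoherent}, applied to $R^{\fg}_{\fq_1}R^{\fm_1}_{\fq_2}=R^{\fg}_{\fq_1\cap\cdots}$ by transitivity, shows that $\nu\mapsto R^{\fg}_{\fq_1}R^{\fm_1}_{\fq_2}(e(M_2^\Gamma)\eta^{mic}_{\psi_2}(s)\otimes\nu)$ is $M_2$-coherent. The Kottwitz signs are constant on real forms, so they commute with everything. Since $\lambda'$ is in the good range for $\fm_2$, uniqueness in Proposition \ref{prop:Lcoherentunique} gives $T_{M_2}(\lambda'\to\lambda)\eta^{mic}_{\psi'}(s)=e(G^\Gamma)R^{\fg}_{\fq_1}R^{\fm_1}_{\fq_2}e(M_2^\Gamma)\eta^{mic}_{\psi_2}(s)$. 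Taking $s=1$ fixes $c=1$, by comparing the multiplicity of a member of $\Pi_{\varphi_\psi}$; this proves (ii).

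For (iii), set $s=1$ in (ii). Every $\pi\in\Pi^z(G^\Gamma)^{mic}_\psi$ has nonzero coefficient in $\eta^{mic}_\psi(1)$, since the coefficient is $\pm\dim\chi^{mic}_\psi\neq0$. The right side is a $\ZZ$-combination of $R^{\fg}_{\fq_1}R^{\fm_1}_{\fq_2}\pi_2$ with $\pi_2\in\Pi(M_2^\Gamma)^{mic}_{\psi_2}$, so $\pi$ occurs in one of them. For (iv), Proposition \ref{prop:unipotentimpliesweaklyunipotent} says the $\pi_2$ are weakly unipotent. Their infinitesimal character is $\lambda$, which is weakly fair for $\fq_2$ because $\lambda_z$ pairs with $\fu_2$-roots through $\psi|_{S^1}$, nonnegatively by the definition of $\fu_2$. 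Corollary \ref{cor:unitaryinduction}(iii) and then (i) give unitarity, and combining this with (iii) and Theorem \ref{thm:unipotentunitary} concludes.
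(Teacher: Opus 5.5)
Your outer reduction is the same as the paper's, and so is your handling of (i), (iii) and (iv). You twist $\psi_2$ by a large central multiple of $2\rho_{\mathfrak{u}_2}$ to reach the good range, apply Corollary \ref{cor:goodinduction}, and transport back along $T_{M_2}(\lambda'\to\lambda)$ using Proposition \ref{prop:cohindLcoherent} and uniqueness of $M_2$-coherent families. Theorem \ref{thm:convolutionisdualtocoherentcontinuation} and Lemma \ref{lem:eta} then reduce everything to $m^{mic}_{\psi'}(s)\circ I(\Lambda\to\Lambda')=m^{mic}_{\psi}(s)$.

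The gap is that this identity (the paper's Lemma \ref{lem:intandmic}) is the whole content of the general case, and you only announce it. The route you sketch would not establish it.

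First, $q'\colon {}^{\vee}R/({}^{\vee}P\cap{}^{\vee}P')\to{}^{\vee}R/{}^{\vee}P'$ is not proper. Its fibre ${}^{\vee}P'/({}^{\vee}P\cap{}^{\vee}P')$ is the ${}^{\vee}P'$-orbit of the base point in ${}^{\vee}R/{}^{\vee}P$, which is in general a non-closed Bruhat cell. So the microlocal direct-image estimate does not apply as you state it. You have to factor $q'$ through the locally closed embedding $\iota$ into ${}^{\vee}R/{}^{\vee}P'\times{}^{\vee}R/{}^{\vee}P$, followed by the proper projection. Then $\iota_!$ of the pulled-back sheaf acquires singular support along boundary strata.

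Second, a computation at the base point alone, of $({}^{\vee}\mathfrak{r}/({}^{\vee}\mathfrak{p}\cap{}^{\vee}\mathfrak{p}')+{}^{\vee}\mathfrak{k})^*$ or a transversality check there, says nothing about vanishing cycles at the other points $(1{}^{\vee}P',h{}^{\vee}P)$ of the compactified fibre. What you need is that these vanishing cycles form a skyscraper at the base point. Via the conormal description of the strata, this reduces to the global statement
\[
\{g\in\overline{{}^{\vee}P\,{}^{\vee}P'} \mid \mathrm{Ad}(g)e\in\mathrm{nil}({}^{\vee}\mathfrak{p})\}={}^{\vee}P .
\]
This is Theorem \ref{thm:threeparabolics}, with ${}^{\vee}P={}^{\vee}P_f$ and ${}^{\vee}P'={}^{\vee}P_g$.

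That statement is where the hypotheses are actually used, and it does not follow formally from anything in your plan. On the open cell it comes from a grading argument that needs the central part $z_f$ of $\lambda$ to be nonnegative on $\mathfrak{u}_2$, i.e. weak fairness. The boundary cells ${}^{\vee}P_f\dot w\,{}^{\vee}P_g$ need a separate argument:
\begin{enumerate}
\item Show $\langle h,w^{-1}\lambda_f-\lambda_f\rangle<0$, again using weak fairness.
\item Write $\langle h,\cdot\rangle$ as a nonnegative combination of the characters of the irreducible relative invariants of the prehomogeneous space $(L_g,V)$, with $V$ the $2$-eigenspace of $\mathrm{ad}(h)$ (Proposition \ref{prop:positivityofh}).
\item Conclude that $\mathrm{Ad}(\dot w\,{}^{\vee}P_g)e\cap\mathrm{nil}({}^{\vee}\mathfrak{p}_f)=\emptyset$.
\end{enumerate}
None of this appears in your proposal.

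Your device for fixing the constant $c$ is also circular. You propose comparing the coefficient of an $L$-packet member at $s=1$, but outside the good range you have no independent handle on that coefficient on the induced side, since weakly fair cohomological induction can be reducible or zero. Once the lemma is proved correctly, $c=1$ for every $s$ and no normalization is needed.
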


\begin{proof}
Let $\chi\colon M_2 \to \CC^{\times}$ denote the algebraic character corresponding to the sum of the roots in ${}^{\vee}\fu_2$. Then $\chi$ corresponds to an algebraic co-character $\chi^{\vee}\colon\CC^{\times} \to Z({}^{\vee}M_2)$. For any integer $N$, we obtain an Arthur parameter $\psi_2^N\colon W_{\RR} \times SL_2(\CC) \to {}^{\vee}M_2^{\Gamma}$ according to the formula
$$\psi_2^N(j) = \psi_2(j), \quad \psi_2^N|_{SL_2(\CC)} = \psi_2|_{SL_2(\CC)}, \quad \psi_2^N(z) = \psi_2(z)\chi^{\vee}(z/\overline{z})^{2N} \quad z \in \CC^{\times}$$
The Arthur packets and $s$-stable sums for $\psi_2^N$ and $\psi_2$ are related by the formulas
\begin{equation}\label{eq:charactertwist}
\Pi^{zz(\rho_{\fu_1})z(\rho_{\fu_2})}(M_2^{\Gamma})^{mic}_{\psi_2^N} = \Pi^{zz(\rho_{\fu_1})z(\rho_{\fu_2})}(M_2^{\Gamma})^{mic}_{\psi_2} \otimes \chi^{2N}, \qquad \eta^{mic}_{\psi_2^N}(s) = \eta^{mic}_{\psi_2}(s) \otimes \chi^{2N}.
\end{equation}
Define Arthur parameters $\psi_1^N\colon W_{\RR} \times SL_2(\CC) \to {}^{\vee}M_1^{\Gamma}$ and $\psi^N\colon W_{\RR} \times SL_2(\CC) \to {}^{\vee}G^{\Gamma}$ for ${}^{\vee}M_1^{\Gamma}$ and ${}^{\vee}G^{\Gamma}$ by composing with the inclusions
$$\psi_1^N := \epsilon_1 \circ \psi_2^N, \qquad \psi^N := \epsilon \circ \psi_2^N.$$
Let $z_f \in Z({}^{\vee}\fm_2)$ denote the holomorphic part of the differential of $\psi_2|_{\CC^{\times}}$ and let 
$h \in {}^{\vee}\fm_2$ denote the image of $\mathrm{diag}(1,-1)$ under $d\psi_2|_{\mathfrak{sl}_2(\CC)}$. Let $z_g = z_f + 4N\rho_{\fu_2}$, the holomorphic part of the differential of $\psi_2^N|_{\CC^{\times}}$. Then
$$\lambda_f :=  \lambda(\varphi_{\psi}) = \frac{1}{2}h + z_f, \qquad \lambda_g := \lambda(\varphi_{\psi^N}) = \frac{1}{2}h + z_g.$$
(notation as in Proposition \ref{prop:classicaltogeometric}). Now choose $N>>0$ so that $\lambda_g$ is strictly dominant for $\fu_2$ (this is possible by Lemma \ref{lem:rhotwistgoodrange}). This means that the parameter $\psi^N$ is in the good range (Definition \ref{def:good}). So by Corollary \ref{cor:goodinduction}(i), we have $A^{mic}_{\psi^N_2} \xrightarrow{\sim} A^{mic}_{\psi^N}$. On the other hand, by Proposition \ref{prop:Arthurcomponent}, we have natural identifications
$$A^{mic}_{\psi_2} \simeq \mathrm{Com}(Z_{{}^{\vee}K^{alg}_{M_2}}(e)) \simeq A^{mic}_{\psi_2^N}, \qquad A^{mic}_{\psi} \simeq \mathrm{Com}(Z_{{}^{\vee}K^{alg}}(e)) \simeq A^{mic}_{\psi^N}.$$
Thus, all four component groups $A^{mic}_{\psi_2}$, $A^{mic}_{\psi_2^N}$, $A^{mic}_{\psi^N}$, and $A^{mic}_{\psi}$ are canonically identified (with each other, and with $\mathrm{Com}(Z_{{}^{\vee}K^{alg}}(e))$). This proves (i).

We proceed to proving (ii). Since $\psi^N$ is in the good range, Corollary \ref{cor:goodinduction}(ii) implies that for 
each $s \in A_{\psi_2}^{mic}$, there is an identity in $K\Pi^z(G^{\Gamma})$
\begin{equation}\label{eq:goodrangeidentity}\eta^{mic}_{\psi^N}(A^{mic}_{\epsilon}(s)) = e(G^{\Gamma})R^{\fg}_{\fq_1}R^{\fm_1}_{\fq_2} e(M_2^{\Gamma})\eta^{mic}_{\psi_2^N}(s).\end{equation}
Since $\lambda_g$ is good and $\lambda_f-\lambda_g \in X^*(M_2)$, there is a well-defined $M_2$-coherent continuation homomorphism (Definition \ref{def:Lcoherentcontinuation})
$$T_{M_2}(\lambda_g \to \lambda_f)\colon KM^z_{\lambda_g}(G^{\Gamma}) \to KM^z_{\lambda_f}(G^{\Gamma})$$
To prove the identity in (ii), we will apply this homomorphism to both sides of (\ref{eq:goodrangeidentity}). By Proposition \ref{prop:cohindLcoherent} and (\ref{eq:charactertwist}), we have
\begin{equation}\label{eq:coherentfrominduced}T_{M_2}(\lambda_g \to \lambda_f)e(G^{\Gamma})R^{\fg}_{\fq_1}R^{\fm_1}_{\fq_2} e(M_2^{\Gamma}) \eta^{mic}_{\psi_2^N}(s) = e(G^{\Gamma})R^{\fg}_{\fq_1}R^{\fm_1}_{\fq_2} e(M_2^{\Gamma}) \eta^{mic}_{\psi_2}(s).\end{equation}
Thus, it suffices to show that
$$T_{M_2}(\lambda_g \to \lambda_f)\eta^{mic}_{\psi^N}(A^{mic}_{\epsilon}(s)) = \eta^{mic}_{\psi}(A^{mic}_{\epsilon}(s)).$$
In view of Theorem \ref{thm:convolutionisdualtocoherentcontinuation} and Lemma \ref{lem:eta}, this is equivalent to the following identity in $K^{{}^{\vee}G^{alg}}X({}^{\vee}G^{\Gamma},Y,\OO_f)^*$:
$$m^{mic}_{((y,\Lambda_g),e)}(A^{mic}_{\epsilon}(s)) \circ I(\Lambda_f \to \Lambda_g) = m^{mic}_{((y,\Lambda_f),e)}(A^{mic}_{\epsilon}(s)),$$
where $\Lambda_f = \mathcal{F}(\lambda_f)$ and $\Lambda_g=\mathcal{F}(\lambda_g)$. This is immediate from the following lemma.

\begin{lemma}\label{lem:intandmic}
Let $\mathcal{G} \in P^{{}^{\vee}G^{alg}}X({}^{\vee}G^{\Gamma},Y,\OO_f)$. Then there is an isomorphism of $A^{mic}_{\psi}$-representations
$$\chi^{mic}_{((y,\Lambda_g),e)}  I(\Lambda_f \to \Lambda_g)(\mathcal{G}) \simeq \chi^{mic}_{((y,\Lambda_f),e)}(\mathcal{G}).$$
\end{lemma}

\begin{proof}
Let ${}^{\vee}R = {}^{\vee}R(y)$, ${}^{\vee}P_g = {}^{\vee}P(\Lambda_g)$, ${}^{\vee}P_f = {}^{\vee}P(\Lambda_f)$, and consider the correspondence
\begin{center}
    \begin{tikzcd}
       & {}^{\vee}R/({}^{\vee}P_g \cap {}^{\vee}P_f) \ar[dl,"q_g"] \ar[dr,"q_f"] & \\
       {}^{\vee}R/{}^{\vee}P_g & & {}^{\vee}R/{}^{\vee}P_f
    \end{tikzcd}
\end{center}
Recall that after identifying $P^{{}^{\vee}G^{alg}}X({}^{\vee}G^{\Gamma},Y,\OO_g) \simeq P^{{}^{\vee}K^{alg}}({}^{\vee}R/{}^{\vee}P_g)$ and $P^{{}^{\vee}G^{alg}}X({}^{\vee}G^{\Gamma},Y,\OO_f) \simeq P^{{}^{\vee}K^{alg}}({}^{\vee}R/{}^{\vee}P_f)$, the convolution functor $I(\Lambda_f \to \Lambda_g)$ is given by $(q_g)_! \circ q_f^*$. 

Factor $q_g$ as $q_g = p_g \circ \iota$, where 
$$\iota \colon {}^\vee R/({}^\vee P_g\cap {}^\vee P_f)\to {}^\vee R/{}^\vee P_g \times {}^\vee R/^\vee P_f$$
is the diagonal embedding, and 
$$p_g \colon {}^\vee R/{}^\vee P_g \times {}^\vee R/^\vee P_f \to {}^\vee R/^\vee P_g$$
is the first projection. Regard $\CG$ as an object in $P^{{}^{\vee}K^{alg}}({}^{\vee}R/{}^{\vee}P_f)$ and let $\CF=q_f^*\CG$. We wish to show that there is an isomorphism
\begin{equation}\label{eq:iso}\chi^{mic}_{(1{}^{\vee}P_g,e)}(p_g)_!\iota_!\mathcal{F} \simeq \chi^{mic}_{(1{}^{\vee}P_f,e)}\mathcal{G}\end{equation}
of representations of $A^{mic}_{\psi}$. By the general properties of microlocalization functors under pullbacks and pushforwards (\cite[Proposition 4.3.4, Proposition 4.3.5]{KashiwaraSchapira}) there is a natural morphism of $A^{mic}_{\psi}$-representations 
$$\chi^{mic}_{(1{}^{\vee}P_g,e)}(p_g)_!\iota_!\mathcal{F} \to \chi^{mic}_{(1{}^{\vee}P_f,e)}\mathcal{G}.$$
It remains to show that this morphism is an isomorphism of vector spaces.

For a $^\vee K$-orbit $S$ on $^\vee R/^\vee P_f$ and a $^\vee R$-orbit $T$ on $^\vee R/^\vee P_g\times ^\vee R/^\vee P_f$, let 
$$\CS_{S,T}=( ^\vee R/^\vee P_g \times S)\cap T.$$
Then $\{\CS_{S,T}\}$ is a stratification of $^\vee R/^\vee P_g\times ^\vee R/^\vee P_f$ by smooth locally closed subsets. Note that $\iota$ is simply an inclusion of a union of strata and $\CF$ is constructible with respect to this stratification. Thus, so is $\iota_!\CF$. In particular, 
\begin{equation}\label{eq:SS}SS(\iota_!\CF)\subset \bigcup_{S,\ T\subset \overline{^\vee R\cdot(1^\vee P_g,1^\vee P_f)}}T^*_{\CS_{S,T}}\left({}^\vee R/^\vee P_g\times ^\vee R/^\vee P_f\right),\end{equation}
where the union runs over all $^\vee R$-orbits $T$ in the closure of $^\vee R \cdot (1^\vee P_g,1^\vee P_f)$ and all ${}^{\vee}K$-orbits $S$ on ${}^{\vee}R/{}^{\vee}P_f$.

Let us study the conormal bundle $T^*_{\CS_{S,T}}\left({}^\vee R/^\vee P_g\times ^\vee R/^\vee P_f\right)$ in more detail. Choose a point $(g^\vee P_g, h^\vee P_f)\in \CS_{S,T}$. The conormal space of $ ^\vee R/^\vee P_g \times S \subset {}^\vee R/^\vee P_g\times ^\vee R/^\vee P_f$ at the point $(g^\vee P_g, h^\vee P_f)$ is given by 
\begin{equation}\label{eq:subspace1}
\{0\} \oplus(\Ad(h){}^\vee\fu_f)^{-^\vee\theta}\end{equation}
(after identifying the cotangent space $({}^{\vee}\mathfrak{r}/\Ad(g){}^{\vee}\fp_g)^* \oplus ({}^{\vee}\mathfrak{r}/\Ad(h){}^{\vee}\fp_f)^*$ with $\Ad(g)\fu_g \oplus \Ad(h)\fu_f$ using an invariant bilinear form as in Proposition \ref{prop:Arthuroconormal}). On the other hand, the conormal space of $T \subset {}^\vee R/^\vee P_g\times ^\vee R/^\vee P_f$ at $(g^\vee P_g, h^\vee P_f)$ is given by 
\begin{equation}\label{eq:subspace2}\{(x,-x) \mid x\in \Ad(g) ^\vee\fu_g\cap \Ad(h) ^\vee\fu_f \}\end{equation}
Since $\CS_{S,T}=( ^\vee R/^\vee P_g \times S)\cap T$, the conormal space to $\mathcal{S}_{S,T}$ at $(g^\vee P_g, h^\vee P_f)$ is the sum of the subspaces (\ref{eq:subspace1}) and (\ref{eq:subspace2}):
\begin{equation}\label{eq:conormal}\{(x,y) \mid  x\in \Ad(g)^\vee\fu_g\cap \Ad(h) ^\vee\fu_f, \ x+y\in (\Ad(h)^\vee\fu_f)^{-^\vee\theta}\}.\end{equation}
Let $F$ be a complex analytic function on $^\vee R/^\vee P_g$ such that $F(1^\vee P_g)=0$ and $dF(1^\vee P_g)=e$. Recall, Remark \ref{rmk:vanishingcycles}, that there is a natural isomorphism
\begin{equation}\label{eq:iso0}\chi^{mic}_{(1{}^{\vee}P_g,e)}(p_g)_!\iota_!\mathcal{F} \simeq  \left(\Phi_F\circ (p_g)_!\iota_!\mathcal{F}\right)_{1^\vee P_g}.
\end{equation}
where $\Phi_F$ denotes the vanishing cycles functor. Since $p_g$ is proper, \cite[Exercise 8.15]{KashiwaraSchapira} implies that there is an isomorphism of vector spaces
\begin{equation}\label{eq:iso1}\left(\Phi_F\circ (p_g)_!\iota_!\mathcal{F}\right)_{1^\vee P_g}\simeq (p_g|_{\{1^\vee P_g\}\times ^\vee R/^\vee P_f})_!  \Phi_{F\circ p_g}|_{\{1^\vee P_g\}\times ^\vee R/^\vee P_f}(\iota_!\mathcal{F}).
\end{equation}
We claim that $\Phi_{F\circ p_g}|_{\{1^\vee P_g\}\times ^\vee R/^\vee P_f}(\iota_!\CF)$ is a skyscraper sheaf supported at the point $(1^\vee P_g,1^\vee P_f)$. Indeed, suppose the stalk $\Phi_{F\circ p_g}(\iota_!\CF)_{(1{}^{\vee}P_g,h{}^{\vee}P_f)}$ is nonzero for some point $(1^\vee P_g, h^\vee P_f) \in \mathrm{Supp}(\iota_!\CF)$. Then there must be a cotangent vector of the form $(e,0)$ at the point $(1{}^{\vee}P_g,h{}^{\vee}P_f)$, which is also contained in $SS(\iota_!\CF)$. By (\ref{eq:SS}) and (\ref{eq:conormal}) this implies that $e=e+0\in (^\vee\fu_g\cap \Ad(h) ^\vee\fu_f)^{-^\vee \theta}$. Recall, Section \ref{sec:Arthur}, that $e \in ({}^{\vee}\fu_g)^{-{}^{\vee}\theta}$. So this condition is equivalent to $e\in \Ad(h)^\vee\fu_f$. On the other hand, since $(1^\vee P_g, h^\vee P_f) \in \mathrm{Supp}(\iota_!\CF)$ we must have that $h\in\overline{{}^{\vee}P_g{}^{\vee}P_f}$. In summary, $h$ must belong to the set
$$\{h \in \overline{{}^{\vee}P_g{}^{\vee}P_f} \mid e \in \Ad(h) {}^{\vee}\fu_f\} = \{h^{-1} \in \overline{{}^{\vee}P_f{}^{\vee}P_g} \mid \Ad(h^{-1})e \in {}^{\vee}\fu_f\}.$$
But Theorem \ref{thm:threeparabolics} tells us that this set is equal to ${}^{\vee}P_f$. Thus, $(1^\vee P_g, h^\vee P_f) = (1{}^{\vee}P_g,1{}^{\vee}P_f)$, as asserted above.

Since $\Phi_{F\circ p_g}|_{\{1^\vee P_g\}\times ^\vee R/^\vee P_f}(\iota_!\CF)$ is a skyscraper sheaf supported at the point $(1{}^{\vee}P_g,1{}^{\vee}P_f)$, there is an isomorphism of vector spaces
\begin{equation}\label{eq:iso2}(p_g|_{\{1^\vee P_g\}\times ^\vee R/^\vee P_f})_!  \Phi_{F\circ p_g}|_{\{1^\vee P_g\}\times ^\vee R/^\vee P_f}(\iota_!\mathcal{F})\simeq \Phi_{F\circ p_g}(\iota_!\mathcal{F})_{(1^\vee P_g,1^\vee P_f)}.\end{equation}
But $\iota(1(^\vee P_g\cap ^\vee P_f))=(1^\vee P_g,1^\vee P_f)$, so
    \begin{equation}\label{eq:iso3}\Phi_{F\circ p_g}(\iota_!\mathcal{F})_{(1^\vee P_g,1^\vee P_f)}\simeq  \left(\Phi_{F\circ p_g\circ \iota}\CF\right)_{1(^\vee P_g\cap ^\vee P_f)}.\end{equation}
    Finally, using the compatibility of microlocalization with smooth pullbacks (\cite[Proposition 4.3.5]{KashiwaraSchapira}) we have an isomorphism
    \begin{equation}\label{eq:iso4}\left(\Phi_{F\circ p_g\circ \iota}\CF\right)_{1(^\vee P_g\cap ^\vee P_f)}\simeq \chi^{mic}_{(1^\vee P_f,e)}(\mathcal{G}).\end{equation}
Now (\ref{eq:iso}) follows by combining the isomorphisms (\ref{eq:iso0}),(\ref{eq:iso1}), (\ref{eq:iso2}), (\ref{eq:iso3}), and (\ref{eq:iso4}).

\end{proof}

This completes the proof of (ii). Since each irreducible in $\Pi^z(G^{\Gamma})^{mic}_{\psi}$ appears in the linear combination $\eta^{mic}_{\psi}(1)$ with nonzero multiplicity, (iii) follows from (ii) (after specializing to $s=1$). (iv) follows from (iii), together with Corollary \ref{cor:unitaryinduction}.

\end{proof} 

We highlight two applications of Theorem \ref{thm:Jordangeneral}. The first is a proof of the unitarity of Arthur representations for real reductive groups.

\begin{cor}\label{cor:unitary}
All Arthur packets for real reductive groups consist of unitary representations.
\end{cor}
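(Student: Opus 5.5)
The plan is to deduce the corollary directly from Theorem \ref{thm:Jordangeneral}(iv) combined with Theorem \ref{thm:unipotentunitary}, after a reduction making the packets fit the paper's framework. Let $\psi\colon W_{\RR} \times SL_2(\CC) \to {}^{\vee}G^{\Gamma}$ be an arbitrary Arthur parameter for an $E$-group (in particular an $L$-group) $({}^{\vee}G^{\Gamma},\mathcal{S})$ corresponding to an extended group $(G^{\Gamma},\mathcal{W})$. I fix a Jordan decomposition as in Definition \ref{notation}. The members of $\Pi^z(G^{\Gamma})^{mic}_{\psi}$ are pairs $(\delta,\pi)$, and unitarity of $\pi$ as a representation of $G(\RR,\delta)^{can}$ is what has to be shown. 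For $L$-groups ($z=1$) these are honest representations of $G(\RR,\delta)$, which recovers Theorem \ref{thm:unitaryintro}.

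By Theorem \ref{thm:Jordangeneral}(iv), it suffices to know that $\Pi^{zz(\rho_{\fu_1})z(\rho_{\fu_2})}(M_2^{\Gamma})^{mic}_{\psi_2}$ consists of unitary representations. Here $\psi_2$ is unipotent for the $E$-group ${}^{\vee}M_2^{\Gamma}$, whose second invariant need not be trivial. So the key step is to verify that Theorem \ref{thm:unipotentunitary} applies in this generality, i.e. to projective representations of strong real forms of $M_2$. I would argue as follows.

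First, pass from $M_2$ to its derived group and central quotients. Restricting to the derived group changes neither unitarity nor membership in a special unipotent packet, since the center acts by a unitary character because $\psi_2(W_{\RR})$ is bounded. Next, replace the canonical cover by a finite cover of a product of almost simple factors. Finally, use the characterization (as in the proof of Proposition \ref{prop:unipotentimpliesweaklyunipotent}, via \cite[Corollary 27.13]{AdamsBarbaschVogan}) of unipotent packet members by the annihilator condition $\Ann(\pi)=I_{max}(\tfrac12 h)$. This identifies them with special unipotent representations of (covers of) almost simple groups, and the cited works then give unitarity.

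Then I would check the hypotheses of Theorem \ref{thm:Jordangeneral}(iv) itself. That step rests on Corollary \ref{cor:unitaryinduction}(i) for $R^{\fg}_{\fq_1}$, and on Corollary \ref{cor:unitaryinduction}(iii) for $R^{\fm_1}_{\fq_2}$, which requires weak unipotence (Proposition \ref{prop:unipotentimpliesweaklyunipotent}) and the weakly fair range. The weakly fair condition holds because the central part of $\lambda$ is $z_f$, whose real part pairs nonnegatively with roots of $\fu_2$ by construction of ${}^{\vee}\fq_2$ from $\psi|_{S^1}$.

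By Theorem \ref{thm:Jordangeneral}(iii), every member of $\Pi^z(G^{\Gamma})^{mic}_{\psi}$ is then an irreducible constituent of a combination of unitary irreducibles, so it is unitary. The main obstacle I anticipate is the extension of Theorem \ref{thm:unipotentunitary} to nontrivial second invariants and genuine projective representations. The cited unitarity results are formulated for linear groups and specific covers, so this is where care (or an appeal to the covering-group cases of \cite{DavisMasonBrownOrbitMethod, BarbaschMaSunZhu}) is needed.
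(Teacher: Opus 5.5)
Your proposal is correct and is essentially the paper's own argument. The paper deduces the corollary in one line from Theorem~\ref{thm:Jordangeneral}(iv) together with Theorem~\ref{thm:unipotentunitary}; your re-check of real parabolic induction, weak unipotence and the weakly fair range is already built into (iii) and (iv).

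The paper does not treat the covering and second-invariant issue you flag as a separate step. It is absorbed into Theorem~\ref{thm:unipotentunitary}, which is stated for arbitrary unipotent packets. That theorem's proof uses the same reduction you sketch: pass to almost simple groups with $\psi|_{\CC^{\times}}=1$, then to special unipotent representations via \cite[Corollary 27.13]{AdamsBarbaschVogan}.
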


\begin{proof}
This follows from Theorem \ref{thm:Jordangeneral}(iv) and Theorem \ref{thm:unipotentunitary}.
\end{proof}

The second application of Theorem \ref{thm:Jordangeneral} is a proof of Jiang's conjecture, which gives an upper bound on the wavefront sets of the members of an Arthur packet. 

\begin{cor}[Jiang's Conjecture for Real Groups, {\cite[Conjecture 4.2]{Jiang}}]\label{cor:Jiang}
Let $(G^{\Gamma},\mathcal{W})$ be an extended group and let $({}^{\vee}G^{\Gamma},\mathcal{S})$ be a corresponding $E$-group with second invariant $z$. Let $\psi\colon W_{\RR} \times SL_2(\CC) \to {}^{\vee}G^{\Gamma}$ be an Arthur parameter and let $\OO$ be the Barbasch-Vogan dual of the nilpotent adjoint ${}^{\vee}G$-orbit corresponding to $\psi|_{SL_2(\CC)}$. Then every representation in the Arthur packet $\Pi^z(G^{\Gamma})_{\psi}^{mic}$ has wavefront set contained in $\overline{\OO}$. 
\end{cor}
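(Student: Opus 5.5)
By Theorem \ref{thm:Jordangeneral}(iii), every $\pi \in \Pi^z(G^{\Gamma})^{mic}_{\psi}$ is an irreducible constituent of $R^{\fg}_{\fq_1}R^{\fm_1}_{\fq_2}\pi_2$ for some $\pi_2 \in \Pi(M_2^{\Gamma})^{mic}_{\psi_2}$. So the plan has three steps: bound the wavefront set of $\pi_2$ on $M_2$, follow the bound through the two inductions, and compare the result with $\overline{\OO}$. Since the wavefront set of an admissible representation is governed by its associated variety, we may work with the complexified associated varieties $\mathrm{AV}(\pi) \subset \mathcal{N}(\fg^*)$. The constituents of a virtual combination then have associated varieties contained in the union of the supports occurring.

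\textbf{Step 1: the unipotent case on $M_2$.} Let ${}^{\vee}\OO_2 \subset {}^{\vee}\fm_2$ be the nilpotent orbit of $e = d\psi_2\begin{pmatrix}0&1\\0&0\end{pmatrix}$, and let $d_{M_2}({}^{\vee}\OO_2)$ be its Barbasch--Vogan dual in $\fm_2^*$. Because $\psi_2$ is unipotent, the argument in the proof of Proposition \ref{prop:unipotentimpliesweaklyunipotent} applies (after passing to the semisimple part and twisting away the central character). It shows that $\Ann_{U(\fm_2)}(\pi_2) = I_{max}(\tfrac12 h)$. By Barbasch--Vogan, the associated variety of this primitive ideal is $\overline{d_{M_2}({}^{\vee}\OO_2)}$. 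Hence $\mathrm{AV}(\pi_2) \subset \overline{d_{M_2}({}^{\vee}\OO_2)}$.

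\textbf{Step 2: induction.} For both real parabolic induction and cohomological induction $L_i\mathcal{R}^{\fg}_{\fq}$, the associated variety of every constituent of the output is contained in the closure of $G\cdot(\mathrm{AV}(\text{input}) + \fu^{\perp})$. This is the standard fact that associated varieties of induced Harish-Chandra modules lie in the closure of the induced orbit: one uses the $\fq$-filtration of $U(\fg)\otimes_{U(\fq)}(\cdot)$ and the fact that the Zuckerman/Bernstein functor does not increase the annihilator-based support. Applying this twice, and using transitivity of Lusztig--Spaltenstein induction, gives
$$\mathrm{AV}(\pi) \subset \overline{\mathrm{Ind}^{\fg}_{\fm_2}\, d_{M_2}({}^{\vee}\OO_2)}.$$

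\textbf{Step 3: orbit duality.} The last step is the identity
$$\mathrm{Ind}^{\fg}_{\fm_2}\, d_{M_2}({}^{\vee}\OO_2) = d({}^{\vee}G\cdot{}^{\vee}\OO_2),$$
which is the standard compatibility of Barbasch--Vogan duality with induction and saturation (Barbasch--Vogan, Lusztig). Here ${}^{\vee}G\cdot{}^{\vee}\OO_2$ is exactly the orbit of $\psi|_{SL_2(\CC)}$, so its dual is $\OO$, and this gives $\mathrm{AV}(\pi) \subset \overline{\OO}$.

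The main obstacle is Step 2, specifically making the bound rigorous for cohomological induction outside the good range and for derived functors that do not vanish. I would handle it at the level of annihilators rather than modules. Every constituent of $L_i\mathcal{R}^{\fm_1}_{\fq_2}(\pi_2)$ has annihilator containing the ideal induced from $\Ann(\pi_2)$; this follows because $L_i\mathcal{R}$ is computed by a complex built from $U(\fm_1)\otimes_{U(\fq_2)}(\pi_2 \otimes \cdots)$. The associated variety of the induced ideal is the closure of the induced orbit. For real parabolic induction the inclusion is immediate from the explicit induced-module picture.
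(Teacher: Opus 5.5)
Your proposal is correct and follows essentially the same route as the paper. Both arguments use Theorem \ref{thm:Jordangeneral}(iii), the special unipotent bound for $\pi_2$ on $M_2$ (the paper cites \cite[Corollary 27.13]{AdamsBarbaschVogan} directly), the wavefront bounds under the two inductions, and the identity $d({}^{\vee}G\cdot{}^{\vee}\OO_2)=\mathrm{Ind}^{G}_{M_2}d_{M_2}({}^{\vee}\OO_2)$ from \cite[Proposition A2]{BarbaschVogan1985}. The one difference is that you justify the cohomological-induction step through annihilators and the associated variety of induced ideals, a step the paper attributes to Borho--Brylinski's $\mathcal{D}$-module methods and omits; your argument also handles the real parabolic step uniformly, where the paper cites Barbasch--Vogan.
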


\begin{proof}
Let $\pi \in \Pi^z(G^{\Gamma})_{\psi}^{mic}$. By Theorem \ref{thm:Jordangeneral}(iii), there is a representation $\pi_2 \in \Pi^{zz(\rho_{\fu_1})z(\rho_{\fu_2})}(M_2^{\Gamma})^{mic}_{\psi_2}$ such that $\pi$ is a summand of $R^{\fg}_{\fq_1}R^{\fm_1}_{\fq_2}\pi_2$. Let $\OO_2$ denote the Barbasch-Vogan dual of the nilpotent ${}^{\vee}M_2$-orbit corresponding to $\psi_2|_{SL_2(\CC)}$. Then $\OO$ and $\OO_2$ are related by parabolic induction, i.e. $\OO = \Ind^G_{M_2}\OO_2$ (see \cite[Proposition A2]{BarbaschVogan1985}). By \cite[Corollary 27.13]{AdamsBarbaschVogan}, the wavefront set of $\pi_2$ is contained in $\overline{\OO}_2$. Thus it suffices to show that
$$M_1 \cdot \mathrm{WF}(R^{\fm_1}_{\fq_2}\pi_2) \subseteq \Ind^{M_1}_{M_2} (M_2 \cdot \WF(\pi_2)), \qquad G \cdot \mathrm{WF}(R^{\fg}_{\fq_1}\pi_1) \subseteq \Ind^{G}_{M_1} (M_2 \cdot \WF(\pi_1))$$
where $\pi_1$ is a constituent of $R^{\fm_1}_{\fq_2}\pi_1$. These inclusions are well-known. The first inclusion (involving cohomological induction in the weakly-fair range) follows from the $\mathcal{D}$-module description of cohomological induction, using the methods of \cite{BorhoBrylisnki}. We omit the details. A reference for the second inclusion (involving real parabolic induction) is \cite[Theorem 3.5]{BarbaschVoganChars}.
\end{proof}

We conclude by describing a conjectural refinement of Theorem \ref{thm:Jordangeneral}. Theorem \ref{thm:Jordangeneral}(iii) states that there is an inclusion of sets
$$\Pi^z(G^{\Gamma})^{mic}_{\psi} \subseteq [R^{\fg}_{\fq_1} R^{\fm_1}_{\fq_2} \Pi^{zz(\rho_{\fu_1})z(\rho_{\fu_2})}(M_2^{\Gamma})^{mic}_{\psi_2}].$$
This is an easy consequence of the character identity in Theorem \ref{thm:Jordangeneral}(ii) for $s=1$. We conjecture that the reverse inclusion also holds.

\begin{conj}
In the setting of Theorem \ref{thm:Jordangeneral}, we have    
$$\Pi^z(G^{\Gamma})^{mic}_{\psi} = [R^{\fg}_{\fq_1} R^{\fm_1}_{\fq_2} \Pi^{zz(\rho_{\fu_1})z(\rho_{\fu_2})}(M_2^{\Gamma})^{mic}_{\psi_2}].$$
\end{conj}

Because of the signs appearing in the formulas for $\eta^{mic}_{\psi_2}(s)$, this does not seem to follow in an obvious way from the character identities in Theorem \ref{thm:Jordangeneral}(ii). However, we have verified that it is true in numerous low-rank examples using the \texttt{atlas} software (it is always true in the good range case by Remark \ref{rmk:bijectiongoodrange}). 

\appendix

\section{A Lie theory calculation}\label{sec:appendix}

\subsection{Recollections on prehomogeneous vector spaces}\label{subsec:PVS}

Recall that a \emph{prehomogeneous vector space} (PVS) is a pair $(L,V)$ consisting of a complex algebraic group $L$ and a finite-dimensional rational $L$-representation $V$ containing an open dense $L$-orbit $\OO$.

\begin{definition}
A \emph{relative invariant} of $(L,V)$ is a nonzero rational function $f$ on $V$ such that 
$$f(l \cdot v) = \chi(l) f(v), \qquad \forall l \in L, \ v \in V,$$
for some character $\chi\colon L \to \CC^{\times}$.
\end{definition}

We note that a relative invariant is determined by its character, up to multiplication by $\CC^{\times}$ (\cite[Proposition 3]{SatoKimura}). In particular, every relative invariant is a homogeneous function. 

Let $S_1,...,S_n$ denote the codimension one irreducible components of $V \setminus \OO$. Then each $S_i$ is the zero set of an irreducible polynomial $f_i \in \CC[V]$, unique up to nonzero scalar multiplication. We will need the following fundamental fact about relative invariants.

\begin{prop}[{\cite[Proposition 5]{SatoKimura}}]\label{prop:generators}
The polynomials $f_i$ are algebraically independent relative invariants of $(L,V)$. Moreover, every relative invariant $f$ of $(L,V)$ can be written in the form
$$f = c \prod_{i=1}^n f_i^{m_i},$$
for $c \in \CC^{\times}$ and $m_1,...,m_n\in \ZZ$.
\end{prop}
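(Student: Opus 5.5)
The plan has three steps: show each $f_i$ is a relative invariant, show every relative invariant factors through the $f_i$, and deduce algebraic independence from a comparison of characters. Throughout I work with the identity component $L^0$ of $L$, which is the setting of \cite{SatoKimura}. The reason is that a disconnected group may permute the hypersurfaces $S_i$. Nothing changes for $L^0$, since $(L^0,V)$ is still prehomogeneous: $\OO$ is a finite union of $L^0$-orbits, and one of them is open and dense.

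\textbf{Step 1: each $f_i$ is a relative invariant.} Fix $l\in L^0$. The polynomial $v\mapsto f_i(l^{-1}v)$ is irreducible, and its zero set is $l\cdot S_i$. Since $L^0$ is connected and preserves $V\setminus\OO$, it fixes each irreducible component of that set, so $l\cdot S_i=S_i$. Two irreducible polynomials with the same zero set differ by a scalar, which gives $f_i(l^{-1}v)=\chi_i(l)^{-1}f_i(v)$ for some $\chi_i(l)\in\CC^\times$. Comparing both sides for a product $l_1l_2$ shows that $\chi_i$ is multiplicative. Evaluating at a fixed point $v_0$ with $f_i(v_0)\neq 0$ exhibits $\chi_i$ as a regular function of $l$. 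Hence $\chi_i$ is a rational character.

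\textbf{Step 2: every relative invariant factors through the $f_i$.} Let $f=p/q$ be a relative invariant with character $\chi$, where $p,q$ are coprime polynomials. First, $f$ is regular and nonvanishing on $\OO$: if $f$ had a zero or pole at one point of $\OO$, the transformation law would force the same at every point of $\OO$, which is dense. Next, the relation $f(lv)=\chi(l)f(v)$ shows that the zero divisor of $p$ and the zero divisor of $q$ are $L^0$-stable. Each of their irreducible components is a hypersurface lying in $V\setminus\OO$, hence is one of the $S_j$. By unique factorization in $\CC[V]$, $p=c_1\prod f_j^{a_j}$ and $q=c_2\prod f_j^{b_j}$ with $c_1,c_2\in\CC^\times$, which gives the product formula with $m_j=a_j-b_j$.

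\textbf{Step 3: algebraic independence.} First I show that the characters $\chi_i$ are multiplicatively independent. Suppose $\prod\chi_i^{m_i}=1$. Then $g=\prod f_i^{m_i}$ is an $L^0$-invariant rational function, so it is constant on the dense set $\OO$ and therefore constant on $V$. Because the $f_i$ are pairwise non-associate irreducible polynomials, unique factorization forces all $m_i=0$. It follows that distinct monomials $f^k=\prod f_i^{k_i}$ have distinct characters $\chi^k$.

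Now suppose $\sum_k a_k f^k=0$ is a polynomial relation. Replacing $v$ by $lv$ gives $\sum_k a_k\chi^k(l)f^k(v)=0$ for all $l\in L^0$ and all $v$. Fix $v\in\OO$, where every $f^k(v)\neq 0$. Artin's lemma on the linear independence of distinct characters of $L^0$ then yields $a_kf^k(v)=0$, so $a_k=0$ for all $k$.

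\textbf{Expected obstacle.} The only delicate point is Step 1, namely that $L$ fixes each $S_i$ rather than permuting the components. This is exactly why the argument is run for $L^0$, as in \cite{SatoKimura}. Steps 2 and 3 are routine applications of unique factorization and Artin's lemma.
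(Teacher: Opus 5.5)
Your argument is correct, and it is essentially the standard proof from \cite{SatoKimura}, which the paper cites without reproducing. Restricting to $L^0$ is the right reading of the statement: it fails for disconnected $L$, e.g.\ when a swap permutes two coordinate hyperplanes. The restriction is harmless here, because the groups the paper applies it to, $G_0=Z_G(h)$ and $L_g$, are connected.

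One point to tighten: you should say explicitly that $\OO$ is in fact a single $L^0$-orbit. Its $L^0$-orbits are permuted transitively by $L$, so all of them are open, and two nonempty open sets in the irreducible $V$ must meet. This is what guarantees that the $S_i$ are unchanged when you pass to $L^0$, and that the transitivity on $\OO$ you invoke in Steps 2 and 3 is actually available.
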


Let us now consider a very special kind of PVS which arises in this paper and in other Lie theory contexts. Let $G$ be a complex connected reductive algebraic group, and let $(e,f,h)$ be an $\mathfrak{sl}_2$-triple in $\fg$. The semisimple operator $\ad(h)$ defines a $\ZZ$-grading of $\fg$
$$\fg = \bigoplus_{k \in \ZZ} \fg_k, \qquad \fg_k = \{X \in \fg \mid [h,X]=kX\}.$$
Let $G_0=Z_G(h)$, a Levi subgroup of $G$ with Lie algebra $\fg_0$. Clearly $\Ad(G_0)$ preserves $\fg_i$, for all $i \in \ZZ$. It is a theorem of Kostant  (\cite{Kostant1959}) that $(G_0,\fg_2)$ is a PVS with open dense orbit $\Ad(G_0)e$. In fact, more is true. 

\begin{prop}[{\cite[Proposition 11.2.9]{Rubenthaler}}]\label{prop:codim1}
The complement $\fg_2 \setminus \Ad(G_0)e$ is of pure codimension one, i.e. it is the union of its codimension one irreducible components.
\end{prop}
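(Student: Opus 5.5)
The plan is to prove that the complement $\fg_2 \setminus \Ad(G_0)e$ is a hypersurface by a dimension/degree argument on the Jacobson--Morozov data, using the structure of $Z_G(e)$ and a relative-invariant (discriminant-type) polynomial. First, recall the standard facts from $\mathfrak{sl}_2$-theory: $\ad(e)\colon \fg_0 \to \fg_2$ is surjective. This surjectivity is exactly Kostant's statement that $\Ad(G_0)e$ is open dense. Moreover, $Z_{G_0}(e) = Z_G(e,h)$ is reductive. So the orbit $\Ad(G_0)e \simeq G_0/Z_{G_0}(e)$ is an affine variety, being a quotient of a reductive group by a reductive subgroup (Matsushima).

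The key step is the following standard fact: if $U \subset V$ is an open subset of an affine space (or any normal affine variety) and $U$ is itself affine, then $V \setminus U$ has pure codimension one. I would prove this via Hartogs. Suppose some irreducible component $Z$ of $V\setminus U$ had codimension $\geq 2$. Then pass to the open set $V' = V \setminus (\text{other components})$, which contains $U$ and in which $V'\setminus U = Z\cap V'$ has codimension $\ge 2$. Every regular function on $U$ then extends to $V'$ by normality, so $\CC[U] = \CC[V']$. Since $U$ is affine, the inclusion $U \hookrightarrow V'$ followed by $V' \to \mathrm{Spec}\,\CC[V'] = \mathrm{Spec}\,\CC[U] = U$ retracts $V'$ onto $U$. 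The composite $U\to V'\to U$ is the identity and $U$ is dense in $V'$. One then deduces $U = V'$, because a point of $V'\setminus U$ would map to a point of $U$ whose fiber would contain two distinct points, contradicting separatedness plus density; concretely, the coordinate functions of $V$ restricted to $U$ separate points. That contradicts $Z\cap V'\neq\emptyset$. Applying this with $V=\fg_2$ and $U=\Ad(G_0)e$ gives the proposition.

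The main obstacle is the affineness of the orbit, which requires the reductivity of $Z_{G_0}(e)$. For this I will cite the standard result that $Z_G(e,h)$ is a Levi factor of $Z_G(e)$ (Kostant / Barbasch--Vogan), hence reductive, together with Matsushima's criterion. A secondary point is to make sure the identification of the orbit with $G_0/Z_{G_0}(e)$ is an isomorphism of varieties. This holds in characteristic zero because the orbit map is separable, and orbits are smooth and locally closed, here open.
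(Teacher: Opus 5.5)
Your argument is correct. The paper gives no proof here; it only cites Rubenthaler. What you wrote is the standard self-contained argument for reductive prehomogeneous spaces, in three steps:
\begin{enumerate}
\item The generic stabilizer $Z_{G_0}(e)=Z_G(e,h)=Z_G(e,h,f)$ is reductive. The equality holds because $f$ is determined by $(e,h)$, since $\ker\mathrm{ad}(e)\cap\mathfrak{g}_{-2}=0$; and $Z_G(e,h,f)$ is the centralizer of the image of $SL_2$.
\item Matsushima's criterion then makes the open orbit $G_0/Z_{G_0}(e)\cong\mathrm{Ad}(G_0)e$ affine.
\item Your Hartogs-plus-retraction lemma shows that the complement of an affine open subset of the smooth variety $\mathfrak{g}_2$ has pure codimension one.
\end{enumerate}
The retraction step is sound in the concrete form you give it. For every closed point $x\in V'$ and every linear coordinate $\xi$ on $\mathfrak{g}_2$ one has $\xi(x)=\xi(r(x))$, hence $x=r(x)\in U$.

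Compared with a bare citation, your route shows that reductivity of the generic stabilizer is the only input. So it applies verbatim to the space $(L_g,V)$, which is where the paper actually uses the proposition (Step 3 of Proposition \ref{prop:boundary}).

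Two cosmetic points. First, your opening sentence announces a degree argument using a discriminant-type relative invariant, and none appears. That route would also be harder, since you would have to show that the zero set of an invariant such as $p$ in Proposition \ref{prop:positivityofh} is exactly the orbit complement. Second, the case $e=0$, where $\mathfrak{g}_2=0$, is vacuous.
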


Fix a non-degenerate invariant symmetric bilinear form $\langle -,-\rangle$ on $\fg$ with the following property: for any maximal torus $T \subset G$, $\langle -,-\rangle$ is positive definite on the co-character lattice $X_*(T)$. Note that if $\fg$ is semisimple, then the Killing form has this property (and is the unique such form, up to positive real scaling on each simple factor).

\begin{prop}\label{prop:positivityofh}
Let $\chi_1,...,\chi_n$ be the characters of $G_0$ corresponding to the irreducible polynomials $f_1,...,f_n$ defining the components of $\fg_2 \setminus \Ad(G_0)e$. Then there are nonnegative real numbers $c_1,...,c_n$ such that
$$\langle h, x\rangle = \sum_{i=1}^n c_i d\chi_i(x), \qquad x \in \fg_0.$$
\end{prop}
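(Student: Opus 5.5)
The plan is to produce an explicit relative invariant whose differential character is the functional $x \mapsto \langle h,x\rangle$ on $\fg_0$, and then apply Proposition \ref{prop:generators} together with a positivity argument to control the exponents.

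\textbf{Step 1 (the canonical invariant).} Consider the map $\fg_2 \to \fg_{-2}$ dual to the form, and the determinant of $\ad$: for $X \in \fg_2$, let $\phi(X) = \det\big(\ad(X)\colon \fg_{-1}\oplus\fg_{-2}\to \fg_{1}\oplus\fg_0\big)$, or more cleanly the determinant of $\ad(X)^k$ restricted to pieces $\fg_{-j}\to \fg_{-j+2}$ arranged so that source and target have equal dimension (by $\mathfrak{sl}_2$-theory $\ad(e)\colon \fg_{j}\to\fg_{j+2}$ is an isomorphism for $j\le -1$). Choose $\phi(X)=\prod_{j\ge 1}\det(\ad(X)\colon \fg_{-j-1}\to\fg_{-j+1})$ suitably, using the form to identify determinants as scalars. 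Then $\phi(e)\neq 0$, $\phi$ is a polynomial, and $\phi(g\cdot X)=\chi(g)\phi(X)$ with $d\chi(x)=\sum_j \big(\tr(\ad x|_{\fg_{-j+1}})-\tr(\ad x|_{\fg_{-j-1}})\big)$. So $\phi$ is a relative invariant which is a genuine polynomial nonvanishing on the open orbit.

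\textbf{Step 2 (exponents are nonnegative).} By Proposition \ref{prop:generators}, $\phi = c\prod f_i^{m_i}$; since $\phi$ is a polynomial and the $f_i$ are irreducible and pairwise non-associate, all $m_i\ge 0$. Hence $d\chi=\sum m_i\, d\chi_i$ with $m_i\ge0$.

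\textbf{Step 3 (identify $d\chi$ with a positive multiple of $\langle h,-\rangle$).} This is the main obstacle. Characters of $G_0$ restricted to $\fg_0$ factor through $\fz(\fg_0)$ (they vanish on $[\fg_0,\fg_0]$), and $\langle h,-\rangle$ also vanishes on $[\fg_0,\fg_0]$ since $h$ is central in $\fg_0$. So it suffices to compare on $\fz(\fg_0)$, and after splitting into simple factors (and treating the center of $\fg$ separately, where both sides vanish since $h\in[\fg,\fg]$) one may assume $\fg$ simple and $\langle,\rangle$ the Killing form up to positive scaling. A trace functional $x\mapsto \tr(\ad x|_{\fg_k})$ on $\fz(\fg_0)$ equals $\langle \sum_{\alpha\in\Delta_k}\alpha^\sharp ,x\rangle$; one must then check that the specific combination from Step 1 is a positive multiple of $h$. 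I would rather choose $\phi$ more cleverly to make this automatic: take instead the Killing-type invariant $\phi(X)=\det\big(\ad(X)\ad(\cdot)\big)$, i.e. $\phi(X)=\det(\ad(X)^2\colon \fg_{-2}\to\fg_{2})$ composed with the form, whose character on $\fz(\fg_0)$ is $2\,\tr(\ad x|_{\fg_2})-\ldots$; alternatively sum over all $k$ the determinants $\det(\ad X^{k}\colon\fg_{-k}\to\fg_k)$, whose total character is $\sum_k k\,\tr(\ad x|_{\fg_k})\cdot(\text{const})=\mathrm{const}\cdot\tr(\ad h\,\ad x)=\mathrm{const}\cdot B(h,x)$. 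This last identity, $\sum_k k\,\tr(\ad x|_{\fg_k})=B(h,x)$, is immediate since $\ad h$ acts by $k$ on $\fg_k$ and $\ad x$ preserves the grading. Using $\ad(X)^k\colon \fg_{-k}\to\fg_k$ (an isomorphism at $X=e$ by $\mathfrak{sl}_2$-theory) and identifying $\det$ via $\fg_{-k}\simeq\fg_k^*$, the character of $X\mapsto\det(\ad X^k|_{\fg_{-k}})$ is $x\mapsto \tr(\ad x|_{\fg_k})-\tr(\ad x|_{\fg_{-k}})=2\tr(\ad x|_{\fg_k})$ on $\fz(\fg_0)$-type elements, and its degree in $X$ is $k\dim\fg_k$; the product over $k\ge1$ gives character $2\sum_{k\ge1}\tr(\ad x|_{\fg_k})$, and weighting (taking the $k$-th factor to the power appropriate) or instead using $\prod_k \det(\ad X^k)$ raised to power $k$... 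I would fix the bookkeeping so the total character is $B(h,x)$ up to a positive constant, then conclude $\langle h,x\rangle=\sum c_i d\chi_i(x)$ with $c_i=m_i/\mathrm{const}\ge0$.
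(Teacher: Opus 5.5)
Once the bookkeeping is fixed, your construction is exactly the paper's $p=\prod_{k\le -1}p_k^{-k}$: take $\prod_{m\ge 1}\det\bigl(\langle \ad(X)^m a_i,a_j\rangle\bigr)^{m}$ over bases of $\fg_{-m}$, which is nonzero at $e$. Its character is $\sum_k k\,\mathrm{Tr}(\ad x|_{\fg_k})=\mathrm{Tr}(\ad h\,\ad x)$, because $\mathrm{Tr}(\ad x|_{\fg_{-m}})=-\mathrm{Tr}(\ad x|_{\fg_m})$ for $x\in\fg_0$, and Proposition \ref{prop:generators} plus polynomiality then give $m_i\ge 0$, as in the paper. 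You should discard the Step 1 candidates, since $\ad(e)\colon\fg_j\to\fg_{j+2}$ is only injective for $j\le -1$ (bijective just for $j=-1$), so those are not determinants of square maps in general.
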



\begin{proof}
We can easily reduce to the case when $\fg$ is a simple Lie algebra and $\langle -,-\rangle$ is the Killing form
$$\langle x,y\rangle = \mathrm{Tr}(\ad(x) \circ \ad(y)).$$
We begin by constructing an explicit polynomial relative invariant $p$ on $\fg_2$ which transforms according to $\langle h,-\rangle$. Choose a basis $\{e_k^1,...,e^{l_k}_k\}$ of $\fg_k$, for each $k \in \ZZ$ (this choice of basis will not matter significantly, but is useful for definitions). For each $k \leq -1$, we define a matrix-valued polynomial $M_k$ on $\fg_2$ by the formula
$$M_k(v)_{ij} = \langle \ad(v)^{-k} e_k^i,e_k^j\rangle, \qquad v \in \fg_2.$$
Let $p_k$ be the polynomial on $\fg_2$ defined by taking the determinant of $M_k$
$$p_k(v) = \det(M_k(v)), \qquad v \in \fg_2,$$
and let
$$p = \prod_{k \leq -1} p_k(v)^{-k}.$$
We claim that $p$ is a relative invariant of $(G_0,\fg_2)$ and that the corresponding character $\tau\colon G_0 \to \CC^{\times}$ satisfies
$$d\tau(x) = \langle h, x\rangle, \qquad x \in \fg_0.$$
For $g \in G$, let $C_k(g)$ denote the matrix of $\Ad(g^{-1})|_{\fg_k}$ with respect to the basis $\{e_k^1,...,e^{l_k}_k\}$. Then for any $k \leq -1$ and $v \in \fg_2$ we have
\begin{align*}
M_k(\Ad(g)v)_{ij} &= \langle \ad(\Ad(g)v)^{-k} e_k^i, e_k^j\rangle\\
                  &= \langle \Ad(g)\ad(v)^{-k}\Ad(g)^{-1}e^i_k,e^j_k\rangle\\
                  &= \langle \ad(v)^{-k} \Ad(g)^{-1}e^i_k, \Ad(g)^{-1}e^j_k\rangle\\
                  &= (C_k(g)^t M_k(v) C_k(g))_{ij}
\end{align*}
Hence, $M_k(\Ad(g)v) = C_k(g)^t M_k(v) C_k(g)$ and so
\begin{align*}
p_k(\Ad(g)v) &= \det(M_k(\Ad(g)v))\\
&= \det(C_k(g))^2 p_k(v)\\
&= \det(\Ad(g^{-2})|_{\fg_k}) p_k(v)
\end{align*}
This shows that $p_k$ is a relative invariant of $(G_0,\fg_2)$, corresponding to the character
$$\tau_k\colon G_0 \to \CC^{\times}, \qquad \tau_k(g) = \det(\Ad(g^{-2}))|_{\fg_k}.$$
It follows that $p$ is a relative invariant, corresponding to the character
$$\tau\colon G_0 \to \CC^{\times}, \qquad \tau(g) = \prod_{k \leq -1} \tau_k(g)^{-k}.$$
Note that for each $k \in \ZZ$, we have the relation $\det(\Ad(g)|_{\fg_k}) = \det(\Ad(g)^{-1}|_{\fg_{-k}})$. Thus,
$$\tau(g) = \prod_k \det(\Ad(g)|_{\fg_k})^k, \qquad g \in G_0.$$
Taking differentials, we get
\begin{align*}
d\tau(X) &= \sum_k k\mathrm{Tr}(\ad(X)|_{\fg_k})\\
         &= \sum_k \mathrm{Tr}(\ad(h) \circ \ad(X)|_{\fg_k})\\
         &= \mathrm{Tr}(\ad(h) \circ \ad(X))\\
         &= \langle h,X\rangle
\end{align*}
Since $p$ is a relative invariant, Proposition \ref{prop:generators} implies that
$$p = c\prod_{i=1}^n f_i^{m_i}$$
for $c \in \CC^{\times}$ and $m_1,...,m_n \in \ZZ$. Hence
$$d\tau = \sum_{i=1}^n m_i d\chi_i.$$
Since $p$ is a polynomial, we have $m_i \geq 0$ for all $i$. This completes the proof.
\end{proof}

\subsection{Three parabolics}\label{sec:threeparabolics}

In this section, we will consider the following situation. Let $G$ be a complex connected reductive algebraic group with Borel subgroup $B \subset G$ and maximal torus $T \subset B$. Let $Q = LU$ be a standard parabolic subgroup of $G$ (in our application, $Q$ will arise as the parabolic dual to our `inducing' parabolic). Denote the roots for $T$ in $\fg$ and $\fl$ by $\Delta(\fg)$, $\Delta(\fl)$, and the positive roots by $\Delta^+(\fg)$, $\Delta^+(\fl)$. Let $(e,f,h)$ be an $\mathfrak{sl}_2$-triple in $\fl$ with $h \in \ft$, dominant for $\Delta^+(\fl)$. We will also assume that $(e,f,h)$ is `even' in $\fl$, in that
\begin{itemize}
    \item[(i)] $h$ has even weights on $\fl$
\end{itemize}
Choose $z_f,z_g \in X_*(T) \otimes_{\ZZ} \RR$ such that
$$\alpha(z_f)=\alpha(z_g)=0, \qquad \forall \alpha \in \Delta(\fl)$$
(the subscripts `f' and `g' stand for `fair' and `good', respectively), and consider the elements
$$\lambda_f := \frac{1}{2}h + z_f, \qquad \lambda_g = \frac{1}{2}h + z_g$$
(in our application, these elements will arise as the infinitesimal characters of Arthur packets with a common $SL_2(\CC)$). We will impose the following additional conditions on $h$, $z_f$, and $z_g$:

\begin{itemize}
    \item[(ii)] $\lambda_f$ and $\lambda_g$ have integral weights on $\fg$. 
    \item[(iii)] $z_f$ has nonnegative weights on $\fu$ (this is the `weakly fair range' assumption).
    \item[(iv)] $\lambda_g$ has strictly positive weights on $\fu$ (this is the `good range' assumption).
\end{itemize}

Because of condition (ii), the semisimple operators $\ad(\lambda_f)$ and $\ad(\lambda_g)$ define $\ZZ$-gradings of $\fg$, and hence parabolic subgroups
$$P_f = L_fU_f, \qquad P_g = L_gU_g.$$
Here, $L_f$ (resp.~$L_g$) is the centralizer of $\lambda_f$ (resp.~$\lambda_g$), and $U_f$ (resp.~$U_g$) is the unipotent subgroup corresponding to the strictly positive (integral) weight spaces for $\lambda_f$ (resp.~$\lambda_g$). Because of condition (iv), we have inclusions
\begin{equation}\label{eq:PginQ}L_g \subseteq L, \qquad U \subseteq U_g, \qquad P_g \subseteq Q.\end{equation}
Note that in general, there are no simple relations between $P_f$ and $Q$, or between $P_f$ and $P_g$, except that
$$P_f \cap L = P_g \cap L$$
(since $\lambda_f-\lambda_g$ is central in $\fl$). 

Consider the constructible subset $P_fP_g \subset G$ and its Zariski closure $\overline{P_fP_g}$. The main result of this section is

\begin{theorem}\label{thm:threeparabolics}
In the setting described above, we have
$$\{g \in \overline{P_fP_g} \mid \Ad(g)e \in \fu_f\} = P_f.$$
\end{theorem}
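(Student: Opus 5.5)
The inclusion $P_f \subseteq \{g \in \overline{P_fP_g} \mid \Ad(g)e \in \fu_f\}$ is immediate. Since $z_f$ is central in $\fl$ and $[h,e]=2e$, we get $[\lambda_f,e]=e$, so $e \in \fg_1(\lambda_f) \subset \fu_f$. Moreover $P_f$ normalizes $\fu_f$, and $P_f \subset P_fP_g$. The work is the reverse inclusion. Write $Z$ for the left-hand set. The first step is to record its symmetries. $Z$ is Zariski closed in $\overline{P_fP_g}$, and it is stable under left multiplication by $P_f$, because $P_f\overline{P_fP_g} \subseteq \overline{P_fP_g}$ and $\Ad(P_f)\fu_f=\fu_f$. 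Using the Bruhat decomposition $G=\bigsqcup_w P_f\dot wP_g$, every $g \in Z$ can be written $g=p\dot wq$ with $p\in P_f$, $q \in P_g$, and $w$ a minimal-length representative of $W_{L_f}\backslash W/W_{L_g}$ with $P_f\dot wP_g \subseteq \overline{P_fP_g}$. By left $P_f$-invariance, it suffices to show two things: $w=1$, and when $w=1$ the factor $q$ can be absorbed into $P_f$.

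The second step analyzes $\Ad(q)e$ for $q \in P_g$. We have $P_g \cap L = P_f \cap L$, and by (\ref{eq:PginQ}) $P_g = (P_g\cap L)U$. Also $e \in \fl$ lies in the $\lambda_g$-weight-$1$ space, and $\fu \subset \fu_g$ has $\lambda_g$-weights $\ge 1$ by (iv). So $\Ad(q)e = \Ad(l)e + x$ with $l \in P_f\cap L$ and $x$ in the sum of $\lambda_g$-weight spaces of weight $>1$ inside $\fu$. Since $\fl\cap\fp_f$ is the nonnegative $h$-graded part of $\fl$ (evenness (i)), $\Ad(l)e$ lies in $\fl_{\geq 2}(h)$, and its $\fl_2(h)$-component lies in the $L_0=Z_L(h)$-orbit of $e$, which is open in $\fl_2(h)$ by Kostant. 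Left multiplication by $P_f\cap L$ is harmless, so we may assume $l \in L_0$ up to the nilradical. The condition $\Ad(\dot w)\Ad(q)e \in \fu_f$ then becomes a condition on the $T$-weights appearing in $\Ad(l)e + x$: every weight $\beta$ occurring must satisfy $\langle \lambda_f, w\beta\rangle > 0$.

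The third step, which I expect to be the main obstacle, is ruling out $w \neq 1$. I would first try a positivity argument built from Proposition \ref{prop:positivityofh} applied to the $\mathfrak{sl}_2$-triple $(e,f,h)$ in $\fl$. Because $\Ad(l)e$ lies in the open $L_0$-orbit, none of the boundary relative invariants $f_i$ vanishes on its $\fl_2(h)$-component. By Proposition \ref{prop:generators}, the weights occurring in that component therefore cannot all lie in an open half-space on which the characters $d\chi_i$ are all negative. Proposition \ref{prop:positivityofh} expresses $\langle h,-\rangle$ as a nonnegative combination of the $d\chi_i$. Together these give a weight $\beta$ of $\fl_2(h)$ occurring in $\Ad(l)e$ with $\langle w\lambda_f - \lambda_f, w\beta\rangle \le 0$, in a suitable sense. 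I then want to combine this with three further facts: (iii), i.e. $z_f \ge 0$ on $\fu$; the Bruhat-closure condition $P_f\dot wP_g \subseteq \overline{P_fP_g}$; and minimality of $w$. The aim is to force $\langle \lambda_f, w\beta\rangle \le 0$ for that $\beta$, contradicting $\Ad(\dot w q)e \in \fu_f$ unless $w=1$. The components $x \in \fu$ should be controllable by (iii) and (iv) in the same way.

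The final step treats $w=1$. Here $g=pq$ with $\Ad(q)e \in \fu_f$, and we must show $q \in P_f$. Write $q=lu$ with $l\in P_f\cap L$ and $u\in U$. The part $l$ is already in $P_f$, so it remains to show $u \in P_f\cap U$. For this I would use a grading argument. Suppose $u=\exp(y)$ with $y \in \fu$ has a component in a $\lambda_f$-weight $\le 0$ root space. Then take the lowest $\lambda_f$-degree component of $[y,e]$ and its iterated brackets in $\Ad(u)e$. By (iii) and genericity of $e$ (the injectivity of $\ad e$ on the relevant negative $h$-graded pieces, from $\mathfrak{sl}_2$-theory), this component is nonzero and lies outside $\fu_f$, which is a contradiction.
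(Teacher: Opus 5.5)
Your architecture matches the paper's: Bruhat cells, isolating the $\lambda_g$-weight-one part $\Ad(l_0)e$ ($l_0\in L_0=L_g$) of $\Ad(q)e$ by a support argument, relative invariants plus Proposition \ref{prop:positivityofh} on the boundary, and a grading argument on the open cell. However, the boundary step, which you flag yourself, is not carried out, and as sketched it cannot close. Knowing that $\Ad(l_0)e$ lies in the open orbit and that $\langle h,-\rangle=\sum c_i\,d\chi_i$ with $c_i\ge 0$ says nothing about $w$ until you specify the vector on which the $d\chi_i$ are evaluated. The target ``$\le 0$ in a suitable sense'' is also too weak. Every root $\alpha$ of $\fl_2(h)$ has $\alpha(\lambda_f)=1$, so you need $\alpha(w^{-1}\lambda_f)<1$ \emph{strictly}, and then integrality.

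The missing idea is to set $\gamma:=w^{-1}\lambda_f-\lambda_f$, where $w$ is a representative of the boundary double coset with $vw<v$, and $v$ is the shortest element making $v\lambda_f$ dominant. The closure order is read off only after conjugating so that $\dot vP_f\dot v^{-1}\supseteq B$. Since $\gamma$ depends on the representative, minimal length with respect to $B$ is not the right normalization. With this choice, Bruhat order makes $\gamma$ a nonzero (by minimality of $v$) nonnegative sum of positive coroots. Since $\|w^{-1}\lambda_f\|=\|\lambda_f\|$ and $z_f$ is dominant by (iii),
\[
\langle h,\gamma\rangle=2\langle\lambda_f,\gamma\rangle-2\langle z_f,\gamma\rangle=-\|\gamma\|^2-2\langle z_f,\gamma\rangle<0 .
\]
Only now does Proposition \ref{prop:positivityofh} give some $i$ with $d\chi_i(\gamma)<0$. $T$-semi-invariance of $f_i$ forces every monomial $S$ with nonzero coefficient to satisfy $\sum S=d\chi_i$. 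Then $f_i(x)\neq 0$ gives such an $S\subseteq\mathrm{Supp}(x)$, hence some $\alpha\in S$ with $\alpha(\gamma)<0$, i.e.\ $(w\alpha)(\lambda_f)\le 0$.

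Your $w=1$ step also fails as written, because the lowest $\lambda_f$-degree piece is not isolated. Let $k_{\min}<0$ be the lowest degree of $y$. The degree-$(1+k_{\min})$ part of $\Ad(u)e$ also receives terms $\tfrac12[y_a,[y_b,e]]$ with $a+b=k_{\min}$ (e.g.\ $a=0$, or $a=b=k_{\min}/2$), and higher-order terms involving nonnegative-degree components of $y$. So cancellation is possible.

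The paper avoids this in two moves. First it factors $U=(U\cap P_f)(U\cap U_f^{op})$ and uses left $P_f$-invariance to reduce to $u=\exp(Y)$ with $Y\in\fu\cap\fu_f^{op}$. It then takes the \emph{highest} degree $k_0\le -1$ occurring in $Y$. Every term with at least two factors of $Y$ has degree $\le 1+2k_0<1+k_0$, so the degree-$(1+k_0)$ part of $\Ad(u)e$ is exactly $[Y_{k_0},e]$, which must vanish. Then $\mathfrak{sl}_2$-theory gives $h$-weight $l\ge 0$ for a nonzero piece of $Y_{k_0}$, while (iii) gives $k_0-\tfrac l2\ge 0$, contradicting $k_0\le -1$. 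Also, the relevant condition is a component of \emph{negative} $\lambda_f$-weight, not $\le 0$, since weight $0$ lies in $\fp_f$.
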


The proof of Theorem \ref{thm:threeparabolics} will occupy the remainder of this section. First, we will prove the corresponding statement for the `open' Bruhat cell $P_fP_g \subset \overline{P_fP_g}$.

\begin{prop}\label{prop:opencell}
In the setting described above, we have
$$\{g \in P_fP_g \mid \Ad(g)e \in \fu_f\} = P_f.$$
\end{prop}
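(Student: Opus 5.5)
The plan is to prove the two inclusions separately; the nontrivial one reduces to an $\mathfrak{sl}_2$ weight argument inside $U$.

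\emph{Inclusion $\supseteq$.} Since $e \in \fl$ and $z_f$ is central in $\fl$, we have $[\lambda_f,e] = \tfrac12[h,e] = e$. So $e$ has $\ad(\lambda_f)$-weight $1>0$, which gives $e \in \fu_f$. Because $P_f$ normalizes $\fu_f$, every $g\in P_f$ satisfies $\Ad(g)e \in \fu_f$. Also $P_f \subset P_fP_g$, so $P_f$ lies in the set in question.

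\emph{Reduction for $\subseteq$.} Let $g = pq$ with $p\in P_f$, $q \in P_g$ and $\Ad(g)e\in\fu_f$. Since $\Ad(p)^{-1}$ preserves $\fu_f$, we get $\Ad(q)e \in \fu_f$, and it suffices to show $q\in P_f$.

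By (\ref{eq:PginQ}) we have $P_g \subseteq Q$, with $P_g = (P_g\cap L)(P_g \cap U)$. Write $q = mu$ with $m \in P_g\cap L$ and $u \in U$. As noted before the theorem, $P_g \cap L = P_f\cap L$, so $m\in P_f$. The same argument as above then reduces us to the following claim: if $u\in U$ and $\Ad(u)e\in \fu_f$, then $u\in P_f$.

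Since $U$ is a $T$-stable unipotent group, it factors as a product of root subgroups, $U = (U\cap \bar U_f)(U\cap P_f)$, where $\bar U_f$ is the opposite unipotent radical. Write $u = u_-u_+$ accordingly and replace $u$ by $u_- = u u_+^{-1}$; note $\Ad(u_+)^{-1}$ preserves $\fu_f$, so $\Ad(u_-)e \in \fu_f$ still holds. It therefore suffices to show: if $Y\in \fu\cap\bar\fu_f$ and $\Ad(\exp Y)e \in \fu_f$, then $Y=0$.

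\emph{Key step.} Suppose $Y\neq 0$ and decompose it into $\ad(\lambda_f)$-weight components. All weights are integers $\le -1$ by (ii). Let $Y_{-k}$ be the nonzero component whose weight $-k$ is closest to $0$, so $k\ge1$.

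In $\Ad(\exp Y)e = e + [Y,e] + \tfrac12[Y,[Y,e]]+\cdots$, the $\lambda_f$-weight $1-k$ component is exactly $[Y_{-k},e]$. Indeed, every other term has weight at most $1-k-1$ (for single brackets with other components) or at most $1-2k$ (for iterated brackets). Since $1-k\le 0$ and $\Ad(\exp Y)e\in\fu_f$, this component must vanish, so $[Y_{-k},e]=0$.

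Now $Z_\fg(e)$ is stable under $\ad h$ (because $[h,e]=2e$) and under $\ad z_f$ (because $[z_f,e]=0$, as $z_f$ is central in $\fl$). Hence $Y_{-k}$ decomposes into joint $(h,z_f)$-eigenvectors, each lying in $Z_\fg(e)\cap \fu$.

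For each such eigenvector, $\mathfrak{sl}_2$-theory says centralizers of $e$ have nonnegative $h$-weights. Condition (iii) says its $z_f$-weight is $\ge 0$. So its $\lambda_f = \tfrac12 h + z_f$ weight is $\ge 0$, contradicting $-k<0$. Hence $Y = 0$, so $u\in P_f$, then $q\in P_f$, and finally $g\in P_f$.

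The only step I expect to require care is the bookkeeping in the key step: identifying the extremal weight component of $\Ad(\exp Y)e$, and checking that the factorization $U=(U\cap\bar U_f)(U\cap P_f)$ is legitimate. The latter holds because both factors are generated by complementary sets of $T$-root subgroups of the unipotent group $U$. Everything else is formal.
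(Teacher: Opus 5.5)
There is one step whose justification does not work as written, although the repair is immediate. In the last reduction you factor $u = u_-u_+$ with $u_+\in U\cap P_f$ on the \emph{right}, and set $u_- = uu_+^{-1}$. Then $\Ad(u_-)e = \Ad(u)\bigl(\Ad(u_+)^{-1}e\bigr)$. Knowing that $\Ad(u_+)^{-1}$ preserves $\fu_f$ only tells you $\Ad(u_+)^{-1}e\in\fu_f$. Since $u$ is not yet known to lie in $P_f$, nothing you have shown forces $\Ad(u)$ to keep this vector in $\fu_f$.

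The condition $\Ad(g)e\in\fu_f$ is stable under \emph{left} multiplication by $P_f$, which is how you correctly used it for $p$ and for $m$. It is not stable under right multiplication. The fix is to take the factorization in the other order, $U=(U\cap P_f)(U\cap\bar U_f)$. This is equally valid: both factors are subgroups and $U=U^{-1}$, or equivalently the root subgroups may be multiplied in any order. Writing $u=u_+u_-$, you get $\Ad(u_-)e=\Ad(u_+)^{-1}\Ad(u)e\in\fu_f$. Your key step then gives $u_-=1$, hence $u=u_+\in P_f$. This is exactly the order the paper uses.

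With that correction your proof is the paper's proof. You make the same reduction to $\{g\in U\cap\bar U_f \mid \Ad(g)e\in\fu_f\}=\{1\}$ via $P_g\cap L=P_f\cap L$, and use the same extremal-weight argument. The only difference is in packaging. The paper isolates the extremal component using a $\ZZ^3$-grading by $\ad(\lambda_f)$, $\ad(h)$ and $\ad(t)$. You instead take the extremal $\lambda_f$-component and decompose it inside $Z_{\fg}(e)\cap\fu$, using that this space is stable under $\ad h$ and $\ad z_f$. The two are equivalent, and yours avoids the auxiliary element $t$.
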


\begin{proof}
The proof has several steps.

{\it Step 1.} Note that $[\lambda_f,e] = [\frac{1}{2}h,e] + [z_f,e] = e$. So $e \in \fu_f$. The inclusion $P_f \subseteq \{g \in P_fP_g \mid \Ad(g)e \in \fu_f\}$ follows immediately. It remains to prove the reverse inclusion $\{g \in P_fP_g \mid \Ad(g)e \in \fu_f\} \subseteq P_f$. Since $\{g \in P_fP_g \mid \Ad(g)e \in \fu_f\}$ is stable under left-multiplication by $P_f$, it suffices to show that $\{g \in P_g \mid \Ad(g)e \in \fu_f\} \subseteq P_f$.

{\it Step 2.} By the inclusions in (\ref{eq:PginQ}), there is a decomposition
\begin{equation}\label{eq:decomp1}P_g = (L \cap P_g) U.\end{equation}
Since $U \subset G$ is unipotent subgroup of $G$, it is a product of its root groups (multiplied in any order). Hence
\begin{equation}\label{eq:decomp2}U = \prod_{\alpha \in \Delta(\fu)} U_{\alpha} = (\prod_{\alpha \in \Delta(\fu \cap \fp_f)} U_{\alpha}) (\prod_{\alpha \in \Delta(\fu \cap \fu_f^{op})} U_{\alpha}) = (U \cap P_f)(U \cap U_f^{op}).\end{equation}
Combining (\ref{eq:decomp1}) and (\ref{eq:decomp2}), we arrive at the decomposition
\begin{equation}\label{eq:decomp3}P_g = (L \cap P_g)(U \cap P_f)(U \cap U_f^{op}).\end{equation}
Since $\{g \in P_g \mid \Ad(g)e \in \fu_f\}$ is stable under left-multiplication by $P_g \cap P_f$ and $(L \cap P_g)(U_ \cap P_f) \subset P_g \cap P_f$, it in fact suffices to show that
$$\{g \in U \cap U_f^{op} \mid \Ad(g)e \in \fu_f\} = \{1\}.$$
{\it Step 3.} Suppose, for contradiction, that $g$ is a nontrivial element in $U \cap U_f^{op}$ such that $\Ad(g)e \in \fu_f$. Since $U \cap U_f^{op}$ is a unipotent group, the exponential map $\exp\colon \fu \cap \fu_f^{op} \to U \cap U_f^{op}$ is a bijection. In particular, $g = \exp(Y)$ for some nonzero element $Y \in \fu \cap \fu_f^{op}$. 

Let $t$ denote the sum of the positive co-roots in $\fu$, so that  $\fl = \fz_{\fg}(t)$ and $\fu$ is the sum of the positive weight spaces for $t$. Consider the $\ZZ^3$-grading of $\fg$ defined by the (commuting) semisimple operators $\ad(\lambda_f)$, $\ad(h)$, and $\ad(t)$:
\begin{equation}\label{eq:trigrading}
\fg = \bigoplus_{k,l,m} \fg_{k,l,m}, \qquad  \fg_{k,l,m} = \{\xi \in \fg \mid [\lambda_f,\xi] = k\xi, \ [h,\xi] = l\xi, \ [t,\xi] = m\xi\}.
\end{equation}
Consider the decomposition of $Y$ with respect to this grading:
$$Y = \sum Y_{k,l,m}, \qquad Y_{k,l,m} \in \fg_{k,l,m}.$$
Since $Y \in \fu \cap \fu_f^{op}$, we have that $k \leq -1$ and $m \geq 1$ whenever $Y_{k,l,m} \neq 0$. Let 
$$k_0 = \max \{k \in \ZZ \mid \fg_{k,l,m} \neq \{0\} \text{ for some } l,m\}.$$
Since $k_0+1 \leq 0$ and $\Ad(g)e \in \fu_f$, we have that
\begin{equation}\label{eq:zerocomponent}(\Ad(g)e)_{k_0+1,l,m} = 0, \qquad \forall l,m \in \ZZ.\end{equation}
On the other hand
$$\Ad(g)e = \sum_{n \geq 0} \frac{1}{n!}\ad(Y)^ne = \sum_{k_1,...,k_n,l_1,...,l_n,m_1,...,m_n} \frac{1}{n!}\ad(Y_{k_1,l_1,m_1})...\ad(Y_{k_n,l_n,m_n})e$$
(this sum is finite since $\ad(Y)$ is nilpotent). Since $e \in \fg_{1,2,0}$, we have that
$$\ad(Y_{k_1,l_1,m_1})...\ad(Y_{k_n,l_n,m_n})e \in \fg_{1+\sum k_i, 2+\sum l_i, \sum m_i}.$$
So 
\begin{equation}\label{eq:component}(\Ad(g)e)_{k_0+1,l,m} = \sum_{l,m \in \ZZ} [Y_{k_0,l,m},e].\end{equation}
Combining (\ref{eq:zerocomponent}) and (\ref{eq:component}), we deduce 
$$[Y_{k_0,l,m},e] = 0, \qquad \forall l,m \in \ZZ.$$
Choose $l,m \in \ZZ$ so that $Y_{k_0,l,m} \neq 0$ (such $l$ and $m$ exist by the definition of $k_0$). Since $[Y_{k_0,l,m},e]=0$ and $[h,Y_{k_0,l,m}] = lY_{k_0,l,m}$, we have by the representation theory of $\mathfrak{sl}_2(\CC)$ that $l \geq 0$. On the other hand, $Y_{k_0,l,m}$ is a weight vector for $z_0$ of weight $k_0 - \frac{1}{2}l$:
$$[z_0,Y_{k_0,l,m}] = [\lambda_f - \frac{1}{2}h, Y_{k_0,l,m}] = k_0Y_{k_0,l,m} - \frac{1}{2}lY_{k_0,l,m} = (k_0 - \frac{1}{2}l)Y_{k_0,l,m}.$$
Thus, since $Y_{k_0,l,m} \in \fu$, our assumption on $z_0$ implies that $k_0-\frac{1}{2}l \geq 0$. Hence $k_0 \geq \frac{1}{2}l \geq 0$. This is a contradiction.
\end{proof}

Next, we will show that there are no contributions to $\{g \in \overline{P_fP_g} \mid \Ad(g)e \in \fu_f\}$ coming from the boundary $\partial P_fP_g = \overline{P_fP_g} \setminus P_fP_g$. Let $W=N_G(T)/T$ denote the Weyl group of $G$. Let $\ell\colon W \to \ZZ_{\geq 0}$ denote the length function and let $\leq$ denote the Bruhat order. The following lemma gives an explicit description of $\partial P_fP_g$ as a union of Bruhat cells.

\begin{lemma}\label{lem:Bruhat}
Let $v \in W$ denote the (unique) minimal-length element such that $v\lambda_f$ is dominant, and let
$$W_{\partial} := \{w \in W \mid vw < v\}.$$
Then
$$\partial P_fP_g = \bigcup_{w \in W_{\partial}} P_f \dot{w}P_g,$$
where, for each $w \in W$, $\dot{w}$ denotes a lift of $w$ to $N_G(T)$. 
\end{lemma}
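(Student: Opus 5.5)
The plan is to reduce to the classical Bruhat decomposition for double cosets of standard parabolic subgroups. Two preliminary observations do most of the work.

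First, $P_g$ is standard, i.e. $B \subseteq P_g$. Indeed $\Delta^+(\fg) = \Delta^+(\fl) \cup \Delta(\fu)$ because $Q$ is standard. On $\Delta^+(\fl)$ the element $\lambda_g = \frac12 h + z_g$ pairs nonnegatively, since $h$ is dominant for $\Delta^+(\fl)$ and $z_g$ is central in $\fl$. On $\Delta(\fu)$ it pairs strictly positively by condition (iv). Hence $\lambda_g$ is dominant, and $P_g$ is the standard parabolic with Weyl group $W_g := W_{\lambda_g}$. This group is generated by the simple reflections $s_\alpha$ with $\langle \lambda_g,\alpha\rangle = 0$, and all such $\alpha$ lie in $\Delta(\fl)$.

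Second, $P_f = \dot v^{-1} P_{st} \dot v$, where $P_{st} \supseteq B$ is the standard parabolic attached to the dominant weight $v\lambda_f$. Its Weyl group $W_{st}$ is the stabilizer of $v\lambda_f$. Consequently, for every $w \in W$,
$$P_f \dot w P_g = \dot v^{-1}\bigl(P_{st}\, \dot{(vw)}\, P_g\bigr).$$
Translating by $\dot v$, the lemma becomes the statement that
$$\partial\bigl(P_{st}\dot v P_g\bigr) = \bigcup_{u<v} P_{st}\dot u P_g \qquad (\text{with } u = vw).$$

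The key step is to show that $v$ is the unique minimal-length element of its double coset $W_{st}\, v\, W_g$.

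For the left coset: every $x \in W_{st}$ fixes $v\lambda_f$. So $xv$ also makes $\lambda_f$ dominant, and minimality of $v$ forces $\ell(xv) \ge \ell(v)$.

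For the right coset: let $s = s_\alpha$ be a simple reflection in $W_g$. Then $\alpha \in \Delta(\fl)$, so $\langle\lambda_f,\alpha\rangle = \langle\lambda_g,\alpha\rangle = 0$, because $\lambda_f - \lambda_g = z_f - z_g$ is central in $\fl$. Hence $s\lambda_f = \lambda_f$, so $vs\lambda_f = v\lambda_f$ is dominant, and minimality again gives $\ell(vs) > \ell(v)$.

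An element that is minimal in both its left $W_{st}$-coset and its right $W_g$-coset is the minimal element of the double coset; this is a standard fact about parabolic double cosets in Coxeter groups.

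Now apply the standard closure relation:
$$\overline{P_{st}\dot v P_g} = P_{st}\,\overline{B\dot v B}\,P_g = \bigcup_{u\le v} P_{st}\dot u P_g.$$
Here the second equality is the Bruhat closure relation $\overline{B\dot vB} = \bigcup_{u\le v} B\dot uB$, and the first uses that $P_{st}\times P_g$ acting on the closed set $\overline{B\dot vB}$ produces a closed set by properness of the flag variety.

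It remains to check that the cells with $u<v$ are disjoint from the open cell. If $u<v$, then the minimal representative of $W_{st}uW_g$ has length at most $\ell(u)<\ell(v)$. Since $v$ is the minimal representative of its own double coset, the two double cosets are different, and so $P_{st}\dot uP_g \cap P_{st}\dot vP_g = \emptyset$ by the Bruhat decomposition. Removing the $u=v$ term therefore gives exactly the boundary. Translating back by $\dot v^{-1}$ and writing $u = vw$ yields $W_\partial$.

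The step I expect to be the main obstacle is the minimality of $v$ in its double coset: it is exactly what guarantees that no lower cell in the union coincides with the open cell $P_fP_g$. The argument above handles it via the centrality of $\lambda_f - \lambda_g$ in $\fl$ together with condition (iv).
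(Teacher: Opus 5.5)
Your proof is correct and takes the same approach as the paper. You conjugate $P_f$ by $\dot v$ to the standard parabolic attached to $v\lambda_f$, then read off the boundary from the closure order on parabolic double cosets, using that $v$ is the minimal representative of its double coset. Along the way you supply justifications the paper leaves implicit: that $P_g\supseteq B$ (from condition (iv) and $h$ being $\Delta^+(\mathfrak{l})$-dominant), and that $v$ is minimal in $W_{st}vW_g$ because $\lambda_f-\lambda_g$ is central in $\mathfrak{l}$, which is exactly what underlies the paper's step $v^{min}=v$.
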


\begin{proof}
Since $v\lambda_f$ is dominant, the parabolic subgroup $P_f':=\dot{w}P_f\dot{w}^{-1}$ contains the Borel subgroup $B$. So by the Bruhat decomposition, we have, for elements $w_1,w_2 \in W$
$$P_f'\dot{w}_1P_g \nsubseteq \overline{P_f'\dot{w}_2P_g} \iff w_1^{min} < w_2^{min},$$
where $w^{min}$ denotes the (unique) minimal-length element in the double-coset $W_f'wW_g$. Hence
\begin{align*}
P_f\dot{w}P_g \subseteq \partial P_fP_g &\iff  \dot{v}P_f\dot{w}P_g \subseteq \partial (\dot{v} P_fP_g)\\
&\iff P_f'\dot{vw}P_g \subseteq \partial(P_f'\dot{v}P_g)\\
&\iff vw < v^{min}=v.
\end{align*}
This completes the proof.
\end{proof}

For the next proposition, we will use the following notation. For any $x \in \fg$, there is a unique decomposition
$$x = t+\sum_{\alpha \in \Delta(\fg)} x_{\alpha}, \qquad t \in \ft, \ x_{\alpha} \in \fg_{\alpha}.$$
We write
$$\mathrm{Supp}(x) := \{\alpha \in \Delta(\fg) \mid x_{\alpha} \neq 0\}.$$

\begin{prop}\label{prop:boundary}
In the setting described in the beginning of Section \ref{sec:threeparabolics}, we have
$$\{g \in \partial P_fP_g \mid \Ad(g)e \in \fu_f\} = \emptyset.$$
\end{prop}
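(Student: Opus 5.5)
The plan is to reduce to a single Bruhat cell and use the invariant theory of the prehomogeneous vector space $(L_0,\fl_2)$, where $L_0=Z_L(h)$ and $\fl_2$ is the $2$-eigenspace of $\ad(h)$ on $\fl$. By Lemma \ref{lem:Bruhat} it suffices to fix $w\in W_{\partial}$ and show that no $g\in P_f\dot wP_g$ satisfies $\Ad(g)e\in\fu_f$. Write $g=p\dot w q$ with $p\in P_f$ and $q\in P_g$. Since $P_f$ normalizes $\fu_f$, the condition is equivalent to $\Ad(q)e\in\Ad(\dot w^{-1})\fu_f$. Set $\mu:=w^{-1}\lambda_f\in\ft$. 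Then the condition says that every root in $\mathrm{Supp}(\Ad(q)e)$ pairs strictly positively with $\mu$, so we must show this is impossible for $w\in W_\partial$.

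First I simplify $\Ad(q)e$. As in Step 2 of Proposition \ref{prop:opencell}, write $q=lu$ with $l\in L\cap P_g=L\cap P_f$ and $u\in U$. Since $e\in\fl$, we have $\Ad(q)e\in\Ad(l)e+\fu$, so the $\fl$-component of $\Ad(q)e$ is exactly $\Ad(l)e$. Because $L\cap P_g$ is the parabolic of $L$ attached to $h$, we can write $l=l_0l_+$ with $l_0\in L_0$ and $l_+$ in the unipotent radical. Then $\Ad(l)e\in\Ad(l_0)e+\bigoplus_{k>2}\fl_k$, and $y:=\Ad(l_0)e$ lies in the open $L_0$-orbit of $\fl_2$. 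Since $\mathrm{Supp}(y)\subseteq\mathrm{Supp}(\Ad(q)e)$, every root in $\mathrm{Supp}(y)$ pairs strictly positively with $\mu$.

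The main step uses the relative invariants. Let $\mu_0$ denote a suitable projection of $\mu$ onto $\ft\cap[\fl,\fl]$, or more simply a small rational perturbation of $\mu$ that preserves positivity on $\mathrm{Supp}(y)$. Then the one-parameter subgroup $\mu_0(t)$ of $T\subset L_0$ contracts $y$ to $0$ as $t\to 0$. Each $f_i$ in Proposition \ref{prop:generators} is homogeneous and nonconstant, so it vanishes at $0$ while $f_i(y)\ne0$. From $f_i(\mu_0(t)y)=t^{d\chi_i(\mu_0)}f_i(y)$ we get $d\chi_i(\mu_0)>0$. Proposition \ref{prop:codim1} guarantees the $f_i$ cut out the whole boundary, and Proposition \ref{prop:positivityofh} then gives $\langle h,\mu\rangle=\sum c_i\,d\chi_i(\mu_0)\ge 0$, with strict inequality whenever $h\ne0$ on the relevant factor. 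Combining this with the $\fu$-part, using the weakly fair hypothesis (iii) on $z_f$ and the $t$-grading from \eqref{eq:trigrading}, I expect to obtain an inequality of the form $\langle w^{-1}\lambda_f,\lambda_f\rangle\ge\langle\lambda_f,\lambda_f\rangle$. I would then turn this into equality, using that $W$ preserves the positive-definite form so $\|w^{-1}\lambda_f\|=\|\lambda_f\|$. Equality forces $w^{-1}\lambda_f=\lambda_f$, so $w\in W_{\lambda_f}$, and then $vw$ and $v$ lie in the same coset $vW_{\lambda_f}$. Since $v$ is minimal in that coset, $vw\ge v$, which contradicts $w\in W_\partial$.

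The hardest step is this last combination: getting a clean inequality that involves all of $\lambda_f=\tfrac12 h+z_f$, not just its $h$-part, from positivity of $\mu$ on $\mathrm{Supp}(\Ad(q)e)$. The $\frac12 h$ contribution should come from Proposition \ref{prop:positivityofh} as above. The $z_f$ contribution has to come from the roots of $\fu$ in the support together with $z_f\ge0$ on $\fu$. If the direct inequality fails, my fallback is an induction on $\ell(w)$ that mirrors the $k_0$-maximal-component argument of Proposition \ref{prop:opencell}. I would choose a simple reflection $s$ with $vws<vw$ and show that the lowest $\mu$-graded piece of $\Ad(q)e$ must commute with $e$, which leads to the same sign contradiction.
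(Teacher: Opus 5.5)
Your reduction to $y=\Ad(l_0)e$ in the open $L_g$-orbit of $\fl_2$, with $\alpha(\mu)>0$ for all $\alpha\in\mathrm{Supp}(y)$ and $\mu=w^{-1}\lambda_f$, is correct and matches the paper's Step 1. Your endgame (Cauchy--Schwarz forcing $w\in W_{\lambda_f}$, contradicting minimality of $v$) is also fine. The gap is the inequality in between. You only \emph{expect} it, and neither half follows from what you propose.

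Put $\gamma=\mu-\lambda_f$. You need $\langle\lambda_f,\gamma\rangle=\tfrac12\langle h,\gamma\rangle+\langle z_f,\gamma\rangle\ge 0$.

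For the $h$-half, positivity of $\mu$ on $\mathrm{Supp}(y)$ only gives $d\chi_i(\mu)>0$, hence $\langle h,\mu\rangle\ge 0$. What you need is $\langle h,\mu\rangle\ge\langle h,\lambda_f\rangle=\tfrac12\|h\|^2$. Perturbing $\mu$ to $\mu_0$ only hurts, since it destroys integrality. The missing idea is integrality plus a degree count:
\begin{itemize}
\item $\mu$ is integral on roots by (ii), so $\alpha(\mu)\ge 1$ on $\mathrm{Supp}(y)$.
\item Since $f_i(y)\ne 0$, some monomial $S$ of $f_i$ with nonzero coefficient satisfies $S\subseteq\mathrm{Supp}(y)$.
\item Then $d\chi_i=\sum S$ gives $d\chi_i(\mu)\ge d_i=\tfrac12 d\chi_i(h)=d\chi_i(\lambda_f)$, using $d\chi_i(z_f)=0$.
\item Proposition \ref{prop:positivityofh} then yields $\langle h,\gamma\rangle\ge 0$.
\end{itemize}

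For the $z_f$-half, the source you name (roots of $\fu$ in $\mathrm{Supp}(\Ad(q)e)$ together with (iii)) is not available. For $q\in L_g$ the $\fu$-component of $\Ad(q)e$ is zero. The condition on $\mathrm{Supp}(y)$ involves only roots of $\fl$, which see only the $[\fl,\fl]$-component of $\mu$, while $\langle z_f,\mu\rangle$ sees only its $\mathfrak{z}(\fl)$-component. So no argument from the support condition alone can give your inequality. For instance, for $e=0$ the condition holds vacuously for every $w$, yet $\langle w^{-1}z_f,z_f\rangle<\|z_f\|^2$ whenever $w^{-1}z_f\neq z_f$.

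The hypothesis $w\in W_\partial$ must be used here, not only at the end. Since $(vw)^{-1}<v^{-1}$ and $v\lambda_f$ is dominant, $\gamma$ is a nonnegative combination of positive coroots, nonzero by minimality of $v$. Since $z_f$ is dominant by (iii), $\langle z_f,\gamma\rangle\ge 0$.

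With both halves, $\langle\lambda_f,\gamma\rangle\ge 0$ contradicts $2\langle\lambda_f,\gamma\rangle=\|w^{-1}\lambda_f\|^2-\|\lambda_f\|^2-\|\gamma\|^2=-\|\gamma\|^2<0$. This is exactly the paper's proof, which runs it contrapositively: $\langle h,\gamma\rangle<0$ forces $d\chi_i(\gamma)<0$ for some $i$, hence a root $\alpha\in\mathrm{Supp}(y)$ with $\alpha(\mu)\le 0$. Your fallback induction on $\ell(w)$ is too unspecified to assess.
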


\begin{proof}
The proof has several steps.

{\it Step 1.} By Lemma \ref{lem:Bruhat}, it suffices to show that
$$w \in W_{\partial} \implies \Ad(P_f\dot{w}P_g)e \cap \fu_f = \emptyset.$$
Since $\fu_f$ is preserved by $\Ad(P_f)$, this is equivalent to the claim that 
\begin{equation}\label{eq:e1}w \in W_{\partial} \implies \Ad(\dot{w}P_g)e \cap \fu_f = \emptyset.\end{equation}
Suppose $\Ad(\dot{w}L_g)e \cap \fu_f = \emptyset$ for some $w \in W$ and let $p \in P_g$ be arbitrary. Write $p=ul$ for $u \in U_g$ and $l \in L_g$. Let $e' = \Ad(l)e$, and $u=\exp(Y)$ with $Y \in \fu_g$. Then
$$
\Ad(\dot{w}p)e = \Ad(\dot{w})\Ad(u)e'= \Ad(\dot{w})e' + \Ad(\dot{w}) \sum_{n \geq 1} \frac{1}{n!}\ad(Y)^n e'$$
Since $l$ centralizes $\lambda_g$ and $e$ has $\lambda_g$-weight $1$, $e'$ must also have $\lambda_g$-weight $1$. On the other hand, since $Y \in \fu_g$, each term $ \frac{1}{n!}\ad(Y)^n e'$ is a sum of weight vectors for $\lambda_g$ of weight strictly greater than $1$. Thus, $\mathrm{Supp}(e')$ is disjoint from $\mathrm{Supp}(\sum_{n \geq 1} \frac{1}{n!}\ad(Y)^n e')$. Since $\Ad(\dot{w})$ permutes the root spaces, this implies that $\mathrm{Supp}(\Ad(\dot{w}e')$ is disjoint from $\mathrm{Supp}(Ad(\dot{w}) \sum_{n \geq 1} \frac{1}{n!}\ad(Y)^n e')$. It follows that
$$\mathrm{Supp}(\Ad(\dot{w}p)e) =  \mathrm{Supp}(\Ad(\dot{w})e') \sqcup \mathrm{Supp}(Ad(\dot{w}) \sum_{n \geq 1} \frac{1}{n!}\ad(Y)^n e').$$
Since $\Ad(\dot{w}e') \cap \fu_f = \emptyset$, we know that $ \mathrm{Supp}(\Ad(\dot{w}e'))$ contains a root $\alpha$ such that $\alpha(\lambda_f) \leq 0$. It follows from the above that $\mathrm{Supp}(\Ad(\dot{w}p)e)$ must also contain this root and hence that $\Ad(\dot{w}p)e \notin \fu_f$. So in fact (\ref{eq:e1}) is equivalent to the implication
\begin{equation}\label{eq:e2}w \in W_{\partial} \implies \Ad(\dot{w}L_g)e \cap \fu_f = \emptyset.\end{equation}
This is what we will prove in Steps 2-4.

{\it Step 2.} Recall that if $\lambda \in \ft$ is integral and dominant for $\Delta^+(\fg)$ and $w_1 < w_2$, then $w_1\lambda-w_2\lambda$ is a sum of positive co-roots for $\fg$. Applying this fact to the Weyl group elements $(vw)^{-1} < v^{-1}$ and the integral dominant coweight $v\lambda_f \in \ft$, we see that $\gamma:=w^{-1}\lambda_f - \lambda_f$ is a sum of positive co-roots for $\fg$ (a nonzero sum, as $v$ is of minimal-length and $vw < v$). Fix a positive-definite $W$-invariant symmetric bilinear form $\langle -, -\rangle$ on $X_*(T) \otimes_{\ZZ} \RR$. Condition (iii) on $z_f$ implies that $z_f$ is dominant for $\fg$. So $\langle z_f, \gamma\rangle \geq 0$. This implies that $\langle h, \gamma\rangle < 0$ by the following calculation:
\begin{align*}
\langle h, \gamma\rangle &= 2\langle \lambda_f,\gamma\rangle - 2\langle z_f,\gamma\rangle\\
&= (||\lambda_f+\gamma||^2 - ||\lambda_f||^2 - ||\gamma||^2) - 2\langle z_f, \gamma\rangle\\
&= (||w^{-1}\lambda_f||^2 - ||\lambda_f||^2 - ||\gamma||^2) - 2\langle z_f, \gamma\rangle\\
&= - ||\gamma||^2 - 2\langle z_f, \gamma\rangle\\
&< 0.
\end{align*}
{\it Step 3.} Now let $V$ denote the $2$-eigenspace of $\ad(h)$ on $\fl$. Recall from Section \ref{subsec:PVS} that $(L_g,V)$ is a prehomogeneous vector space with open dense orbit $\Ad(L_g)e$. Moreover, by Proposition \ref{prop:codim1}, $V \setminus \Ad(L_g)e$ is the union of its codimension one irreducible components $S_1,...,S_n$. Let $f_1,...,f_n$ be irreducible polynomials defining these components. These are relative invariants by Proposition \ref{prop:generators}. Denote the corresponding characters of $L_g$ by $\chi_1,...,\chi_n$. By Proposition \ref{prop:positivityofh}, there are nonnegative real numbers $c_1,...,c_n$ such that
$$\langle h, \gamma \rangle = \sum_{i=1}^n c_i d\chi_i(\gamma).$$
Since $\langle h, \gamma\rangle <0$, we must therefore have $d\chi_i(\gamma)<0$ for some $\chi_i$. Substituting $w^{-1}\lambda_f - \lambda_f$ for $\gamma$ and $\frac{1}{2}h+z_f$ for $\lambda_f$, we get the following inequality:
\begin{equation}\label{eq:e3}d\chi_i(w^{-1}\lambda_f) < d\chi_i(\lambda_f) = \frac{1}{2} d\chi_i(h) + d\chi_i(z_f) = \frac{1}{2}d\chi_i(h).\end{equation}

{\it Step 4.} Let $\Delta(V) = \{\alpha \in \Delta(\fl) \mid \alpha(h)=2\}$, the set of roots in $V$, and choose a basis of nonzero root vectors $\{E_{\alpha}\}_{\alpha \in \Delta(V)}$. Recall that $f_i$ is a nonzero homogeneous polynomial on $V$. Let $d_i$ denote its homogeneous degree. Then $f_i(x)$ is of the following form:
$$f_i(x) = \sum_{|S|=d_i} c_S \prod_{\alpha \in S} x(\alpha), \qquad x = \sum_{\alpha \in \Delta(V)} x(\alpha) E_{\alpha}.$$
where the sum runs over all order-$d_i$ multisets of roots in $\Delta(V) = \{\alpha \in \Delta(\fl) \mid \alpha(h)=2\}$ and the coefficients $c_S$ are complex numbers (not all $0$). Moreover, since $f_i$ is a relative invariant with character $\chi_i$, we have
\begin{equation}\label{eq:e4}d\chi_i = \sum S,\end{equation}
for any order-$d_i$ multiset $S \subset \Delta(V)$ with $c_S \neq 0$. 

Suppose that $x \in \Ad(L_g)e$. Then in particular $f_i(x) \neq 0$. This means that there is an order-$d_i$ multiset $S \subset \Delta(V)$ such that $c_S \neq 0$ and $\prod_{\alpha \in S} x(\alpha) \neq 0$. In particular $S \subseteq \mathrm{Supp}(x)$ and so
\begin{equation}\label{eq:e5}
wS \subseteq w\mathrm{Supp}(x) = \mathrm{Supp}(\Ad(\dot{w})x).
\end{equation}
Evaluating (\ref{eq:e4}) at the element $h \in \ft$, we obtain the equation
$$d\chi_i(h) = (\sum S)(h) = 2d_i.$$
Combining with (\ref{eq:e3}), we get a strict inequality
$$\sum_{\alpha \in S} \alpha(w^{-1}\lambda_f) < d_i.$$
Since the sum on the left consists of $d_i$ terms, there must be some $\alpha \in S$ such that $\alpha(w^{-1}\lambda_f) < 1$. Since $\lambda_f$, and hence $w^{-1}\lambda_f$, has integral weights on $\fg$, this implies that $\alpha(w^{-1}\lambda_f) \leq 0$, i.e. that $w\alpha \notin \Delta(\fu_f)$. But by (\ref{eq:e5}), $w\alpha \in \mathrm{Supp}(\Ad(\dot{w})x)$. We conclude that $\Ad(\dot{w})x \notin \fu_f$. This completes the proof.
\end{proof} 

Now, Theorem \ref{thm:threeparabolics} follows immediately from Propositions \ref{prop:opencell} and \ref{prop:boundary}.


\begin{sloppypar} \printbibliography[title={References}] \end{sloppypar}

\end{document}